\newtheorem{Teo}{Theorem}[section]
\newtheorem{Cor}[Teo]{Corollary}
\newtheorem{Prop}[Teo]{Proposition}
\newtheorem{Def}[Teo]{Definition}
\newtheorem{Lem}[Teo]{Lemma}
\newtheorem{Exam}[Teo]{Example}
\newtheorem{remark}[Teo]{Remark}
\newcommand{\C}{\mathbb C}
\newcommand{\R}{\mathbb R}
\newcommand{\Z}{\mathbb Z}
\newcommand{\F}{\mathbb F}
\newcommand{\E}{\mathcal{E}}
\newcommand{\N}{\mathcal{N}}
\newcommand{\T}{\mathcal{T}}
\newcommand{\I}{\mathcal{I}}
\newcommand{\W}{\mathcal W}
\begin{document}

\title[Lipschitz geometry and combinatorics of abnormal surface germs
 ]{Lipschitz geometry and combinatorics of abnormal surface germs
}

\author[]{Andrei Gabrielov$\dagger$}\thanks{$\dagger$ Research supported by the NSF grant DMS-1665115}
\address{Department of Mathematics, Purdue University,
	West Lafayette, IN 47907, USA}\email{gabrielov@purdue.edu}
\author[]{Emanoel Souza*}\thanks{*Research supported by grant Print/CAPES/UFC N 88887.312005/2018-00}
\address{Departamento de Matem\'atica, Universidade Federal do Cear\'a
(UFC), Campus do Pici, Bloco 914, Cep. 60455-760. Fortaleza-Ce,
Brasil}\email{emanoelfs.cdd@gmail.com}

\keywords{Lipschitz geometry, Surface singularities, Catalan numbers}

\subjclass{51F30, 14P10, 03C64, 05A17}

\begin{abstract}
We study outer Lipschitz geometry of real semialgebraic or, more general, definable in a polynomially bounded o-minimal structure over the reals, surface germs. In particular, any definable H\"older triangle is either Lipschitz normally embedded or contains some ``abnormal'' arcs. We show that abnormal arcs constitute finitely many ``abnormal zones'' in the space of all arcs, and investigate geometric and combinatorial properties of abnormal surface germs. We establish a strong relation between geometry and combinatorics of abnormal H\"older triangles.
\end{abstract}

\maketitle

\section{Introduction}
This paper explores Lipschitz geometry of germs of semialgebraic (or, more general, definable in a polynomially bounded o-minimal structure) real surfaces, with the goal towards effective bi-Lipschitz classification of definable surface singularities.

Lipschitz geometry of singularities attracted considerable attention for the last 50 years, as a natural approach to classification of singularities which is intermediate between their bi-regular (too fine) and topological (too coarse) equivalence.
In particular, the finiteness theorems of Mostowski \cite{Mostowski} and Parusinski \cite{Parusinski} suggest the possibility of
effective bi-Lipschitz classification of definable real surface germs.

In the seminal paper of Pham and Teissier \cite{Pham} on Lipschitz geometry of germs of complex plane algebraic curves, it was shown that two such germs are (outer metric) bi-Lipschitz equivalent exactly when they are ambient topologically equivalent, thus outer bi-Lipschitz equivalence class of such germs is completely determined by the essential Puiseux pairs of their irreducible branches, and by the orders of contact between the branches.

Later it became clear (see \cite{birbrair2000normal}) that any singular germ $X$ inherits two metrics from the ambient space: the inner metric where the distance between two points of $X$ is the length of the shortest path connecting them inside $X$, and the outer metric with the distance between two points of $X$ being just their distance in the ambient space.
This defines two classification problems, equivalence up to bi-Lipschitz homeomorphisms with respect to the inner and outer metrics,
the inner metric classification being more coarse than the outer metric one.

Any semialgebraic surface germ with a link homeomorphic to a line segment is inner bi-Lipschitz equivalent to the standard $\beta$-H\"older triangle $\{0\le x\le 1,\;0\le y\le x^\beta\}$.
Any semialgebraic surface germ with an isolated singularity and connected link is inner bi-Lipschitz equivalent to the germ of a $\beta$-horn -- surface of revolution of a $\beta$-cusp $\{0\le x\le 1,\;y=x^\beta\}$.
For the Lipschitz normally embedded singularities, the inner and outer metrics are equivalent, thus the two classifications are the same.
Kurdyka \cite{kurdyka1997distance} proved that any semialgebraic set can be decomposed into finitely many normally embedded semialgebraic sets.
Birbrair and Mostowski \cite{birbrair2000normal} used Kurdyka's construction to prove that any semialgebraic set is inner Lipschitz equivalent to a normally embedded semialgebraic set.

Classification of surface germs with respect to the outer metric is much more complicated.
A singular germ $X$ can be considered as the family $\{X_t\}$ of its links (intersections with the spheres of a small radius $t>0$).
 Thus Lipschitz geometry of $X$ can be understood as the dynamics of $X_t$ as $t\to 0$.
 For this purpose we investigate the ``Valette link'' of $X$, the family of arcs in $X$ parameterized by the distance to the origin.
 The outer Lipschitz invariants of $X$ are described in terms of the tangency orders between those arcs.

The first step towards the outer metric classification of surface germs was made in \cite{birbrair2014lipschitz} for the
surfaces of a very special kind, each of them being the union of the real plane and a graph of a function defined on that plane.
In the classical singularity theory, this corresponds to classification of functions with respect to bi-Lipschitz $K$-equivalence.

The present paper is the next step towards outer metric classification of surface germs.
Using Kurdyka's ``pancake decomposition'' of a germ into normally embedded subsets, and the ``pizza decomposition'' from \cite{birbrair2014lipschitz} for the distance functions, we identify basic ``abnormal'' parts of a surface germ, called snakes, and investigate their geometric and combinatorial properties.

In Section \ref{Section: Basic Definitions} we review some standard (and some less standard) definitions and technical tools of Lipschitz geometry of surface germs. The standard metric of $\R^n$ induces two metrics on $X$: the outer and inner metrics.
The distance between two points $x$ and $y$ of $X$ in the outer metric is just the distance $|x-y|$ between them in $\R^n$, while the distance in the inner metric is the infimum of the lengths of definable paths connecting $x$ and $y$ inside $X$.
A surface $X$ is Lipschitz normally embedded if these two metrics are equivalent.
An arc $\gamma\subset X$ is the germ of a definable mapping $[0,\epsilon)\to X$ such that $|\gamma(t)|=t$.
The outer (resp., inner) tangency order of two arcs $\gamma$ and $\gamma'$ is the exponent of the distance between $\gamma(t)$ and $\gamma'(t)$ in the outer (resp., inner) metric. This equips the set of all arcs in $X$ (known as the Valette link $V(X)$ of $X$, see \cite{valette2007link}) with a non-archimedean metric.
The simplest surface germ is a $\beta$-H\"older triangle, which is inner bi-Lipschitz equivalent to the germ of the set $\{0\le x\le 1,\;0\le y\le x^\beta\}\subset\R^2$.
A $\beta$-H\"older triangle $T$ has two boundary arcs, corresponding to $y=0$ and $y=x^\beta$.
All other arcs in $T$ are interior arcs.
An arc $\gamma\subset X$ is Lipschitz non-singular if it is topologically non-singular and there is a normally embedded H\"older triangle $T\subset X$ such that $\gamma$ is an interior arc of $T$.
There are finitely many Lipschitz singular arcs in any surface $X$.
A H\"older triangle is non-singular if all its interior arcs are Lipschitz non-singular.
An arc $\gamma\subset X$ is generic if its inner tangency order with any singular arc of $X$ is equal to the minimal tangency order of any two arcs in $X$.

In Subsection \ref{subsection:pancake decomposition}, we describe Kurdyka's ``pancake decomposition'' of a surface germ (see Definition \ref{Def: pancake decomposition} and Remark \ref{Rem: existence of pancake decomp}) into normally embedded H\"older triangles (``pancakes'').

Proposition \ref{Prop:two triangles} in Subsection \ref{Subsection: two triangles} states that,
 for any two normally embedded $\beta$-H\"older triangles $T$ and $T'$ such that, for some $\alpha>\beta$, the boundary arcs of $T$ have tangency orders at least $\alpha$ with the boundary arcs of $T'$ and all interior arcs of $T$ have tangency order at least $\alpha$ with $T'$, there is a bi-Lipschitz homeomorphism $h:T\to T'$ such that the tangency order between $\gamma$ and $h(\gamma)$ is at least $\alpha$ for any arc $\gamma$ of $T$.

In Section \ref{Section: Lipschitz functions on a NE Holder triangle} we present the ``pizza decomposition'' from \cite{birbrair2014lipschitz} in a suitable form.
Together with pancake decomposition, it is our main technical tool for the study of Lipschitz geometry of surface germs.

Abnormal surfaces, the main object of this paper, are introduced in Section \ref{Section: Snakes}.
An arc $\gamma\subset X$ is abnormal if there are two Lipschitz normally embedded H\"older triangles $T$ and $T'$ such that $\gamma$ is their common boundary arc and $T\cup T'$ is not normally embedded. Otherwise $\gamma$ is a normal arc.
A surface $X$ is abnormal if all its generic arcs are abnormal. Note that the set of abnormal arcs in $X$ is outer Lipschitz invariant.
Abnormal surfaces are important building blocks of general surface germs.
In particular, we study abnormal non-singular $\beta$-H\"older triangles, which we call $\beta$-snakes (see Fig.~\ref{fig:three snakes}).
Snakes are ``weakly normally embedded'' (see Definition \ref{Def: weak LNE}): if $X$ is a $\beta$-snake then any H\"older triangle $T\subset X$ that is not normally embedded has the same exponent $\beta$ (see Proposition \ref{Prop:weak LNE}).
This fundamental property of snakes allows one to clarify the outer Lipschitz geometry of a surface germ by separating exponents associated with its different parts.
Another peculiar property of snakes is non-uniqueness of their minimal pancake decompositions (see Remark \ref{REM: different number of pancakes in minimal pancake decompositions} and Fig.~\ref{fig:Two pancake decomposition of a snake}). Furthermore, there is a canonical (outer Lipschitz invariant) decomposition of the Valette link of a $\beta$-snake into finitely many normally embedded $\beta$-zones, segments and nodal zones (see Corollary \ref{Cor: V(X) is a disjoint union of segments and nodal zones} and Proposition \ref{Prop:bubble inside snake}).

In section \ref{Section: Main Theorem} we explain the role played by snakes in Lipschitz geometry of general surface germs.
Theorem \ref{Teo: Main HT decomposition} states that each abnormal arc of a surface germ $X$ belongs to one of the finitely many snakes and ``non-snake bubbles'' (see Fig.~\ref{fig:bubble snake and non-snake bubbles}) contained in $X$.

In Section \ref{Section: Names of snakes}, we introduce snake names, combinatorial invariants associated with snakes, and investigate their non-trivial combinatorics. In particular, we show that any snake name can be reduced to a binary one, and derive recurrence relations for the numbers of distinct binary snake names of different lengths.

In subsection \ref{Subsection: weak equivalence} we present a strong relationship between geometry and combinatorics of snakes.
We define ``weakly outer bi-Lipschitz maps'' (see Definition \ref{Def:weak equivalence}) between surface germs,
and give combinatorial description of weak outer Lipschitz equivalence of snakes in terms of their snake names and
some extra combinatorial data.

We thank Lev Birbrair, Alexandre Fernandes and Rodrigo Mendes, whose insights into Lipschitz geometry of singularities, made available to us through private communications, contributed to some of the technical tools used in this paper.

\section{Lipschitz geometry of surface germs: basic definitions and results}\label{Section: Basic Definitions}

All sets, functions and maps in this paper are assumed to be definable in a polynomially bounded o-minimal structure over $\mathbb{R}$ with the field of exponents $\mathbb{F}$, for example, real semialgebraic or subanalytic. Unless the contrary is explicitly stated, we consider germs at the origin of all sets and maps.

\begin{Def}\label{DEF: inner, outer and normally embedded}
	\normalfont Given a germ at the origin of a set  $X\subset \mathbb{R}^{n}$ we can define two metrics on $X$, the \textit{outer metric} $d(x,y)=|x-y|$ and the \textit{inner metric} $d_{i}(x,y)=\inf\{l(\alpha)\}$, where $l(\alpha)$ is the length of a rectifiable path $\alpha$ from $x$ to $y$ in $X$. Note that such a path $\alpha$ always exist, since $X$ is definable. A set $X \subset \R^{n}$ is \textit{Lipschitz normally embedded} if the outer and inner metrics are equivalent (i.e., there is a positive constant $C$ such that $d_i(x,y) \le Cd(x,y)$ for all $x,y\in X$).
\end{Def}

\begin{remark}
	\normalfont The inner metric is not always definable, but one can consider an equivalent definable metric (see \cite{kurdyka1997distance}), for example, the \textit{pancake metric} (see \cite{birbrair2000normal} and \cite[Lemma 2.5]{birbrair2018arc}).
\end{remark}

\begin{Exam}\label{cusp}
	\normalfont The set $X=\{(x,y) \in \R^2 \mid x^2 = y^3\}$ has two branches $X_\pm=\{y\ge 0,\;x=\pm y^{3/2}\}$. For two points $p_+=(y^{3/2},y)\in X_+$ and $p_-=(-y^{3/2},y)\in X_-$, we have $d(p_+,p_-)=2y^{3/2}$ and $d_i(p_+,p_-)\ge y$, thus $d_i(p_+,p_-)/d(p_+,p_-)\to\infty$ as $y\searrow 0$. In particular, $X$ is not normally embedded.
\end{Exam}

\subsection{H\"older triangles}\label{Subsec: Holder triangles}

\begin{Def}\label{Def: arc}
	\normalfont	An \textit{arc} in $\mathbb{R}^{n}$ is a germ at the origin of a mapping $\gamma \colon [0,\epsilon) \longrightarrow \mathbb{R}^{n}$ such that $\gamma(0) = 0$. Unless otherwise specified, we suppose that arcs are parameterized by the distance to the origin, i.e., $|\gamma(t)|=t$. We usually identify an arc $\gamma$ with its image in $\mathbb{R}^{n}$. For a germ at the origin of a set $X$, the set of all arcs $\gamma \subset X$ is denoted by $V(X)$ (known as the Valette link of $X$, see \cite{valette2007link}).
\end{Def}

\begin{Def}\label{DEF: order of tangency}
	\normalfont The \textit{tangency order} of two arcs $\gamma_{1}$ and $\gamma_{2}$ in $V(X)$ (notation $tord(\gamma_{1},\gamma_{2})$) is the exponent $q$ where $|\gamma_{1}(t) - \gamma_{2}(t)| = ct^{q} + o(t^{q})$ with $c\neq 0$. By definition, $tord(\gamma,\gamma)=\infty$. For an arc $\gamma$ and a set of arcs $Z \subset V(X)$, the tangency order of $\gamma$ and $Z$ (notation $tord(\gamma, Z)$), is the supremum of $tord(\gamma, \lambda)$ over all arcs $\lambda \in Z$. The tangency order of two sets of arcs $Z$ and $Z'$ (notation $tord(Z,Z')$) is the supremum of $tord(\gamma, Z')$ over all arcs $\gamma \in Z$. Similarly, we define the tangency orders in the inner metric, denoted by $itord(\gamma_{1},\gamma_{2}),\; itord(\gamma, Z)$ and $itord(Z,Z')$.
\end{Def}

\begin{Def}\label{DEF: standard Holder triangle}
	\normalfont For $\beta \in \mathbb{F}$, $\beta \ge 1$, the \textit{standard} $\beta$-\textit{H\"older triangle} $T_\beta\subset\R^2$ is the germ at the origin of the set
	\begin{equation}\label{Formula:Standard Holder triangle}
	T_\beta = \{(x,y)\in \R^2 : 0\le x\le 1, \; 0\le y \le x^\beta\}.
	\end{equation}
	The curves $\{x\ge 0,\; y=0\}$ and $\{x\ge 0,\; y=x^\beta\}$ are the \textit{boundary arcs} of $T_\beta$.
\end{Def}

\begin{Def}\label{DEF: Holder triangle}
	\normalfont A germ at the origin of a set $T \subset \mathbb{R}^{n}$ that is bi-Lipschitz equivalent with respect to the inner metric to the standard $\beta$-H\"older triangle $T_\beta$ is called a $\beta$-\textit{H\"older triangle} (see \cite{birbrair1999local}).
The number $\beta \in \mathbb{F}$ is called the \textit{exponent} of $T$ (notation $\beta=\mu(T)$). The arcs $\gamma_{1}$ and $\gamma_{2}$ of $T$ mapped to the boundary arcs of $T_\beta$ by the homeomorphism are the \textit{boundary arcs} of $T$ (notation $T=T(\gamma_{1},\gamma_{2})$). All other arcs of $T$ are \textit{interior arcs}. The set of interior arcs of $T$ is denoted by $I(T)$.
\end{Def}

\begin{remark}\label{Rem: NE HT condition}
	\normalfont It follows from the Arc Selection Lemma that a H\"older triangle $T$ is normally embedded if, and only if, $tord(\gamma,\gamma')=itord(\gamma,\gamma')$ for any two arcs $\gamma$ and $\gamma'$ of $T$  (see \cite[Theorem 2.2]{birbrair2018arc}).
\end{remark}

\begin{Def}\label{DEF: Lipschitz non-singular arc}
	\normalfont Let $X$ be a surface (a two-dimensional set). An arc $\gamma \subset X$ is \textit{Lipschitz non-singular} if there exists a normally embedded H\"older triangle $T \subset X$ such that $\gamma$ is an interior arc of  $T$ and $\gamma \not\subset \overline{X\setminus T}$. Otherwise, $\gamma$ is \textit{Lipschitz singular}. It follows from pancake decomposition (see Definition \ref{Def: pancake decomposition} and Remark \ref{Rem: existence of pancake decomp}) that a surface $X$ contains finitely many Lipschitz singular arcs. The union of all Lipschitz singular arcs in $X$ is denoted by $Lsing(X)$.
\end{Def}

\begin{Def}\label{DEF: non-singular Holder triangle}
	\normalfont A H\"older triangle $T$ is \textit{non-singular} if all interior arcs of $T$ are Lipschitz non-singular.
\end{Def}

\begin{Exam}\label{Exam: Lipschitz singular arc}
	\normalfont Let $\alpha,\beta \in \F$ with $1\le \beta <\alpha$. Let $\gamma_1,\gamma_2,\lambda\subset \R^3$ be arcs (not parameterized by the distance to the origin) such that $\gamma_1(t)=(t,t^\beta,0),\;\gamma_2(t)=(t,t^\beta,t^\alpha)$ and $\lambda(t)=(t,0,0)$. Consider the H\"older triangles $T_1=T(\gamma_1,\lambda)=\{(x,y,z) : x\ge 0,\;0 \le y \le x^\beta,\;z=0\}$ and $T_2=T(\lambda,\gamma_2)=\{(x,y,z) : x \ge 0,\; 0 \le y \le x^\beta,\; z = x^{\alpha-\beta}y\}$.
Let $T=T_1\cup T_2$. Note that $T_1$ and $T_2$ are normally embedded $\beta$-H\"older triangles but $T$ is not normally embedded, since $tord(\gamma_1,\gamma_2)=\alpha > \beta = itord(\gamma_1,\gamma_2)$. Thus every interior arc $\gamma\ne\lambda$ of $T$ is Lipschitz non-singular. Let us show that $\lambda$ is a Lipschitz singular arc.
	
Consider the arcs $\gamma'_1(t)=(t,t^p,0)\subset T_1$ and $\gamma'_2(t)=(t,t^p,t^{\alpha-\beta+p})\subset T_2$, where $p>\beta,\;p\in\F$.
We have $tord(\gamma'_1,\lambda)=tord(\lambda,\gamma'_2)=p$ and $tord(\gamma'_1,\gamma'_2)=\alpha - \beta + p > p=itord(\gamma'_1,\gamma'_2)$. Thus H\"older triangles $T'_1=T(\gamma'_1,\lambda)$ and $T'_2=T(\lambda,\gamma'_2)$ are normally embedded but the H\"older triangle $T_p=T'_1\cup T'_2$ is not. If $T'\subset T$ is any H\"older triangle such that $\lambda\in I(T')$ then, for large enough $p$, the H\"older triangle $T_p$ is contained in $T'$. Therefore, $T'$ is not normally embedded, thus $\lambda$ is a Lipschitz singular arc of $T$. Note also that any point of $\lambda$ other than the origin has a normally embedded neighborhood in $T$.
\end{Exam}

\begin{Def}\label{Def: generic arc of a surface}
	\normalfont Let $X$ be a surface germ with connected link. The \textit{exponent} $\mu(X)$ of $X$ is defined as $\mu(X)=\min\, itord(\gamma,\gamma')$, where the minimum is taken over all arcs $\gamma,\,\gamma'$ of $X$.
A surface $X$ with exponent $\beta$ is called a $\beta$-surface. An arc $\gamma \subset X\setminus Lsing(X)$ is \textit{generic} if $itord(\gamma,\gamma') = \mu(X)$ for all arcs $\gamma'\subset Lsing(X)$. The set of generic arcs of $X$ is denoted by $G(X)$.
\end{Def}

\begin{remark}\label{Rem: generic arcs of a non-singular HT}
	\normalfont If $X=T(\gamma_{1},\gamma_{2})$ is a non-singular $\beta$-H\"older triangle then an arc $\gamma\subset X$ is \textit{generic} if, and only if, $itord(\gamma_{1},\gamma) = itord(\gamma,\gamma_{2}) = \beta$.
\end{remark}

\begin{Lem}\label{Lem:beta-bubble two NE pieces}
	Let $\gamma$ be an arc of a $\beta$-H\"older triangle $T=T(\gamma_1,\gamma_2)$ such that $tord(\gamma_1,\gamma)=itord(\gamma_1,\gamma)$ and $tord(\gamma,\gamma_2)=itord(\gamma,\gamma_2)$. If $tord(\gamma_1,\gamma_2)>\beta$ then $$itord(\gamma_1,\gamma)=itord(\gamma,\gamma_2)=\beta.$$
\end{Lem}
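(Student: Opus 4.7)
The plan is to derive the conclusion from two applications of the non-archimedean (ultrametric) inequality for tangency orders, combined with the basic inner-tangency information coming from the fact that $T$ is a $\beta$-H\"older triangle.

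First I would record what the $\beta$-H\"older triangle structure gives about inner tangency orders. Since $T$ is inner bi-Lipschitz equivalent to $T_\beta$ and such a map preserves inner tangency orders, we have $itord(\gamma_1,\gamma_2)=\beta$, and moreover $itord(\gamma_1,\gamma)\ge\beta$ and $itord(\gamma,\gamma_2)\ge\beta$ for every arc $\gamma\subset T$ (translate the bounds for the standard model $T_\beta$). Setting $a:=itord(\gamma_1,\gamma)$ and $b:=itord(\gamma,\gamma_2)$, the ultrametric inequality
\[
\beta=itord(\gamma_1,\gamma_2)\ge\min(a,b)
\]
combined with $a,b\ge\beta$ forces $\min(a,b)=\beta$.

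Next I would apply the same ultrametric inequality on the outer side. By hypothesis $tord(\gamma_1,\gamma)=a$ and $tord(\gamma,\gamma_2)=b$, so
\[
tord(\gamma_1,\gamma_2)\ge\min(a,b)=\beta,
\]
with strict inequality possible only when the two outer orders coincide: if $a\ne b$, the ultrametric forces $tord(\gamma_1,\gamma_2)=\min(a,b)=\beta$, contradicting the assumption $tord(\gamma_1,\gamma_2)>\beta$. Hence $a=b$, and because $\min(a,b)=\beta$ this common value must be $\beta$, giving $itord(\gamma_1,\gamma)=itord(\gamma,\gamma_2)=\beta$ as required.

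I do not expect a real obstacle here: the argument is purely a manipulation of the ultrametric (non-archimedean) nature of $tord$ and $itord$. The only point to justify carefully is the lower bound $itord(\gamma_1,\gamma)\ge\beta$ and $itord(\gamma,\gamma_2)\ge\beta$ inside a $\beta$-H\"older triangle; this can either be read off directly from the standard model $T_\beta$ (where any interior arc $\gamma$ has the form $y=y(x)$ with $0\le y(x)\le x^\beta$, so outer, hence inner, tangency with each boundary arc is at least $\beta$), or cited as a standard property of $\beta$-H\"older triangles. Everything else is just the two lines of ultrametric reasoning above.
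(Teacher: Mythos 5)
Your proof is correct and follows essentially the same route as the paper's: both rest on the non-archimedean (ultrametric) property of $tord$, deducing that $tord(\gamma_1,\gamma_2)=\min(\beta_1,\beta_2)=\beta$ whenever $\beta_1\ne\beta_2$, which contradicts the hypothesis $tord(\gamma_1,\gamma_2)>\beta$. The only difference is cosmetic: you derive the identity $\min(itord(\gamma_1,\gamma),itord(\gamma,\gamma_2))=\beta$ from the ultrametric inequality for $itord$ together with the bounds $itord(\gamma_i,\gamma)\ge\beta$, whereas the paper simply records this as a standard fact about $\beta$-Hölder triangles.
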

\begin{proof}
	Let $\beta_{1}=tord(\gamma_{1},\gamma)=itord(\gamma_{1},\gamma)$ and $\beta_{2}=tord(\gamma,\gamma_{2})=itord(\gamma,\gamma_{2})$.
	Then $\beta=\min(itord(\gamma_{1},\gamma),itord(\gamma,\gamma_{2}))=\min(\beta_{1},\beta_{2})$. If $\beta_{1}\ne \beta_{2}$ then, by the non-archimedean property, we have $$tord(\gamma_{1},\gamma_{2})  =\min(tord(\gamma_{1},\gamma),tord(\gamma,\gamma_{2}))=\min(\beta_{1},\beta_{2})=\beta,$$ a contradiction.	
\end{proof}

\subsection{Pancake decomposition}\label{subsection:pancake decomposition}

\begin{Def}\label{Def: pancake decomposition}
	\normalfont Let $X\subset \mathbb{R}^{n}$ be the germ at the origin of a closed set. A \textit{pancake decomposition} of $X$ is a finite collection of closed normally embedded subsets $X_{k}$ of $X$ with connected links, called \textit{pancakes}, such that $X=\bigcup X_{k}$ and $$\textrm{dim}(X_{j}\cap X_{k}) < \textrm{min}(\textrm{dim}(X_{j}),\textrm{dim}(X_{k}))\quad\text{for all}\; j,k.$$
\end{Def}

\begin{remark}\label{Rem: existence of pancake decomp}
	\normalfont The term ``pancake'' was introduced in \cite{birbrair2000normal}, but this notion first appeared (with a different name) in \cite{kurdyka1992subanalytic} and \cite{kurdyka1997distance}, where the existence of such decomposition was established.
\end{remark}

\begin{remark}\label{Rem:pancake of holder triangle is holder triangle}
	\normalfont If $X$ is a H\"older triangle then each pancake $X_k$ is also a H\"older triangle.
\end{remark}

\begin{Def}\label{Def: minimal pancak decomp}
	\normalfont A pancake decomposition $\{X_{k}\}$ of a set $X$ is \textit{minimal} if the union of any two adjacent pancakes $X_{j}$ and $X_{k}$ (such that $X_{j}\cap X_{k}\ne \{0\}$) is not normally embedded.
\end{Def}

\begin{remark}
	\normalfont When the union of two adjacent pancakes is normally embedded, they can be replaced by their union, reducing the number of pancakes. Thus, a minimal pancake decomposition always exists.
\end{remark}

\subsection{Bi-Lipschitz homeomorphisms between pancakes}\label{Subsection: two triangles}

\begin{Prop}\label{Prop:two triangles}
Let $T=T(\gamma_1,\gamma_2)$ and $T'=T(\gamma'_1,\gamma'_2)$ be normally embedded $\beta$-H\"older triangles such that $tord(\gamma_1,\gamma'_1)\ge\alpha,\;tord(\gamma_2,\gamma'_2)\ge\alpha$, and $tord(\gamma,T')\ge\alpha$
for all arcs $\gamma\subset T$, for some $\alpha>\beta$.
Then there is a bi-Lipschitz homeomorphism $h: T\to T'$ such that $h(\gamma_1)=\gamma'_1,\;h(\gamma_2)=\gamma'_2$,
and $tord(h(\gamma),\gamma)\ge\alpha$ for any arc $\gamma\subset T$.
\end{Prop}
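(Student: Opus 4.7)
The plan is to construct $h$ by establishing a bi-Lipschitz correspondence $\sigma$ between the direction parameters of $T$ and $T'$ and assembling $h$ as a composition of bi-Lipschitz maps. Since $T$ and $T'$ are normally embedded $\beta$-H\"older triangles, fix outer bi-Lipschitz homeomorphisms $\Phi \colon T_\beta \to T$ and $\Phi' \colon T_\beta \to T'$ sending boundary arcs to the corresponding boundary arcs; for $s \in [0,1]$, write $\gamma^T_s = \Phi(\tilde\gamma_s)$, where $\tilde\gamma_s(x) = (x, sx^\beta)$. The hypothesis $tord(\gamma^T_s, T') \ge \alpha$ produces (by o-minimality) an arc $\nu_s \subset T'$ with $tord(\gamma^T_s, \nu_s) \ge \alpha$. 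Any two such ``shadow'' arcs have mutual tangency $\ge \alpha > \beta = \mu(T')$ by the non-archimedean property and therefore share the same limiting direction in $T'$; call it $\sigma(s) \in [0,1]$. The boundary hypotheses $tord(\gamma_i, \gamma'_i) \ge \alpha$ force $\sigma(0) = 0$ and $\sigma(1) = 1$, with $\nu_0 = \gamma'_1$ and $\nu_1 = \gamma'_2$.

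Next I would show that $\sigma \colon [0,1] \to [0,1]$ is a bi-Lipschitz homeomorphism. Using normal embedding of $T$ and $T'$ together with the bi-Lipschitz parametrizations, one has the leading-order estimates $|\gamma^T_{s_1}(t) - \gamma^T_{s_2}(t)| \asymp |s_1 - s_2|\, t^\beta$ in $T$ and $|\nu_{s_1}(t) - \nu_{s_2}(t)| \asymp |\sigma(s_1) - \sigma(s_2)|\, t^\beta$ in $T'$. Combining these with the shadow bounds $|\gamma^T_{s_i}(t) - \nu_{s_i}(t)| \le C\, t^\alpha$ in two triangle inequalities, dividing by $t^\beta$, and letting $t \to 0$ (so that $t^{\alpha-\beta} \to 0$) yields $|\sigma(s_1) - \sigma(s_2)| \asymp |s_1 - s_2|$; continuity and the boundary values then make $\sigma$ a bi-Lipschitz homeomorphism. (Injectivity along the way rules out $\sigma(s_1)=\sigma(s_2)$ for $s_1\ne s_2$, since otherwise the non-archimedean property would force $tord(\gamma^T_{s_1},\gamma^T_{s_2})>\beta$, contradicting normal embedding of $T$.)

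Finally, I would reparametrize $T'$ via a new outer bi-Lipschitz $\hat\Phi' \colon T_\beta \to T'$ with $\hat\Phi'(\tilde\gamma_{\sigma(s)}) = \nu_s$, and set
\[
h \,=\, \hat\Phi' \circ S \circ \Phi^{-1}, \qquad S(x,y) = \bigl(x,\ x^\beta\, \sigma(y/x^\beta)\bigr).
\]
The self-map $S$ of $T_\beta$ is bi-Lipschitz because $\sigma$ is, so $h$ is bi-Lipschitz as a composition, and $h(\gamma_1) = \nu_0 = \gamma'_1$, $h(\gamma_2) = \gamma'_2$, $h(\gamma^T_s) = \nu_s$ for each $s$. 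For any arc $\gamma \subset T$ parametrized by distance to the origin, the foliation by standard arcs writes $\gamma(t) = \gamma^T_{s(t)}(t)$, so $h(\gamma(t)) = \nu_{s(t)}(t)$; uniformity of the shadow bound (by definability of the family $\{\nu_s\}$) gives $|\gamma(t) - h(\gamma(t))| \le C\, t^\alpha$, i.e.\ $tord(h(\gamma), \gamma) \ge \alpha$. The main technical obstacle is producing $\hat\Phi'$: the family $\{\nu_s\}$ differs from the standard family $\{\gamma^{T'}_{\sigma(s)}\}$ only by a higher-order perturbation (tangency $p_0 > \beta$, uniform in $s$ by o-minimality), and one must verify that this perturbation still foliates a neighborhood of $0$ in $T'$ and induces a bi-Lipschitz reparametrization of $T_\beta$. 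This follows from a perturbation argument near the origin, where polynomial boundedness of the structure provides the derivative control needed for bi-Lipschitzness of the induced self-map of $T_\beta$.
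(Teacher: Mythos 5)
Your approach is genuinely different from the paper's. The paper uses a theorem of Birbrair--Fernandes--Jelonek to re-embed so that $T'$ becomes the standard planar triangle $T_\beta\subset\R^2\subset\R^n$, then studies orthogonal projection $\pi|_T$ onto $\R^2$: the ``bad set'' (where $\pi|_T$ fails to be smooth, injective, orientation-preserving, or to satisfy a uniform derivative bound) is shown to be a finite union of H\"older triangles of exponent $\ge\alpha$, and $h$ is taken to be $\pi$ off this bad set and patched arbitrarily bi-Lipschitzly inside it. You instead attempt a direct fiberwise construction, building a parameter correspondence $\sigma$ between the two standard foliations and assembling $h$ from a reparametrization. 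This is a reasonable idea in principle, but there is a real gap.

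The gap is precisely where you flag it, but it is more serious than a ``perturbation argument'' can cover. The arc $\nu_s$ that witnesses $tord(\gamma^T_s,T')\ge\alpha$ is some arc of $T'$, not a leaf $\gamma^{T'}_{s'}$ of the standard foliation of $T'$. You define $\sigma(s)$ as the index $s'$ with $tord(\nu_s,\gamma^{T'}_{s'})>\beta$, but that tangency is only strictly greater than $\beta$, not $\ge\alpha$, and the exponent can be arbitrarily close to $\beta$ and need not be uniform in $s$ a priori. So if $h$ sends $\gamma^T_s$ to the standard leaf $\gamma^{T'}_{\sigma(s)}$, then $tord(h(\gamma^T_s),\gamma^T_s)\ge\min\bigl(tord(\gamma^{T'}_{\sigma(s)},\nu_s),\,\alpha\bigr)$ and the first term may be $<\alpha$. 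This forces you to map $\gamma^T_s$ to $\nu_s$ itself, i.e.\ to produce the map $\hat\Phi'$ with $\hat\Phi'(\tilde\gamma_{\sigma(s)})=\nu_s$ --- but the family $\{\nu_s\}$ is an arbitrary selection and has no reason to foliate $T'$: nothing prevents two arcs $\nu_{s_1},\nu_{s_2}$ from crossing away from the origin, or the family from missing parts of $T'$. Constructing a bi-Lipschitz self-map of $T_\beta$ that straightens this family (while also controlling the gain in tangency back to $\ge\alpha$) is the content of the proposition and cannot be dispatched by invoking polynomial boundedness. The paper's projection argument avoids this exactly because it never needs a global foliation: $\pi$ is the candidate homeomorphism everywhere at once, and one only has to excise the finitely many $\alpha$-thin triangles where it degenerates.

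A secondary, lesser issue: the estimate $|\gamma^T_{s_1}(t)-\gamma^T_{s_2}(t)|\asymp|s_1-s_2|\,t^\beta$ is not automatic. Pushing the standard arcs forward by the outer bi-Lipschitz $\Phi$ and then \emph{reparametrizing by distance to the origin} introduces $s$-dependent time changes, so the distance between $\gamma^T_{s_1}$ and $\gamma^T_{s_2}$ at common time $t$ is not simply the image of the distance in $T_\beta$. One only gets $tord(\gamma^T_{s_1},\gamma^T_{s_2})=\beta$ and a two-sided comparison of leading coefficients up to multiplicative constants that a priori depend on $(s_1,s_2)$; the linear comparison with $|s_1-s_2|$ needs a separate argument (e.g., a compactness/definability step). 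This is fixable, but it is another place where the sketch is doing more work than it acknowledges.
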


\begin{proof}
According to Theorem 4.5 from \cite{birbrair2020lipschitz}, we may assume, embedding $T\cup T'$ into $\R^n$ for some $n\ge 5$,
that $T'=T_\beta$ is a standard $\beta$-H\"older triangle (\ref{Formula:Standard Holder triangle}) in the $xy$-plane $\R^2\subset\R^n$,
$\gamma'_1$ belongs to the positive $x$-axis and $\gamma'_2$ to the graph $y=x^\beta$.
Let $\pi:\R^n\to\R^2$ be orthogonal projection, and let $\rho:\R^n\to\R^{n-2}$ be orthogonal projection to the orthogonal complement of $\R^2$ in $\R^n$.
Orientation of $\R^2$ defines orientation of $T'$ such that $\gamma'_1$ is oriented in the positive $x$-axis direction,
and the link of $T'$ is oriented from $\gamma'_1$ to $\gamma'_2$. We assume that $T$ is oriented consistently, so that
its link is oriented from $\gamma_1$ to $\gamma_2$.

Let $S$ be the set of those points of $T$ where $\pi|_T$ is not a smooth, one-to-one, orientation-preserving map.
We claim that $S$ is a union of finitely many $\beta_j$-H\"older triangles $T_j$, where $\beta_j\ge\alpha$ for all $j$.

Let $V\subset\R^2$ be the union of the set of critical values of $\pi|_T$ and the arcs
$\pi(\gamma_1),\;\pi(\gamma_2),\;\gamma'_1$ and $\gamma'_2$. The set $W=\pi^{-1}(V)\cap T$  consists of
finitely many isolated arcs and, possibly, some ``vertical'' H\"older triangles mapped by $\pi$ to arcs in $\R^2$.
Removing from $W$ interiors of the vertical triangles, we obtain the set $U\subset T$ consisting of finitely many arcs,
all of them having tangency order at least $\alpha$ with $\R^2$, since they have tangency order at least $\alpha$ with $T'$.
Let $T_j\subset T$ be $\beta_j$-H\"older triangles bounded by arcs from $U$ and containing no interior arcs from $U$.
If $T_j$ is a vertical triangle then $\beta_j\ge\alpha$.

For any non-vertical triangle $T_j$, if $\pi^{-1}(\pi(T_j))\cap T$ contains more than one non-vertical triangle then,
since $T$ is normally embedded and all arcs of $T$ have tangency order at least $\alpha$ with $\R^2$, we have $\beta_j\ge\alpha$.
If $\pi|_{T_j}$ is orientation reversing then there is
a $\beta_j$-H\"older triangle $T_k\subset\pi^{-1}(\pi(T_j))\cap T$ such that $\pi|_{T_k}$ is orientation preserving, thus $\beta_j\ge\alpha$ in that case, too.

Note also that each of the sets $\overline{T'\setminus\pi(T)}$ and
$\overline{\pi(T)\setminus T'}$ is either empty or consists of at most two H\"older triangles with exponents at least $\alpha$, since $tord(\gamma_1,\gamma'_1)\ge\alpha$ and $tord(\gamma_2,\gamma'_2)\ge\alpha$.

Let now $T_j\subset T$ be a $\beta$-H\"older triangle bounded by two arcs from $U$
and containing no interior arcs from $U$. Then $T'_j=\pi(T_j)\subset T'$,
$\pi|_{T_j}$ is orientation preserving, and for each interior point $P\in T_j$ we have $\pi^{-1}(\pi(P))=\{P\}$.
For $(x,y)=\pi(P)\in T'_j$, let $f(x,y)=\rho(P)$ be a function $f=(f_1,\ldots,f_{n-2}):T'_j\to\R^{n-2}$.
For $c>0$, let $T'_{j,c}$ be the set of points in $T'_j$ where either $f$ is not differentiable or
$|\partial f_k/\partial y|\ge c$ for some $k$.
Since $tord(\gamma,\R^2)\ge\alpha$ for each arc $\gamma\subset T$, each set $T'_c$ is contained in the union of finitely many $\alpha$-H\"older triangles. Note that the mapping $\pi:{T_j}\to T'_j$ is bi-Lipschitz outside these triangles.

Adding the sets $T_{j,c}=\pi^{-1}(T'_{j,c})\cap T_j$, for some $c>0$ and each $\beta$-H\"older triangle $T_j$, to the set $S$, we can find a finite set of disjoint $\alpha$-H\"older triangles in $T$ such that projection of each of them to $\R^2$ is an $\alpha$-H\"older triangle either contained in $T'$ or intersecting $T'$ over an $\alpha$-H\"older triangle, and $\pi|_T$ is a bi-Lipschitz mapping from $T$ to $T'$ outside these triangles.

We can now define $h:T\to T'$ as any orientation preserving bi-Lipschitz homeomorphism from each of these $\alpha$-H\"older triangles to intersection of its projection with $T'$, and as $\pi$ in the complement to all these triangles.
\end{proof}

\begin{Exam}\label{ex:twotriang}
\normalfont The links of two normally embedded $\beta$-H\"older triangles $T=T(\gamma_1,\gamma_2)$ and $T'=T(\gamma'_1,\gamma'_2)$ are shown in Fig.~\ref{fig:twotriang}.
A H\"older triangle $T(q,r)\subset T$ is projected to $T'$ with orientation reversed. Since its exponent is $\beta_1\ge\alpha>\beta$, one can choose
the points $p,\,q,\,r,\,s$ so that $p'=\pi(p),\,s'=\pi(s)$ and a mapping $h:T'\to T$ such that $h(q')=q$ and $h(r')=r$ is an outer bi-Lipschitz homeomorphism.
\end{Exam}

\begin{figure}
		\centering
		\includegraphics[width=4.5in]{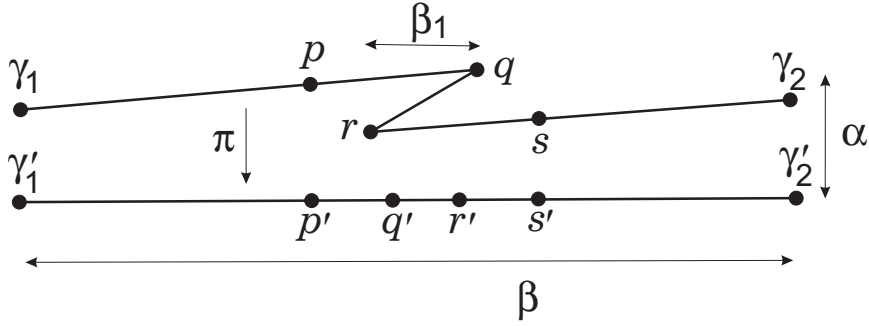}
		\caption{Links of two normally embedded $\beta$-H\"older triangles in Example \ref{ex:twotriang}}\label{fig:twotriang}
	\end{figure}

\subsection{Pizza decomposition}\label{Subsection: Pizza Decomp}

In this subsection we use the definitions and results of \cite{birbrair2014lipschitz}.

\begin{Def}\label{DEF: order of a function on an arc}
	\normalfont Let $f\not\equiv 0$ be a germ at the origin of a Lipschitz function defined on an arc $\gamma$. The \textit{order} of $f$ on $\gamma$, denoted by $ord_{\gamma}f$, is the value $q \in \mathbb{F}$ such that $f(\gamma(t)) = ct^{q} + o(t^{q})$ as $t \rightarrow 0$, where $c\neq 0$. If $f\equiv 0$ on $\gamma$, we set $ord_{\gamma}f = \infty$.
\end{Def}

\begin{Def}\label{DEF: interval Q of a pizza slice}
	\normalfont Let $T\subset \mathbb{R}^{n}$ be a H\"older triangle, and let $f\colon (T,0) \rightarrow (\mathbb{R},0)$ be a Lipschitz function. We define $$Q_{f}(T)=\bigcup_{\gamma\in V(T)}ord_{\gamma}f.$$
\end{Def}

\begin{remark}\label{Rem:Q_{f}(T) is a segment}
	\normalfont It was shown in \cite{birbrair2014lipschitz} that $Q_{f}(T)$ is a closed segment in $\mathbb{F}\cup \{\infty\}$.
\end{remark}

\begin{Def}\label{DEF: Elementary Holder triangle}
	\normalfont A H\"older triangle $T$ is \textit{elementary} with respect to a Lipschitz function $f$ if, for any two distinct arcs $\gamma$ and $\gamma'$ in $T$ such that $ord_{\gamma}f=ord_{\gamma'}f=q$, the order of $f$ is $q$ on any arc in the H\"older triangle $T(\gamma,\gamma')\subset T$.
\end{Def}

\begin{Def}\label{Def:width function}
	\normalfont Let $T\subset \mathbb{R}^{n}$ be a H\"older triangle and $f\colon (T,0) \rightarrow (\mathbb{R},0)$ a Lipschitz function. For each arc $\gamma \subset T$, the \textit{width} $\mu_{T}(\gamma,f)$ of $\gamma$ with respect to $f$ is the infimum of the exponents of H\"older triangles $T'\subset T$ containing $\gamma$ such that $Q_{f}(T')$ is a point. For $q\in Q_{f}(T)$ let $\mu_{T,f}(q)$ be the set of exponents $\mu_{T}(\gamma,f)$, where $\gamma$ is any arc in $T$ such that $ord_{\gamma}f=q$. It was shown in \cite{birbrair2014lipschitz} that the set $\mu_{T,f}(q)$ is finite. This defines a multivalued \textit{width function} $\mu_{T,f}\colon Q_{f}(T)\rightarrow \mathbb{F}\cup \{\infty\}$. When $f$ is fixed, we write $\mu_{T}(\gamma)$ instead of $\mu_{T}(\gamma,f)$ and $\mu_{T}$ instead of $\mu_{T,f}$. If $T$ is an elementary H\"older triangle with respect to $f$ then the function $\mu_{T,f}$ is single valued.
\end{Def}

\begin{Def}\label{DEF: pizza slice}
	\normalfont Let $T$ be a H\"older triangle and $f\colon (T,0)\rightarrow (\mathbb{R},0)$ a Lipschitz function. We say that $T$ is a \textit{pizza slice} associated with $f$ if it is elementary with respect to $f$ and $\mu_{T,f}(q)=aq+b$ is an affine function.
\end{Def}

\begin{Lem}\label{Lem: width function on NE HT}
	Let $X=T(\gamma_{1},\gamma_{2})$ be a normally embedded H\"older triangle partitioned by an interior arc $\gamma$ into two H\"older triangles $X_1=T(\gamma_{1},\gamma)$ and $X_2=T(\gamma,\gamma_{2})$. Let $f\colon (X_1,0)\rightarrow (\R,0)$ be the function given by $f(x)=d(x,X_2)$. Then, for every arc $\theta \subset X_1$, we have $$ord_{\theta}f=\mu_{X_1}(\theta,f)=tord(\theta,\gamma).$$
\end{Lem}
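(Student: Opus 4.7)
\emph{Plan.} Write $q := tord(\theta,\gamma)$; the goal is to prove $ord_\theta f = q$ and $\mu_{X_1}(\theta,f)=q$ by sandwich arguments, using the Arc Selection Lemma, the non-archimedean property of tangency orders, and the hypothesis that $X$ is normally embedded. Preliminarily, $X_1$ itself is normally embedded: a shortest path in $X$ between two points of $X_1$ has length comparable to their outer distance (as $X$ is NE), and any maximal portion of it entering $X_2$ is delimited by two crossings $\gamma(s_1),\gamma(s_2)$ of $\gamma$ and can be replaced by the sub-arc of $\gamma$ between them, of length $|s_2-s_1|$ no greater than the replaced portion since $\gamma$ is arc-length parameterized; the resulting path lies in $X_1$ and witnesses NE. A similar short-cutting argument, applied to the boundary arcs of any sub-H\"older triangle $T'\subset X_1$, shows that $T'$ is also NE.

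For the first equality, the bound $ord_\theta f \ge q$ is immediate since $\gamma\subset X_2$ gives $f(\theta(t))\le|\theta(t)-\gamma(t)|$, of order $q$. For the reverse bound, I would apply the Arc Selection Lemma to obtain an arc $\lambda\subset X_2$, parameterized by distance to the origin, with $tord(\theta,\lambda)=ord_\theta f =: q'$. Normal embedding of $X$ gives $itord_X(\theta,\lambda)=q'$, so a shortest inner path from $\theta(t)$ to $\lambda(t)$ in $X$ has length of order $t^{q'}$ and must cross $\gamma$ at some point $\gamma(s(t))$. Then $|\theta(t)-\gamma(s(t))|\le Ct^{q'}$ and $|s(t)-t|\le Ct^{q'}$, so by the triangle inequality $|\theta(t)-\gamma(t)|\le 2Ct^{q'}$, forcing $q\ge q'$.

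For the width equality, the lower bound $\mu_{X_1}(\theta,f)\ge q$ is the easier direction: for any $T'=T(\theta_1,\theta_2)\subset X_1$ containing $\theta$ with $Q_f(T')=\{q\}$, the first equality applied to the boundary arcs gives $tord(\theta_i,\gamma)=q$, the non-archimedean inequality forces $tord(\theta_1,\theta_2)\ge q$, and since $T'$ is NE, $\mu(T')=tord(\theta_1,\theta_2)\ge q$. The upper bound $\mu_{X_1}(\theta,f)\le q$ requires exhibiting a specific $T'\ni\theta$ with $\mu(T')=q$ and $Q_f(T')=\{q\}$; I would select boundary arcs $\theta_1,\theta_2$ on opposite sides of $\theta$ in the link of $X_1$ with $tord(\theta_i,\theta)=tord(\theta_i,\gamma)=q$ and $tord(\theta_1,\theta_2)=q$, using the inner bi-Lipschitz equivalence of $X_1$ with a standard H\"older triangle to produce them. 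The main obstacle is verifying $Q_f(T')=\{q\}$, i.e.\ ruling out interior arcs $\theta'\in T'$ with $tord(\theta',\gamma)>q$: any such $\theta'$ necessarily satisfies $tord(\theta',\theta)=q$ by the non-archimedean property applied to $(\theta',\theta,\gamma)$, and $\theta_1,\theta_2$ can be chosen so that their order-$q$ leading behavior relative to $\gamma$ strictly brackets that of $\theta$, eliminating this case. Alternatively, one may invoke the pizza decomposition of $X_1$ with respect to $f$ from \cite{birbrair2014lipschitz}: by the first equality, the affine width function of every pizza slice of $X_1$ must be the identity $\mu(q')=q'$, which immediately yields the upper bound.
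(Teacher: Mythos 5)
Your proof is correct in outline but takes a noticeably different route from the paper's, and the paper's route is the one that actually smooths over the place where you hesitate. The paper reduces at the start to $X$ being a standard $\beta$-H\"older triangle in $\R^2$ (legitimate precisely because $X$ is normally embedded, so the inner bi-Lipschitz equivalence to $T_\beta$ is in fact outer); then $\gamma$ is literally the nearest arc of $X_2$ to any $\gamma'\subset X_1$, giving $ord_{\gamma'}f=tord(\gamma',\gamma)$ immediately, and for the width upper bound it takes a single generic arc $\theta'\in G(T(\theta,\gamma))$ on the $\gamma$-side of $\theta$ and checks that $Q_f(T(\theta,\theta'))=\{q_\theta\}$ and $\mu(T(\theta,\theta'))=q_\theta$. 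You instead prove the first equality intrinsically via the Arc Selection Lemma and a geodesic-crossing argument using normal embedding of $X$; this is a valid alternative, and your preliminary short-cutting argument that $X_1$ is NE is essentially right (though the replaced sub-arc of $\gamma$ between $\gamma(s_1)$ and $\gamma(s_2)$ has length comparable to $|s_1-s_2|$, not $\le |s_1-s_2|$, since parameterization by distance to the origin is not arc-length — the multiplicative constant is harmless). The weak spot in your write-up is exactly where the paper's $\R^2$-reduction pays off: the upper bound $\mu_{X_1}(\theta,f)\le q$. Your bracketing construction can be made to work, but the claim that $Q_f(T(\theta_1,\theta_2))$ collapses to a point really needs the identification of $X_1$ with a planar triangle, where "order-$q$ leading coefficients" make sense; and you should say \emph{outer} bi-Lipschitz equivalence there (which you do have, since $X_1$ is NE), not inner, as inner equivalence alone does not preserve the outer tangency orders you are manipulating. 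Your pizza-decomposition alternative is heavier than needed and somewhat circular as phrased, since it asserts the shape of the width function rather than derives it. The paper's one-sided triangle $T(\theta,\theta')$ is simpler than your two-sided $T(\theta_1,\theta_2)$ and handles the boundary cases $\theta=\gamma_1$ and $\theta=\gamma$ without special pleading.
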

\begin{proof}
	Since $X$ is normally embedded, we can assume that $X$ is a standard H\"older triangle in $\R^2$. Then, for every arc $\gamma' \subset X_1$, since $\gamma$ is the closest arc in $X_2$ to $\gamma'$, we have $ord_{\gamma'}f = tord(\gamma',\gamma)$. Moreover, given an arc $\theta \subset X_1$, we write $q_\theta=ord_{\theta}f$, and if $\theta'\in G(T(\theta,\gamma))$ then $ord_{\gamma'}f=tord(\gamma',\gamma)=tord(\theta,\gamma)=q_\theta$ for every arc $\gamma'\subset T(\theta,\theta')$. Thus, $\mu_{X_1}(q_{\theta})\le \mu(T(\theta,\theta'))=tord(\theta,\gamma) =q_{\theta}$. However, if $\mu_{X_1}(q_{\theta})< q_{\theta}$ then there is an arc $\gamma'\subset X_1$ such that $tord(\theta,\gamma')<q_\theta$ and consequently, $tord(\theta,\gamma)\ne q_\theta$.
\end{proof}

\begin{Prop}\label{Prop:width function properties elementary triangle}
	\emph{(See \cite{birbrair2014lipschitz})} Let $T$ be a $\beta$-H\"older triangle, $f$ a Lipschitz function on $T$ and $Q=Q_{f}(T)$. If $T$ is a pizza slice associated with $f$ then
	\begin{enumerate}
		\item $\mu_{T}$ is constant only when $Q$ is a point;
		\item $\beta\le\mu_{T}(q)\le \max(q,\beta)$ for all $q \in Q$;
		\item $\mu(ord_{\gamma}f)=\beta$ for all $\gamma \in G(T)$;
		\item If $Q$ is not a point, let $\mu_0=\max_{q\in Q}\mu_T(q)$, and let $\gamma_0$ be the boundary arc of $T$ such that $\mu_T(\gamma_0)=\mu_0$. Then $\mu_T(\gamma)=itord(\gamma_0,\gamma)$
		for all arcs $\gamma\subset T$ such that $itord(\gamma_0,\gamma)\le\mu_0$.
	\end{enumerate}
\end{Prop}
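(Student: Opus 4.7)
The plan is to establish the four assertions in order, relying on the affine pizza-slice formula $\mu_T(q)=aq+b$ together with Lipschitz continuity of $f$, and invoking Lemma~\ref{Lem: width function on NE HT} for the geometric identification in (4).

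For (2), the lower bound comes from observing that every sub-H\"older triangle $T'\subset T$ has both boundary arcs inside $T$, whose inner tangency order is at least $\mu(T)=\beta$; hence $\mu(T')\ge\beta$, so taking infimum in Definition~\ref{Def:width function} gives $\mu_T(q)\ge\beta$. For the upper bound, fix an arc $\gamma\subset T$ with $ord_\gamma f=q$. If $q\ge\beta$, Lipschitz continuity implies that every arc $\gamma'$ with $tord(\gamma,\gamma')>q$ also satisfies $ord_{\gamma'}f=q$; so I can shrink a sub-triangle $T'\ni\gamma$ to an exponent approaching $q$ from above while keeping $Q_f(T')=\{q\}$, yielding $\mu_T(q)\le q$. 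If $q<\beta$, elementariness of $T$ makes the locus of arcs of order $q$ a sub-triangle of exponent at most $\beta$ containing $\gamma$, giving $\mu_T(q)\le\beta$. Hence $\mu_T(q)\le\max(q,\beta)$ in either case.

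For (1), the lower bound $\mu_T(q)\ge\beta$ combined with the affine formula forces $a\ge 0$. If $a=0$ and $Q$ were to contain distinct orders $q_1<q_2$, pick arcs $\gamma_i\subset T$ with $ord_{\gamma_i}f=q_i$; Lipschitz continuity gives $tord(\gamma_1,\gamma_2)\le q_1$, while a sub-triangle realizing the infimum width $b$ at $\gamma_2$ must exclude $\gamma_1$ and hence has exponent bounded below in terms of $tord(\gamma_1,\gamma_2)$, and iterating this with nested sub-triangles forces $Q$ to collapse to a point. For (3), pick a generic $\gamma\in G(T)$ with $q=ord_\gamma f$; by Remark~\ref{Rem: generic arcs of a non-singular HT}, $itord(\gamma,\gamma_i)=\beta$ for both boundary arcs $\gamma_i$ of $T$, so I can enclose $\gamma$ in a sub-triangle $T'\subset T$ of exponent $\beta$, and Lipschitz plus elementariness forces $Q_f(T')=\{q\}$, giving $\mu_T(q)\le\beta$ which combined with (2) yields $\mu_T(ord_\gamma f)=\beta$.

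Part (4) is the main obstacle. First I would reduce to the normally embedded case by composing with the inner bi-Lipschitz homeomorphism from $T$ to the standard $\beta$-H\"older triangle, so that outer and inner tangency orders coincide. The extremal boundary arc $\gamma_0$ is then identified by its maximal width $\mu_0$. For any arc $\gamma$ with $s:=itord(\gamma_0,\gamma)\le\mu_0$, the sub-triangle $T(\gamma_0,\gamma)$ has exponent $s$, and applying Lemma~\ref{Lem: width function on NE HT} with the distance-to-$\gamma_0$ function in place of $f$ expresses $\mu_T(\gamma)$ in terms of $tord(\gamma_0,\gamma)$. Combining this with the affine pizza formula and the extremality of $\gamma_0$ (which pins down the slope $a$ and intercept $b$ via the two endpoints of $Q$) yields $\mu_T(\gamma)=itord(\gamma_0,\gamma)$. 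The delicate step is matching the affine parameters with the tangency geometry, in particular verifying that the threshold $\mu_0$ is precisely where the two descriptions of $\mu_T(\gamma)$ must agree.
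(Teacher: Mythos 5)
This proposition is quoted from \cite{birbrair2014lipschitz}; the present paper gives no proof of it, so there is no in-paper argument to compare against. Judging your sketch on its own: the overall strategy of combining Lipschitz continuity of $f$ with the affine width function and the horn-neighborhood argument in (2) is sensible, but parts (1), (3) and (4) all contain gaps.

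In (1), the claim that ``the lower bound $\mu_T(q)\ge\beta$ combined with the affine formula forces $a\ge0$'' is simply false: an affine function $aq+b$ with $a<0$ can stay above $\beta$ on a bounded interval $Q$ (e.g.\ $\mu_T(q)=100-q$ on $Q=[1,2]$), and in fact both signs of the slope occur in examples. The subsequent ``iterating with nested sub-triangles forces $Q$ to collapse'' is a restatement of the desired conclusion rather than an argument. In (3), you assert that for a generic $\gamma$ enclosed in a $\beta$-exponent sub-triangle $T'$, ``Lipschitz plus elementariness forces $Q_f(T')=\{q\}$''; this cannot be right as stated, since $T'=T$ is itself such a sub-triangle and $Q_f(T)=Q$ need not be a point — what has to be shown is that some $\beta$-exponent sub-triangle around $\gamma$ is contained in a single level set, which is precisely the nontrivial content of (3). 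In (4) you explicitly defer the main step (``the delicate step is matching the affine parameters with the tangency geometry''), and the auxiliary tool you propose, Lemma~\ref{Lem: width function on NE HT}, does not directly apply: that lemma is about the distance to one half of a partition of a normally embedded $T$ by an \emph{interior} arc, not about the distance to a \emph{boundary} arc $\gamma_0$, and in any case it computes $\mu_{T,g}$ for that particular distance function $g$, not $\mu_{T,f}$ for the given $f$; the two-sided comparison $\mu_T(\gamma)=itord(\gamma_0,\gamma)$ still has to be proved. So the sketch captures the right ingredients (Lipschitz horn estimates, affineness, the normalization $\min\mu_T=\beta$ from (3)), but the core of (1) and all of (4) remain to be argued.
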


\begin{Def}\label{Def:Pizza decomp}
		\normalfont A decomposition $\{T_i\}$ of a H\"older triangle $X$ into $\beta_{i}$-H\"older triangles $T_{i}=T(\lambda_{i-1},\lambda_{i})$ such that $T_{i-1}\cap T_{i}=\lambda_{i}$ is a \textit{pizza decomposition} of $X$ (or just a \textit{pizza} on $X$) associated with $f$ if each $T_{i}$ is a pizza slice associated with $f$. We write $Q_{i}=Q_{f}(T_{i})$, $\mu_{i}=\mu_{T_{i},f}$ and $q_i=ord_{\lambda_i}f$. 
\end{Def}

\begin{remark}\label{REM: existence of pizza slice}
	\normalfont The existence of a pizza associated with a function $f$ was proved in \cite{birbrair2014lipschitz} for a function defined in $(\R^{2},0)$. The same arguments prove the existence of a pizza associated with a function defined on a H\"older triangle as in Definition \ref{Def:Pizza decomp}. The results mentioned in this subsection remain true when $f$ is a Lipschitz function on a H\"older triangle $T$ with respect to the inner metric, although in this paper we need them only for Lipschitz functions with respect to the outer metric.
\end{remark}

\begin{Def}\label{Def:minimal pizza}
	\normalfont A pizza $\{T_i\}_{i=1}^{p}$ associated with a function $f$ is \textit{minimal} if, for any $i\in \{2,\ldots, p\}$, $T_{i-1}\cup T_i$ is not a pizza slice associated with $f$.
\end{Def}

\begin{Exam}
	\normalfont Consider $T_1$ and $T_2$ as in Example \ref{Exam: Lipschitz singular arc}. Let $f\colon (T_1,0)\rightarrow (\R,0)$ be the function given by $f(x,y,z)=x^{\alpha - \beta}y$. Note that $T_2$ is the graph of $f$. For each arc $\gamma\subset T_1$, we have $\gamma(t)=(t,ct^p + o(t^p),0)$, where $c>0$ and $p\ge \beta$. Hence, $f(\gamma(t)) = ct^{\alpha - \beta + p} + o(t^{\alpha - \beta + p})$ and, consequently, $ord_{\gamma}f=\alpha - \beta + p$. Moreover, if $\gamma'(t)=c't^{p'} + o(t^{p'})$ is another arc in $T_1$ then $ord_{\gamma}f=ord_{\gamma'}f$ if and only if $p=p'$. Thus, $T_1$ is elementary with respect to $f$, and a minimal pizza decomposition of $T_1$ associated with $f$ consists of the single pizza slice $T_1$ with $Q_1=[\alpha,\infty)$. Since $ord_{\lambda}f=\infty$, we have $\mu(ord_{\lambda}f)=\infty = \max_{q\in Q_1} \mu(q)$. Proposition \ref{Prop:width function properties elementary triangle} implies that $\mu(ord_{\gamma}f) = itord(\gamma,\lambda)$ for every arc $\gamma \subset T_1$. For $\gamma(t)=(t,ct^p + o(t^p),0)$ we obtain $itord(\gamma,\lambda) = tord(\gamma,\lambda) = p$. Since $q=ord_{\gamma}f=\alpha - \beta + p$, we have $p=q + \beta - \alpha$, thus $\mu(q)=q + \beta - \alpha$ for every $q\in Q_1$.
\end{Exam}

\begin{Def}\label{DEF: multipizza}
	\normalfont Consider the set of germs of Lipschitz functions $f_{l}\colon (X,0) \rightarrow (\mathbb{R},0),\;l=1,\ldots , m$, defined on a H\"older triangle $X$. A \textit{multipizza} on $X$ associated with $\{f_{1},\ldots, f_{m}\}$ is a decomposition $\{T_{i}\}$ of $X$ into $\beta_{i}$-H\"older triangles which is a pizza on $X$ associated with $f_{l}$ for each $l$.
\end{Def}

\begin{remark}\label{REM: existence of a multipizza}
	\normalfont The existence of a multipizza follows from the existence of a pizza associated with a single Lipschitz function $f$, since a refinement of a pizza associated with any function $f$ is also a pizza associated with $f$.
\end{remark}

\subsection{Zones}\label{Subsec: Zones}
In this subsection, $(X,0)\subset (\R^n,0)$ is a surface germ.

\begin{Def}\label{Def: zone}
	\normalfont A nonempty set of arcs $Z \subset V(X)$ is a \textit{zone} if, for any two distinct arcs $\gamma_{1}$ and $\gamma_{2}$ in $Z$, there exists a non-singular H\"older triangle $T=T(\gamma_{1},\gamma_{2}) \subset X$ such that $V(T) \subset Z$. If $Z = \{\gamma\}$ then $Z$ is a \textit{singular zone}.
\end{Def}

\begin{Def}\label{Def: maximal zone in}
	\normalfont Let $B \subset V(X)$ be a nonempty set. A zone $Z\subset B$ is \textit{maximal in} $B$ if, for any non-singular H\"older triangle $T$ such that $V(T) \subset B$, one has either $Z\cap V(T)=\emptyset$ or $V(T) \subset Z$.
\end{Def}

\begin{remark}\label{rem:maxzone}
\normalfont It follows from the definition of a zone that, for any family $\{Z_i\}$ of zones containing an arc $\gamma$, their union is also a zone. In particular, if a  zone $Z\subset B$ is maximal in $B$ then, for any arc $\gamma\in Z$, it is the union of all zones in $B$ containing $\gamma$.
\end{remark}

\begin{remark}
	\normalfont A zone could be understood as an analog of a connected subset of $V(X)$, and a maximal zone in a set $B$ is an analog of a connected component of $B$.
\end{remark}

\begin{Def}\label{Def:order of zone}
	\normalfont The \textit{order} $\mu(Z)$ of a zone $Z$ is the infimum of $tord(\gamma,\gamma')$ over all arcs $\gamma$ and $\gamma'$ in $Z$. If $Z$ is a singular zone then $\mu(Z) = \infty$. A zone $Z$ of order $\beta$ is called a $\beta$-zone.
\end{Def}

\begin{remark}
	\normalfont The tangency order can be replaced by the inner tangency order in Definition \ref{Def:order of zone}. Note that, for any arc $\gamma \in Z$, $\inf_{\gamma'\in Z}tord(\gamma,\gamma')=\inf_{\gamma'\in Z}itord(\gamma,\gamma')=\mu(Z)$.
\end{remark}

\begin{Def}\label{Def: LNE zone}
	\normalfont A zone $Z$ is normally embedded if, for any two arcs $\gamma$ and $\gamma'$ in $Z$, there exists an normally embedded H\"older triangle $T=T(\gamma,\gamma')$ such that $V(T)\subset Z$.
\end{Def}

\begin{Def}\label{Def: open closed perfect zone}
	\normalfont A $\beta$-zone $Z$ is \textit{closed} if there is a $\beta$-H\"older triangle $T$ such that $V(T)\subset Z$.
Otherwise, $Z$ is \textit{open}.
If $Z$ is a closed $\beta$-zone then an arc $\gamma \in Z$ is \textit{generic} if there exists a $\beta$-H\"older triangle $T$ such that $V(T) \subset Z$ and $\gamma$ is a generic arc of $T$.
If $Z$ is an open $\beta$-zone then an arc $\gamma \in Z$ is \textit{generic} if, for any $\alpha>\beta$, there exists an $\alpha$-H\"older triangle $T$ such that $V(T) \subset Z$ and $\gamma$ is a generic arc of $T$.
The set of generic arcs of $Z$ is denoted by $G(Z)$. By definition, if $Z$ is a singular zone, then its only arc is generic.
 A zone $Z$ is \textit{perfect} if $Z=G(Z)$.
\end{Def}

\begin{Def}\label{Def:complete and open complete zone}
	\normalfont A closed $\beta$-zone $Z \subset V(X)$ is $\beta$-\textit{complete} if, for any $\gamma\in Z$, $$Z=\{\gamma'\in V(X) : itord(\gamma,\gamma')\ge \beta\}.$$ An open $\beta$-zone $Z \subset V(X)$ is $\beta$-\textit{complete} if, for any $\gamma\in Z$, $$Z=\{\gamma'\in V(X) : itord(\gamma,\gamma')> \beta\}.$$
\end{Def}

\begin{remark}\label{Rem:open complete zones}
	\normalfont Let $Z$ and $Z'$ be open $\beta$-complete zones. Then, either $Z\cap Z'=\emptyset$ or $Z=Z'$. Moreover, $Z\cap Z'=\emptyset$ implies $itord(Z,Z')\le\beta$. The same holds when $Z$ and $Z'$ are closed $\beta$-complete zones, except $Z\cap Z'=\emptyset$ implies $itord(Z,Z')<\beta$.
\end{remark}

\begin{Exam}\label{Exam: closed, complete and perfect zones}
	\normalfont If $T$ is a non-singular $\beta$-H\"older triangle then the set $V(T)$ of all arcs in $T$, the set $I(T)$ of interior arcs of $T$, and the set $G(T)$ of generic arcs of $T$ are closed $\beta$-zones, but only $V(T)$ is $\beta$-complete, and only $G(T)$ is closed perfect. The set $V(T)\setminus G(T)$ consists of two open non-perfect $\beta$-complete zones. For any arc $\gamma\in G(T)$, the set of arcs $\gamma'\in V(T)$ such that $tord(\gamma,\gamma')>\beta$ is an open perfect $\beta$-zone.
\end{Exam}

\begin{Def}\label{Def:adjacent zones}
	\normalfont Two zones $Z$ and $Z'$ in $V(X)$ are \textit{adjacent} if $Z\cap Z'=\emptyset$ and there exist arcs $\gamma \subset Z$ and $\gamma' \subset Z'$ such that $V(T(\gamma,\gamma')) \subset Z\cup Z'$.
\end{Def}

\begin{Lem}\label{Lem: union of adjacent zones is a zone}
	Let $X$ be a H\"older triangle, and let $Z$ and $Z'$ be two zones in $V(X)$ of orders $\beta$ and $\beta'$, respectively. If either $Z\cap Z'\ne\emptyset$ or $Z$ and $Z'$ are adjacent, then $Z\cup Z'$ is a zone of order $\min(\beta,\beta')$.
\end{Lem}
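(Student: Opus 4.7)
The plan is to verify two properties of $W=Z\cup Z'$: that $W$ is a zone, and that $\mu(W)=\min(\beta,\beta')$.

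For the zone property, take two distinct arcs $\delta_1,\delta_2\in W$ and produce a non-singular H\"older triangle $T=T(\delta_1,\delta_2)\subset X$ with $V(T)\subset W$. If both arcs lie in $Z$, or both in $Z'$, the result is immediate from the corresponding zone property. The substantive case is $\delta_1\in Z$ and $\delta_2\in Z'$. Under the intersecting hypothesis $Z\cap Z'\ne\emptyset$, I would pick an arc $\eta\in Z\cap Z'$ and use the zone properties of $Z$ and $Z'$ separately to produce non-singular H\"older triangles $T_1=T(\delta_1,\eta)$ with $V(T_1)\subset Z$ and $T_2=T(\eta,\delta_2)$ with $V(T_2)\subset Z'$, then glue along $\eta$ to obtain $T=T_1\cup T_2$ with $V(T)\subset W$. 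Under the adjacency hypothesis, I would adjoin to $T_1=T(\delta_1,\eta)\subset Z$ and $T_2=T(\eta',\delta_2)\subset Z'$ the witness H\"older triangle $T_0=T(\eta,\eta')$ whose arcs already lie in $Z\cup Z'$, and concatenate to form $T=T_1\cup T_0\cup T_2$.

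For the order, $\mu(W)\le\min(\beta,\beta')$ is immediate since $Z,Z'\subseteq W$. For the reverse inequality, I would bound $tord(\gamma,\gamma')$ from below for each pair in $W$. Pairs within a single zone give $tord\ge\beta$ or $tord\ge\beta'$ by hypothesis. For a mixed pair $(\gamma,\gamma')\in Z\times Z'$, the non-archimedean inequality applied to the chain $\gamma,\eta,\gamma'$ in the intersecting case gives $tord(\gamma,\gamma')\ge\min(tord(\gamma,\eta),tord(\eta,\gamma'))\ge\min(\beta,\beta')$. In the adjacency case the chain becomes $\gamma,\eta,\eta',\gamma'$ and the only non-trivial term is $tord(\eta,\eta')$, which I would lower-bound by $\min(\beta,\beta')$ by partitioning $V(T_0)=(V(T_0)\cap Z)\sqcup(V(T_0)\cap Z')$ and chaining through intermediate arcs of $T_0$, using the zone orders of $Z$ and $Z'$ together with the fact that tangency orders inside the H\"older triangle $T_0$ are at least $\mu(T_0)$.

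The main technical obstacle is verifying the non-singularity of the glued triangle $T$. Interior arcs coming from the pieces $T_1$, $T_2$ (and $T_0$ in the adjacency case) are Lipschitz non-singular in $X$ by construction, but each seam arc---$\eta$ in the intersecting case, and $\eta,\eta'$ in the adjacency case---is only a boundary arc of its adjacent pieces while becoming an \emph{interior} arc of $T$, so its Lipschitz non-singularity in $X$ is not automatic. I would verify it by constructing a normally embedded H\"older triangle straddling each seam, assembled from thin LNE sub-triangles selected from the pieces on both sides of the seam, and invoking Proposition~\ref{Prop:two triangles} to produce a bi-Lipschitz identification that matches tangency orders across the seam so that the straddling union is indeed LNE. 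This stitching argument is the heart of the proof.
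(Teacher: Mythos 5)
Your split into the zone property and the order computation is natural, and the intersecting-case order argument ($\gamma,\eta,\gamma'$ with $\eta\in Z\cap Z'$) matches the paper's. But there are two genuine gaps.

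The adjacent-case order bound is circular. You reduce it to showing $tord(\eta,\eta')\ge\min(\beta,\beta')$ for the witness triangle $T_0=T(\eta,\eta')$, but your sketch for that step — chaining through intermediate arcs of $T_0$ and invoking ``tangency orders inside $T_0$ are at least $\mu(T_0)$'' — does not close: any chain from $\eta\in Z$ to $\eta'\in Z'$ inside $V(T_0)\subset Z\sqcup Z'$ must cross from $Z$ to $Z'$ at some step, and that mixed-pair term is exactly what you are trying to bound, while the stated tautology gives no lower bound on $\mu(T_0)$ itself. The step you are missing is the paper's short argument: suppose $\mu(T_0)<\min(\beta,\beta')$ and pick a generic arc $\lambda''\in G(T_0)$, so $itord(\eta,\lambda'')=itord(\lambda'',\eta')=\mu(T_0)$; since $\lambda''$ lies in $Z$ or in $Z'$, one of these equals $\mu(T_0)<\min(\beta,\beta')$ and contradicts $\mu(Z)=\beta$ or $\mu(Z')=\beta'$. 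With $\mu(T_0)\ge\min(\beta,\beta')$ in hand, your chain $\gamma,\eta,\eta',\gamma'$ works.

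Your repair of the non-singularity of the seam misuses Proposition~\ref{Prop:two triangles}. That proposition starts from two $\beta$-Hölder triangles that are \emph{already} normally embedded and outer-close, and produces a bi-Lipschitz map between them; it does not manufacture a normally embedded triangle straddling a seam, and no bi-Lipschitz identification of the two pieces can change whether their union across the seam is normally embedded, since that is an intrinsic feature of $X$. You are right to be suspicious of the paper's ``one can easily check'': with $Z=V(T_1)$ and $Z'=V(T_2)$ in Example~\ref{Exam: Lipschitz singular arc}, these are intersecting zones whose union $V(T)$ fails to be a zone because the seam $\lambda$ is Lipschitz singular — so the ``is a zone'' clause needs an extra hypothesis (e.g.\ that the seam is Lipschitz non-singular), not a stitching argument. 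Since the paper only ever uses the conclusion about $\mu(Z\cup Z')$, the order bound, and in particular the generic-arc step above, is the part you really need to supply.
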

\begin{proof}
	One can easily check that in both cases $Z\cup Z'$ is a zone.

If there is an arc $\lambda\in Z\cap Z'$ then, for any arcs $\gamma\in Z$ and $\gamma'\in Z'$, we have $itord(\gamma,\gamma')\ge\min(itord(\gamma,\lambda),itord(\lambda,\gamma'))\ge\min(\beta,\beta')$.

If $Z$ and $Z'$ are adjacent, let $T=T(\lambda,\lambda')$ be a H\"older triangle such that $\lambda\in Z,\;\lambda'\in Z'$ and
$V(T)\subset Z\cup Z'$. If $\mu(T)<\min(\beta,\beta')$, let us choose an arc $\lambda''\in G(T)$.
If $\lambda''\in Z$ (resp., $\lambda''\in Z'$) then $itord(\lambda,\lambda'')<\beta$ (resp., $itord(\lambda'',\lambda')<\beta'$),
a contradiction. Thus $\mu(T)\ge\min(\beta,\beta')$ and for any arcs $\gamma\in Z$ and $\gamma'\in Z'$ we have $itord(\gamma,\gamma')\ge\min(itord(\gamma,\lambda),itord(\lambda',\gamma'),\mu(T))\ge\min(\beta,\beta')$,
so $\mu(Z\cup Z')=\min(\beta,\beta')$ in both cases.
\end{proof}

\begin{Lem}\label{Lem: beta zone must have beta intersection part in HT decomp}
	Let $\{X_i\}$ be a finite decomposition of a H\"older triangle $X$ into $\beta_i$-H\"older triangles. If $Z\subset V(X)$ is a $\beta$-zone then $Z_i=Z\cap V(X_i)$ is a $\beta$-zone for some $i$.
\end{Lem}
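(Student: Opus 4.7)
My plan is to prove the lemma in two moves: first show that each non-empty $Z_i$ is automatically a zone with $\mu(Z_i)\ge\beta$, and then rule out by contradiction the possibility that every $\mu(Z_i)$ strictly exceeds $\beta$.

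For the first move, I would take distinct arcs $\gamma,\gamma'\in Z_i$, invoke the zone property of $Z$ to get a non-singular H\"older triangle $T(\gamma,\gamma')\subset X$ with $V(T(\gamma,\gamma'))\subset Z$, and then use that $\gamma,\gamma'\in V(X_i)$ together with uniqueness of the sub-H\"older triangle in $X$ with prescribed boundary arcs to conclude $T(\gamma,\gamma')\subset X_i$, hence $V(T(\gamma,\gamma'))\subset V(X_i)\cap Z=Z_i$. This simultaneously shows $Z_i$ is a zone and, via $Z_i\subset Z$, gives $\mu(Z_i)\ge\mu(Z)=\beta$.

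For the second move, assume for contradiction that $\mu(Z_i)>\beta$ for all $i$ (with the convention $\mu(\emptyset)=\infty$), and let $\beta^*=\min_i\mu(Z_i)>\beta$. Since $\mu(Z)=\beta<\beta^*$, I pick distinct $\gamma,\gamma'\in Z$ with $tord(\gamma,\gamma')<\beta^*$ and, by the zone property of $Z$, a non-singular H\"older triangle $T=T(\gamma,\gamma')\subset X$ with $V(T)\subset Z$. Tracing the link of $T$ from $\gamma$ to $\gamma'$, $T$ crosses consecutive pancakes $X_{i_1},\dots,X_{i_k}$ of the decomposition with $\gamma\in V(X_{i_1})$ and $\gamma'\in V(X_{i_k})$; for each $1\le j\le k-1$ the shared boundary arc $\lambda_j$ of $X_{i_j}$ and $X_{i_{j+1}}$ is an interior arc of $T$, hence $\lambda_j\in V(T)\subset Z$, and $\lambda_j\in V(X_{i_j})\cap V(X_{i_{j+1}})$ forces $\lambda_j\in Z_{i_j}\cap Z_{i_{j+1}}$.

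Setting $\lambda_0=\gamma$ and $\lambda_k=\gamma'$, each consecutive pair $\lambda_{j-1},\lambda_j$ belongs to $Z_{i_j}$, so $tord(\lambda_{j-1},\lambda_j)\ge\mu(Z_{i_j})\ge\beta^*$. The ultrametric inequality for the tangency order applied along this chain then yields
$$tord(\gamma,\gamma')\ge\min_{1\le j\le k}tord(\lambda_{j-1},\lambda_j)\ge\beta^*,$$
contradicting $tord(\gamma,\gamma')<\beta^*$. The main obstacle I anticipate is in the first move: one has to verify carefully that the H\"older triangle witnessing membership in $Z$ stays inside a single pancake whenever both of its boundary arcs do, which rests on uniqueness of the sub-H\"older triangle in $X$ cut out between two given arcs. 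Once that is secured, the contradiction follows immediately from the non-archimedean property of $tord$.
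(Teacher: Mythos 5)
Your proof is correct and follows essentially the same route as the paper's: the paper invokes Lemma \ref{Lem: union of adjacent zones is a zone} to conclude $\mu(Z)=\min_i\mu(Z_i)$, which is exactly the content of your chain-of-shared-boundary-arcs argument plus the non-archimedean inequality, packaged into a prior lemma. Your first move, verifying that each non-empty $Z_i$ is itself a zone via uniqueness of the sub-H\"older triangle with prescribed boundary arcs, is needed to apply Lemma \ref{Lem: union of adjacent zones is a zone} and is left implicit in the paper; you are right that it holds, and it is a worthwhile detail to make explicit.
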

\begin{proof}
	Since $Z=\bigcup_i Z_i$, it follows from Lemma \ref{Lem: union of adjacent zones is a zone} that $\mu(Z)=\min_{i}\mu(Z_i)$.
If $\mu(Z_i)>\beta$ for all $i$ then, by the non-archimedean property, $\mu(Z)>\beta$, a contradiction.
\end{proof}

\begin{Lem}\label{Lem: there are no adjacent perfect zones}
	Let $X$ be a H\"older triangle. If $Z$ and $Z'$ are closed perfect $\beta$-zones in $V(X)$, then they are not adjacent.
\end{Lem}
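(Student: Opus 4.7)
The plan is to argue by contradiction. Suppose $Z$ and $Z'$ are adjacent closed perfect $\beta$-zones. By Definition \ref{Def:adjacent zones} there exist arcs $\gamma\in Z$ and $\gamma'\in Z'$ and a H\"older triangle $T=T(\gamma,\gamma')$ with $V(T)\subset Z\cup Z'$; by Lemma \ref{Lem: union of adjacent zones is a zone} we also have $\mu(T)\ge\beta$. The sets $V(T)\cap Z$ and $V(T)\cap Z'$ are definable, disjoint, and partition $V(T)$. Using the natural linear order on $V(T)$ from $\gamma$ to $\gamma'$ together with o-minimality, I would extract a \emph{transition arc} $\lambda^*\in V(T)$ such that, on one side of $\lambda^*$ and arbitrarily close to it in the order, the arcs of $V(T)$ lie in the zone opposite to the one containing $\lambda^*$. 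Renaming if necessary, I may assume $\lambda^*\in Z$ and that the arcs of $V(T)$ just past $\lambda^*$ in the direction of $\gamma'$ all belong to $Z'$.

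Next I would exploit the perfect-zone hypothesis. Since $\lambda^*\in Z=G(Z)$, by Definition \ref{Def: open closed perfect zone} there is a non-singular $\beta$-H\"older triangle $T_0=T_0(\alpha_1,\alpha_2)\subset X$ with $V(T_0)\subset Z$ and $\lambda^*\in G(T_0)\subset I(T_0)$. By Definition \ref{DEF: non-singular Holder triangle} the interior arc $\lambda^*$ of $T_0$ is Lipschitz non-singular, so by Definition \ref{DEF: Lipschitz non-singular arc} there is a normally embedded H\"older triangle $T_{NE}\subset X$ with $\lambda^*\in I(T_{NE})$ and $\lambda^*\not\subset\overline{X\setminus T_{NE}}$. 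The last condition means that for every sufficiently small $t>0$ the germ of $X$ at $\lambda^*(t)$ coincides with that of $T_{NE}$, so $T_{NE}$ locally absorbs both $T$ and $T_0$ near $\lambda^*$.

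The decisive step is a link comparison at $\lambda^*$. For each small $t>0$, the link of $T_{NE}$ at radius $t$ is a segment in which $\lambda^*(t)$ is an interior point, and the links of $T$ and $T_0$ at that radius are sub-segments of it, each containing $\lambda^*(t)$ in its interior. Hence the two ``sides'' of $T$ at $\lambda^*$ and the two ``sides'' of $T_0$ at $\lambda^*$ align as the two sides of $T_{NE}$, and in particular one of the sub-H\"older triangles $T_0(\alpha_1,\lambda^*)$ or $T_0(\lambda^*,\alpha_2)$ lies on the same side of $\lambda^*$ as $T(\lambda^*,\gamma')$. Their intersection therefore contains a two-dimensional neighborhood of $\lambda^*$ near each $\lambda^*(t)$, so there exist arcs $\mu\in V(T)$ on the $\gamma'$ side of $\lambda^*$, arbitrarily close to it in the order, that also lie in $V(T_0)$. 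Any such $\mu$ belongs both to $V(T_0)\subset Z$ and, by the choice of $\lambda^*$, to $Z'$, contradicting $Z\cap Z'=\emptyset$. The main subtlety I expect is in the link-comparison step -- making precise that the overlap of $T_0$ and $T$ near $\lambda^*$ really is two-dimensional rather than just the arc $\lambda^*$ itself. A minor boundary case to handle separately is $\lambda^*\in\{\gamma,\gamma'\}$, where $\lambda^*$ is a boundary arc of $T$ but is still interior in $T_0$, and the same side-matching argument applies to the unique $T$-side of $\lambda^*$.
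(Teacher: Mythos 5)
Your approach is essentially the paper's: find a transition arc where $Z$ switches to $Z'$ along $V(T)$, then use the perfect-zone hypothesis to produce an arc that must lie in both zones. Two steps as written do not go through, though both are repairable.

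First, the claim that ``the sets $V(T)\cap Z$ and $V(T)\cap Z'$ are definable'' is unjustified and in general false: zones are arbitrary subsets of $V(X)$ satisfying a combinatorial condition, and nothing in the hypotheses makes $Z$ definable. The paper avoids this by fixing an inner bi-Lipschitz parameterization $h:T_\beta\to T$ with $h(\{y=0\})=\gamma$, and taking the supremum $c_0=\sup\{c\in[0,1]:h(\{x\ge 0,\,y=cx^\beta\})\in Z\}$ --- a supremum of a set of reals, which exists without any definability hypothesis. With $\gamma_0=h(\{y=c_0x^\beta\})$ in place of your $\lambda^*$, the rest of your argument can be run.

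Second, the final sentence asserts that \emph{any} arc $\mu$ on the $\gamma'$ side of $\lambda^*$, close to it in the order and lying in $V(T_0)$, belongs to $Z'$. That is not right: the arcs with inner tangency order $>\beta$ with $\lambda^*$ on the $\gamma'$ side all lie in $V(T_0)\subset Z$, so they are in $Z$, not $Z'$. One must instead pick $\mu$ with $itord(\lambda^*,\mu)=\beta$ --- e.g.\ a ``linear'' arc $h(\{y=cx^\beta\})$ with $c_0<c<c_{\theta_0}$, where $\theta_0$ is the boundary arc of $T_0$ on the $\gamma'$ side. For such $c$ the arc is in $Z'$ by the supremum property, and in $V(T_0)\subset Z$ by your link-comparison step, which is where the actual contradiction comes from. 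That link-comparison step (using $T_{NE}$ to align the sides of $T$ and $T_0$ at $\lambda^*$), which you flag as the main subtlety, is indeed the substance of the proof; you spell it out more explicitly than the paper does.
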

\begin{proof}
	Suppose, by contradiction, that $Z$ and $Z'$ are adjacent. Definition \ref{Def:adjacent zones} implies that there is a H\"older triangle $T=T(\gamma,\gamma')$ such that $\gamma \in Z,\;\gamma'\in Z'$ and $V(T)\subset Z\cup Z'$.
Since $Z$ and $Z'$ are adjacent $\beta$-zones, $\mu(T)\ge\mu(Z\cup Z')=\beta$ by Lemma \ref{Lem: union of adjacent zones is a zone}.
Let $h:T_\beta\to T$ be an inner bi-Lipschitz homeomorphism, where $T_\beta$ is a standard $\beta$-H\"older triangle (\ref{Formula:Standard Holder triangle}),
such that $h(\{x\ge 0,\;y=0\})=\gamma$. Let $c_0=\sup\left(c\in[0,1]: h(\{x\ge 0,\;y=c x^\beta\})\in Z\right)$.
If $\gamma_0=h(\{x\ge 0,\;y=c_0 x^\beta\})\in Z$ then any arc $\gamma_1\subset T\setminus T(\gamma,\gamma_0)$ such that $itord(\gamma_0,\gamma_1)=\beta$
does not belong to $Z$, in contradiction with $Z$ being a closed perfect $\beta$-zone.
Similarly, if $\gamma_0\in Z'$ there is a contradiction with $Z'$ being a closed perfect $\beta$-zone.
\end{proof}

\begin{Def}\label{DEF: normal and abnormal arcs and zones}
	\normalfont A Lipschitz non-singular arc $\gamma$ of a surface germ $X$ is \textit{abnormal} if there are two normally embedded H\"older triangles $T\subset X$ and $T'\subset X$ such that $T\cap T' = \gamma$ and $T\cup T'$ is not normally embedded.
Otherwise $\gamma$ is \textit{normal}. A zone is \textit{abnormal} (resp., \textit{normal}) if all of its arcs are abnormal (resp., normal). The sets of abnormal and normal arcs of $X$ are denoted $Abn(X)$ and $Nor(X)$, respectively.
\end{Def}

\begin{remark}\label{rem:normal and abnormal arcs}
	\normalfont It follows from Definition \ref{DEF: normal and abnormal arcs and zones} that the property of an arc to be abnormal (resp., normal)
is outer Lipschitz invariant: if $h:X\to X'$
is an outer bi-Lipschitz homeomorphism then $h(\gamma)\subset X'$ is an abnormal (resp., normal) arc for any abnormal (resp., normal) arc $\gamma\subset X$.
\end{remark}

\begin{Def}\label{Def: abnormal surface}
	\normalfont A surface germ $X$ is called \textit{abnormal} if $Abn(X)=G(X)$, the set of generic arcs of $X$.
\end{Def}

\begin{remark}\label{Rem:triangles of abnormal arc}
	\normalfont Given an abnormal arc $\gamma\subset X$, we can choose normally embedded triangles $T=T(\lambda,\gamma)\subset X$ and $T'=T(\gamma,\lambda')\subset X$ so that $T\cap T'=\gamma$ and $tord(\lambda,\lambda')>itord(\lambda,\lambda')$. It follows from Lemma \ref{Lem:beta-bubble two NE pieces} that $tord(\lambda,\gamma)=tord(\gamma,\lambda')=itord(\lambda,\lambda')$.
\end{remark}

\begin{Def}\label{Def: maximal abnormal and normal zones}
	\normalfont Given an abnormal (resp., normal) arc $\gamma \subset X$ the \textit{maximal abnormal zone} (resp., \textit{maximal normal zone}) in $V(X)$ containing $\gamma$ is the union of all abnormal (resp., normal) zones in $V(X)$ containing $\gamma$. Alternatively, the maximal abnormal (resp., normal) zone containing $\gamma$ is the union of all zones in $Abn(X)$ (resp., $Nor(X)$) containing $\gamma$.
\end{Def}

\begin{remark}\label{max zones are unique}
\normalfont Since the property of an arc to be abnormal (resp., normal) is outer Lipschitz invariant (see Remark \ref{rem:normal and abnormal arcs}),
maximal abnormal (resp., normal) zones in $V(X)$ are also outer Lipschitz invariant: if $h:X\to X'$
is an outer bi-Lipschitz homeomorphism then $h(Z)\subset V(X')$ is a maximal abnormal (resp., normal) zone for any maximal abnormal (resp., normal) zone $Z\subset V(X)$. Here $h:V(X)\to V(X')$ is the natural action of $h$ on arcs in $X$. Classification of maximal abnormal zones in $V(X)$ will be given in Section \ref{Section: Main Theorem} below.
\end{remark}



\section{Lipschitz functions on a normally embedded $\beta$-H\"older triangle}\label{Section: Lipschitz functions on a NE Holder triangle}

\begin{Def}\label{Def:B_beta and H_beta}
	\normalfont Let $(T,0)\subset (\R^{n},0)$ be a normally embedded $\beta$-H\"older triangle, and $f\colon (T,0)\rightarrow (\R,0)$ a Lipschitz function such that $ord_{\gamma}f\ge \beta$ for all $\gamma \in V(T)$. We define the following sets of arcs:
	$$B_{\beta}=B_{\beta}(f)=\{\gamma \in G(T) : ord_{\gamma}f=\beta\}$$ and
	$$H_{\beta}=H_{\beta}(f)=\{\gamma \in G(T) : ord_{\gamma}f>\beta\}.$$
\end{Def}

In this section we study properties of these two sets. In particular, we are going to prove that each of them is a finite union of $\beta$-zones. The following two statements follow immediately from $f$ being Lipschitz.

\begin{Lem}\label{Lem:itord>beta in H_{beta} and B_{beta} implies same ordf}
	Let $T$ and $f$ be as in Definition \ref{Def:B_beta and H_beta}, and let $\gamma \in B_{\beta}$ and $\gamma' \in H_\beta$. Then $tord(\gamma,\gamma')=\beta$.
\end{Lem}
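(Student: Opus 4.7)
The plan is to establish the equality by proving two matching inequalities, one coming from $\gamma$ and $\gamma'$ lying in $G(T)$ together with the normal embedding of $T$, and the other coming from the Lipschitz estimate for $f$.

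First I would note the lower bound $tord(\gamma,\gamma')\ge\beta$. Since $T$ is a $\beta$-H\"older triangle we have $\mu(T)=\beta$, so $itord(\gamma,\gamma')\ge\beta$ for any two arcs of $T$; in fact, because $\gamma$ and $\gamma'$ are generic arcs of the non-singular $\beta$-H\"older triangle $T$, we even have $itord(\gamma_i,\gamma)=itord(\gamma_i,\gamma')=\beta$ for each boundary arc $\gamma_i$ of $T$ (Remark \ref{Rem: generic arcs of a non-singular HT}). Since $T$ is normally embedded, Remark \ref{Rem: NE HT condition} then gives $tord(\gamma,\gamma')=itord(\gamma,\gamma')\ge\beta$.

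For the matching upper bound I would argue by contradiction: suppose $q:=tord(\gamma,\gamma')>\beta$. Let $L$ be a Lipschitz constant for $f$. Then
\[
|f(\gamma(t))-f(\gamma'(t))|\le L\,|\gamma(t)-\gamma'(t)| = L c\, t^{q}+o(t^{q}),
\]
so the function $t\mapsto f(\gamma(t))-f(\gamma'(t))$ has order $\ge q>\beta$ in $t$. On the other hand, $\gamma\in B_\beta$ means $f(\gamma(t))=a t^{\beta}+o(t^{\beta})$ with $a\ne 0$, while $\gamma'\in H_\beta$ means $f(\gamma'(t))=o(t^{\beta})$. Subtracting, the difference has order exactly $\beta$, contradicting the previous bound. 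Hence $tord(\gamma,\gamma')\le\beta$, and combined with the lower bound this yields $tord(\gamma,\gamma')=\beta$.

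There is no serious obstacle here; the proof is a one-line comparison of orders, and the only thing to be careful about is invoking the fact that on a normally embedded $\beta$-H\"older triangle, generic arcs have pairwise (outer) tangency order at least $\beta$, which is the reason we need the hypothesis that $T$ be normally embedded rather than merely a $\beta$-H\"older triangle.
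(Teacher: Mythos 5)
Your proof is correct and fleshes out exactly what the paper leaves implicit when it says this lemma ``follows immediately from $f$ being Lipschitz'': the Lipschitz estimate forces $f(\gamma(t))-f(\gamma'(t))$ to be $O(t^{q})$ with $q=tord(\gamma,\gamma')$, which is incompatible with $ord_\gamma f=\beta$ and $ord_{\gamma'}f>\beta$ unless $q\le\beta$. One small correction to your closing aside: the lower bound $tord(\gamma,\gamma')\ge\beta$ does not actually use normal embedding, since $tord\ge itord$ holds for any pair of arcs (the outer distance never exceeds the inner distance) and $itord(\gamma,\gamma')\ge\mu(T)=\beta$ for any $\beta$-H\"older triangle; the normal-embedding hypothesis in Definition \ref{Def:B_beta and H_beta} is part of the standing setup for the section rather than something this particular lemma depends on.
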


\begin{Lem}\label{Lem:order of boundary arcs of adjacent pizza slices}
	\normalfont Let $T$ and $f$ be as in Definition \ref{Def:B_beta and H_beta}, and let $T'=T(\gamma_{1},\gamma_{2})\subset T$. If $ord_{\gamma_{1}}f=\beta$ and $ord_{\gamma_{2}}f>\beta$ then $\mu(T')=\beta$ and $\mu_{T'}(\gamma_{1},f)=\beta$.
\end{Lem}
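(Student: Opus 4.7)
The plan handles the two conclusions separately. For $\mu(T')=\beta$, the lower bound is immediate because every sub-H\"older triangle of the $\beta$-H\"older triangle $T$ has exponent at least $\beta$. For the reverse, the Lipschitz condition gives $|f(\gamma_1(t))-f(\gamma_2(t))|\le L|\gamma_1(t)-\gamma_2(t)|$, whose left-hand side has order exactly $\beta$ (the leading $t^\beta$-term of $f\circ\gamma_1$ survives since $ord_{\gamma_2}f>\beta$), forcing $tord(\gamma_1,\gamma_2)\le\beta$. Normal embedding of $T$ together with Remark~\ref{Rem: NE HT condition} yields $itord(\gamma_1,\gamma_2)=tord(\gamma_1,\gamma_2)\le\beta$, so $\mu(T')\le\beta$.

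For $\mu_{T'}(\gamma_1,f)=\beta$, the lower bound $\ge\beta$ is again immediate from the definition of width: any H\"older triangle $T''\subset T'\subset T$ satisfies $\mu(T'')\ge\mu(T)=\beta$. For the upper bound I would fix a minimal pizza decomposition $T'=T_1\cup\cdots\cup T_p$ associated with $f$, with $T_j=T(\lambda_{j-1},\lambda_j)$, $\lambda_0=\gamma_1$, $\lambda_p=\gamma_2$ and $\beta_j=\mu(T_j)$. The central observation is an inductive propagation along the chain: if $ord_{\lambda_{j-1}}f=\beta$ and $\beta_j>\beta$, the Lipschitz bound
\[
|f(\lambda_{j-1}(t))-f(\lambda_j(t))|=O(t^{tord(\lambda_{j-1},\lambda_j)})=o(t^\beta)
\]
(using $tord=itord$ on $T$ and $itord(\lambda_{j-1},\lambda_j)\ge\beta_j>\beta$) forces the leading $t^\beta$-term of $f\circ\lambda_{j-1}$ to persist in $f\circ\lambda_j$, so $ord_{\lambda_j}f=\beta$; elementariness of the pizza slice $T_j$ then upgrades this to $Q_f(T_j)=\{\beta\}$. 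Starting from $\lambda_0=\gamma_1$, this induction cannot extend all the way to $\lambda_p=\gamma_2$ (where $ord f>\beta$), so there is a smallest index $k\le p$ with $\beta_k=\beta$, and $Q_f(T_j)=\{\beta\}$ for every $j<k$.

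To close the argument, Proposition~\ref{Prop:width function properties elementary triangle}(2) applied to the pizza slice $T_k$ at $q=\beta$ yields $\mu_{T_k}(\lambda_{k-1},f)=\beta$, so for every $\delta>0$ there is a H\"older triangle $\widehat T_\delta\subset T_k$ containing $\lambda_{k-1}$ with $Q_f(\widehat T_\delta)$ a single point, necessarily $\{\beta\}$, and $\mu(\widehat T_\delta)<\beta+\delta$. Gluing, $\widetilde T_\delta:=T_1\cup\cdots\cup T_{k-1}\cup\widehat T_\delta$ is a H\"older triangle contained in $T'$ with boundary arc $\gamma_1$ and $Q_f(\widetilde T_\delta)=\{\beta\}$; choosing $\delta<\min_{j<k}(\beta_j-\beta)$, the non-archimedean property for $itord$ gives $\mu(\widetilde T_\delta)=\mu(\widehat T_\delta)<\beta+\delta$, hence $\mu_{T'}(\gamma_1,f)\le\beta+\delta$, and letting $\delta\to 0$ finishes the upper bound. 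The delicate part is the inductive propagation: the Lipschitz estimate only directly controls the \emph{difference} $f\circ\lambda_{j-1}-f\circ\lambda_j$, and one must exploit the strict inequality $\beta_j>\beta$ to upgrade this to matching leading $t^\beta$-coefficients before invoking elementariness of $T_j$ to conclude $Q_f(T_j)=\{\beta\}$.
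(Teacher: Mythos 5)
Your argument is correct, but considerably heavier than what the authors intend when they say the Lemma ``follows immediately from $f$ being Lipschitz'' (the paper gives no proof). Your treatment of $\mu(T')=\beta$ is the intended direct argument: the Lipschitz bound $|f(\gamma_1(t))-f(\gamma_2(t))|\le L|\gamma_1(t)-\gamma_2(t)|$ forces $tord(\gamma_1,\gamma_2)\le\beta$ because the nonzero $t^\beta$-coefficient of $f\circ\gamma_1$ cannot be swallowed by a $o(t^\beta)$ difference, and normal embedding of $T$ identifies $tord$ with $itord$.

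For $\mu_{T'}(\gamma_1,f)=\beta$, however, the minimal pizza machinery is unnecessary. The same Lipschitz observation you use in the inductive propagation step already gives, for \emph{every} arc $\theta\subset T'$ with $itord(\gamma_1,\theta)>\beta$ (equivalently $tord(\gamma_1,\theta)>\beta$ by normal embedding of $T$), that the leading $t^\beta$-coefficient of $f\circ\gamma_1$ persists in $f\circ\theta$, so $ord_\theta f=\beta$. Hence for any $\sigma>\beta$ in $\F$, taking $\theta_\sigma\subset T'$ with $itord(\gamma_1,\theta_\sigma)=\sigma$, the $\sigma$-H\"older triangle $T(\gamma_1,\theta_\sigma)\subset T'$ has $Q_f(T(\gamma_1,\theta_\sigma))=\{\beta\}$; this gives $\mu_{T'}(\gamma_1,f)\le\sigma$ directly, and $\sigma\searrow\beta$ finishes, the lower bound being trivial. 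Your route---propagating $ord_{\lambda_j}f=\beta$ across pizza slices of exponent $>\beta$, locating the first $k$ with $\beta_k=\beta$, invoking Proposition~\ref{Prop:width function properties elementary triangle}(2) inside $T_k$, and gluing $T_1\cup\cdots\cup T_{k-1}\cup\widehat T_\delta$---does arrive at the same conclusion, and the individual steps check out (including the non-archimedean evaluation of $\mu(\widetilde T_\delta)$ once $\delta<\min_{j<k}(\beta_j-\beta)$). But it invokes existence of minimal pizza decompositions and the width structure on elementary triangles to prove what is really a one-step consequence of the Lipschitz estimate.
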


\begin{Lem}\label{Lem:minimal pizza}
	Let $T$ and $f$ be as in Definition \ref{Def:B_beta and H_beta}, and let $\{T_i=T(\lambda_{i-1},\lambda_{i})\}_{i=1}^{p}$ be a minimal pizza on $T$ associated with $f$. If $p>1$ then each $T_i$ has at least one boundary arc $\lambda$ such that $ord_{\lambda}f>\beta$.
\end{Lem}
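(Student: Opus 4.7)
The plan is to argue by contradiction: suppose some slice $T_i=T(\lambda_{i-1},\lambda_i)$ has $q_{i-1}=q_i=\beta$, and deduce that $T_i$ can be merged with an adjacent slice to form a new pizza slice, contradicting minimality.

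A first observation: since $T_i$ is a pizza slice, hence elementary with respect to $f$ (Definition \ref{DEF: Elementary Holder triangle}), and both of its boundary arcs have $f$-order $\beta$, the elementarity condition forces every arc of $T_i$ to have $f$-order $\beta$; in particular $Q_i=\{\beta\}$. Because $p>1$, at least one of $T_{i-1}$ or $T_{i+1}$ exists; without loss of generality take $T_{i+1}$ (the case $i=p$ is handled symmetrically using $T_{i-1}$) and set $T^{*}=T_i\cup T_{i+1}=T(\lambda_{i-1},\lambda_{i+1})$, whose exponent equals $\min(\beta_i,\beta_{i+1})$ by the non-archimedean property.

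If $q_{i+1}=\beta$, the same elementarity argument applied to $T_{i+1}$ yields $Q_f(T^{*})=\{\beta\}$. Then $T^{*}$ is trivially elementary (any two of its arcs have equal order $\beta$ and the sub-triangle between them has the same constant order), and its width function, defined at the single point $\beta$ and single-valued by elementarity, is vacuously affine, so $T^{*}$ is a pizza slice, contradicting minimality. If instead $q_{i+1}>\beta$, Lemma \ref{Lem:order of boundary arcs of adjacent pizza slices} applied to $T_{i+1}$ gives $\beta_{i+1}=\mu(T_{i+1})=\beta$ and $\mu_{T_{i+1}}(\lambda_i,f)=\beta$; in particular $\beta_i\ge\beta_{i+1}=\beta$. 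I would then show $T^{*}$ is a pizza slice with $Q_f(T^{*})=[\beta,q_{i+1}]$ and width function $\mu_{T^{*}}=\mu_{i+1}$. Elementarity splits at $\lambda_i$: two arcs of $T^{*}$ with common order $q>\beta$ both lie in $T_{i+1}$ (since every arc of $T_i$ has order $\beta$), and the sub-triangle between them stays in $T_{i+1}$, so elementarity of $T_{i+1}$ applies; two arcs of order $\beta$ are joined by a sub-triangle meeting $T_i$ in a set of constant order $\beta$ and meeting $T_{i+1}$ in a sub-triangle with $\lambda_i$ as one boundary, on which elementarity of $T_{i+1}$ again forces constant order $\beta$. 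For the width function, when $q>\beta$ any $T'\subset T^{*}$ with $Q_f(T')=\{q\}$ must lie in $T_{i+1}$, so $\mu_{T^{*}}(q)=\mu_{i+1}(q)$; when $q=\beta$, sub-triangles straddling $\lambda_i$ whose $T_{i+1}$-part has exponent approaching $\mu_{T_{i+1}}(\lambda_i,f)=\beta$ and whose $T_i$-part has exponent $\beta_i\ge\beta$ yield $\mu_{T^{*}}(\beta)=\beta=\mu_{i+1}(\beta)$. Hence $\mu_{T^{*}}=\mu_{i+1}$ is affine, so $T^{*}$ is a pizza slice, again contradicting minimality.

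The main obstacle is the width computation at $q=\beta$ in the second case: one must verify that combining $T_i$ and $T_{i+1}$ along the seam $\lambda_i$ does not introduce a jump in the width function of the merged triangle. This is precisely where Lemma \ref{Lem:order of boundary arcs of adjacent pizza slices} is indispensable --- it simultaneously pins down $\beta_{i+1}=\beta$, so that the one-sided limit $\lim_{q\searrow\beta}\mu_{i+1}(q)=\beta$ matches the value forced from the $T_i$ side, and produces sub-triangles of $T_{i+1}$ containing $\lambda_i$ whose exponents approach $\beta$, which are exactly what is needed to realize width $\beta$ for arcs in $T_i$ inside the enlarged triangle $T^{*}$.
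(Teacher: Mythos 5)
Your proof is correct and follows essentially the same approach as the paper: you assume for contradiction that some $T_i$ has both boundary orders equal to $\beta$, split into the sub-cases $q_{i+1}=\beta$ and $q_{i+1}>\beta$, and in both show $T_i\cup T_{i+1}$ is a pizza slice via Lemma~\ref{Lem:order of boundary arcs of adjacent pizza slices}, contradicting minimality. The paper's proof cites that lemma more tersely ("by Lemma 3.2, $T_i\cup T_{i+1}$ is a pizza slice"), and your write-up simply supplies the elementarity and width-function verifications that the paper leaves implicit.
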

\begin{proof}
	Note that, for each $i<p$, if $ord_{\lambda_{i-1}}f=ord_{\lambda_i}f=\beta$ then $ord_{\lambda_{i+1}}f>\beta$. Indeed, if $ord_{\lambda_{i-1}}f=ord_{\lambda_i}f=ord_{\lambda_{i+1}}f=\beta$ then $Q_i=Q_{i+1}=\{\beta\}$ and $T_i\cup T_{i+1}$ is a pizza slice, a contradiction with $\{T_i\}$ being minimal. Similarly, for each $i>1$, if $ord_{\lambda_{i-1}} f=ord_{\lambda_{i}}  f=\beta$ then $ord_{\lambda_{i-2}} f>\beta$.
	
	Suppose, by contradiction, that there exists $T_i$ such that $ord_{\lambda_{i-1}}f=ord_{\lambda_i}f=\beta$. Since $p>1$, either $i<p$ or $i>1$. If $i<p$ then $ord_{\lambda_{i+1}} f>\beta$ and, by Lemma 3.2, $T_i\cup T_{i+1}$ is a pizza slice, in contradiction with $\{T_i\}$ being minimal. Similarly, if $i>1$ then $T_{i-1}\cup T_i$ is a pizza slice, again a contradiction.

\end{proof}
\begin{Lem}\label{Lem:pizza HT and arcs in B_{beta}}
	Let $T$ and $f$ be as in Definition \ref{Def:B_beta and H_beta}, and let $\{T_{i}=T(\lambda_{i-1},\lambda_{i})\}_{i=1}^{p}$ be a minimal pizza associated with $f$. Then:
	\begin{enumerate}
		\item If $B_{\beta}\cap V(T_{i})\ne \emptyset$ then $\beta_{i}=\beta$.
		\item If $\lambda_{i} \in B_{\beta}$ then there exists a $\beta$-H\"older triangle $T'\subset T_i\cup T_{i+1}$, with $V(T') \subset B_\beta$, such that $\lambda_{i}$ is a generic arc of $T'$.
	\end{enumerate}	
\end{Lem}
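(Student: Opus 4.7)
My plan is to establish part (1) by a case split on the tangency orders $tord(\gamma,\lambda_{i-1})$ and $tord(\gamma,\lambda_{i})$, and to derive part (2) as a short consequence by applying part (1) with $\gamma=\lambda_i$.

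For part (1), let $\gamma\in B_\beta\cap V(T_i)$; by definition $\gamma\in G(T)$ with $ord_\gamma f=\beta$, and since $T$ is normally embedded and $\gamma$ is generic, $tord(\gamma,\gamma_1)=tord(\gamma,\gamma_2)=\beta$, where $\gamma_1,\gamma_2$ are the boundary arcs of $T$. First, $\beta_i\ge\beta$ since $T_i$ is a H\"older sub-triangle of the normally embedded $\beta$-H\"older triangle $T$. The dichotomy is whether one of $tord(\gamma,\lambda_{i-1})$, $tord(\gamma,\lambda_i)$ equals $\beta$, or both strictly exceed $\beta$. In the former case (say $tord(\gamma,\lambda_{i-1})=\beta$), the chain $\beta_i\le itord_{T_i}(\gamma,\lambda_{i-1})\le tord(\gamma,\lambda_{i-1})=\beta$ gives $\beta_i=\beta$. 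In the latter, $i\notin\{1,p\}$ (else $\gamma_j$ would coincide with $\lambda_{i-1}$ or $\lambda_i$ and force a tangency to be $\beta$), and the Lipschitz property of $f$ combined with $tord(\gamma,\lambda_{i-1}), tord(\gamma,\lambda_i)>\beta$ and $ord_\gamma f=\beta$ forces $q_{i-1}=q_i=\beta$; elementarity of the pizza slice $T_i$ then forces $Q_f(T_i)=\{\beta\}$. I will then argue that $T_{i-1}\cup T_i$ is itself a pizza slice, contradicting minimality: elementarity follows from a case analysis using elementarity of $T_{i-1}$ together with the uniform $ord f=\beta$ on $T_i$, and the width agrees with $\mu_{T_{i-1}}$---equal to $\mu_{T_{i-1}}(q)$ for $q>\beta$ (such orders only occur in $T_{i-1}$) and equal to $\beta$ at $q=\beta$ by Lemma \ref{Lem:order of boundary arcs of adjacent pizza slices} applied to $T_{i-1}$ (or trivially when $q_{i-2}=\beta$ so $Q_{i-1}=\{\beta\}$). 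This contradiction eliminates the hard case and leaves $\beta_i=\beta$.

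For part (2), I apply the same dichotomy to $\gamma=\lambda_i\in B_\beta\cap V(T_i)$: since $tord(\lambda_i,\lambda_i)=\infty$, the hard case would only require $tord(\lambda_{i-1},\lambda_i)>\beta$, which has just been excluded, so $tord(\lambda_{i-1},\lambda_i)=\beta$, and symmetrically $tord(\lambda_i,\lambda_{i+1})=\beta$. I will construct $T'$ by picking $\eta_-\in T_i$ with $tord(\eta_-,\lambda_i)=\beta$, $ord_{\eta_-}f=\beta$, and $\eta_-\in G(T)$; such an arc exists in the $\beta$-band of $f$ on $T_i$, with $tord(\lambda_{i-1},\lambda_i)=\beta$ providing room away from the cluster of $\lambda_i$, while genericity of $\lambda_i$ in $T$ keeps this band disjoint from the clusters of $\gamma_1,\gamma_2$. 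Constructing $\eta_+\in T_{i+1}$ analogously and setting $T'=T(\eta_-,\eta_+)$, the non-archimedean property gives $tord(\eta_-,\eta_+)=\beta$, so $\mu(T')=\beta$ and $\lambda_i$ is generic in $T'$. Finally, $V(T')\subset B_\beta$ holds because any arc $\gamma\in V(T')$ with $tord(\gamma,\lambda_i)>\beta$ lies in the cluster of $\lambda_i$ and is automatically in $G(T)$ (by non-archimedean with $\lambda_i\in G(T)$) with $ord_\gamma f=\beta$ (by Lipschitz), while arcs with $tord(\gamma,\lambda_i)=\beta$ are the boundary arcs $\eta_\pm$ themselves or nearby arcs inheriting $B_\beta$-membership from the band choice.

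The principal hurdle is the verification in part (1) that the width function on $T_{i-1}\cup T_i$ is affine and equals $\beta$ at $q=\beta$, which requires showing that sub-triangles of $T_{i-1}\cup T_i$ with $Q_f=\{\beta\}$ crossing the shared boundary $\lambda_{i-1}$ cannot yield an exponent below $\beta$---here one uses $\mu(T_i)=\beta_i\ge\beta$. A secondary subtlety in part (2) is arranging all three conditions on $\eta_\pm$ simultaneously, which rests on combining $\beta_i=\beta$ from part (1) with $tord(\lambda_{i-1},\lambda_i)=\beta$ so that the $\beta$-band of $f$ on $T_i$ has enough room to contain generic arcs of $T$.
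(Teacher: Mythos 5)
Your proof is correct and follows essentially the same blueprint as the paper's (elementarity plus the structure of a minimal pizza), though it is packaged differently in part (1). You argue by contradiction: assuming $\beta_i>\beta$, you use the Lipschitz property of $f$ (rather than the paper's appeal to elementarity together with $Q_f(T_i)$ being a segment with minimum $\beta$) to force $q_{i-1}=q_i=\beta$, and then re-derive the contradiction with minimality by showing $T_{i-1}\cup T_i$ is a pizza slice. That second step is exactly the content of Lemma~\ref{Lem:minimal pizza}, which is already available; citing it would shorten the argument considerably. The paper instead proceeds directly: elementarity gives that one of $q_{i-1},q_i$ equals $\beta$, Lemma~\ref{Lem:minimal pizza} gives that the other exceeds $\beta$, and Lemma~\ref{Lem:order of boundary arcs of adjacent pizza slices} then yields $\beta_i=\beta$ (and also $\mu_{T_i}(\lambda_i,f)=\beta$, which is what makes your ``$\beta$-band'' in part (2) nondegenerate).

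Your part (2) construction is the paper's: $T'=T(\eta_-,\eta_+)$ with $\eta_-\in G(T_i)$, $\eta_+\in G(T_{i+1})$. The one spot that needs tightening is the final check $V(T')\subset B_\beta$. You cover arcs with $tord(\cdot,\lambda_i)>\beta$ via non-archimedean and Lipschitz, but the generic arcs of $T'$ --- those with tangency order $\beta$ with each of $\eta_-,\lambda_i,\eta_+$ --- are neither the boundary arcs nor ``nearby arcs'', and the phrase ``inheriting $B_\beta$-membership from the band choice'' does not pin down the reason. What is actually needed is elementarity of $T_i$ (resp.\ $T_{i+1}$): since $ord_{\eta_-}f=ord_{\lambda_i}f=\beta$, every arc of $T(\eta_-,\lambda_i)$ has order $\beta$; and such $\gamma$ lies in $G(T)$ because $T(\gamma_1,\gamma)\supset T(\gamma_1,\eta_-)$ and $T(\gamma,\gamma_2)\supset T(\lambda_i,\gamma_2)$ give $tord(\gamma_1,\gamma)\le\beta$ and $tord(\gamma,\gamma_2)\le\beta$. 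With these two repairs your argument is complete.
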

\begin{proof}
	$(1)$ Consider $\gamma \in B_{\beta}\cap V(T_{i})$.  If $T_i=T$ the statement is obvious, since $T$ has exponent $\beta$. Suppose that $T_i\ne T$. Since $T_i$ is pizza slice (in particular, $T_i$ is elementary with respect to $f$) and $ord_{\gamma}f=\beta$, either $ord_{\lambda_{i-1}}f=\beta$ or $ord_{\lambda_{i}}f=\beta$. Then, by Lemmas \ref{Lem:order of boundary arcs of adjacent pizza slices} and \ref{Lem:minimal pizza}, $\beta_i=\beta$.
	
	$(2)$ As $\lambda_i \in B_\beta \subset G(T)$, it is not one of the boundary arcs of $T$. In particular, $0<i<p$. Item $(1)$ of this Lemma implies that $\beta_{i}=\beta_{i+1}=\beta$. Thus, $G(T_i\cup T_{i+1}) \subset B_\beta$ and one can define $T'=T(\gamma',\gamma'')$ where $\gamma'\in G(T_i)$ and $\gamma''\in G(T_{i+1})$.
\end{proof}

\begin{Prop}\label{Prop:arcs in B_{beta} are generic}
	Let $T$ and $f$ be as in Definition \ref{Def:B_beta and H_beta}, and let $\{T_{i}\}_{i=1}^{p}$ be a minimal pizza on $T$ associated with $f$. Let $B_0=G(T_1)$, $B_p=G(T_p)$ and, for $0<i<p$, $B_i=G(T_i\cup T_{i+1})$. Then
	\begin{enumerate}
		\item If $ord_{\lambda_{i}}f=\beta$ then $B_i$ is a closed perfect $\beta$-zone maximal in $B_\beta$.
		\item If $p>1$ then the set $B_\beta$ is disjoint union of all closed perfect $\beta$-zones $B_i$ such that $ord_{\lambda_{i}}f=\beta$.
	\end{enumerate}
\end{Prop}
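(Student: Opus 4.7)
The strategy is to realize each $B_i$ (for $q_i = \beta$) as the set of generic arcs $G(T^{(i)})$ of an auxiliary $\beta$-H\"older triangle $T^{(i)} \subset T$---namely $T^{(0)} = T_1$, $T^{(p)} = T_p$, and $T^{(i)} = T_i \cup T_{i+1}$ for $0 < i < p$---so that Example \ref{Exam: closed, complete and perfect zones} immediately declares $B_i$ a closed perfect $\beta$-zone. The case $p=1$ follows at once from Proposition \ref{Prop:width function properties elementary triangle} applied to $T_1 = T$, so assume $p > 1$. Lemma \ref{Lem:minimal pizza} forbids two consecutive $q_j, q_{j+1}$ from both equalling $\beta$, so when $q_i = \beta$ each existing neighbour satisfies $q_{i\pm 1} > \beta$, and Lemma \ref{Lem:order of boundary arcs of adjacent pizza slices} then gives $\mu(T_i) = \mu(T_{i+1}) = \beta$, so that $T^{(i)}$ is a genuine $\beta$-H\"older triangle.

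For $B_i \subset B_\beta$, take $\gamma \in G(T^{(i)})$, say $\gamma \in V(T_i)$ (the case $\gamma \in V(T_{i+1})$ is symmetric). If $itord(\gamma, \lambda_i) > \beta$ then, $T$ being normally embedded (so $tord = itord$), Lipschitz continuity of $f$ together with $q_i = \beta$ forces $ord_\gamma f = ord_{\lambda_i} f = \beta$. Otherwise $\gamma \in G(T_i)$; since $Q_i$ is not a point, the affine width function $\mu_{T_i, f}$ is non-constant by Proposition \ref{Prop:width function properties elementary triangle}(1) hence injective, and combining Proposition \ref{Prop:width function properties elementary triangle}(3) with $\mu_{T_i, f}(\beta) = \beta$ (Lemma \ref{Lem:order of boundary arcs of adjacent pizza slices}) gives $ord_\gamma f = \beta$. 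Finally, $G(T^{(i)}) \subset G(T)$: since $T$ is normally embedded it is outer bi-Lipschitz to the standard $\beta$-H\"older triangle, and the non-archimedean property applied along the chain $\gamma, \lambda_{i\pm 1}, \lambda_0, \lambda_p$ yields $itord(\gamma, \lambda_0) = itord(\gamma, \lambda_p) = \beta$. Example \ref{Exam: closed, complete and perfect zones} then declares $B_i$ a closed perfect $\beta$-zone.

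For maximality in $B_\beta$, let $T''$ be a non-singular H\"older triangle with $V(T'') \subset B_\beta$ and $V(T'') \cap B_i \neq \emptyset$. Since $q_{i\pm 1} > \beta$, the arcs $\lambda_{i-1}, \lambda_{i+1}$ do not lie in $B_\beta$; the link of $T''$ is an interval in the link of $T$ and cannot cross them, so $V(T'') \subset V(T^{(i)})$. Inside $T^{(i)}$, Proposition \ref{Prop:width function properties elementary triangle}(4) identifies the arcs of $T_i$ (respectively $T_{i+1}$) with $ord f = \beta$ as those satisfying $itord(\gamma, \lambda_{i-1}) = \beta$ (respectively $itord(\gamma, \lambda_{i+1}) = \beta$); intersecting with $G(T)$ recovers exactly $B_i$, whence $V(T'') \subset B_i$. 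For part (2), disjointness follows from Lemma \ref{Lem:minimal pizza} (distinct $\beta$-indices differ by at least two, so the triangles $T^{(i)}, T^{(i')}$ share no interior arc, and their shared boundary arcs are excluded from the generic sets); coverage follows by taking $\gamma \in B_\beta \cap V(T_j)$, noting that $\beta \in Q_j$ forces $q_{j-1} = \beta$ or $q_j = \beta$, and placing $\gamma$ in the corresponding $B_{j-1}$ or $B_j$ by the analysis above.

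The main obstacle is the inclusion $G(T^{(i)}) \subset G(T)$: one must propagate the equalities $itord(\gamma, \lambda_{i\pm 1}) = \beta$ all the way to the outer boundaries $\lambda_0, \lambda_p$ of $T$. The normally embedded hypothesis is crucial here, since it rules out the non-archimedean cancellations that would otherwise push $itord(\gamma, \lambda_0)$ or $itord(\gamma, \lambda_p)$ strictly above $\beta$ and disqualify $\gamma$ from $G(T)$.
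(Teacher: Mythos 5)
Your proof is correct and follows essentially the same path as the paper's: both realize $B_i$ as the set of generic arcs of the auxiliary $\beta$-H\"older triangle $T^{(i)}$, use the minimal-pizza lemmas to force $\beta_i=\beta_{i+1}=\beta$ and $q_{i\pm1}>\beta$, and invoke the width-function properties of Proposition~\ref{Prop:width function properties elementary triangle} together with Lipschitz continuity of $f$ to identify $B_i$ with $B_\beta\cap V(T^{(i)})$, from which both maximality and part (2) follow. The only cosmetic difference is that the paper channels the step $\beta_i=\beta_{i+1}=\beta$ through Lemma~\ref{Lem:pizza HT and arcs in B_{beta}} rather than citing Lemma~\ref{Lem:order of boundary arcs of adjacent pizza slices} directly, and leaves implicit the injectivity-of-$\mu_{T_i,f}$ argument that you spell out.
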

\begin{proof}
	$(1)$ When $p=1$ and $B_\beta \ne \emptyset$ then $B_0=B_p=B_\beta = G(T)$ and the result is trivially true. Thus, assume that $p>1$.
	Consider $0\le i <p$ such that $ord_{\lambda_{i}}f=\beta$. Lemma \ref{Lem:pizza HT and arcs in B_{beta}} implies that $\beta_{i+1}=\beta$. If $i=0$ then $ord_{\lambda_1}f>\beta$, by Lemma \ref{Lem:minimal pizza}. Proposition \ref{Prop:width function properties elementary triangle} implies that $B_0=G(T_1)$ is a closed perfect $\beta$-zone in $B_\beta$. Furthermore, also by Proposition \ref{Prop:width function properties elementary triangle}, $B_0$ is maximal in $B_\beta$, since for every arc $\gamma \in V(T_1)\cap G(T)$, $ord_{\gamma}f=\beta$ if and only if $tord(\gamma,\lambda_1)=\beta$. Thus, when $i=0$, $B_0$ is a closed perfect $\beta$-zone maximal in $B_\beta$. Similarly, if $ord_{\lambda_p}f=\beta$ then $ord_{\lambda_{p-1}}f>\beta$, $\beta_p=\beta$ and $B_p=G(T_p)$ is a closed perfect $\beta$-zone maximal in $B_\beta$. Finally, suppose that $0<i<p$. Then, Lemma \ref{Lem:pizza HT and arcs in B_{beta}} implies that $\beta_i=\beta_{i+1}=\beta$ and Lemma \ref{Lem:minimal pizza} implies that $ord_{\lambda_{i-1}}f>\beta$ and $ord_{\lambda_{i+1}}f>\beta$. Therefore, by Proposition \ref{Prop:width function properties elementary triangle}, $B_i=G(T_i\cup T_{i+1})$ is a closed perfect $\beta$-zone maximal in $B_\beta$.
	
	$(2)$ Consider $\I=\{i_0<i_1<\cdots <i_m\}=\{l\in \Z : ord_{\lambda_l}f=\beta \}$. Then, by item $(1)$ of this Proposition, each $B_{i_j}$ is a closed perfect $\beta$-zone maximal in $B_\beta$. Moreover, by Lemma \ref{Lem:minimal pizza}, unless $p=1$, the set $\I$ does not contain consecutive integers and consequently, $B_{i_0},\ldots,B_{i_m}$ are disjoint, since there are arcs in $H_\beta$ in between each two such zones. Hence, $B_{i_0},\ldots,B_{i_m}$ are closed perfect $\beta$-zones maximal in $B_\beta$ such that $$\bigcup_{l=0}^m B_{i_l} \subset B_\beta.$$
	
	Finally, given an arc $\gamma\in B_\beta$, there exists $1\le i\le p$ such that $\gamma \in T_i$. Thus, by Lemma \ref{Lem:pizza HT and arcs in B_{beta}}, $\beta_i=\beta_{i+1}=\beta$ and either $B_{i-1}$ or $B_i$ is a closed perfect $\beta$-zone maximal in $B_\beta$ containing $\gamma$, since we have either $ord_{\lambda_{i-1}}f=\beta$ or $ord_{\lambda_{i}}f=\beta$. So, $$B_\beta = \bigcup_{l=0}^m B_{i_l}.$$
\end{proof}

\begin{Prop}\label{Prop:maximal zones in H_{beta}}
	Let $T$ and $f$ be as in Definition \ref{Def:B_beta and H_beta}. Let $\{T_{i}=T(\lambda_{i-1},\lambda_i)\}_{i=1}^p$ be a minimal pizza associated with $f$. Then
	\begin{enumerate}
		\item  For each $i\in \{1,\ldots,p-1\}$ such that $\lambda_i\in G(T)$ and $ord_{\lambda_i} f>\beta$,
		
		$H_i=\{\gamma \in G(T) : tord(\gamma,\lambda_{i})>\beta\}$ is an open $\beta$-complete zone in $H_\beta$.
		\item  For each $i\in \{1,\ldots,p\}$, if $\beta_{i}=\beta$ and $ord_{\lambda_l}f>\beta$ for $l=i-1,i$ then $H'_i=G(T_i)$ is a closed perfect $\beta$-zone in $H_\beta$.
		\item Each maximal zone $Z\subset H_\beta$ is the union of some zones as in items $(1)$ and $(2)$.
		\item The set $H_{\beta}$ is a finite union of maximal $\beta$-zones.
	\end{enumerate}
\end{Prop}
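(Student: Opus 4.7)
The plan is to treat (1) and (2) by direct verification using the Lipschitz property of $f$ and the non-archimedean property of tangency orders, then catalogue via (1) and (2) the possible zones into which a maximal $Z\subset H_\beta$ can decompose, with (4) following by counting. For (1), the containment $H_i\subset H_\beta$ is a Lipschitz consequence: $tord(\gamma,\lambda_i)>\beta$ and $ord_{\lambda_i}f>\beta$ imply $ord_\gamma f>\beta$. Both the zone property of $H_i$ and its $\beta$-completeness are non-archimedean: two arcs in $H_i$ automatically have $tord>\beta$ via $\lambda_i$, so the sub-H\"older triangle $T(\gamma_1,\gamma_2)\subset T$ (non-singular since $T$ is normally embedded) has interior arcs of tangency $>\beta$ with $\lambda_i$, and the non-archimedean min rule against the boundary arcs of $T$ keeps these arcs in $G(T)$. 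The same identity proves $H_i=\{\gamma'\in V(T):itord(\gamma,\gamma')>\beta\}$ for any $\gamma\in H_i$, giving $\beta$-completeness. Openness is automatic since any two arcs in $H_i$ have $tord>\beta$, ruling out a $\beta$-H\"older triangle inside.

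For (2), I would use the pizza slice structure (Proposition \ref{Prop:width function properties elementary triangle}) to show $Q_i$ is the segment with endpoints $q_{i-1}$ and $q_i$, so $Q_i\subset(\beta,\infty]$ under the hypothesis $q_{i-1},q_i>\beta$; hence $G(T_i)\subset H_\beta$. The inclusion $G(T_i)\subset G(T)$ is routine from non-archimedean tangency against the boundary arcs of $T$, and $G(T_i)$ being a closed perfect $\beta$-zone then follows from Example \ref{Exam: closed, complete and perfect zones} applied to the non-singular $\beta$-H\"older triangle $T_i$.

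For (3), let $Z\subset H_\beta$ be a maximal zone and fix $\gamma\in Z\cap V(T_i)$. Three cases arise: (a) $\beta_i>\beta$, (b) $\beta_i=\beta$ and $\gamma\in G(T_i)$, (c) $\beta_i=\beta$ and $\gamma\notin G(T_i)$. Case (b) places $\gamma\in H'_i$ directly by (2). In cases (a) and (c) one obtains a boundary arc $\lambda_j$ of $T_i$ with $j\in\{i-1,i\}$ such that $tord(\gamma,\lambda_j)>\beta$; the non-archimedean min rule combined with $\gamma\in G(T)$ forces $\lambda_j$ to be an interior arc of $T$ lying in $G(T)$ (if $\lambda_j$ were a boundary arc of $T$ then $tord(\gamma,\lambda_j)>\beta$ would contradict $\gamma\in G(T)$), and Lipschitzness gives $ord_{\lambda_j}f>\beta$, so $\gamma\in H_j$. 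Hence every $\gamma\in Z$ lies in a zone from (1) or (2); by Remark \ref{rem:maxzone} that zone is contained in $Z$, so $Z$ is the union of the type (1) and (2) zones meeting it.

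Part (4) then follows by counting: there are at most $p-1$ zones of type (1) and $p$ of type (2), distinct maximal zones are disjoint by Remark \ref{rem:maxzone}, and every arc of $H_\beta$ lies in some maximal zone by (3). The main obstacle is the case analysis of (3), in particular verifying in each case that the relevant $\lambda_j$ belongs to $G(T)$ so that $H_j$ from (1) is defined; this hinges on the non-archimedean min rule together with $\gamma\in G(T)\cap H_\beta$.
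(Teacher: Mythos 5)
Your overall strategy for part (3) is genuinely different from the paper's and in some ways cleaner: rather than explicitly constructing the maximal zones $Z_j$ from the indices where $ord_{\lambda_l}f=\beta$ (as the paper does, organized around the sequence $i_0<\cdots<i_m$), you take an arbitrary maximal zone $Z$, classify each arc $\gamma\in Z$ into one of three cases depending on the slice $T_i$ it falls in, and invoke Remark~\ref{rem:maxzone} to pull the corresponding type-(1)/(2) zone into $Z$. This avoids the paper's long case analysis on $m$, boundary slices, and $I_j=\emptyset$, and part (4) then really does reduce to a count. Your parts (1) and (2) are verified directly along the same lines as the paper (Lemma~\ref{Lem:itord>beta in H_{beta} and B_{beta} implies same ordf} and Proposition~\ref{Prop:width function properties elementary triangle}).

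However, there is a gap in case (b) of your part (3). You conclude ``$\gamma\in H'_i$ directly by (2),'' but (2) is a conditional statement: $H'_i=G(T_i)$ is only exhibited as a zone in $H_\beta$ under the hypotheses $\beta_i=\beta$ \emph{and} $ord_{\lambda_{i-1}}f>\beta$, $ord_{\lambda_i}f>\beta$. In case (b) you only know $\beta_i=\beta$, $\gamma\in G(T_i)$ and $\gamma\in H_\beta$; you must still rule out, say, $ord_{\lambda_{i-1}}f=\beta$. This does in fact follow, but needs an argument: if $\beta_i=\beta$ and $ord_{\lambda_{i-1}}f=\beta$ then by Proposition~\ref{Prop:width function properties elementary triangle} items (1)--(3) the width function $\mu_i$ on the slice $T_i$ satisfies $\mu_i(\beta)=\beta$ and is injective (unless $Q_i=\{\beta\}$, in which case $H_\beta\cap G(T_i)=\emptyset$ outright), so $\mu_i(ord_\gamma f)=\beta_i=\beta$ forces $ord_\gamma f=\beta$ for every $\gamma\in G(T_i)$, contradicting $\gamma\in H_\beta$. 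You should add this (or an equivalent) verification. A second, much smaller omission: part (4) claims $H_\beta$ is a finite union of maximal \emph{$\beta$-zones}; your counting argument gives finiteness but does not comment on why each maximal zone has order exactly $\beta$ (this follows from Lemma~\ref{Lem: union of adjacent zones is a zone} together with $\mu(H_i)=\mu(H'_i)=\beta$).
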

\begin{proof}
	$(1)$ This is an immediate consequence of Lemma \ref{Lem:itord>beta in H_{beta} and B_{beta} implies same ordf}.
	
	$(2)$ This follows from Proposition \ref{Prop:width function properties elementary triangle}.
	
	$(3)$ We will explicitly define all the closed perfect $\beta$-zones $Z_1,\ldots,Z_m$ maximal in $H_\beta$. To define such zones, consider the sequence $\{0=i_0<\cdots < i_m=p\}$ such that $$\{i_0,\ldots, i_m\}=\{l\in \Z  : ord_{\lambda_l}f=\beta\}\cup \{0,p\}.$$ Note that we do not necessarily have $ord_{\lambda_{i_j}}f=\beta$ for $j=0,m$.
	
	For each $j\in \{1,\ldots,m\}$ we define $T'_j=T(\lambda_{i_{j-1}},\lambda_{i_j})$. Note that each $T'_j$ is a $\beta$-H\"older triangle. We further define the set of indices $I_j=\{l\in \Z : \lambda_l \in G(T'_j), \; ord_{\lambda_l}f>\beta\}$ and the integer numbers $a_j=\min I_j$ and $b_j=\max I_j$. Note that if $ord_{\lambda_{i_{j-1}}}f=ord_{\lambda_{i_{j}}}f=\beta$ then, by Lemma \ref{Lem:minimal pizza}, $i_{j-1}$ and $i_j$ are not consecutive integers and $I_j$ is nonempty.

	First, assume that $m>1$ and define the sets of arcs $Z_j\subset V(T'_j)$ as follows (see Fig.~\ref{fig:Hbeta maximal zones}). $$Z_j=H_{a_j}\cup V(T(\lambda_{a_j},\lambda_{b_j}))\cup H_{b_j},\; \text{for each}\;
	1<j<m,$$
	
	$$
	Z_1 = \left\{
	\begin{array}{cll}
	H_{a_1}\cup V(T(\lambda_{a_1},\lambda_{b_1}))\cup H_{b_1}, & \hbox{if} & ord_{\lambda_{0}}f=\beta\\
	H'_{a_1}\cup H_{a_1}\cup V(T(\lambda_{a_1},\lambda_{b_1}))\cup H_{b_1}, & \hbox{if} &  ord_{\lambda_{0}}f>\beta \; \text{and} \; I_1\ne \emptyset \\
	\emptyset, & \hbox{if} & I_1= \emptyset
	\end{array}
	\right. $$
	and
	$$
	Z_m = \left\{
	\begin{array}{cll}
	H_{a_m}\cup V(T(\lambda_{a_m},\lambda_{b_m}))\cup H_{b_m}, & \hbox{if} & ord_{\lambda_{p}}f=\beta\\
	H_{a_m}\cup V(T(\lambda_{a_m},\lambda_{b_m}))\cup H_{b_m}\cup H'_{b_m + 1}, & \hbox{if} &  ord_{\lambda_{p}}f>\beta \; \text{and} \; I_m\ne \emptyset \\
	\emptyset, & \hbox{if} & I_m= \emptyset
	\end{array}
	\right.. $$
	
	In any of the cases above, if $a_j=b_j$ we set $T(\lambda_{a_j},\lambda_{b_j})=\lambda_{a_j}$.
	
	Now we are going to prove that, for each $1\le j \le m$, if $Z_j\ne \emptyset$ then it is a $\beta$-zone maximal in $H_\beta$. We consider three cases: $1<j<m$, $j=1$ and $j=m$.
	
	\textit{Case $1<j<m$.} In this case we have $ord_{\lambda_{i_{j-1}}}f=ord_{\lambda_{i_{j}}}f=\beta$. Thus,  $I_j$ is nonempty. So, the numbers $a_j$ and $b_j$ exist and $Z_j$ is also nonempty. Finally, note that if $a_j\ne b_j$ then $H_{a_j}\cap V(T(\lambda_{a_j},\lambda_{b_j}))\ne \emptyset$ and $V(T(\lambda_{a_j},\lambda_{b_j}))\cap H_{b_j}\ne \emptyset$ (see Fig.~\ref{fig:Hbeta maximal zones}a), and if $a_j=b_j$ then $Z_j=H_{a_j}=H_{b_j}$. In any case $Z_j$ is a zone, since the union of a sequence of finitely many zones, such that the intersection of any two consecutive such zones is nonempty, is a zone. Moreover, Proposition \ref{Prop:width function properties elementary triangle} and Lemma \ref{Lem:itord>beta in H_{beta} and B_{beta} implies same ordf} imply that $Z_j$ is maximal in $H_\beta$ since from the definition of $a_j$ and $b_j$, if $V(T'')\cap Z_j\ne \emptyset$ for a H\"older triangle $T''$ with $V(T'')\subset H_\beta$, the boundary arcs of $T''$ must both belong to $Z_j$.
	
	\textit{Case $j=1$.} We have three options: $ord_{\lambda_{0}}f=\beta$, $ord_{\lambda_{0}}f>\beta$ and $I_1\ne \emptyset$, and $I_1 = \emptyset$.
	
	If $ord_{\lambda_{0}}f=\beta$ then, using the same arguments as in case 1, we obtain that $Z_1$ is a maximal $\beta$-zone in $H_\beta$.
	
	Suppose that $ord_{\lambda_{0}}f>\beta$ and $I_1\ne \emptyset$. Since $a_1$ and $b_1$ exist, note that, $H'_{a_1}$ and $H_{a_1}\cup V(T(\lambda_{a_1},\lambda_{b_1}))\cup H_{b_1}$ are adjacent zones (see Fig.~\ref{fig:Hbeta maximal zones}b and Fig.~\ref{fig:Hbeta maximal zones}c). Then, $Z_1$ is a zone. Moreover, by the definitions of $a_1$ and $b_1$, Proposition \ref{Prop:width function properties elementary triangle} and Lemma \ref{Lem:itord>beta in H_{beta} and B_{beta} implies same ordf} imply that every arc in $G(T)\cap H_\beta$ must belong to $Z_1$. So, again $Z_1$ is a maximal $\beta$-zone in $H_\beta$.
	
	If $I_1 = \emptyset$ then, by Proposition \ref{Prop:width function properties elementary triangle}, $H_\beta \cap G(T'_1)=\emptyset$.
	
	\textit{Case $j=m$.} This case is very similar to the case $j=1$ and its proof is omitted.
	
	Second, if $m=1$ we have four options: $ord_{\lambda_0}f>\beta$ and $ord_{\lambda_p}f>\beta$, $ord_{\lambda_0}f=ord_{\lambda_p}f=\beta$, $ord_{\lambda_0}f>\beta$ and $ord_{\lambda_p}f=\beta$, and $ord_{\lambda_0}f=\beta$ and $ord_{\lambda_p}f>\beta$.
	
	If $ord_{\lambda_0}f>\beta$ and $ord_{\lambda_p}f>\beta$ then $H_\beta=G(T)$.
	
	If $ord_{\lambda_0}f=ord_{\lambda_p}f=\beta$ then, similarly as shown above in case $1<j<m$, $Z=H_{a_1}\cup V(T(\lambda_{a_1},\lambda_{b_1}))\cup H_{b_1}$ is a closed perfect $\beta$-zone maximal in $H_\beta$.
	
	If $ord_{\lambda_0}f>\beta$ and $ord_{\lambda_p}f=\beta$ then either $H_\beta=\emptyset$ if $I_1=\emptyset$ or $H'_{a_1}\cup H_{a_1}\cup V(T(\lambda_{a_1},\lambda_{b_1}))\cup H_{b_1}$ is a closed perfect $\beta$-zone maximal in $H_\beta$ otherwise.
	
	If $ord_{\lambda_0}f=\beta$ and $ord_{\lambda_p}f>\beta$ then either $H_\beta=\emptyset$ if $I_1=\emptyset$ or $H_{a_1}\cup V(T(\lambda_{a_1},\lambda_{b_1}))\cup H_{b_1}\cup H'_{b_1 + 1}$ is a closed perfect $\beta$-zone maximal in $H_\beta$ otherwise.
	
	Finally, since $Z_1, \ldots , Z_m$ are disjoint zones maximal in $H_\beta$, any maximal zone in $H_\beta$ coincide with one of those. 
	
	$(4)$ By item $(3)$ of this Proposition, $H_\beta=\bigcup_{j=1}^mZ_j$.

	\begin{figure}
		\centering
		\includegraphics[width=4.5in]{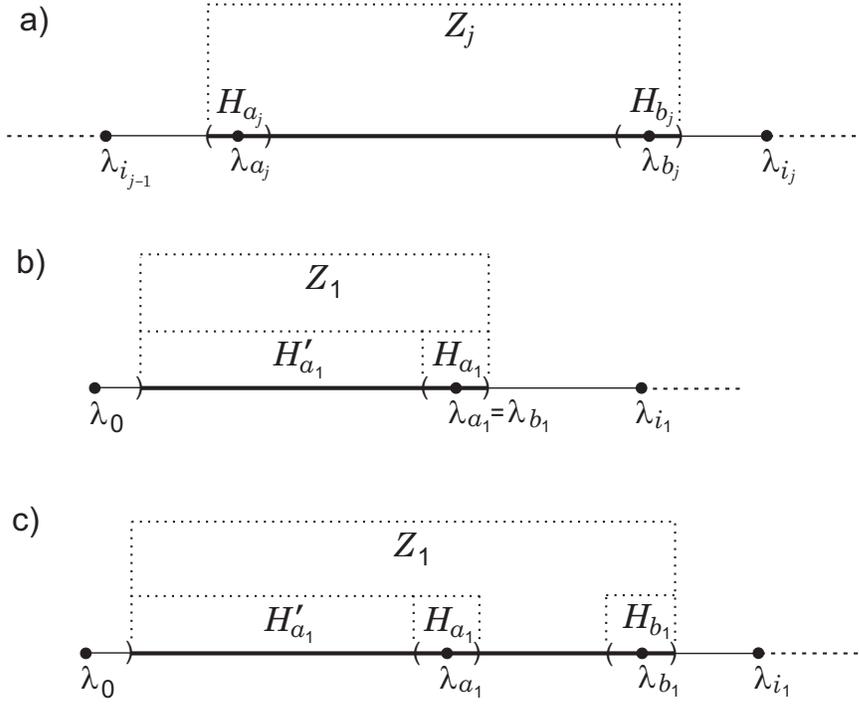}
		\caption{Several cases in the proof of Proposition 3.7:\newline a) $1<j<m$; b) $j=1,\; a_1=b_1$; c) $j=1,\; a_1<b_1$.
			Zones $Z_j$, $H_i$ and $H'_i$ are indicated by dotted lines. The ``open intervals'' containing $\lambda_i$
			represent open $\beta$-complete zones.}\label{fig:Hbeta maximal zones}
	\end{figure}
\end{proof}

\section{Snakes}\label{Section: Snakes}

In this section we define snakes, one of the main objects of this paper. A $\beta$-snake is an abnormal surface germ which is a $\beta$-H\"older triangle. We define a canonical partition of the Valette link of a $\beta$-snake into segments and nodal zones. All segments of a $\beta$-snake are closed perfect $\beta$-zones, and all its nodal zones are open $\beta$-complete. A node of a $\beta$-snake is defined as the union of its nodal zones having tangency order higher than $\beta$. We consider relations between pancake decompositions of a snake and its segments and nodes.

\subsection{Snakes and their pancake decomposition}\label{Subsec: Snakes and their pancakes}

\begin{Def}\label{Def:snake}
	\normalfont A non-singular $\beta$-H\"older triangle $T$ is called a $\beta$\textit{-snake} if $T$ is an abnormal surface (see Definition \ref{Def: abnormal surface}).
\end{Def}

\begin{remark}\label{Rem: abnormal arcs of a snake}
	\normalfont It follows from Definition \ref{Def:snake} and Remark \ref{Rem: generic arcs of a non-singular HT} that each normal arc in $T$ has inner tangency order higher than $\beta$ with one of its boundary arcs, and each abnormal arc in $T$ has inner tangency order $\beta$ with both boundary arcs.
\end{remark}

\begin{Exam}\label{Exam: explanation about snakes links' pictures}
	\normalfont
	A snake with the link as in Fig.~\ref{fig:three snakes}a is a bubble snake (see Definition \ref{Def:bubble} below). A snake with the link as in Fig.~\ref{fig:three snakes}b is a binary snake, while a snake with the link as in Fig.~\ref{fig:three snakes}c is not (see Definition \ref{Def: binary snake} below). We use planar pictures to represent the links of snakes. Points in the picture correspond to arcs in a snake with the given link. Although the Euclidean distance in the link's picture does not accurately translate the tangency order of arcs in the snake with the given link, we will often use it so that points with smaller Euclidean distance in the picture correspond to arcs in the snake with higher tangency order. For example, points inside the shaded disks correspond to arcs with the tangency order higher than $\beta$.
\end{Exam}

\begin{remark}\label{REM: different number of pancakes in minimal pancake decompositions}
	\normalfont Note that minimal generic pancake decompositions of a snake may have different number of pancakes. For example, one of the two minimal pancake decompositions of the snake Fig.~\ref{fig:three snakes}b shown in Fig.~\ref{fig:Two pancake decomposition of a snake} has two pancakes while the other one has three.
\end{remark}

\begin{remark}\label{REM: circular snake}
	\normalfont One can also define a \textit{circular snake} as a surface with connected link such that any arc in it is abnormal (in particular, each of its arcs is Lipschitz non-singular).
The simplest circular snake is a normally embedded surface germ with the link homeomorphic to a circle.
A link of a circular snake that is not normally embedded is shown in Fig.~\ref{fig:circ-snakes}.
Circular snakes are not discussed in this paper, although they appear in Example
\ref{Exam: curve in C2}, and in Theorem \ref{Teo: Main HT decomposition}.
\end{remark}

\begin{figure}
	\centering
	\includegraphics[width=4.5in]{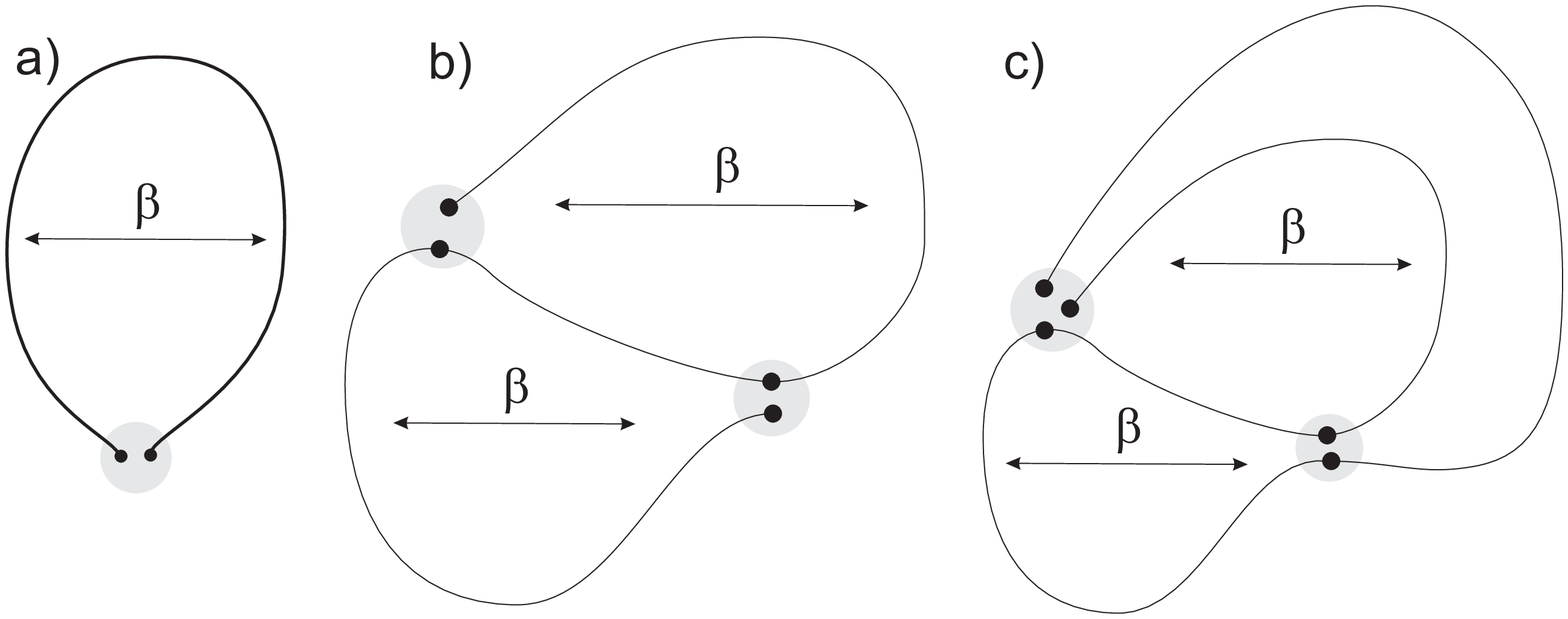}
	\caption{Three links of $\beta$-snakes: a) a bubble snake; b) a binary snake; c) a non-binary snake. Shaded disks represent arcs with the tangency order higher than $\beta$.}\label{fig:three snakes}
\end{figure}

\begin{figure}
	\centering
	\includegraphics[width=4.5in]{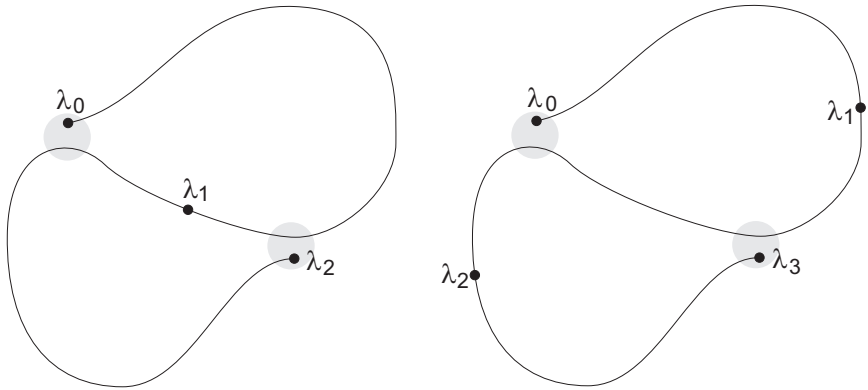}
	\caption{Two minimal pancake decompositions of the snake in Fig.~\ref{fig:three snakes}b. Black dots indicate the boundary arcs of pancakes.}\label{fig:Two pancake decomposition of a snake}
\end{figure}

\begin{figure}
	\centering
	\includegraphics[width=3in]{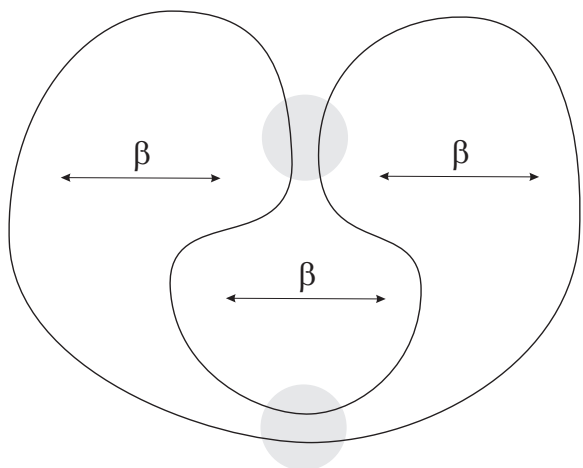}
	\caption{An example of a circular $\beta$-snake. Shaded disks represent arcs with the tangency order higher than $\beta$.}\label{fig:circ-snakes}
\end{figure}

\begin{figure}
	\centering
	\includegraphics[width=3in]{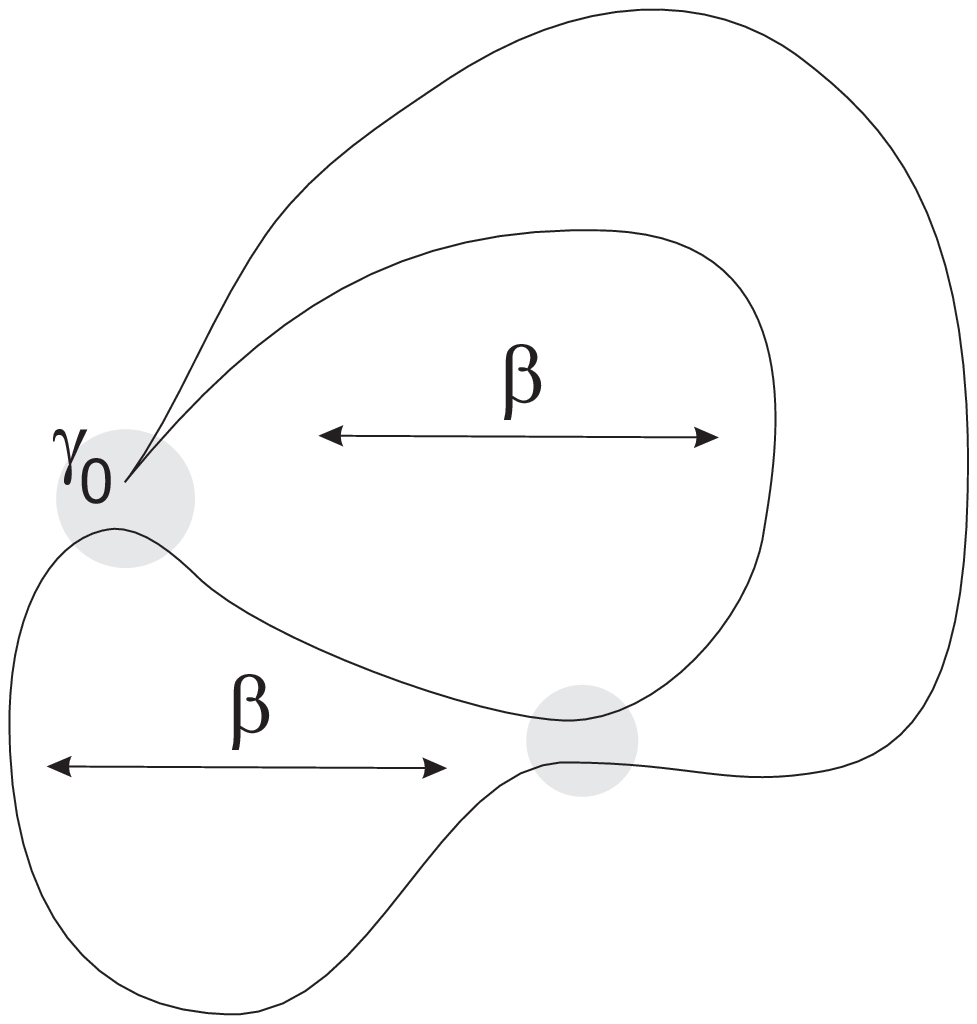}
	\caption{The link of a cusp snake with a singular arc $\gamma_0$. Shaded disks represent arcs with tangency order higher than $\beta$.}\label{fig:cusp snake}
\end{figure}

\begin{Exam}\label{Exam: cusp snake}	
	\normalfont A circular link (bi-Lipschitz homeomorphic to a circle with respect to the inner metric) of an abnormal $\beta$-surface $X$ with $Lsing(X)=\gamma_0$ is shown in Fig.~\ref{fig:cusp snake}. Note that $X$ is not a snake, since it is not even a H\"older triangle. Despite its circular link, $X$ is not a circular snake as well, since it contains the singular arc $\gamma_0$. One can obtain a snake $T\subset X$ as follows. Consider arcs $\gamma_1\ne\gamma_0$ and $\gamma_2\ne\gamma_0$ in $X$ such that $itord(\gamma_0,\gamma_1)=itord(\gamma_0,\gamma_2)=\alpha>\beta$ and $T=T(\gamma_{1},\gamma_{2})\subset (X\setminus \gamma_0)\cup \{0\}$ is a $\beta$-H\"older triangle. Then $T$ is a $\beta$-snake with link as shown in Fig.~\ref{fig:three snakes}c. This surface $X$ may be considered as a snake with both boundary arcs equal the singular arc $\gamma_0$. We call such a surface a cusp snake.
\end{Exam}

\begin{Lem}\label{Lem: maximal abnormal zone alpha le beta}
	Let $X$ be a surface germ and $A\subset V(X)$ a maximal abnormal $\beta$-zone. Let $\gamma \in A$, and let $T=T(\lambda,\gamma)\subset X$ and $T'=T(\gamma,\lambda')\subset X$ be normally embedded $\alpha$-H\"older triangles such that $T\cap T'=\gamma$ and $tord(\lambda,\lambda')>itord(\lambda,\lambda')$. Then $\alpha\le \beta$.
\end{Lem}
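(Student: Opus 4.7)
My plan is to argue by contradiction, assuming $\alpha>\beta$ and deriving a contradiction with $\mu(A)=\beta$.

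\textbf{First}, since $T$ and $T'$ are normally embedded, $tord(\lambda,\gamma)=itord(\lambda,\gamma)=\alpha$ and $tord(\gamma,\lambda')=itord(\gamma,\lambda')=\alpha$; combined with the hypothesis $tord(\lambda,\lambda')>itord(\lambda,\lambda')$, Lemma~\ref{Lem:beta-bubble two NE pieces} applied to $T\cup T'=T(\lambda,\lambda')$ with $\gamma$ as interior arc yields $itord(\lambda,\lambda')=\alpha$, so that $T\cup T'$ is a non-normally embedded H\"older triangle of exponent $\alpha$. By the Remark following Definition~\ref{Def:order of zone}, $\mu(A)=\inf_{\gamma'\in A}itord(\gamma,\gamma')=\beta$; since $\beta<\alpha$, I would choose $\gamma_0\in A$ with $\beta_0:=itord(\gamma,\gamma_0)<\alpha$. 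The zone property of $A$ (Definition~\ref{Def: zone}) provides a non-singular H\"older triangle $T_0=T(\gamma,\gamma_0)\subset X$ with $V(T_0)\subset A$ of exponent $\beta_0$; after replacing $T_0$ by the pancake of its pancake decomposition adjacent to $\gamma$ (Remark~\ref{Rem: existence of pancake decomp}), I may assume $T_0$ is normally embedded.

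\textbf{Next (the heart of the argument)}, every arc in $V(T)\cup V(T')$ has $itord(\gamma,\cdot)\ge\alpha$ while generic arcs of $T_0$ have $itord(\gamma,\cdot)=\beta_0<\alpha$; hence $T_0$ must extend from $\gamma$ along a region of $X$ not contained in $V(T\cup T')$. Since $\gamma$ is Lipschitz non-singular in $X$ (as $\gamma$ is abnormal, by Definition~\ref{DEF: normal and abnormal arcs and zones}), the two ``sides'' of $\gamma$ in a normally embedded neighborhood are locally occupied by $T$ and $T'$, and the normally embedded $T_0$ must fit along one of them, say the $T$-side. The inequality $\beta_0<\alpha=\mu(T)$ together with the normally embedded property of both $T$ and $T_0$ then forces $T\subset T_0$ (the ``wider'' NE triangle of smaller exponent swallows the ``narrower'' NE triangle of larger exponent that shares the boundary arc $\gamma$). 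Consequently $V(T)\subset V(T_0)\subset A$, and in particular $\lambda\in A$; a symmetric construction, using an arc of $A$ on the $T'$-side, yields $\lambda'\in A$.

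\textbf{Finally}, having $\lambda,\lambda'\in A$, the zone property of $A$ produces a non-singular H\"older triangle $T^\sharp=T(\lambda,\lambda')\subset X$ with $V(T^\sharp)\subset A$, of exponent $itord(\lambda,\lambda')=\alpha$. Combining $V(T^\sharp)\subset A$ with the earlier inclusion $V(T_0)\subset A$ and the strict inequality $tord(\lambda,\lambda')>\alpha$, a non-archimedean estimate on tangency orders of pairs $(\xi,\gamma_0)$ with $\xi\in V(T^\sharp)$ produces a pair in $A$ whose inner tangency is strictly less than $\beta$, contradicting $\mu(A)=\beta$. \emph{The principal obstacle is the ``two-sides / $T\subset T_0$'' claim in the middle paragraph}: making it rigorous in the general setting requires a careful use of pancake decompositions and the structure of normally embedded H\"older triangles rather than a naive $2$-dimensional local model, and ensuring the existence of a suitable arc of $A$ on the $T'$-side so that the symmetric conclusion $\lambda'\in A$ follows.
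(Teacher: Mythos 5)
The first part of your proposal tracks the paper's opening: you establish $itord(\lambda,\lambda')=\alpha$ via Lemma~\ref{Lem:beta-bubble two NE pieces}, use $\mu(A)=\beta<\alpha$ to find $\gamma_0\in A$ with $itord(\gamma,\gamma_0)<\alpha$, and conclude $\lambda\in A$. The paper gets to $\lambda\in A$ by a slightly different route (choosing a non-singular $\beta$-H\"older triangle $T_0$ with $T_0\cap T'=\gamma$ and $V(T_0)\cap A$ a $\beta$-zone, whence any $\theta\subset T_0$ with $itord(\theta,\gamma)>\beta$ lies in $A$), avoiding the need to make $T_0$ normally embedded and thereby sidestepping the ``$T\subset T_0$'' swallowing argument that you yourself flag as shaky. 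But the bigger problem is downstream of $\lambda\in A$, and it is fatal.

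First, the symmetric step producing $\lambda'\in A$ is not justified and cannot be taken for granted: $A$ is a maximal abnormal zone, and nothing guarantees that it extends to inner tangency $<\alpha$ from $\gamma$ on the $T'$-side; $\gamma$ could perfectly well sit at the ``edge'' of $A$ in that direction. Second, and more decisively, the final contradiction does not exist: you claim that non-archimedean estimates ``produce a pair in $A$ whose inner tangency is strictly less than $\beta$'', but this is impossible on its face, since $\mu(A)=\beta$ is \emph{defined} as the infimum of inner tangency orders over pairs in $A$, so every pair in $A$ has $itord\ge\beta$; moreover the non-archimedean inequality only yields lower bounds of the form $itord(\xi,\gamma_0)\ge\min(itord(\xi,\gamma),itord(\gamma,\gamma_0))\ge\beta$, never an upper bound below $\beta$. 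The paper closes the argument in a completely different way: having $\lambda=\theta_0\in A$ abnormal, it constructs normally embedded $\alpha_1$-triangles $T_1=T(\theta_1,\theta_0)$ and $T'_1=T(\theta_0,\theta'_1)$ with $tord(\theta_1,\theta'_1)>itord(\theta_1,\theta'_1)$, forces $\theta'_1\subset T\cup T'$ (because $T\cup T'$ is not normally embedded), deduces $\alpha_1>\beta$ by non-archimedeanness, hence $\theta_1\in A$, and \emph{iterates}. Tracking this iteration against a fixed minimal pancake decomposition shows that $\theta'_i$ and $\theta_{i+2}$ must lie in different pancakes, so the process exhausts the finitely many pancakes and cannot continue --- contradiction. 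That descent/finiteness mechanism (using the existence and finiteness of pancake decompositions from Remark~\ref{Rem: existence of pancake decomp}) is what actually kills the assumption $\alpha>\beta$; it is entirely absent from your proposal, and without it there is no way to finish from $\lambda\in A$ alone.
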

\begin{proof}
	Suppose, by contradiction, that $\alpha >\beta$.
	Since $A$ has order $\beta$, we can assume, without loss of generality, that there is a non-singular $\beta$-H\"older triangle $T_0=T(\gamma_0,\gamma)\subset X$ such that $T_0\cap T'=\gamma$ and $V(T_0)\cap A$ is a $\beta$-zone.
	In particular, if $\theta \subset T_0$ is any arc such that $itord(\theta,\gamma)>\beta$, then $\theta \in A$. Since $itord(\lambda,\gamma)=\alpha >\beta$, we have $\lambda \in A$. Let $\theta_0=\lambda$ and $\theta'_0=\lambda'$. Then, there are normally embedded $\alpha_1$-H\"older triangles $T_1=T(\theta_1,\theta_0)\subset X$ and $T'_1=T(\theta_0,\theta'_1)\subset X$  such that $T(\gamma_0,\theta_0)\cap T'_1=T_1\cap T'_1 = \theta_0$ and $tord(\theta_1,\theta'_1)>itord(\theta_1,\theta'_1)$.
	Since $T\cup T'$ is not normally embedded, we have $\theta'_1\subset T\cup T'$.
	
	Note that $\alpha_1 >\beta$. Indeed, if $\theta'_1 \subset T$ then, by non-archimedean property, $\alpha_1=itord(\theta_0,\theta'_1)\ge itord(\theta_0,\gamma)=\alpha >\beta$. If $\theta'_1 \subset T'$ we use that $\alpha=itord(\gamma,\theta'_0)$ and apply non-archimedean property again. Thus, $\theta_1 \in A$ and there are normally embedded $\alpha_2$-H\"older triangles $T_2=T(\theta_2,\theta_1)\subset X$ and $T'_2=T(\theta_1,\theta'_2)\subset X$ such that $T(\gamma_0,\theta_1)\cap T'_2=T_2\cap T'_2 = \theta_1$ and $tord(\theta_2,\theta'_2)>itord(\theta_2,\theta'_2)$.
	Since $T_1\cup T'_1$ is not normally embedded, we have $\theta'_2\subset T_1\cup T'_1$. Similarly, we prove that $\alpha_2\ge \alpha_1 > \beta$ and obtain that $\theta_2 \in A$.
	
	Continuing this procedure for $i>2$, we see that $\alpha_i\ge \cdots \ge \alpha_1>\beta$, thus $\theta_i\in A$ and there are normally embedded $\alpha_{i+1}$-H\"older triangles $T_{i+1}=T(\theta_{i+1},\theta_i)$ and $T'_{i+1}=T(\theta_i,\theta'_{i+1})$ such that $T_{i+1}\cap T'_{i+1} = \theta_i$, $tord(\theta_{i+1},\theta'_{i+1})>itord(\theta_{i+1},\theta'_{i+1})$ (see Fig.~\ref{fig: maximal abnormal zone alpha le beta}).
	
	Given a minimal pancake decomposition of $X$, for each $i\ge 0$, $\theta'_i$ and $\theta_{i+2}$ belong to different pancakes, since $T(\theta_i,\theta'_i)\subset T(\theta_{i+2},\theta'_i)$ is not normally embedded. Since there are only finitely many pancakes in a minimal pancake decomposition, our procedure must stop after finitely many steps, in contradiction with $\alpha>\beta$.
\end{proof}
\begin{figure}
	\centering
	\includegraphics[width=4in]{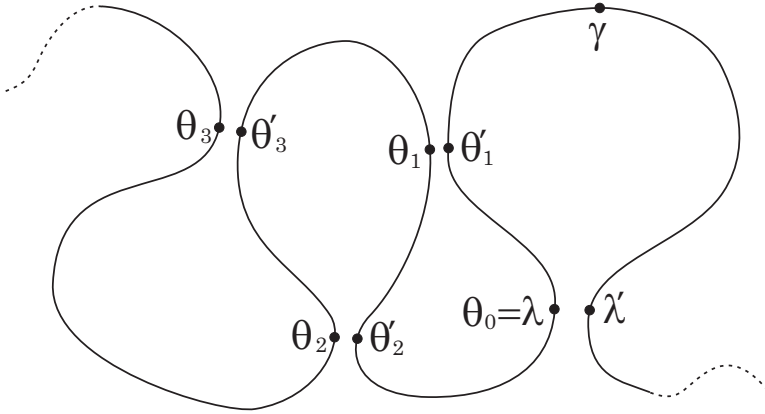}
	\caption{Construction in the proof of Lemma \ref{Lem: maximal abnormal zone alpha le beta}. Each pair $\theta_i$, $\theta'_i$ has tangency order higher than its inner tangency order.}\label{fig: maximal abnormal zone alpha le beta}
\end{figure}

\begin{Lem}\label{Lem:minimal pancake decomp of a snake}
	Let $X$ be a $\beta$-snake, and let $\{X_k\}_{k=1}^p$ be a minimal pancake decomposition of $X$.
Then each $X_k$ is a $\beta$-H\"older triangle.
\end{Lem}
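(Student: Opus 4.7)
The plan is to establish that each pancake's exponent equals $\beta$. Every pancake $X_k$ is a normally embedded H\"older triangle by the definition of pancake decomposition and Remark \ref{Rem:pancake of holder triangle is holder triangle}, with exponent $\beta_k\ge\mu(X)=\beta$ since $X_k\subset X$. It remains to exclude $\beta_k>\beta$.

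Assume $p\ge 2$ (otherwise the claim is trivial) and label the pancakes so that $X_k=T(\lambda_{k-1},\lambda_k)$ with $\lambda_0=\gamma_1$, $\lambda_p=\gamma_2$, and interior separating arcs $\lambda_1,\ldots,\lambda_{p-1}$. The first step is to observe that each shared arc $\lambda_k$ with $1\le k\le p-1$ is abnormal: by minimality, $X_k\cup X_{k+1}$ is not normally embedded, while both $X_k$ and $X_{k+1}$ are normally embedded and meet precisely along $\lambda_k$, so Definition \ref{DEF: normal and abnormal arcs and zones} applies. Since $X$ is a snake, Definition \ref{Def: abnormal surface} gives $Abn(X)=G(X)$, so $\lambda_k$ is generic in $X$, and Remark \ref{Rem: generic arcs of a non-singular HT} yields $itord(\gamma_1,\lambda_k)=itord(\lambda_k,\gamma_2)=\beta$. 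For the extremal pancakes this immediately gives $\beta_1=itord(\gamma_1,\lambda_1)=\beta$ and $\beta_p=itord(\lambda_{p-1},\gamma_2)=\beta$, since each of $X_1$ and $X_p$ is normally embedded with the indicated boundary arcs.

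For a middle pancake $X_k$ with $1<k<p$, I would proceed by contradiction, assuming $\beta_k>\beta$. The arc $\lambda_{k-1}$ lies in a maximal abnormal zone $A$, and since $G(X)\subset A$ and $G(X)$ is a $\beta$-zone, the order of $A$ equals $\beta$. The strategy is then to exhibit normally embedded $\alpha$-H\"older triangles $T$ and $T'$ of common exponent $\alpha=\max(\beta_{k-1},\beta_k)>\beta$ with $T\cap T'=\lambda_{k-1}$, constructed as sub-triangles of $X_{k-1}$ and $X_k$ respectively with $\lambda_{k-1}$ as one boundary arc; such sub-triangles of any prescribed exponent $\ge\max(\beta_{k-1},\beta_k)$ are available, since each $X_j$ is a normally embedded $\beta_j$-H\"older triangle so its arcs realise every inner tangency $\ge\beta_j$ with $\lambda_{k-1}$. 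Lemma \ref{Lem: maximal abnormal zone alpha le beta}, applied to $\lambda_{k-1}\in A$, would then force $\alpha\le\beta$, contradicting $\alpha>\beta$.

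The main obstacle is verifying the remaining hypothesis of Lemma \ref{Lem: maximal abnormal zone alpha le beta}, namely that the outer boundary arcs $\mu$ of $T$ and $\mu'$ of $T'$ can be chosen so that $tord(\mu,\mu')>itord(\mu,\mu')$. The required strict tangency discrepancy exists somewhere in $X_{k-1}\cup X_k$, which is not normally embedded by minimality: there are arcs $\eta\in X_{k-1}$ and $\eta'\in X_k$ with $tord(\eta,\eta')>itord(\eta,\eta')$. A careful non-archimedean bookkeeping --- tracing the tangency interactions of $\eta,\eta'$ with $\lambda_{k-1}$ and with any candidate boundary $\mu$ of an $\alpha$-sub-triangle of $X_{k-1}$, and comparing them with those of the corresponding arc in $X_k$ --- would allow one to refine the choice of $T$ and $T'$ so that the boundary arcs $\mu,\mu'$ themselves witness the discrepancy, completing the application of Lemma \ref{Lem: maximal abnormal zone alpha le beta} and finishing the proof.
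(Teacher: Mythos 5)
Your strategy is the same as the paper's: identify each separating arc $\lambda_j$ as abnormal (hence generic by $Abn(X)=G(X)$), deduce the extremal exponents directly from Remark~\ref{Rem: generic arcs of a non-singular HT}, and for middle pancakes invoke Lemma~\ref{Lem: maximal abnormal zone alpha le beta} with $\lambda_{k-1}$ as the central arc. The extremal case is correct, and is the contrapositive form of the paper's argument.

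The middle-pancake case, however, has a genuine gap that you flag but do not close. Worse, the way you set it up makes the gap harder to close than it needs to be: you fix the target exponent $\alpha=\max(\beta_{k-1},\beta_k)$ in advance and then try to force the boundary arcs $\mu,\mu'$ of $\alpha$-subtriangles to witness $tord(\mu,\mu')>itord(\mu,\mu')$. There is no guarantee such witnesses exist at exactly that exponent --- the failure of normal embedding of $X_{k-1}\cup X_k$ may only manifest at arcs whose common inner tangency with $\lambda_{k-1}$ is strictly greater than $\max(\beta_{k-1},\beta_k)$. The correct and much cleaner move, which is what the paper does, is to let the exponent be determined by the discrepancy rather than prescribed: take any $\eta\subset X_{k-1}$, $\eta'\subset X_k$ with $tord(\eta,\eta')>itord(\eta,\eta')$, and set $T=T(\eta,\lambda_{k-1})$, $T'=T(\lambda_{k-1},\eta')$. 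These are normally embedded (being contained in pancakes), and Lemma~\ref{Lem:beta-bubble two NE pieces} --- which is the precise tool your ``careful non-archimedean bookkeeping'' needs to become --- shows that both have the \emph{same} exponent $\alpha_0=itord(\eta,\eta')$. Since $T'\subset X_k$, one has $\alpha_0=\mu(T')\ge\mu(X_k)=\beta_k$, so if $\beta_k>\beta$ then $\alpha_0>\beta$, contradicting the conclusion $\alpha_0\le\beta$ of Lemma~\ref{Lem: maximal abnormal zone alpha le beta}. Without this explicit use of Lemma~\ref{Lem:beta-bubble two NE pieces} to pin the common exponent and the inclusion $T'\subset X_k$ to bound it from below, the argument is not complete.
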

\begin{proof}
	We may assume that $X=T(\lambda_{0},\lambda_{p})$ and $X_{k}=T(\lambda_{k-1},\lambda_{k})$ for each $k$, thus $X_{k}\cap X_{k+1}=\lambda_{k}$.
Let $\mu(X_{k}) = tord(\lambda_{k-1},\lambda_{k}) = \beta_{k}\ge\beta$.
	
	We prove first that $\beta_1=\beta$. If $\beta_{1}>\beta$ then, by the definition of a $\beta$-snake, $\lambda_{1}$ is a normal arc. However, since $X_{1}$ and $X_{2}$ are two normally embedded H\"older triangles such that $X_{1}\cap X_{2}=\lambda_{1}$ and, by the definition of a minimal pancake decomposition, $X_{1}\cup X_{2}$ is not normally embedded, $\lambda_{1}$ is also abnormal, which is a contradiction.
By a similar argument we can prove that $\beta_{p}=\beta$.
	
	Since $\beta_1=\beta_p=\beta$, we have $\lambda_j \in G(X)$ for $1<j<p$ by the definition of a $\beta$-snake.
Since $\{X_{k}\}$ is a minimal pancake decomposition, $X_j\cup X_{j+1}$ is not normally embedded,
thus there are normally embedded $\alpha$-H\"older triangles $T=T(\lambda,\lambda_j)\subset X_j$ and $T'=T(\lambda_j,\lambda')\subset X_{j+1}$ such that
$T\cap T'=\lambda_j$ and $tord(\lambda,\lambda')>itord(\lambda,\lambda')=\alpha$.
By Lemma \ref{Lem: maximal abnormal zone alpha le beta}, we have $\alpha \le \beta$.
Since $X$ is a $\beta$-snake and $T\subset X_j$, we have $\alpha\le \beta\le\beta_j\le\alpha$. Thus $\beta_j=\beta$ also for $1<j<p$.
\end{proof}

\begin{Def}\label{Def: weak LNE}
\normalfont A $\beta$-H\"older triangle $X$ is \textit{weakly normally embedded} if, for any two arcs $\gamma$ and $\gamma'$ in $X$ such that $tord(\gamma, \gamma') > itord(\gamma, \gamma')$, we have $itord(\gamma, \gamma')=\beta$.
\end{Def}

\begin{Prop}\label{Prop:weak LNE}
	Let $X$ be a $\beta$-snake. Then $X$ is weakly normally embedded.
\end{Prop}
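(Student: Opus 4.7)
The plan is to argue by contradiction: assume two arcs $\gamma,\gamma'\subset X$ satisfy $tord(\gamma,\gamma')>itord(\gamma,\gamma')=\alpha$ with $\alpha>\beta$. I would start by fixing a minimal pancake decomposition $\{X_k=T(\lambda_{k-1},\lambda_k)\}_{k=1}^p$ of $X$. By Lemma \ref{Lem:minimal pancake decomp of a snake} every pancake $X_k$ is a normally embedded $\beta$-H\"older triangle, and Remark \ref{Rem: NE HT condition} then forces $tord=itord$ on any pair of arcs inside a single pancake. Consequently $\gamma$ and $\gamma'$ must lie in different pancakes; after relabeling I may assume $\gamma\in X_j$, $\gamma'\in X_k$ with $j<k$, so that the pancake boundaries $\lambda_j,\ldots,\lambda_{k-1}$ all lie in the $\alpha$-H\"older triangle $T(\gamma,\gamma')\subset X$.

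The next step is a dichotomy based on how many pancake boundaries occupy $T(\gamma,\gamma')$. If two consecutive pancake boundaries $\lambda_l$ and $\lambda_{l+1}$ both lie in $T(\gamma,\gamma')$, then on one hand, being the boundary arcs of the $\beta$-H\"older triangle $X_{l+1}$, they satisfy $itord(\lambda_l,\lambda_{l+1})=\beta$; on the other hand, being two arcs of the $\alpha$-H\"older triangle $T(\gamma,\gamma')$, they satisfy $itord(\lambda_l,\lambda_{l+1})\ge\alpha>\beta$, a contradiction. This easy case also subsumes the configurations in which $\gamma$ or $\gamma'$ itself is a pancake boundary adjacent to another pancake boundary inside $T(\gamma,\gamma')$.

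The remaining situation is that exactly one pancake boundary $\lambda_l$ lies strictly inside $T(\gamma,\gamma')$, forcing $\gamma$ to be an interior arc of $X_l$ and $\gamma'$ an interior arc of $X_{l+1}$. I would then apply Lemma \ref{Lem:beta-bubble two NE pieces} to the $\alpha$-H\"older triangle $T(\gamma,\gamma')$ with interior arc $\lambda_l$: normal embeddedness of $X_l$ and $X_{l+1}$ gives $tord=itord$ on each of the pairs $(\gamma,\lambda_l)$ and $(\lambda_l,\gamma')$, and the hypothesis $tord(\gamma,\gamma')>\alpha$ then pins down $itord(\gamma,\lambda_l)=itord(\lambda_l,\gamma')=\alpha$. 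Hence $T(\gamma,\lambda_l)\subset X_l$ and $T(\lambda_l,\gamma')\subset X_{l+1}$ are normally embedded $\alpha$-H\"older triangles meeting only along $\lambda_l$, whose union $T(\gamma,\gamma')$ is not normally embedded, so $\lambda_l$ is an abnormal arc of $X$. Since $X$ is a snake, $Abn(X)=G(X)$, and by Example \ref{Exam: closed, complete and perfect zones} the set $G(X)$ is a single abnormal $\beta$-zone, so the maximal abnormal zone through $\lambda_l$ coincides with $G(X)$ and has order $\beta$. Lemma \ref{Lem: maximal abnormal zone alpha le beta} then yields $\alpha\le\beta$, contradicting $\alpha>\beta$.

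The hard part is this last case, because a direct non-archimedean comparison through $\lambda_l$ does not by itself produce a contradiction: both $tord(\gamma,\lambda_l)$ and $tord(\lambda_l,\gamma')$ might equal $\alpha$ while $tord(\gamma,\gamma')$ strictly exceeds $\alpha$. One must first extract the precise value $\alpha$ of these two inner tangency orders using Lemma \ref{Lem:beta-bubble two NE pieces}, thereby identifying $\lambda_l$ as the center of a bubble of exponent $\alpha$, and only then invoke the structural Lemma \ref{Lem: maximal abnormal zone alpha le beta} to bound the bubble exponent by the order of the maximal abnormal zone through $\lambda_l$, which the snake hypothesis forces to be exactly $\beta$.
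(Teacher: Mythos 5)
Your proof is correct and follows essentially the same route as the paper: pick a minimal pancake decomposition, invoke Lemma \ref{Lem:minimal pancake decomp of a snake} to make all pancakes $\beta$-H\"older, dispose of the non-adjacent case by the non-archimedean comparison against $itord(\lambda_l,\lambda_{l+1})=\beta$, and in the adjacent case use Lemma \ref{Lem:beta-bubble two NE pieces} to pin the two inner tangency orders at $\alpha$ and then Lemma \ref{Lem: maximal abnormal zone alpha le beta} applied to the maximal abnormal zone $Abn(X)=G(X)$ to force $\alpha\le\beta$. The only differences are stylistic (you frame the whole argument as a contradiction and spell out the case split more explicitly), plus one harmless imprecision: $\gamma$ need not be an interior arc of the pancake $X_l$ (it could be its boundary arc $\lambda_{l-1}$), but the ensuing argument does not actually use interiority.
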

\begin{proof}
	Let $\gamma$ and $\gamma'$ be two arcs in $X$ such that $tord(\gamma, \gamma') > itord(\gamma, \gamma')$.
Consider a minimal pancake decomposition $\{X_{k}\}_{k=1}^{p}$ of $X$.
Since each pancake is normally embedded, $\gamma$ and $\gamma'$ do not belong to the same  pancake. If $\gamma$ and $\gamma'$ are not in adjacent pancakes, then Lemma \ref{Lem:minimal pancake decomp of a snake} implies that  $itord(\gamma, \gamma')=\beta$.
Let us assume that $\gamma \subset X_{j-1}$ and $\gamma' \subset X_{j}$ for some $j>1$. Consider $T=T(\gamma,\lambda_{j-1})\subset X_{j-1}$ and $T'=T(\lambda_{j-1},\gamma')\subset X_j$.
Note that both $T$ and $T'$ are normally embedded, since each of them is contained in a pancake. Let $\alpha = itord(\gamma, \gamma')$.
Since $tord(\gamma, \gamma')>\alpha$, Lemma \ref{Lem:beta-bubble two NE pieces} implies that $\mu(T)=\mu(T')=\alpha$.
Since $\lambda_{j-1}\in G(X)$ by Lemma \ref{Lem:minimal pancake decomp of a snake},
Lemma \ref{Lem: maximal abnormal zone alpha le beta} implies that $\alpha \le \beta$.
Since $X$ is a $\beta$-snake and $T(\gamma,\gamma')\subset X$, we have also $\alpha\ge\beta$. Thus $\alpha=\beta$.
\end{proof}

\subsection{Segments and nodes}\label{Subsec: Segments and nodes}

\begin{Def}\label{Def:horn-neighbourhood}
	\normalfont Let $X$ be a surface and $\gamma \subset X$ an arc. For $a>0$ and $1\le\alpha\in\F$, the $(a,\alpha)$-\textit{horn neighborhood} of $\gamma$ in $X$ is defined as follows: $$HX_{a,\alpha}(\gamma) = \bigcup_{0\le t \ll1} X\cap S(0,t)\cap \overline{B}(\gamma(t),at^{\alpha}),$$
	where $S(0,t)=\{x \in \mathbb{R}^{n} : |x|=t\}$ and $\overline{B}(y,R) = \{x \in \mathbb{R}^{n} : |x - y|\le R\}$.
\end{Def}

\begin{remark}\label{Rmk:horn-neighborhood}
	\normalfont When there is no confusion about the surface $X$ being considered, one writes $H_{a,\alpha}(\gamma)$ instead of $HX_{a,\alpha}(\gamma)$.
\end{remark}

\begin{Def}\label{Def of multiplicity}
	\normalfont If $X$ is a $\beta$-snake and $\gamma$ an arc in $X$, the \textit{multiplicity} of $\gamma$, denoted by $m_{X}(\gamma)$ (or just $m(\gamma)$), is defined as the number of connected components of $HX_{a,\beta}(\gamma)\setminus \{0\}$ for $a>0$ small enough.
\end{Def}

\begin{remark}\normalfont
	Since $X$ is definable in an o-minimal structure, the family of sets $\{HX_{a,\beta}(\gamma)\}_{a>0}$ in Definition \ref{Def of multiplicity} is also definable. In particular, the number of connected component of this set is constant for small $a>0$.
\end{remark}

\begin{Lem}\label{Lem:exist arc in horn connected component}
	Let $X$ be a surface, $\gamma \subset X$ an arc and $Y\subset X$ a closed set. If, for $a>0$ sufficiently small, $Y\cap HX_{a,\alpha}(\gamma)\ne\{0\}$, then there is an arc $\gamma'\subset Y$ such that $tord(\gamma,\gamma')>\alpha$.
\end{Lem}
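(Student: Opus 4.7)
The plan is to interpret the hypothesis as an accumulation statement and then apply curve selection in a product space to extract a single arc. Because the horn neighborhood is defined as a germ (through the condition $0 \le t \ll 1$), saying that $Y \cap HX_{a,\alpha}(\gamma)$ is not the trivial germ $\{0\}$ means exactly that, for each sufficiently small $a>0$, the definable set
\[
E_a = \{x \in Y \setminus\{0\} : |x - \gamma(|x|)| \le a\, |x|^\alpha\}
\]
has $0$ in its closure. In particular, for each such $a$ there are points of $E_a$ with $|x|$ arbitrarily small, so the definable set
\[
D = \bigl\{(x,a) \in (Y\setminus\{0\}) \times (0,\infty) : |x - \gamma(|x|)| \le a\, |x|^\alpha \bigr\}
\]
contains points with both $|x|$ and $a$ arbitrarily small; equivalently, $(0,0) \in \overline{D}$.

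By the Curve Selection Lemma for definable sets in an o-minimal structure, there is a definable arc $\sigma(s) = (\gamma'(s), a(s))$ with $\sigma(0) = (0,0)$ and $\sigma(s) \in D$ for $s > 0$. In particular $\gamma'(s) \in Y$, $\gamma'(s) \to 0$, $a(s) \to 0$, and $\gamma'$ is non-constant near $s=0$, so we may reparametrize $\gamma'$ by distance to the origin, writing $|\gamma'(t)| = t$. After reparametrization, $a$ becomes a definable function of $t$ with $a(t) \to 0^+$ as $t \to 0^+$, and we retain the pointwise inequality
\[
|\gamma'(t) - \gamma(t)| \le a(t)\, t^\alpha.
\]

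The final step is to promote "$a(t) \to 0$" to a polynomial rate, which is where polynomial boundedness of the ambient o-minimal structure enters: any definable function of one variable tending to $0$ at $0$ is eventually dominated by some positive power of $t$, so there exist $c > 0$ and $\delta > 0$ with $a(t) \le c\, t^\delta$ for small $t$. Combined with the inequality above, this gives $|\gamma'(t) - \gamma(t)| \le c\, t^{\alpha + \delta}$, and hence $tord(\gamma, \gamma') \ge \alpha + \delta > \alpha$. Since $\gamma'$ is an arc in the closed set $Y$, this is the required conclusion.

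The main obstacle is conceptual rather than computational: one has to recognize that the hypothesis is really a joint accumulation statement in the product variable $(x,a)$, so that a single application of Curve Selection yields an arc whose constant $a(t)$ depends on $t$ and tends to zero. Without this product construction one would only obtain, for each fixed $a$, an arc with tangency order at least $\alpha$ (but possibly equal to $\alpha$), which would be too weak.
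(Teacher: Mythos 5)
Your proof is correct, but it takes a genuinely different route from the paper's. The paper's argument constructs the definable set of nearest points $M=\bigcup_t M_t$, where $M_t=\{x\in Y_t : d(\gamma(t),Y_t)=d(\gamma(t),x)\}$ and $Y_t=Y\cap S(0,t)$, applies the Arc Selection Lemma once to get an arc $\gamma'\subset M\subset Y$, and then argues by contradiction: if $tord(\gamma,\gamma')\le\alpha$, then $d(\gamma(t),Y_t)$ is comparable to $t^{q}$ with $q\le\alpha$, so for $a$ small enough the horn $H_{a,\alpha}(\gamma)$ misses $Y\setminus\{0\}$ entirely, contradicting the hypothesis. Your argument instead reformulates the hypothesis as a joint accumulation at $(0,0)$ in the product $\{x\in Y\setminus\{0\}\}\times(0,\infty)$, applies Curve Selection there to get a pair $(\gamma'(s),a(s))$, reparameterizes, and then invokes polynomial boundedness explicitly to promote $a(t)\to 0$ to $a(t)\le c\,t^\delta$, giving $tord(\gamma,\gamma')\ge\alpha+\delta>\alpha$ directly rather than by contradiction. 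Both are sound. The paper's nearest-point construction is slightly more economical because $d(\gamma(t),Y_t)$ automatically realizes the infimum, so no auxiliary variable $a$ is needed, and polynomial boundedness enters only implicitly via the existence of the tangency order $q$; your product-space argument makes the role of polynomial boundedness transparent (it is the step that converts ``$a(t)\to 0$'' into a power saving), at the cost of the extra bookkeeping in the product.
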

\begin{proof}
	Let $Y_t=S(0,t)\cap Y$ and $M_t=\{x \in Y_t : d(\gamma(t),Y_t)=d(\gamma(t),x)\}$. Each set $M_t$ is definable, and so is $M=\bigcup_{0\le t} M_t$. By the Arc Selection Lemma there exists an arc $\gamma' \subset M \subset Y$.
	
	If for each arc $\gamma' \subset M$ we have $tord(\gamma,\gamma')=\alpha$ then, for $a>0$ sufficiently small, $\gamma' \not\subset Y\cap H_{a,\alpha}(\gamma)$, a contradiction with $Y\cap H_{a,\alpha}(\gamma)\ne\{0\}$.
\end{proof}

\begin{Prop}\label{Prop:connected comp of horn neigbourhood}
	Let $X$ be a surface, $T\subset X$ a normally embedded $\beta$-H\"older triangle and $\gamma \subset X$ an arc.
Then, for $1\le \alpha\in\F$ and $a>0$ sufficiently small, the link of $T\cap HX_{a,\alpha}(\gamma)$ is connected.
\end{Prop}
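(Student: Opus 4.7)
The plan is to reduce the connectedness of the link to a contradiction argument combining Lemma \ref{Lem:exist arc in horn connected component}, the non-archimedean inequality, and the normal embedding of $T$. First, by the o-minimal analogue of Hardt triviality the number of connected components of $I(t)=T\cap S(0,t)\cap\overline{B}(\gamma(t),at^\alpha)$ is locally constant in $t$ for small $t>0$, with the components tracked definably, so it suffices to bound the component count by one for small $a$ and small $t$. If $I(t)$ is eventually empty there is nothing to prove; otherwise Lemma \ref{Lem:exist arc in horn connected component} applied with $Y=T$ produces an arc $\gamma_0\subset T$ with $tord(\gamma,\gamma_0)>\alpha$, so the zone $Z_>=\{\gamma'\in V(T):tord(\gamma,\gamma')>\alpha\}$ is non-empty.

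Using the non-archimedean inequality and the normal embedding of $T$ (Remark \ref{Rem: NE HT condition}), I would first show that $Z_>$ is ``convex'' in $T$: for $\gamma_1,\gamma_2\in Z_>$ the sub-H\"older triangle $T(\gamma_1,\gamma_2)$ has $itord(\gamma_1,\gamma_2)=tord(\gamma_1,\gamma_2)=\mu>\alpha$, inherits normal embedding from $T$, and every arc $\gamma'\in T(\gamma_1,\gamma_2)$ satisfies $tord(\gamma_1,\gamma')\ge\mu$ and hence (by non-archimedean once more) $tord(\gamma,\gamma')>\alpha$. Now suppose for contradiction that for arbitrarily small $a$ the link has two distinct components $C_1(t),C_2(t)$, with closed definable histories $\tilde C_1,\tilde C_2\subset T$. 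I claim that for $a$ sufficiently small each $\tilde C_j$ meets $Z_>$: a component supported only by arcs with $tord(\gamma,\cdot)=\alpha$ would have a positive lower bound on the leading coefficients of its arcs, and once $a$ dropped below this bound the component would have been removed from the horn. Hence, after possibly shrinking $a$, curve selection together with a definable limit on leading coefficients in each surviving $\tilde C_j$ yields arcs $\gamma_j\in Z_>\cap\tilde C_j$.

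Then $T(\gamma_1,\gamma_2)$ is a normally embedded $\mu$-H\"older triangle with $\mu>\alpha$, so every arc $\gamma'\in T(\gamma_1,\gamma_2)$ satisfies $|\gamma'(t)-\gamma_1(t)|\le Lt^\mu$ for a constant $L$ independent of $\gamma'$. Combined with $|\gamma_1(t)-\gamma(t)|=o(t^\alpha)$ and $\mu>\alpha$, this yields
\[
|\gamma'(t)-\gamma(t)|\le Lt^\mu+o(t^\alpha)=o(t^\alpha)\le at^\alpha
\]
for all sufficiently small $t$, uniformly in $\gamma'$. By the Curve Selection Lemma every point of the link of $T(\gamma_1,\gamma_2)$ at level $t$ lies on such an arc, so the entire (connected) link of $T(\gamma_1,\gamma_2)$ at level $t$ sits inside $\overline{B}(\gamma(t),at^\alpha)$, giving a connected path from $\gamma_1(t)$ to $\gamma_2(t)$ within the horn and contradicting $C_1\ne C_2$. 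The most delicate step is the persistence claim above: formalizing that for $a$ small enough every surviving component of the link meets $Z_>$, and this is precisely where the hypothesis ``$a>0$ sufficiently small'' enters.
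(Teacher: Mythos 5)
Your proposal is correct and follows the same general strategy as the paper (use Lemma \ref{Lem:exist arc in horn connected component} to extract from each surviving component an arc with tangency order $>\alpha$ to $\gamma$, then exploit the non-archimedean inequality and normal embedding of $T$), but the final contradiction step is genuinely different. The paper, after finding $\gamma'\subset C$ and $\gamma''\subset C'$ with $tord(\gamma,\gamma')>\alpha$ and $tord(\gamma,\gamma'')>\alpha$, observes that disconnectedness forces $T(\gamma',\gamma'')$ to contain an interior arc $\lambda\not\subset HX_{a,\alpha}(\gamma)$; then $tord(\gamma,\lambda)\le\alpha$ combined with the non-archimedean inequality gives $itord(\gamma',\gamma'')\le\alpha<tord(\gamma',\gamma'')$, contradicting normal embedding. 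You instead show that $T(\gamma_1,\gamma_2)$ is a normally embedded $\mu$-H\"older triangle with $\mu>\alpha$ whose link at level $t$ has diameter $O(t^\mu)=o(t^\alpha)$, hence lies entirely inside $\overline{B}(\gamma(t),at^\alpha)$ for $t$ small, directly contradicting the separation of $\gamma_1(t)$ and $\gamma_2(t)$. Both arguments exploit normal embedding; yours trades the inner-tangency-order bookkeeping of the paper for a quantitative diameter estimate, and is a valid alternative.

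Two points deserve tightening. First, the claim that every $\gamma'\in V(T(\gamma_1,\gamma_2))$ satisfies $|\gamma'(t)-\gamma_1(t)|\le Lt^\mu$ \emph{uniformly} does not follow from the pointwise fact $tord(\gamma_1,\gamma')\ge\mu$: the implied constants and thresholds in a tangency-order statement depend on $\gamma'$. The correct source of uniformity is that the link of a normally embedded $\mu$-H\"older triangle at level $t$ has outer diameter $O(t^\mu)$ with a single constant, which follows from inner bi-Lipschitz equivalence to $T_\mu$ together with normal embedding; invoke this structural fact directly rather than as a corollary of arcwise tangency orders. Second, your ``persistence claim'' --- that once the component count stabilizes, each component meets $Z_>$ --- is precisely Lemma \ref{Lem:exist arc in horn connected component} applied to the closure of a component, which is how the paper uses that lemma; there is no need to re-derive it via leading coefficients, and doing so informally leaves the heaviest step of your argument unsupported.
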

\begin{proof}
	Let $H=HX_{a,\alpha}(\gamma)$. If $\alpha<\beta$ and $T\cap H\ne \{0\}$ for $a>0$ sufficiently small, then there is an arc $\gamma' \subset T$ such that $tord(\gamma',\gamma'')>\alpha$, by Lemma \ref{Lem:exist arc in horn connected component}.
	This implies that $T\subset H$, thus the link of $T\cap H=T$ is connected.
	
	Suppose that $\alpha\ge\beta$ and, for $a>0$ sufficiently small, the link of $T\cap H$ is not connected.
	Let $C$ and $C'$ be two distinct connected components of $(T\cap H)\setminus \{0\}$. By Lemma \ref{Lem:exist arc in horn connected component}, for small enough $a$, there exist arcs $\gamma' \subset C$ and $\gamma'' \subset C'$ such that $$tord(\gamma',\gamma'')\ge \min(tord(\gamma,\gamma'),tord(\gamma,\gamma''))>\alpha.$$
	
	Consider $T'=T(\gamma',\gamma'') \subset T$. As $\gamma'$ and $\gamma''$ are in different connected components, there exists an arc $\lambda \subset (T'\setminus H)\cup\{0\}$. Thus, $itord(\gamma',\gamma'')\le\alpha$, a contradiction with $T$ being normally embedded.
\end{proof}

\begin{Cor}\label{Cor:intersection of horn neighbourhood with Holder triangle}
	Let $X$ be a surface, $T\subset X$ a normally embedded H\"older triangle and $\gamma\subset X$ an arc.
Then, for $1\le\alpha\in\F$ and $a>0$ sufficiently small, either $T\cap HX_{a,\alpha}(\gamma)= \{0\}$ or it is a H\"older triangle.
\end{Cor}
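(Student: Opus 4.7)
The plan is to use Proposition \ref{Prop:connected comp of horn neigbourhood} together with the Arc Selection Lemma to extract two boundary arcs of $Y = T \cap HX_{a,\alpha}(\gamma)$, then invoke the normal embedding of $T$ to produce an inner bi-Lipschitz homeomorphism between $Y$ and a standard H\"older triangle.

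Assuming $Y \ne \{0\}$, Proposition \ref{Prop:connected comp of horn neigbourhood} implies that, for $a>0$ small enough, the link of $Y$ is connected. Since $T$ is a normally embedded $\beta$-H\"older triangle, one may identify $T$ via an inner (hence, up to a constant, outer) bi-Lipschitz homeomorphism with the standard triangle $T_\beta \subset \R^2$. Under this identification, for each small $t>0$ the slice $Y_t = Y \cap S(0,t)$ is a connected definable subarc of the link of $T_\beta$ at radius $t$, with two endpoints $p_1(t)$ and $p_2(t)$ (possibly coinciding, or lying on the boundary of $T_\beta$). Definable selection combined with the Arc Selection Lemma then produces two definable arcs $\gamma_1, \gamma_2 \subset Y$, parameterized by distance to the origin, whose values at level $t$ are precisely $p_1(t)$ and $p_2(t)$.

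Finally I would show that $Y = T(\gamma_1,\gamma_2)$ is a H\"older triangle by constructing a fiber-preserving inner bi-Lipschitz map from $Y$ onto the standard $\beta'$-H\"older triangle, where $\beta'$ is determined by the tangency behavior of $\gamma_1$ and $\gamma_2$. Such a map is built by linearly rescaling each slice $Y_t$ onto the corresponding horizontal slice of the target standard triangle. The main obstacle is verifying that this fiberwise rescaling is globally bi-Lipschitz rather than merely Lipschitz on each fiber; this will follow from the normal embedding of $T$, which yields uniform control over the slice lengths $|Y_t|$ and the derivatives of the endpoint maps $p_i(t)$, together with the definability of the whole construction in the polynomially bounded o-minimal structure. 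Pulling everything back through the original identification of $T$ with $T_\beta$ then produces the desired H\"older triangle structure on $Y$.
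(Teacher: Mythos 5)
The paper states this as a corollary with no separate proof: it is meant to follow immediately from Proposition \ref{Prop:connected comp of horn neigbourhood} together with the classification of definable surface germs recalled in the Introduction and due to \cite{birbrair1999local}. Once the link of $Y=T\cap HX_{a,\alpha}(\gamma)$ is known to be connected, it is a connected definable subset of the link of $T$, hence a point or a segment; in the two-dimensional case it is a segment, and the cited classification then says that any definable surface germ with segment link is inner bi-Lipschitz equivalent to a standard H\"older triangle. Your proposal reaches the right conclusion but re-derives this classification from scratch by fiberwise rescaling rather than invoking it, which is more work than the corollary requires.

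Two substantive caveats about your re-derivation. First, you flag but do not close the essential gap: that the slice-by-slice linear rescaling onto $T_{\beta'}$ is globally bi-Lipschitz, not merely Lipschitz on each fiber. You attribute this to normal embedding of $T$, but that hypothesis is really doing two other jobs (it is needed for Proposition \ref{Prop:connected comp of horn neigbourhood}, and it upgrades the inner identification $T\cong T_\beta$ to an outer one); the control of the rescaling comes instead from the polynomially bounded o-minimal setting, which forces the boundary arcs $\gamma_1,\gamma_2$ to have expansions $y=cx^q+o(x^q)$ with $q\ge\beta\ge 1$, hence bounded derivatives and a slice-length ratio $x^{\beta'}/(f_2(x)-f_1(x))$ bounded away from $0$ and $\infty$. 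Second, there is a parameterization mismatch you do not address: the inner bi-Lipschitz map $h:T\to T_\beta$ preserves distance to the origin only up to a constant, so the level-$t$ slice of $Y$ in $T$ need not map to the slice $\{x=t\}$ of $T_\beta$; a definable reparameterization is needed before the fiberwise construction applies. The cleaner route is: connected link by Proposition \ref{Prop:connected comp of horn neigbourhood}, hence segment link, hence a H\"older triangle by \cite{birbrair1999local}.
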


\begin{Def}\label{DEF: constant zone}
	\normalfont Let $X$ be a $\beta$-snake and $Z \subset V(X)$ a zone. We say that $Z$ is a \textit{constant zone} of multiplicity $q$ (notation $m(Z)=q$) if all arcs in $Z$ have the same multiplicity $q$.
\end{Def}

\begin{Def}\label{Def:segment-nodal arcs-zones}
	\normalfont Let $X$ be a $\beta$-snake and $\gamma \subset X$ an arc. We say that $\gamma$ is a \textit{segment arc} if there exists a $\beta$-H\"older triangle $T \subset X$ such that $\gamma$ is a generic arc of $T$ and $V(T)$ is a constant zone. Otherwise $\gamma$ is a \textit{nodal arc}. We denote the set of segment arcs and the set of nodal arcs in $X$ by $\mathbf{S}(X)$ and $\mathbf{N}(X)$, respectively. A \textit{segment} of $X$ is a maximal zone in $\mathbf{S}(X)$. A \textit{nodal zone} of $X$ is a maximal zone in $\mathbf{N}(X)$. We write $Seg_{\gamma}$ or $Nod_{\gamma}$ for a segment or a nodal zone containing an arc $\gamma$.
\end{Def}

\begin{Prop}\label{Prop:segment is a perfect zone}
	If $X$ is a $\beta$-snake then each segment of $X$ is a closed perfect $\beta$-zone.
\end{Prop}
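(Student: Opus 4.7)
The plan is to fix an arbitrary arc $\gamma \in S$ and exploit the definition of segment arc. By Definition \ref{Def:segment-nodal arcs-zones} there is a $\beta$-H\"older triangle $T \subset X$ with $\gamma \in G(T)$ and $V(T)$ a constant zone. The first step is to observe that every arc $\gamma'' \in G(T)$ is again a segment arc, with the \emph{same} witness $T$: it is a generic arc of $T$, and the constant zone property of $V(T)$ is already given. Hence $G(T) \subset \mathbf{S}(X)$. By Example \ref{Exam: closed, complete and perfect zones}, $G(T)$ is a closed perfect $\beta$-zone, so it is in particular a zone inside $\mathbf{S}(X)$ that contains $\gamma$. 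By maximality of $S$ in $\mathbf{S}(X)$ (Remark \ref{rem:maxzone}), this forces $G(T) \subset S$.

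From this containment I would derive the three required properties. For the order: since $G(T) \subset S \subset V(X)$ and $X$ is a $\beta$-snake, $\beta = \mu(X) \le \mu(S) \le \mu(G(T)) = \beta$, so $\mu(S) = \beta$. For closedness: Example \ref{Exam: closed, complete and perfect zones} guarantees a $\beta$-H\"older triangle $T_0$ with $V(T_0) \subset G(T) \subset S$, so $S$ is a closed $\beta$-zone.

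For perfectness, I would pick an arbitrary $\gamma' \in S$ and repeat the first paragraph with $\gamma'$ in place of $\gamma$. This produces a $\beta$-H\"older triangle $T_{\gamma'}$ with $\gamma' \in G(T_{\gamma'})$, $V(T_{\gamma'})$ a constant zone, and (by the same maximality argument) $G(T_{\gamma'}) \subset S$. Since $G(T_{\gamma'})$ is itself a closed perfect $\beta$-zone and $\gamma' \in G(T_{\gamma'})$, Definition \ref{Def: open closed perfect zone} supplies a $\beta$-H\"older triangle $T'$ with $V(T') \subset G(T_{\gamma'}) \subset S$ and $\gamma' \in G(T')$. This is exactly the condition for $\gamma'$ to be generic in the closed $\beta$-zone $S$, so $S = G(S)$.

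I do not expect a genuine obstacle: the statement is essentially a bookkeeping consequence of maximality. The one point to be careful about is ensuring that the inclusion $G(T) \subset S$ is justified by the maximality clause rather than by some geometric refinement of $T$; once that is done, closedness and perfectness of $S$ are directly inherited from the corresponding properties of $G(T)$ already recorded in Example \ref{Exam: closed, complete and perfect zones}, and the lower bound $\mu(S) \ge \beta$ is automatic from $X$ being a $\beta$-snake.
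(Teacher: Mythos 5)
Your proof is correct and follows essentially the same route as the paper's: for each arc of $S$ produce, from the witness triangle $T$ of Definition~\ref{Def:segment-nodal arcs-zones}, a $\beta$-H\"older subtriangle contained in $S$ in which that arc is generic, and read off the three properties. The only stylistic difference is that you justify the inclusion of the generic arcs of $T$ into $S$ explicitly via maximality (Remark~\ref{rem:maxzone}) and then appeal to Example~\ref{Exam: closed, complete and perfect zones}, whereas the paper asserts $V(T)\subset S$ outright and constructs the subtriangle $T'=T(\gamma_1',\gamma_2')$ by hand; this is a matter of bookkeeping rather than a different method, and your version is in fact a bit more careful on that point.
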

\begin{proof}
	Given an arc $\gamma$ in a segment $S$ of $X$, by Definition \ref{Def:segment-nodal arcs-zones}, there exists a $\beta$-H\"older triangle $T=T(\gamma_1,\gamma_2)$ such that $\gamma$ is a generic arc of $T$ and $V(T)\subset S$ is a constant zone. Let $\gamma'_{1}$ and $\gamma'_{2}$ be generic arcs of $T(\gamma_{1},\gamma)$ and $T(\gamma,\gamma_{2})$, respectively. It follows that $T'=T(\gamma'_{1},\gamma'_{2})$ is a $\beta$-H\"older triangle such that $\gamma$ is a generic arc of $T'$ and $V(T')\subset S$.
\end{proof}

\begin{remark}\label{Rem:nodal zone is open perfect}\normalfont
Similar arguments show that each nodal zone of a $\beta$-snake $X$ which does not contain any boundary arcs of $X$ is an open perfect $\beta$-zone.
\end{remark}

\begin{Lem}\label{Lem:multipizza and generic arcs of multipizza triangle}
	Let $X$ be a $\beta$-snake and $\{X_{k}\}_{k=1}^{p}$ a pancake decomposition of $X$. Let $T=X_{j}$ be one of the pancakes and consider the set of germs of Lipschitz functions $f_{l}\colon (T,0) \rightarrow (\mathbb{R},0)$ given by $f_{l}(x)=d(x,X_{l})$. If $\{T_{i}\}$ is a multipizza on $T$ associated with $\{f_{1},\ldots, f_{p}\}$ then, for each $i$, the following holds:
	\begin{enumerate}
		\item $\mu_{il}(ord_{\gamma}f_{l})=\beta_{i}$ for all $l$ and all $\gamma \in G(T_{i})$, thus $G(T_{i})$ is a constant zone.
		\item $V(T_{i})$ intersects at most one segment of $X$.
		\item If $V(T_i)$ is contained in a segment then it is a constant zone.
	\end{enumerate}	
\end{Lem}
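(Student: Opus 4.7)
The plan is to analyse the multiplicity function $m_X$ on each pizza slice $T_i$ using the multipizza data and the affine width functions $\mu_{il}$.

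For part $(1)$, Proposition \ref{Prop:width function properties elementary triangle}$(3)$ gives $\mu_{il}(ord_\gamma f_l) = \beta_i$ for each $\gamma \in G(T_i)$ and each index $l$. Since $\mu_{il}(q) = a_{il}q + b_{il}$ is affine and non-constant whenever $Q_{f_l}(T_i)$ is not a point (item $(1)$ of that proposition), the equation $\mu_{il}(q_{il}) = \beta_i$ determines a unique value $q_{il}$ with $ord_\gamma f_l = q_{il}$ for all $\gamma \in G(T_i)$. By Corollary \ref{Cor:intersection of horn neighbourhood with Holder triangle}, each pancake $X_l$ intersects the horn $HX_{a,\beta}(\gamma)$ in at most one connected component, and by Lemma \ref{Lem:exist arc in horn connected component} this intersection is non-trivial for small $a$ precisely when $ord_\gamma f_l > \beta$. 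Hence $m_X(\gamma) = |\{l : q_{il} > \beta\}|$, independent of $\gamma \in G(T_i)$.

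For part $(2)$, I would argue by contradiction: suppose $V(T_i)$ meets two segments $S_1 \ne S_2$. By Proposition \ref{Prop:segment is a perfect zone} each $S_j$ is a closed perfect $\beta$-zone; as distinct maximal zones in $\mathbf{S}(X)$ they are disjoint, and by Lemma \ref{Lem: there are no adjacent perfect zones} they are not adjacent. Pick $\gamma_j \in S_j \cap V(T_i)$ with constant $\beta$-H\"older neighbourhoods $T_j^*$. The plan is to use the affine structure of each $\mu_{il}$ along $T_i$ to interpolate between $T_1^*$ and $T_2^*$ by a chain of overlapping constant $\beta$-H\"older triangles covering $V(T(\gamma_1,\gamma_2))$. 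This produces a zone of segment arcs joining $\gamma_1$ and $\gamma_2$, placing them in a common maximal zone of $\mathbf{S}(X)$ and contradicting $S_1 \cap S_2 = \emptyset$.

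For part $(3)$, assume $V(T_i) \subset S$ for a segment $S$. By part $(1)$, $G(T_i)$ is a constant zone of some multiplicity $m_0$. For a non-generic arc $\gamma \in V(T_i) \setminus G(T_i)$, its constant $\beta$-H\"older neighbourhood $T_\gamma^*$ lies in $S$, and a generic arc $\gamma' \in G(T_i)$ sufficiently close to $\gamma$ in the inner metric satisfies $\gamma' \in V(T_\gamma^*)$; constancy on $V(T_\gamma^*)$ then forces $m(\gamma) = m(\gamma') = m_0$, yielding constancy on all of $V(T_i)$.

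The main obstacle lies in part $(2)$, specifically in producing the interpolating chain of constant $\beta$-H\"older triangles. The key observation should be that, within the pizza slice $T_i$, the multiplicity function is piecewise constant with jumps only at ``transition arcs'' where some $ord f_l$ crosses $\beta$; such transition arcs cannot themselves be segment arcs (their multiplicity changes on arbitrarily small scales, precluding any constant $\beta$-neighbourhood), so if both endpoints of the sub-triangle $T(\gamma_1,\gamma_2)$ are segment arcs the interpolating path must avoid every transition arc and therefore inherits a single constant multiplicity structure.
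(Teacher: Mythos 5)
Your parts (1) and (3) are essentially correct and aligned with the paper's intent. In (1) you are re-deriving the multiplicity formula $m(\gamma)=\sum_l m_l(\gamma)$ (which the paper records separately as Remark \ref{multiplicity formula}) from the horn-neighbourhood lemmas; the paper simply calls item (1) an "immediate consequence" of Definition \ref{Def:Pizza decomp} and Proposition \ref{Prop:width function properties elementary triangle}, which is the same content once one observes that injectivity of the non-constant affine map $\mu_{il}$ forces $ord_\gamma f_l$ to be constant on $G(T_i)$.

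Part (2) has a genuine gap, and you have correctly located it yourself. You pick $\gamma_j\in S_j\cap V(T_i)$ without requiring $\gamma_j\in G(T_i)$, and then assert that "if both endpoints of $T(\gamma_1,\gamma_2)$ are segment arcs the interpolating path must avoid every transition arc." That conclusion does not follow from the two observations preceding it (that multiplicity is piecewise constant with jumps at transition arcs, and that transition arcs are nodal). Nothing in your argument excludes the multiplicity jumping up and back down across a nodal zone sitting between $\gamma_1$ and $\gamma_2$: in that configuration $\gamma_1$ and $\gamma_2$ are both segment arcs, possibly even with the same multiplicity value, yet there are transition arcs strictly between them and your chain cannot cross. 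This is in fact exactly the configuration you are trying to rule out, so the step is circular.

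The paper's proof is shorter precisely because it sidesteps this. After disposing of the case $\beta_i>\beta$ (a $\beta_i$-zone meeting a closed perfect $\beta$-zone $S$ must be contained in $S$, by Proposition \ref{Prop:segment is a perfect zone}), it takes $\beta_i=\beta$ and chooses $\lambda\in S\cap G(T_i)$ and $\lambda'\in S'\cap G(T_i)$, using the perfectness of segments to guarantee these intersections are nonempty. Then $G(T(\lambda,\lambda'))\subset G(T_i)$, where item (1) already gives constant multiplicity, so every arc of $T(\lambda,\lambda')$ is a segment arc, forcing $S$ and $S'$ to lie in a common maximal zone of $\mathbf{S}(X)$, a contradiction. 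To repair your argument you should likewise replace "$\gamma_j\in S_j\cap V(T_i)$" with "$\gamma_j\in S_j\cap G(T_i)$," justify nonemptiness via perfectness of $S_j$, and add the (easy) case $\beta_i>\beta$; with those changes, no interpolation across transition arcs is needed at all.
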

\begin{proof}
	$(1)$. This is an immediate consequence of Definition \ref{Def:Pizza decomp} and Proposition \ref{Prop:width function properties elementary triangle}.
	
	$(2)$. If $\beta_{i}>\beta$ and $V(T_{i})$ intersects a segment $S$, then $V(T_{i})\subset S$, since $S$ is a closed perfect $\beta$-zone, by Proposition \ref{Prop:segment is a perfect zone}.
	
	Let $\beta_{i}=\beta$. Suppose that $V(T_{i})$ intersects distinct segments $S$ and $S'$. As each segment is a closed perfect $\beta$-zone, we can choose arcs $\lambda \in S$ and $\lambda' \in S'$ so that $\lambda, \lambda' \in G(T_{i})$. Let $T'=T(\lambda,\lambda')$. By item $(1)$ of this Lemma, all arcs in $G(T')$ have the same multiplicity. It follows from Definition \ref{Def:segment-nodal arcs-zones} that each arc in $T'$ is a segment arc. Thus, $S$ and $S'$ belong to the same segment, a contradiction.
	
	$(3)$ This a consequence of Definition \ref{Def:segment-nodal arcs-zones} and item $(1)$ of this Lemma. 
\end{proof}

\begin{Prop}\label{Prop:segment has finitely many zones}
	Let $X$ be a $\beta$-snake. Then
	\begin{enumerate}
		\item There are no adjacent segments in $X$.
		\item $X$ has finitely many segments.
	\end{enumerate}
	
\end{Prop}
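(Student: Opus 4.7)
The plan is to handle the two parts independently, both by appealing to results already assembled in the paper.

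For (1), I would simply combine Proposition \ref{Prop:segment is a perfect zone} and Lemma \ref{Lem: there are no adjacent perfect zones}. The former says that every segment of $X$ is a closed perfect $\beta$-zone; the latter says that two closed perfect $\beta$-zones in the Valette link of a H\"older triangle are never adjacent. Together these yield (1) at once, with essentially no additional work.

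For (2), the plan is to bound the number of segments by the total number of slices in a multipizza-type decomposition of $X$. Fix a minimal pancake decomposition $\{X_k\}_{k=1}^{p}$ of $X$, whose existence is guaranteed by Remark \ref{Rem: existence of pancake decomp}. For each pancake $T=X_j$, consider the family of Lipschitz distance functions $f_l(x)=d(x,X_l)$ for $l\ne j$, and choose a multipizza $\{T_{j,i}\}_{i}$ on $X_j$ associated with $\{f_l : l\ne j\}$; this exists by Remark \ref{REM: existence of a multipizza}. Then Lemma \ref{Lem:multipizza and generic arcs of multipizza triangle}(2) guarantees that each pizza slice $T_{j,i}$ intersects at most one segment of $X$.

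To conclude, observe that every arc $\gamma\subset X$ lies in some pancake $X_j$ and hence in some slice $T_{j,i}$, so every (nonempty) segment of $X$ meets at least one slice of the finite collection $\{T_{j,i}\}_{j,i}$. Assigning to each segment one slice it intersects gives a well-defined injective map from the set of segments of $X$ into this finite collection, thus establishing (2). The main subtlety, rather than a real obstacle, is that one must use a multipizza associated with \emph{all} the distance functions $f_l$ at once so that the conclusion of Lemma \ref{Lem:multipizza and generic arcs of multipizza triangle}(2) applies uniformly; once this is in place, the counting is immediate.
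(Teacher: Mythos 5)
Your proposal is correct and follows essentially the same path as the paper's proof: part (1) is verbatim the combination of Proposition \ref{Prop:segment is a perfect zone} and Lemma \ref{Lem: there are no adjacent perfect zones}, and part (2) is the paper's counting argument (each pancake has a finite multipizza, and by Lemma \ref{Lem:multipizza and generic arcs of multipizza triangle}(2) each slice meets at most one segment), merely phrased a bit more explicitly as an injective map from segments to slices. The only cosmetic difference is that the paper feeds all $p$ distance functions $f_1,\ldots,f_p$ into the multipizza (including the trivial $f_j\equiv 0$) whereas you restrict to $l\ne j$, which changes nothing.
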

\begin{proof}
	$(1)$ This is an immediate consequence of Proposition \ref{Prop:segment is a perfect zone} and Lemma \ref{Lem: there are no adjacent perfect zones}.
	
	$(2)$ Let $\{X_{k}\}_{k=1}^{p}$ be a pancake decomposition of $X$. It is enough to show that, for each pancake $X_{j}$, $V(X_{j})$ intersects with finitely many segments. But this follows from Lemma \ref{Lem:multipizza and generic arcs of multipizza triangle}, since there are finitely many H\"older triangles in a multipizza.
\end{proof}

\begin{Lem}\label{Lem:itord > beta imples same multiplicity}
	Let $X$ be a $\beta$-snake. Then, any two arcs in $V(X)$ with inner tangency order higher than $\beta$ have the same multiplicity.
\end{Lem}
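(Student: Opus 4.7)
The plan is to relate the multiplicity $m(\gamma)$ to a combinatorial structure on a pancake decomposition of $X$, and then show this structure depends only on the set of arcs having tangency order greater than $\beta$ with $\gamma$.

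First, I would fix a minimal pancake decomposition $\{X_k = T(\lambda_{k-1},\lambda_k)\}_{k=1}^p$ of $X$. By Lemma \ref{Lem:minimal pancake decomp of a snake}, each $X_k$ is a normally embedded $\beta$-H\"older triangle, and by Corollary \ref{Cor:intersection of horn neighbourhood with Holder triangle}, for sufficiently small $a>0$ each intersection $X_k \cap HX_{a,\beta}(\gamma)$ is either $\{0\}$ or a single connected H\"older triangle. Since $HX_{a,\beta}(\gamma) = \bigcup_k (X_k \cap HX_{a,\beta}(\gamma))$, the number of connected components of $HX_{a,\beta}(\gamma)\setminus\{0\}$ is determined by two pieces of combinatorial data: the set $I(\gamma) = \{k : X_k \cap HX_{a,\beta}(\gamma) \ne \{0\}\}$ of \emph{visited} pancakes, and the set $J(\gamma) = \{k : \lambda_k \in HX_{a,\beta}(\gamma)\}$ of shared boundary arcs lying in the horn, since two adjacent pancake-pieces in $X_k$ and $X_{k+1}$ are connected in $HX_{a,\beta}(\gamma)$ precisely when $\lambda_k$ lies in the horn.

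Next, I would characterize these sets by tangency orders. By Lemma \ref{Lem:exist arc in horn connected component}, one has $k\in I(\gamma)$ if and only if there exists $\mu \subset X_k$ with $tord(\mu,\gamma) > \beta$, and $k\in J(\gamma)$ if and only if $tord(\lambda_k,\gamma) > \beta$. Here the crucial ingredient is Proposition \ref{Prop:weak LNE}: since $X$ is weakly normally embedded, the condition $tord(\cdot,\gamma) > \beta$ is equivalent to $itord(\cdot,\gamma) > \beta$. Hence both $I(\gamma)$ and $J(\gamma)$ can be read off purely from the inner tangency orders with $\gamma$.

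Finally, suppose $\gamma$ and $\gamma'$ satisfy $itord(\gamma,\gamma') > \beta$. By the non-archimedean property, for any arc $\mu \subset X$ we have $itord(\mu,\gamma) > \beta$ if and only if $itord(\mu,\gamma') > \beta$, so $I(\gamma) = I(\gamma')$ and $J(\gamma) = J(\gamma')$, and therefore $m(\gamma) = m(\gamma')$. The main obstacle to watch out for is verifying carefully that the number of connected components really is governed by $(I,J)$ in the simple way described: this uses connectedness of each pancake-piece from Corollary \ref{Cor:intersection of horn neighbourhood with Holder triangle}, together with the fact that distinct pancakes meet only along the shared arcs $\lambda_k$, so gluing inside the horn is controlled exactly by membership of those $\lambda_k$ in $HX_{a,\beta}(\gamma)$.
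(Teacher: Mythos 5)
Your decomposition of the horn neighborhood into connected pieces $X_k\cap HX_{a,\beta}(\gamma)$ glued along the arcs $\lambda_k$, and the resulting description of the multiplicity via the index sets $I(\gamma)$ and $J(\gamma)$, are both sound, as are the characterizations of these sets by outer tangency orders exceeding $\beta$. The gap is the step invoking Proposition \ref{Prop:weak LNE}. Weak normal embedding only says that $tord(\mu,\gamma)>itord(\mu,\gamma)$ forces $itord(\mu,\gamma)=\beta$; it does not rule out pairs with $tord(\mu,\gamma)>\beta=itord(\mu,\gamma)$. In fact every $\beta$-snake contains such pairs --- the two boundary arcs of a $\beta$-bubble snake satisfy $tord(\gamma_1,\gamma_2)>\beta=itord(\gamma_1,\gamma_2)$. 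So the asserted equivalence ``$tord(\cdot,\gamma)>\beta \Leftrightarrow itord(\cdot,\gamma)>\beta$'' is false, and the claim that $I(\gamma)$ and $J(\gamma)$ can be read off from inner orders fails concretely: if $\gamma\subset X_j$ and $X_k$ is a non-adjacent pancake, then every $\mu\subset X_k$ has $itord(\mu,\gamma)=\beta$ (any inner path must cross an intermediate $\beta$-pancake), yet $X_k\cap HX_{a,\beta}(\gamma)$ can be nonempty.

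Fortunately the detour through inner orders is unnecessary, and your proof goes through with a one-line fix. From $itord(\gamma,\gamma')>\beta$ you get $tord(\gamma,\gamma')>\beta$, and the non-archimedean inequality holds for the outer tangency order as well: $tord(\mu,\gamma)\ge\min\bigl(tord(\mu,\gamma'),\,tord(\gamma',\gamma)\bigr)$. Hence $tord(\mu,\gamma)>\beta$ if and only if $tord(\mu,\gamma')>\beta$, which gives $I(\gamma)=I(\gamma')$ and $J(\gamma)=J(\gamma')$ directly, without ever comparing $tord$ and $itord$ for the same pair. With this substitution your argument is correct, and it is genuinely different from the paper's: the paper deduces the lemma from the multipizza of Section \ref{Section: Lipschitz functions on a NE Holder triangle}, the formula $m(\gamma)=\sum_l m_l(\gamma)$ of Remark \ref{multiplicity formula}, and Lemma \ref{Lem:itord>beta in H_{beta} and B_{beta} implies same ordf}, whereas you unpack the component count directly from Definition \ref{Def of multiplicity} and the horn-neighborhood lemmas of Subsection \ref{Subsec: Segments and nodes}.
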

\begin{proof}
	Let $\{X_{k}\}_{k=1}^{p}$ be a pancake decomposition of $X$, $T=X_{j}$ one of the pancakes and $\{T_{i}\}$ a multipizza associated with $\{f_1,\ldots, f_p\}$ as in Lemma \ref{Lem:multipizza and generic arcs of multipizza triangle}. Consider arcs $\gamma$ and $\gamma'$ in $V(X)$ such that $itord(\gamma,\gamma')>\beta$ and $\gamma \in V(T)$. We can suppose that $\gamma,\gamma' \in V(T)$, otherwise we can just replace $\gamma'$ by the boundary arc of $T$ in $T(\gamma,\gamma')$.
	
	It is enough to show that for each $l$ we have $ord_{\gamma}f_{l}>\beta$ if and only if $ord_{\gamma'}f_{l}>\beta$. This follows from Lemma \ref{Lem:itord>beta in H_{beta} and B_{beta} implies same ordf}.
\end{proof}

\begin{Cor}\label{Cor:segments and nodal zones are constant zones}
	Let $X$ be a $\beta$-snake. Then, all segments and all nodal zones of $X$ are constant zones.
\end{Cor}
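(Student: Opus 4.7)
The plan is to reduce both statements to Lemma~\ref{Lem:itord > beta imples same multiplicity}, which forces $m(\gamma)=m(\gamma')$ whenever $itord(\gamma,\gamma')>\beta$. Given two arcs $\gamma,\gamma'$ in a common segment or nodal zone $Z$, the zone property produces a non-singular H\"older triangle $T(\gamma,\gamma')\subset X$ with $V(T(\gamma,\gamma'))\subset Z$, and the analysis splits according to whether $\mu(T(\gamma,\gamma'))$ equals $\beta$ or exceeds $\beta$.

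For a nodal zone $N$ that contains no boundary arc of $X$, Remark~\ref{Rem:nodal zone is open perfect} tells us $N$ is an open perfect $\beta$-zone, so $T(\gamma,\gamma')$ cannot be a $\beta$-H\"older triangle (otherwise $V(T(\gamma,\gamma'))\subset N$ would contradict openness). Hence $\mu(T(\gamma,\gamma'))>\beta$, giving $itord(\gamma,\gamma')>\beta$, and Lemma~\ref{Lem:itord > beta imples same multiplicity} applies. If $N$ does contain a boundary arc of $X$, the same openness conclusion should still hold: a $\beta$-H\"older subtriangle $T'\subset N$ would have a generic arc $\theta$, and by refining via a multipizza on the pancake containing $\theta$ and using Lemma~\ref{Lem:multipizza and generic arcs of multipizza triangle}(1), one could exhibit a $\beta$-H\"older triangle around $\theta$ with constant-multiplicity Valette link, so $\theta$ would be a segment arc, contradicting $\theta\in N$.

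For a segment $S$, I would run a local-to-global argument on the link of $T:=T(\gamma,\gamma')\subset X$ with $V(T)\subset S$. Each $\theta\in V(T)$ is a segment arc, so by Definition~\ref{Def:segment-nodal arcs-zones} it is a generic arc of some $\beta$-H\"older triangle $T_\theta\subset X$ with $V(T_\theta)$ a constant zone of multiplicity $m(\theta)$. Because $\theta\in G(T_\theta)$, Corollary~\ref{Cor:intersection of horn neighbourhood with Holder triangle} applied to $T_\theta$ shows that $T_\theta$ absorbs a horn neighborhood of $\theta$ in $X$ at scale $\beta$; consequently $V(T_\theta)\cap V(T)$ contains every arc of $T$ sufficiently close to $\theta$ on the topological link of $T$. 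The multiplicity function on that link (a closed interval) is thus locally constant, so a finite subcover by $\{V(T_{\theta_i})\cap V(T)\}$ produces a chain of overlapping constant-multiplicity pieces from $\gamma$ to $\gamma'$, and matching multiplicities on the common arc between consecutive pieces telescopes to $m(\gamma)=m(\gamma')$. The main obstacle I anticipate is making the ``neighborhood on the link'' assertion fully rigorous, namely that arcs of $T$ topologically close to $\theta$ satisfy $tord(\theta,\cdot)>\beta$ and therefore lie in $V(T_\theta)$; this should follow from Corollary~\ref{Cor:intersection of horn neighbourhood with Holder triangle} together with $\theta\in G(T_\theta)$.
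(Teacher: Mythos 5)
Your proposal is correct, and the two halves have a different relationship to the paper's proof.

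For nodal zones you ultimately run essentially the same argument as the paper: if $T(\gamma,\gamma')$ were a $\beta$-H\"older triangle with Valette link inside $N$, a multipizza slice through a generic arc $\theta\in G(T(\gamma,\gamma'))$ would make $\theta$ a segment arc; hence $itord(\gamma,\gamma')>\beta$ and Lemma~\ref{Lem:itord > beta imples same multiplicity} applies. The paper does not split into interior versus boundary nodal zones because this argument works uniformly; your appeal to Remark~\ref{Rem:nodal zone is open perfect} for interior nodal zones is a harmless shortcut since that remark is itself proved the same way.

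For segments your route is genuinely different. The paper fixes a multipizza $\{T_i\}$ on the pancake containing $\gamma,\gamma'$ and chains along consecutive slices, transferring multiplicity across shared boundary arcs $\lambda_i$ using Lemma~\ref{Lem:multipizza and generic arcs of multipizza triangle}. You instead cover the compact link of $T(\gamma,\gamma')$ by the $\beta$-H\"older triangles $T_\theta$ witnessing that each $\theta$ is a segment arc, extract a finite subcover, and telescope multiplicities across overlaps. This is cleaner in the sense that it avoids bookkeeping about which pizza slices lie in $S$, and it makes the ``local constancy of $m$'' explicit. What you lose is the explicit pizza data, which the paper wants anyway for later results, but for this corollary your approach is self-contained and valid.

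One point where you are worrying about the wrong thing: you invoke Corollary~\ref{Cor:intersection of horn neighbourhood with Holder triangle} to justify that arcs of $T$ ``topologically close to $\theta$'' lie in $V(T_\theta)$, and you flag this as the main obstacle. In fact that corollary does not apply here, since $T_\theta$ need not be normally embedded. But the assertion you need is both weaker and more elementary: because $\theta\in G(T_\theta)$, the arc $\theta$ is an \emph{interior} arc of $T_\theta$, so the link of $T_\theta$ is a closed subinterval of the link of $X$ containing the point corresponding to $\theta$ in its interior. Intersecting with the link of $T(\gamma,\gamma')$ then gives a relative neighborhood of $\theta$ purely for topological reasons; no tangency-order estimate and no horn-neighborhood lemma are required. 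Once that is noted, the compactness and telescoping step closes the argument.
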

\begin{proof}
	Let $\{X_k\}$ be a minimal pancake decomposition of $X$, and $\{T_i\}$ a multipizza on $T=T_j$ associated with $\{f_{1},\ldots, f_{p}\}$ as in Lemma \ref{Lem:multipizza and generic arcs of multipizza triangle}.
	
	Let $S$ be a segment of $X$. Consider two arcs $\gamma,\gamma'\in S$. Replacing, if necessary, one of the arcs $\gamma$, $\gamma'$ by one of the boundary arcs of $T$,  we can assume that $\gamma,\gamma'\in V(T)$. Thus, if $\gamma \in T_i$ and $\gamma'\in T_{i+l}$, for some $l\ge 0$, it follows from Lemma \ref{Lem:multipizza and generic arcs of multipizza triangle} that $m(V(T_i))=m(V(T_{i+1}))=\cdots=m(V(T_{i+l}))$ and consequently, $m(\gamma)=m(\gamma')$.
	
	Let now $N$ be a nodal zone of $X$. Consider two arcs $\gamma,\gamma'\in N$ and assume, without loss of generality, that $T'=T(\gamma,\gamma')\subset T$. If $itord(\gamma,\gamma')=\beta$ then $G(T')\cap G(T_i)\ne \emptyset$ for some $i$ such that $\beta_i=\beta$. As $G(T')$ and $G(T_i)$ are closed perfect $\beta$-zones, $G(T')\cap G(T_i)$ is also a closed perfect $\beta$-zone. Lemma \ref{Lem:multipizza and generic arcs of multipizza triangle} implies that $G(T')\cap G(T_i)$ contains a segment arc, since $G(T_i)$ is a constant zone, a contradiction with $V(T')\subset N$. Thus, $itord(\gamma,\gamma')>\beta$ and $m(\gamma)=m(\gamma')$ by Lemma \ref{Lem:itord > beta imples same multiplicity}.
\end{proof}

\begin{remark}\label{Rem:non-closed zones are constant zones}
	\normalfont If $X$ is a $\beta$-snake then any open $\beta$-zone $Z$ in $V(X)$, and any zone $Z'$ of order $\beta'>\beta$, is a constant zone.
\end{remark}

\begin{Prop}\label{Prop:nodal zones are finite}
	Let $X$ be a $\beta$-snake. Then
	\begin{enumerate}
		\item  For any nodal arc $\gamma$ we have $Nod_{\gamma}=\{\gamma'\in V(X) : itord(\gamma,\gamma')>\beta\}$. In particular, a nodal zone is an open $\beta$-complete zone.
		\item There are no adjacent nodal zones.
		\item There are finitely many nodal zones in $V(X)$.
	\end{enumerate}
\end{Prop}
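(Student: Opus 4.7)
The plan rests on one key observation, proven by the same multipizza mechanism as in Corollary \ref{Cor:segments and nodal zones are constant zones}: every $\beta$-H\"older sub-triangle $T'\subset X$ contains segment arcs. Indeed, taking a pancake decomposition of $X$ and a multipizza on each pancake associated with the distance functions to the other pancakes, one can find a multipizza slice $T_i\subset T'$ with $\beta_i=\beta$; by Lemma \ref{Lem:multipizza and generic arcs of multipizza triangle}, $G(T_i)$ is then a constant $\beta$-zone whose arcs are segment arcs by Definition \ref{Def:segment-nodal arcs-zones}. This fact will be used repeatedly below.

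For part (1), the inclusion $Nod_\gamma\subset\{\gamma':itord(\gamma,\gamma')>\beta\}$ is immediate: if $\gamma'\in Nod_\gamma$ had $itord(\gamma,\gamma')=\beta$, then $T(\gamma,\gamma')$ would be a $\beta$-H\"older triangle with all arcs in $\mathbf{N}(X)$, contradicting the key observation. For the reverse inclusion, assume $itord(\gamma,\gamma')>\beta$ and, for contradiction, suppose $\gamma'$ is a segment arc, generic in a $\beta$-H\"older triangle $T_0=T(\gamma_1,\gamma_2)$ with $V(T_0)$ a constant zone. Working in the inner bi-Lipschitz model that identifies $X$ with $T_\beta$, I distinguish two cases. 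If $\gamma\in T_0$, then the hypothesis $itord(\gamma,\gamma')>\beta$, combined with $itord(\gamma_j,\gamma')=\beta$ and the non-archimedean property, rules out $itord(\gamma,\gamma_j)>\beta$ for each boundary, forcing $itord(\gamma,\gamma_j)=\beta$; so $\gamma$ is generic in $T_0$, hence a segment arc, contradicting nodality of $\gamma$. If $\gamma\notin T_0$, then in the $T_\beta$-model $\gamma$ lies strictly on one side of $T_0$ for small $t$, so that $d_i(\gamma(t),\gamma_j(t))\le d_i(\gamma(t),\gamma'(t))$ for the closer boundary arc $\gamma_j$, giving $itord(\gamma,\gamma_j)\ge itord(\gamma,\gamma')>\beta$, which again contradicts $itord(\gamma_j,\gamma')=\beta$ by the non-archimedean property. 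Hence $\gamma'$ must be nodal. Applying the same argument to every $\delta\in V(T(\gamma,\gamma'))$ (which has $itord(\gamma,\delta)\ge\mu(T(\gamma,\gamma'))>\beta$) shows $V(T(\gamma,\gamma'))\subset\mathbf{N}(X)$, so $\gamma'\in Nod_\gamma$. Open $\beta$-completeness of $Nod_\gamma$ follows directly from the formula.

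Part (2) is a short corollary: if $Z\ne Z'$ are adjacent nodal zones, pick $\gamma\in Z$, $\gamma'\in Z'$ with $V(T(\gamma,\gamma'))\subset Z\cup Z'\subset\mathbf{N}(X)$; by part (1), $itord(\gamma,\gamma')>\beta$ would place $\gamma'$ in $Z$, contradicting $Z\cap Z'=\emptyset$, while $itord(\gamma,\gamma')=\beta$ would make $T(\gamma,\gamma')$ a $\beta$-triangle of nodal arcs, contradicting the key observation. For part (3), each nodal arc sits in some multipizza slice $T_i$ of some pancake, and by part (1) together with an argument analogous to Proposition \ref{Prop:maximal zones in H_{beta}} applied to the distance functions used in the multipizza, each nodal zone is described by finitely many maximal open $\beta$-complete sub-zones in the associated sets $H_\beta(f_l)$; with finitely many pancakes and finitely many multipizza slices per pancake, only finitely many nodal zones exist. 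The principal obstacle is the Case B analysis in part (1): converting the qualitative statement ``$\gamma\notin T_0$'' into a usable comparison of inner distances requires careful use of the $T_\beta$-model, and this is where the plan must be executed most delicately.
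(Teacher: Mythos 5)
Your proof is correct and follows essentially the same route as the paper's. Both rest on the multipizza mechanism (Lemma \ref{Lem:multipizza and generic arcs of multipizza triangle}): a $\beta$-pizza slice has constant generic zone, so its generic arcs are segment arcs; this is your ``key observation,'' and it is the same engine the paper uses via Corollary \ref{Cor:segments and nodal zones are constant zones}. One small imprecision: a multipizza slice $T_i$ of a pancake need not be contained in $T'$; what you want is that $T_i\cap T'$ has exponent $\beta$ for some $i$ (by Lemma \ref{Lem: beta zone must have beta intersection part in HT decomp}), and then $G(T_i\cap T')\subset G(T_i)$ consists of segment arcs. The argument goes through after this repair.

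Where you diverge is in how much is spelled out. The paper's proof of part (1) establishes only the inclusion $Nod_\gamma\subset\{\gamma':itord(\gamma,\gamma')>\beta\}$ (from the constancy of $Nod_\gamma$ via Corollary \ref{Cor:segments and nodal zones are constant zones}) and then asserts equality; you actually prove the reverse inclusion with the two-case analysis on whether $\gamma\in T_0$. Your Case B is fine and can be stated without appealing to the $T_\beta$-model: if $\gamma\notin T_0$ then a boundary arc $\gamma_j$ of $T_0$ lies in $V(T(\gamma,\gamma'))$, so $itord(\gamma,\gamma_j)\ge\mu(T(\gamma,\gamma'))=itord(\gamma,\gamma')>\beta$, and the non-archimedean property then forces $itord(\gamma_j,\gamma')>\beta$, contradicting $\gamma'\in G(T_0)$. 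For part (3), the paper simply invokes Proposition \ref{Prop:segment has finitely many zones} (finitely many segments) together with item (2): segments and nodal zones partition $V(X)$, no two nodal zones are adjacent, hence finitely many nodal zones. Your direct multipizza count is a legitimate alternative but is somewhat redundant given that Proposition \ref{Prop:segment has finitely many zones} is already available; the shorter derivation also avoids re-running the Proposition \ref{Prop:maximal zones in H_{beta}}-style analysis you gesture at.
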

\begin{proof}
	$(1)$ Let $\gamma \in V(X)$ be a nodal arc. Given $\gamma'\in V(X)$, if $itord(\gamma,\gamma')=\beta$ then $\gamma'\notin Nod_{\gamma}$. Indeed, if $\gamma'\in Nod_{\gamma}$ and $itord(\gamma,\gamma')=\beta$ then, since $V(T(\gamma,\gamma'))\subset Nod_{\gamma}$ and $Nod_{\gamma}$ is a constant zone, by Corollary \ref{Cor:segments and nodal zones are constant zones}, every arc in $G(T(\gamma,\gamma'))$ is a segment arc, a contradiction with $Nod_{\gamma}$ being a zone. Thus, a nodal zone is completely determined by any one of its arcs, i.e., $Nod_{\gamma}=\{\gamma'\in V(X) : itord(\gamma,\gamma')>\beta\}$. Therefore, any nodal zone is an open $\beta$-complete zone.
	
	$(2)$ This is an immediate consequence of $(1)$ and Remark \ref{Rem:open complete zones}.

	$(3)$ It is a consequence of Proposition \ref{Prop:segment has finitely many zones} and item $(2)$ of this Proposition.
\end{proof}

\begin{Cor}\label{Cor: V(X) is a disjoint union of segments and nodal zones}
	If $X$ is a snake then $V(X)$ is a disjoint union of finitely many segments and nodal zones.
\end{Cor}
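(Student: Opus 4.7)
The plan is to observe that this corollary is essentially a repackaging of the earlier definitions and finiteness results in this subsection, combined with a standard argument that maximal zones of a given type form a partition.

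First, I would note that by Definition \ref{Def:segment-nodal arcs-zones}, every arc in $V(X)$ is either a segment arc or a nodal arc, and these two possibilities are mutually exclusive. Hence $V(X)=\mathbf{S}(X)\sqcup\mathbf{N}(X)$. It then remains only to decompose each of these two pieces into the corresponding maximal zones.

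Next, I would argue that any two distinct segments of $X$ are disjoint. Indeed, suppose $S$ and $S'$ are segments (i.e.\ maximal zones in $\mathbf{S}(X)$) with a common arc $\gamma$. By Remark \ref{rem:maxzone}, the union of any family of zones sharing a common arc is again a zone, so $S\cup S'\subset\mathbf{S}(X)$ is a zone containing $\gamma$; maximality of $S$ (and of $S'$) in $\mathbf{S}(X)$ forces $S=S\cup S'=S'$. The same argument, applied in $\mathbf{N}(X)$, shows that any two distinct nodal zones are disjoint. Thus the segments partition $\mathbf{S}(X)$ and the nodal zones partition $\mathbf{N}(X)$.

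Finally, finiteness of the collection of segments is Proposition \ref{Prop:segment has finitely many zones}(2), and finiteness of the collection of nodal zones is Proposition \ref{Prop:nodal zones are finite}(3). Combining the three ingredients yields the required description of $V(X)$ as a disjoint union of finitely many segments and nodal zones. There is no real obstacle here; the corollary is purely a bookkeeping consequence of the work already done, and the only subtlety is pointing out that the maximality defining segments and nodal zones automatically makes the decompositions disjoint via Remark \ref{rem:maxzone}.
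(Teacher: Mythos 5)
Your proof is correct and follows essentially the same route the paper intends: the corollary is a direct consequence of Definition \ref{Def:segment-nodal arcs-zones}, Remark \ref{rem:maxzone}, Proposition \ref{Prop:segment has finitely many zones}(2), and Proposition \ref{Prop:nodal zones are finite}(3), which is exactly what you have assembled. The paper offers no separate argument, and your bookkeeping of the disjointness via maximality is the natural way to make the implicit deduction explicit.
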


\begin{Def}\label{DEF: boundary and interior nodal zones}
	\normalfont Let $X=T(\gamma_{1},\gamma_{2})$ be a $\beta$-snake. By Definition \ref{Def:segment-nodal arcs-zones}, the boundary arcs $\gamma_{1}$ and $\gamma_{2}$ of $X$ are nodal arcs. The nodal zones $Nod_{\gamma_{1}}$ and $Nod_{\gamma_{2}}$ are called the \textit{boundary nodal zones}. All other nodal zones are called \textit{interior nodal zones}.
\end{Def}

\begin{Prop}\label{Prop:segment is the generic arcs from HD of nodal adjacent zones}
	Let $X$ be a $\beta$-snake. Then, each interior nodal zone in $X$ has exactly two adjacent segments, and each segment in $X$ is adjacent to exactly two nodal zones. Moreover, if $N$ and $N'$ are the nodal zones adjacent to a segment $S$, then for any arcs $\gamma \subset N$ and $\gamma' \subset N'$, we have $S = G(T(\gamma,\gamma'))$.
\end{Prop}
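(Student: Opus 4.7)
The plan is to exploit the linear structure on $V(X)$ inherited from $X$ being a H\"older triangle. After fixing an inner bi-Lipschitz homeomorphism $X\to T_\beta$, each arc of $X$ corresponds to a unique point in the link of $X$. Every zone $Z\subset V(X)$ is ``convex'' in this linear picture: whenever $\theta,\theta'\in Z$, the non-singular H\"older triangle $T(\theta,\theta')\subset X$ guaranteed by Definition \ref{Def: zone} satisfies $V(T(\theta,\theta'))\subset Z$, and $V(T(\theta,\theta'))$ is precisely the set of arcs between $\theta$ and $\theta'$ in the link.

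By Corollary \ref{Cor: V(X) is a disjoint union of segments and nodal zones}, $V(X)$ is the disjoint union of finitely many segments and nodal zones, so by convexity these correspond to a finite partition of the link into subintervals, naturally linearly ordered. Two zones whose intervals are consecutive are adjacent in the sense of Definition \ref{Def:adjacent zones}: picking arcs in each sufficiently close to the shared endpoint, the H\"older triangle between them is contained in their union. Non-consecutive zones cannot be adjacent, since some intermediate zone, disjoint from their union, will contain arcs of every H\"older triangle between them. Propositions \ref{Prop:segment has finitely many zones}(1) and \ref{Prop:nodal zones are finite}(2) rule out two adjacent segments and two adjacent nodal zones, so segments and nodal zones must strictly alternate. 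Since $\gamma_1,\gamma_2$ are nodal arcs (Definition \ref{DEF: boundary and interior nodal zones}), the ordered partition has the form $N_0,S_1,N_1,\ldots,S_k,N_k$, yielding exactly two nodal zones adjacent to each segment and exactly two segments adjacent to each interior nodal zone.

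For the ``moreover'' part, fix a segment $S=S_i$ with neighboring nodal zones $N=N_{i-1}$ and $N'=N_i$, and pick $\gamma\in N$, $\gamma'\in N'$. Since $N$ and $N'$ are distinct open $\beta$-complete zones (Proposition \ref{Prop:nodal zones are finite}(1)), Remark \ref{Rem:open complete zones} gives $itord(N,N')\le\beta$, and combined with $X$ being a $\beta$-snake we get $itord(\gamma,\gamma')=\beta$. Thus $T(\gamma,\gamma')$ is a $\beta$-H\"older triangle whose link decomposes, by the previous paragraph, as a piece of $N$, all of $S$, and a piece of $N'$. By Remark \ref{Rem: generic arcs of a non-singular HT}, an arc $\theta\in T(\gamma,\gamma')$ is generic iff $itord(\gamma,\theta)=itord(\theta,\gamma')=\beta$. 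The open $\beta$-completeness of $N$ forces $itord(\gamma,\theta)>\beta$ for $\theta\in N\cap V(T(\gamma,\gamma'))$, ruling out genericity, and symmetrically for $N'$; for $\theta\in S$, the fact that $\theta\notin N\cup N'$ and the $\beta$-completeness of $N,N'$ give $itord(\gamma,\theta)\le\beta$ and $itord(\theta,\gamma')\le\beta$, which match the snake lower bound $\ge\beta$, so $\theta\in G(T(\gamma,\gamma'))$. Hence $G(T(\gamma,\gamma'))=S$. The main hurdle is justifying the convexity/linear-order description of zones carefully enough to make ``consecutive in the partition'' visibly equivalent to ``adjacent in the sense of Definition \ref{Def:adjacent zones}''; once that is in place, the rest is direct bookkeeping with the properties of $\beta$-complete zones and the snake inequality.
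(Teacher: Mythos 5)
Your proof is correct and follows essentially the same route as the paper's: alternation of segments and nodal zones forced by Propositions \ref{Prop:segment has finitely many zones}(1) and \ref{Prop:nodal zones are finite}(2), and the characterization of $G(T(\gamma,\gamma'))$ via the open $\beta$-completeness of $N$ and $N'$. The ``hurdle'' you flag (that zones in a non-singular H\"older triangle are intervals of the link) is not a real gap: in a non-singular H\"older triangle the sub-H\"older-triangle $T(\theta,\theta')$ between two arcs is uniquely determined, so $V(T(\theta,\theta'))$ is the interval of the link between them and every zone is automatically ``convex''; the paper simply leaves this bookkeeping implicit.
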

\begin{proof}
	Propositions \ref{Prop:nodal zones are finite} and \ref{Prop:segment has finitely many zones} imply that each nodal zone in $V(X)$ could only be adjacent to a segment $S$, and vice versa.
	
	Finally, let $N$ and $N'$ be the two nodal zones adjacent to $S$ and let $\gamma \in N$ and $\gamma'\in N'$. Since each arc in $T(\gamma,\gamma')$ which has tangency order higher than $\beta$ with one of the boundary arcs is a nodal arc, by Proposition \ref{Prop:nodal zones are finite}, each segment arc in $T(\gamma,\gamma')$ must be in $G(T(\gamma,\gamma'))$, and vice versa.
\end{proof}

\begin{Def}\label{Def: node}
	\normalfont Let $X$ be a $\beta$-snake. A \textit{node} $\N$ in $X$ is a union of nodal zones in $X$ such that for any nodal zones $N,N'$ with $N\subset \N$ then $N'\subset \N$ if and only if $tord(N,N')>\beta$. Given a node $\N=\bigcup_{i=1}^m N_i$, where $N_i$ are the nodal zones in $\N$, the set $Spec(\N)=\{q_{ij}=tord(N_i,N_j) : i\ne j\}$ is called the \textit{spectrum} of $\N$.
\end{Def}

\subsection{Clusters and cluster partitions}\label{Subsection: clusters}

\begin{Def}\label{Def: clusters}
\normalfont Let $\N$ and $\N'$ be nodes of a $\beta$-snake $X$, and let $\mathcal{S}(\N,\N')$ be the (possibly empty) set of all segments of $X$ having adjacent nodal zones in the nodes $\N$ and $\N'$ (see Proposition \ref{Prop:segment is the generic arcs from HD of nodal adjacent zones}). Two segments $S$ and $S'$ in $\mathcal{S}(\N,\N')$ belong to the same \textit{cluster}
if $tord(S,S')>\beta$. This defines a \textit{cluster partition} of $\mathcal{S}(\N,\N')$.
The size of each cluster $C$ of this partition is equal to the multiplicity of each segment $S\in C$ (see Definition \ref{DEF: constant zone}).
\end{Def}

\begin{remark}\label{Rmk:clusters}
\normalfont Proposition \ref{Prop: spiral snake and its segments} below implies that all segments of a spiral snake $X$ belong to the same cluster. If $X$ is not a spiral snake, then Proposition \ref{Prop: HT in consecutive segments} below implies that any two segments of $X$
adjacent to the same nodal zone do not belong to the same cluster.
\end{remark}

\begin{Exam}\label{Exam: curve in C2}
	\normalfont Given relatively prime natural numbers $p$ and $q$, where $1<p<q$, the germ at the origin of the complex curve $X=\{y^p=x^q\}\subset\C^2$, considered as a real surface in $\R^4$, is an example of a circular 1-snake with a single segment and no nodes.
	Removing from $X$ the H\"older triangle $T=\{(x,y)\in \C^2 : 0\le arg(x)\le \pi/q,\;0\le arg(y)\le \pi/p \}$, and taking the closure, one obtains a 1-snake $X'$ with $p$ segments of multiplicity $p$ and $p-1$ segments of multiplicity $p-1$. Each of the two nodes $\N$ and $\N'$ of $X'$ has multiplicity $p$, and its spectrum consists of a single exponent $q/p$.
The set $\mathcal{S}(\N,\N')$ is partitioned into two clusters of sizes $p$ and $p-1$.
\end{Exam}

\begin{figure}
	\centering
	\includegraphics[width=4.5in]{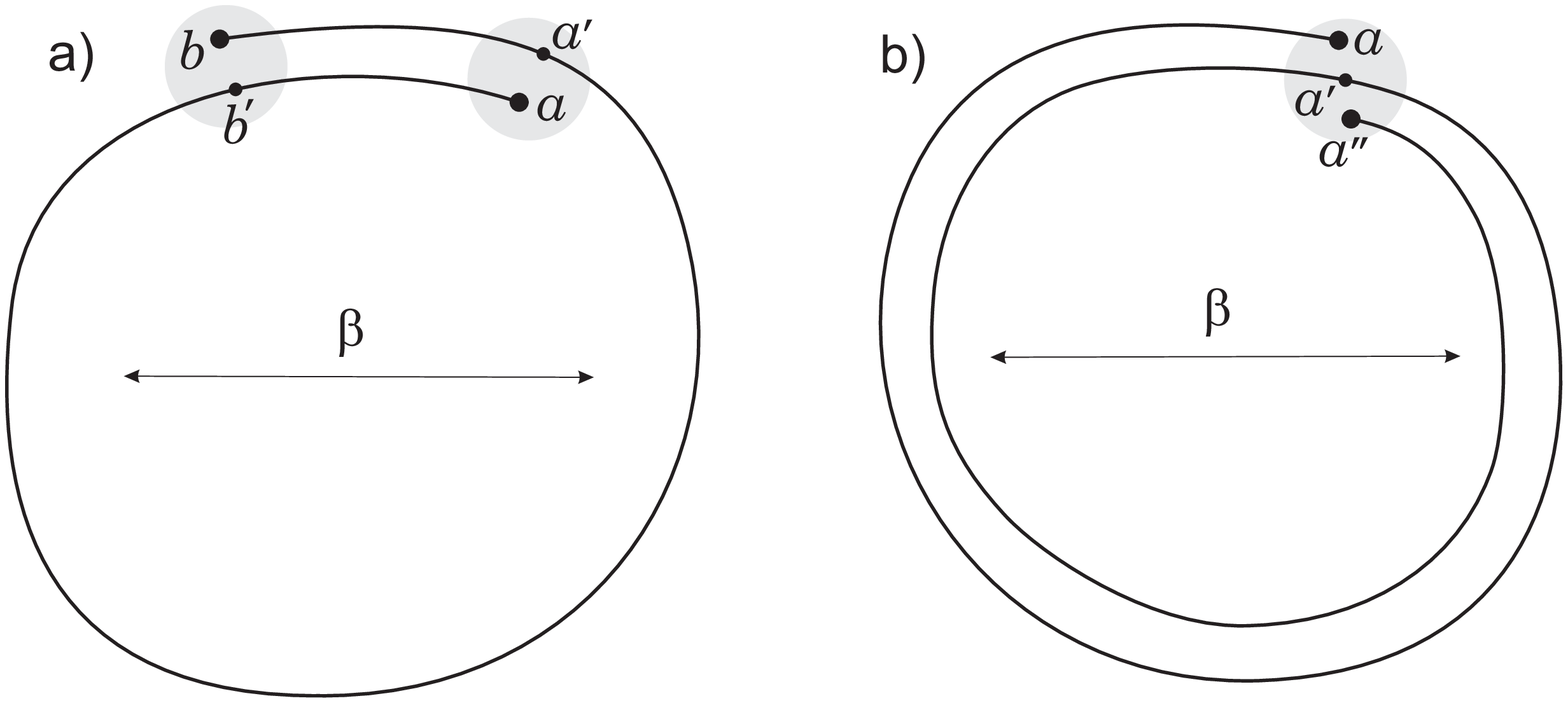}
	\caption{Links of snakes with segments of multiplicity two. a) has two nodes; b) a spiral snake and its single node. Shaded disks represent arcs with tangency order higher than $\beta$.}\label{fig:segment multiplicity}
\end{figure}

\begin{Exam}\label{Exam: multiplicity of segments in a complex curve}
	\normalfont Fig.~\ref{fig:segment multiplicity}a represents the link of a $\beta$-snake $X$ with three segments, $S=G(T(a',b'))$, $S'=G(T(b',a))$ and $S''=G(T(b,a'))$, such that $m(S)=1$ and $m(S')=m(S'')=2$, and two nodes $\N=Nod_{a}\cup Nod_{a'}$ and $\N'=Nod_{b}\cup Nod_{b'}$. If $\beta=1$, $tord(\gamma,T(b,a'))=3/2$ for all arcs $\gamma \subset T(b,a')$ and  $tord(\gamma',T(b',a))=3/2$ for all arcs $\gamma' \subset T(b',a)$ then the link of $X$ is outer metric equivalent to the link of the snake $X'$ in Example \ref{Exam: curve in C2} with $p=2$ and $q=3$.
\end{Exam}

\begin{Exam}\label{Exam: two segments one cluster}
\normalfont Fig.~\ref{fig:segment multiplicity}b represents the link of a $\beta$-snake $X'$ containing two segments, $S=G(T(a,a'))$ and $S'=G(T(a',a''))$ such that $m(S)=m(S')=2$, and a single node $\N=Nod_{a}\cup Nod_{a'}\cup Nod_{a''}$.
All three segments of $X'$ belong to a single cluster in $\mathcal{S}(\N,\N)$.
\end{Exam}

\subsection{Segments and nodal zones with respect to a pancake}\label{Subsection: seg and nodal zones with respect}

\begin{Def}\label{Def: distance functions to pancakes}
	\normalfont Let $X$ be a $\beta$-snake, and $\{X_{k}\}_{k=1}^{p}$ a pancake decomposition of $X$. If $\mu(X_j)=\beta$ we define the functions $f_1,\ldots, f_p$, where $f_{l}\colon (X_{j},0)\rightarrow (\R,0)$ is given by $f_{l}(x)=d(x,X_{l})$. For each $l$ we define $m_{l}\colon V(X_{j})\rightarrow \{0,1\}$ as follows: $m_{l}(\gamma)=1$ if and only if $ord_{\gamma}f_{l}>\beta$ and $m_{l}(\gamma)=0$ otherwise. In particular, $m_{j}\equiv 1$.
\end{Def}

\begin{remark}\label{multiplicity formula}
	\normalfont Consider $m_1,\ldots,m_p$ as in Definition \ref{Def: distance functions to pancakes}. For each $\gamma\in G(X_{j})$ we have $m(\gamma)=\sum_{l=1}^{p}m_{l}(\gamma)$.
\end{remark}

\begin{Def}\label{DEF: constant zones with resp to a pancake}
	\normalfont Consider $m_1,\ldots,m_p$ as in Definition \ref{Def: distance functions to pancakes}. A zone $Z \subset V(X_{j})$ is \textit{constant with respect to} $X_{l}$ if $m_{l}|_{Z}$ is constant.
\end{Def}

\begin{Def}\label{DEF: segments and nodal zones with resp to a pancake}
	\normalfont Let $m_1,\ldots,m_p$ be as in Definition \ref{Def: distance functions to pancakes}. Consider an arc $\gamma\in G(X_{j})$. For each $l$ we say that $\gamma$ is a \textit{segment arc with respect to} $X_{l}$ if there exists a $\beta$-H\"older triangle $T$ such that $\gamma$ is a generic arc of $T$ and $V(T)$ is constant with respect to $X_{l}$. Otherwise $\gamma$ is a \textit{nodal arc with respect to} $X_{l}$. The set of segment arcs in $G(X_j)$ with respect to $X_l$ and the set of nodal arcs in $G(X_{j})$ with respect to $X_l$ are denoted by $\mathbf{S}_l(X_{j})$ and $\mathbf{N}_l(X_{j})$, respectively. Furthermore, a \textit{segment with respect to} $X_{l}$ is a zone $S_{l,j}$ maximal in $\mathbf{S}_l(X_{j})$, and a \textit{nodal zone with respect to} $X_{l}$ is a zone $N_{l,j}$ maximal in $\mathbf{N}_l(X_{j})$. We write $Seg_{\gamma}^{l,j}$ or $Nod_{\gamma}^{l,j}$ for a segment or a nodal zone with respect to $X_{l}$ in $G(X_j)$ containing an arc $\gamma$.
\end{Def}

\begin{remark}\label{Rem: relative segments and nodal zones and perfect and open complete zones}
	\normalfont Let $f_1,\ldots,f_p$ be as in Definition \ref{Def: distance functions to pancakes}. Propositions \ref{Prop:segment is a perfect zone}, \ref{Prop:segment has finitely many zones} and \ref{Prop:nodal zones are finite} remain valid for segments and nodal zones in $G(X_j)$ with respect to $X_{l}$.
	
	In particular, taking $f=f_{l}$ and $T=X_{j}$, segments in $G(X_j)$ with respect to $X_{l}$ are in one-to-one correspondence with the closed perfect zones maximal in $B_{\beta}(f_l)$ and $H_{\beta}(f_l)$. Similarly, the nodal zones with respect to $X_{l}$ are in one-to-one correspondence with the open $\beta$-complete zones in $H_{\beta}(f_l)$ (see Propositions \ref{Prop:arcs in B_{beta} are generic} and \ref{Prop:maximal zones in H_{beta}}).
\end{remark}

\begin{Lem}\label{Lem:relative nodal zones and relative segments}
	Let $X$ be a $\beta$-snake, and let $f_1,\ldots,f_p$ be as in Definition \ref{Def: distance functions to pancakes}.\newline
\emph{(1)} Let $l,l' \in \{1,\ldots, p\}\setminus\{j\}$ with $l\ne l'$. Then, any two nodal zones $N_{l,j}$ and $N_{l',j}$ either coincide or are disjoint. If $N_{l,j}\cap N_{l',j}=\emptyset$ then $itord(N_{l,j},N_{l',j})=\beta$.\newline
\emph{(2)} If $\gamma \in G(X_j)$ is a segment arc of $X$ then $\gamma$ is a segment arc with respect to $X_{l}$.
\end{Lem}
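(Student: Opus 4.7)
For Part (1), the plan is to combine the characterization of nodal zones with respect to $X_l$ as open $\beta$-complete zones (Remark \ref{Rem: relative segments and nodal zones and perfect and open complete zones}, via Proposition \ref{Prop:maximal zones in H_{beta}}) with the dichotomy for such zones (Remark \ref{Rem:open complete zones}). Both $N_{l,j}$ and $N_{l',j}$ are open $\beta$-complete zones in $V(X_j)$, so they either coincide or are disjoint; in the disjoint case $itord\le\beta$. Since $X_j$ is a $\beta$-H\"older triangle by Lemma \ref{Lem:minimal pancake decomp of a snake}, every pair of arcs in $V(X_j)$ has $itord\ge\beta$, so equality holds when the zones are disjoint.

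Part (2) will be proved by contradiction. Set $L=\{l\ne j:\gamma\text{ is nodal with respect to }X_l\}$ and assume $L\ne\emptyset$. By Part (1), all zones $N_{l,j}$ with $l\in L$ coincide with a single open $\beta$-complete zone $N\ni\gamma$. The first key step is the following: since any two arcs in $N$ have outer tangency order $>\beta$, Lemma \ref{Lem:itord>beta in H_{beta} and B_{beta} implies same ordf} applied on the $\beta$-H\"older triangle $X_j$ to each function $f_{l''}$ gives that each $m_{l''}$ is constant on $N$. Write $a_{l''}:=m_{l''}(\gamma)$ for this constant value; then $a_l=1$ for every $l\in L$ because $N\subset H_\beta(f_l)$.

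Next, for each $l''\in\{1,\ldots,p\}\setminus(L\cup\{j\})$, the arc $\gamma$ is a segment arc with respect to $X_{l''}$, hence there is a $\beta$-H\"older triangle $T_{l''}\subset X_j$ with $\gamma\in G(T_{l''})$ and $m_{l''}\equiv a_{l''}$ on $V(T_{l''})$. I combine these with a $\beta$-H\"older triangle witnessing that $\gamma$ is a segment arc of $X$ (which can be arranged to lie in $X_j$ because $Seg_\gamma$ is a closed perfect $\beta$-zone by Proposition \ref{Prop:segment is a perfect zone}) to obtain a single $\beta$-H\"older triangle $T\subset X_j$ with $\gamma\in G(T)$, such that $m$ is constant on $V(T)$ and $m_{l''}\equiv a_{l''}$ on $V(T)$ for every $l''\notin L\cup\{j\}$. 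Remark \ref{multiplicity formula} then gives $\sum_{l''\in L}m_{l''}\equiv |L|$ on $V(T)$, and since each $m_{l''}\in\{0,1\}$, this forces $m_{l''}\equiv 1$ on $V(T)$ for every $l''\in L$. In particular $m_l\equiv 1$ on $V(T)$, contradicting the assumption that $\gamma$ is nodal with respect to $X_l$.

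The delicate point is the simultaneous shrinking used to build the $T$ above: one needs a single $\beta$-H\"older triangle in $X_j$ with $\gamma$ generic that is contained in each of the finitely many witnessing triangles. This rests on the fact that the family of $\beta$-H\"older triangles inside $X_j$ containing $\gamma$ as a generic arc is directed under inclusion, which in turn uses that all the relevant witnessing zones are closed perfect $\beta$-zones (Proposition \ref{Prop:segment is a perfect zone} and Remark \ref{Rem: relative segments and nodal zones and perfect and open complete zones}). Once such a $T$ is secured, the remainder of the argument is a short combinatorial consequence of the multiplicity identity of Remark \ref{multiplicity formula} together with Part (1).
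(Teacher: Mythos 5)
Your Part~(1) argument matches the paper's (Remark \ref{Rem: relative segments and nodal zones and perfect and open complete zones} $+$ Remark \ref{Rem:open complete zones}, with the observation that $itord\ge\beta$ inside the $\beta$-H\"older triangle $X_j$ supplying the missing inequality); that part is fine.

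For Part~(2) you take a genuinely different route. The paper argues by an explicit iteration: starting from one index $l$ with $\gamma$ nodal with respect to $X_l$, it picks an arc $\lambda_1\in S\cap B_l$ where $m_l$ drops from $1$ to $0$, invokes the multiplicity identity $m=\sum_{l''} m_{l''}$ of Remark~\ref{multiplicity formula} to produce a fresh index $l_1\notin\{l,j\}$ where $m_{l_1}$ compensates, then an arc $\lambda_2\in B_l\cap B_{l_1}\cap V(T(\lambda_1,\gamma))$, and so on, finding a new index at every step; since there are only $p$ pancakes the process must halt after at most $p-1$ steps, which is the contradiction. Your proof collapses this iteration into a single counting step: you collect all the ``bad'' indices $L$ at once, build one $\beta$-H\"older triangle $T\subset X_j$ with $\gamma$ generic that simultaneously witnesses that $\gamma$ is a segment arc of $X$ and a segment arc with respect to every $X_{l''}$ with $l''\notin L\cup\{j\}$, and then apply the same multiplicity identity once on $V(T)$ to force $m_{l''}\equiv 1$ for every $l''\in L$, directly contradicting the nodal hypothesis. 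The directedness you rely on is legitimate: because $X_j$ is a normally embedded $\beta$-H\"older triangle (Lemma~\ref{Lem:minimal pancake decomp of a snake}), its arcs are linearly ordered and the intersection of two $\beta$-sub-triangles with $\gamma$ generic is again one, and restricting $V(T)$ to $G(X_j)$ (needed for Remark~\ref{multiplicity formula}) costs nothing by another shrinking via the closed perfect $\beta$-zone $G(X_j)$. The only cosmetic remark: the observation that each $m_{l''}$ is constant on the common nodal zone $N$ (via Lemma~\ref{Lem:itord>beta in H_{beta} and B_{beta} implies same ordf}) is correct but not actually used in the counting; $a_{l''}:=m_{l''}(\gamma)$ is all you need. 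Both proofs rest on the same multiplicity bookkeeping; yours is a cleaner one-shot version, the paper's a more explicit termination argument.
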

\begin{proof}
	$(1)$ This is an immediate consequence of Remark \ref{Rem: relative segments and nodal zones and perfect and open complete zones} and Remark \ref{Rem:open complete zones}.
	
	$(2)$ Suppose that $\gamma \in G(X_j)$ belong to a segment $S$ of $X$, and there exists $l\ne j$ such that $\gamma$ is a nodal arc with respect to $X_{l}$. Remark \ref{Rem: relative segments and nodal zones and perfect and open complete zones} implies that $$Nod_{\gamma}^{l,j}=\{\gamma' \in V(X_{j}) : tord(\gamma,\gamma')>\beta\} \subset H_{\beta}(f_l).$$
	
	There is a closed perfect $\beta$-zone $B_{l}\subset B_{\beta}(f_{l})$ adjacent to $Nod_{\gamma}^{l,j}$. Let $\lambda_{1} \in S\cap B_{l}$. Then, $m_{l}(\gamma)=1$ and $m_{l}(\lambda_{1})=0$. As $\gamma,\lambda_{1}\in S$, it follows that $m(\gamma)=m(\lambda_{1})$. Thus, Remark \ref{multiplicity formula} implies that there is $l_1\in \{1,\ldots, p\}\setminus\{l,j\}$ such that $m_{l_{1}}(\lambda_{1})=1$ and $m_{l_1}(\gamma)=0$.
	
	As $\gamma \in B_\beta(f_{l_1})$, by Proposition \ref{Prop:arcs in B_{beta} are generic}, there is a closed perfect $\beta$-zone $B_{l_1}$ maximal in $B_\beta(f_{l_1})$ containing $\gamma$. Thus, there is $\lambda_{2} \in (B_{l}\cap B_{l_1})\cap V(T(\lambda_{1},\gamma))$. In particular $\lambda_2 \in S$.
	
	As $\gamma,\lambda_{2} \in S$ it follows that $m(\gamma)=m(\lambda_{2})$. Then, as $m_{l}(\lambda_{2})=m_{l_1}(\lambda_{2})=0$, by Remark \ref{multiplicity formula}, there is $l_2\in \{1,\ldots, p\}\setminus\{l,l_1,j\}$ such that $m_{l_{2}}(\lambda_{2})=1$ and $m_{l_{2}}(\gamma)=m_{l_{2}}(\lambda_{1})=0$.
	
	Similarly, as $\gamma \in B_\beta(f_{l_2})$, there are a closed perfect $\beta$-zone $B_{l_2}$ maximal in $B_\beta(f_{l_2})$ containing $\gamma$ and an arc $\lambda_{3} \in (B_{l}\cap B_{l_1}\cap B_{l_2})\cap V(T(\lambda_{1},\gamma))$. In particular $\lambda_3 \in S$.
	
	Continuing with this process, after at most $p-1$ steps, we get a contradiction.
\end{proof}

\begin{Cor}\label{Cor:relative multiplicity pancake and segment}
	Let $X$ be a $\beta$-snake, $\{X_{k}\}_{k=1}^{p}$ a pancake decomposition of $X$, and $S\subset V(X)$ a segment.
If $\gamma,\lambda \in S\cap G(X_{j})$ then $m_{l}(\gamma)=m_{l}(\lambda)$ for all $l$.
\end{Cor}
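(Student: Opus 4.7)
The plan is a proof by contradiction that uses the uniqueness of the sub-H\"older triangle between two arcs of a H\"older triangle, combined with Lemma \ref{Lem:relative nodal zones and relative segments}(2). Suppose, toward a contradiction, that $m_l(\gamma)\ne m_l(\lambda)$ for some $l\in\{1,\ldots,p\}$. By Definition \ref{Def: distance functions to pancakes}, $m_j\equiv 1$, so $l\ne j$; after relabelling, assume $m_l(\gamma)=1$ and $m_l(\lambda)=0$, i.e., $ord_\gamma f_l>\beta$ and $ord_\lambda f_l=\beta$.

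First I would construct a single sub-H\"older triangle $T$ connecting $\gamma$ to $\lambda$ that lies simultaneously in $G(X_j)$ and in $S$. By Example \ref{Exam: closed, complete and perfect zones}, $G(X_j)$ is a closed perfect $\beta$-zone in $V(X_j)$; by Proposition \ref{Prop:segment is a perfect zone}, $S$ is a closed perfect $\beta$-zone in $V(X)$. Both zones contain $\gamma$ and $\lambda$, so Definition \ref{Def: zone} produces non-singular sub-H\"older triangles $T_1=T(\gamma,\lambda)\subset X_j$ with $V(T_1)\subset G(X_j)$ and $T_2=T(\gamma,\lambda)\subset X$ with $V(T_2)\subset S$. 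The link of the snake $X$ is topologically an interval, so the sub-H\"older triangle of $X$ with boundary arcs $\gamma$ and $\lambda$ is unique; hence $T:=T_1=T_2$ and $V(T)\subset G(X_j)\cap S$.

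Next I would extract a nodal arc with respect to $X_l$ inside $V(T)$. By Lemma \ref{Lem:relative nodal zones and relative segments}(2), both $\gamma$ and $\lambda$ belong to $\mathbf{S}_l(X_j)$; the hypothesis $m_l(\gamma)\ne m_l(\lambda)$ then places them in distinct segments $Seg_\gamma^{l,j}\subset H_\beta(f_l)$ and $Seg_\lambda^{l,j}\subset B_\beta(f_l)$ with respect to $X_l$. Propositions \ref{Prop:arcs in B_{beta} are generic} and \ref{Prop:maximal zones in H_{beta}}, applied to the minimal pizza decomposition of $X_j$ associated with $f_l$, describe how $G(X_j)$ is partitioned: each closed perfect $\beta$-zone inside $H_\beta(f_l)$ is flanked by open $\beta$-complete zones inside $H_\beta(f_l)$ which separate it from the closed perfect $\beta$-zones of $B_\beta(f_l)$. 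By Remark \ref{Rem: relative segments and nodal zones and perfect and open complete zones}, those open $\beta$-complete zones are precisely the nodal zones with respect to $X_l$ in $G(X_j)$. Consequently, at least one nodal zone $N^l\subset\mathbf{N}_l(X_j)$ lies strictly between $Seg_\gamma^{l,j}$ and $Seg_\lambda^{l,j}$ in the link of $X_j$, and since $T$ is the unique sub-H\"older triangle of $X_j$ with boundary arcs $\gamma,\lambda$, it must cross $N^l$. Pick $\theta\in V(T)\cap N^l$.

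The contradiction is then immediate: from the first step, $\theta\in V(T)\subset S\cap G(X_j)$, so $\theta$ is a segment arc of $X$ lying in $G(X_j)$, and Lemma \ref{Lem:relative nodal zones and relative segments}(2) forces $\theta\in\mathbf{S}_l(X_j)$, contradicting $\theta\in N^l\subset\mathbf{N}_l(X_j)$. The step I expect to require the most care is extracting the intermediate nodal zone $N^l$ crossed by $T$: one must use the interlacing structure produced by Propositions \ref{Prop:arcs in B_{beta} are generic} and \ref{Prop:maximal zones in H_{beta}} to see that the closed perfect $\beta$-zones of $H_\beta(f_l)$ and $B_\beta(f_l)$ containing $\gamma$ and $\lambda$ cannot be adjacent, and then invoke the uniqueness of the sub-H\"older triangle once more to force $T$ to pass through the separating open $\beta$-complete zone.
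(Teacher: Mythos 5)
Your proof is correct and follows essentially the same approach as the paper — both reduce to Lemma \ref{Lem:relative nodal zones and relative segments}(2) together with the fact that a segment with respect to $X_l$ is a constant zone. You fill in a step the paper leaves implicit (showing that the unique sub-H\"older triangle $T(\gamma,\lambda)$ lies in $S\cap G(X_j)$, so that every intermediate arc is a segment arc with respect to $X_l$), and your extraction of a separating nodal zone is the contrapositive form of the paper's appeal to the maximality of $Seg^{l,j}_\lambda$.
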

\begin{proof}\
	Given arcs $\gamma,\lambda \in S\cap G(X_{j})$, by Lemma \ref{Lem:relative nodal zones and relative segments}, $\gamma \in Seg^{l,j}_\lambda$ for all $l$. As a segment in $G(X_j)$ with respect to $X_l$ is a constant zone, it follows that $m_{l}(\gamma)=m_{l}(\lambda)$.
\end{proof}

\begin{Prop}\label{Prop: each node contains at least two nodal zones}
	Let $X$ be a $\beta$-snake. Then
	\begin{enumerate}
		\item If $tord(\gamma,\gamma')>itord(\gamma,\gamma')$ for some $\gamma,\gamma' \in V(X)$, and $\gamma$ is a nodal arc, then $\gamma'$ is also a nodal arc.
		\item Each node of $X$ has at least two nodal zones.
	\end{enumerate}
\end{Prop}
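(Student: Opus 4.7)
For Part (1), I would argue the contrapositive: assume $\gamma'$ is a segment arc of $X$ and deduce that $\gamma$ must be a segment arc as well. Proposition \ref{Prop:weak LNE} gives $itord(\gamma,\gamma')=\beta<tord(\gamma,\gamma')$; in particular, in any minimal pancake decomposition $\{X_k\}$ of $X$, the arcs $\gamma$ and $\gamma'$ must lie in distinct (normally embedded) pancakes, say $\gamma\in X_l$ and $\gamma'\in X_j$. Lemma \ref{Lem:relative nodal zones and relative segments}(2) then forces $\gamma'$ to be a segment arc with respect to $X_l$, so Remark \ref{Rem: relative segments and nodal zones and perfect and open complete zones} together with Proposition \ref{Prop:maximal zones in H_{beta}} yields a $\beta$-H\"older triangle $T_0\subset X_j$ with $\gamma'\in G(T_0)$ and $ord_\theta f_l>\beta$ for every $\theta\in T_0$, where $f_l(x)=d(x,X_l)$. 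Transferring $T_0$ to $X_l$ along closest-point arcs produces a normally embedded $\beta$-H\"older triangle $T\subset X_l$ to which Proposition \ref{Prop:two triangles} applies, giving a bi-Lipschitz homeomorphism $h:T\to T_0$ with $tord(\theta,h(\theta))\geq\alpha>\beta$ for every $\theta\subset T$. Since arcs with outer tangency $>\beta$ share their $\beta$-horn neighborhoods asymptotically and therefore have equal multiplicity, $T$ inherits the constant multiplicity of the segment containing $T_0$. A direct tangency computation using the non-archimedean property shows $tord(\gamma,\partial T)=\beta$; arranging (by choice of $T$) that $\gamma\in T$, this places $\gamma$ in $G(T)$ and exhibits a $\beta$-H\"older triangle witnessing $\gamma$ as a segment arc, a contradiction.

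For Part (2), given $\gamma\in N$ with $\gamma\in X_j$, the plan is to produce an arc $\gamma^\sharp\in V(X)$ satisfying $tord(\gamma,\gamma^\sharp)>\beta$ and $itord(\gamma,\gamma^\sharp)=\beta$. By Part (1), such a $\gamma^\sharp$ is automatically nodal, and $N':=Nod_{\gamma^\sharp}$ will be a nodal zone distinct from $N$ (because $itord=\beta$ means $\gamma^\sharp\notin N$) with $tord(N,N')\geq tord(\gamma,\gamma^\sharp)>\beta$, which places $N'$ in $\N$. To locate $\gamma^\sharp$, first note that $\gamma$ must be a nodal arc with respect to some pancake $X_l$, $l\neq j$: otherwise, the intersection of finitely many $\beta$-H\"older triangles $T_l\ni\gamma$ (each with $m_l$ constant) would produce a single $\beta$-H\"older triangle on which $m=\sum_l m_l$ (Remark \ref{multiplicity formula}) is constant, contradicting the nodality of $\gamma$. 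By Proposition \ref{Prop:maximal zones in H_{beta}}, $\gamma$ then lies in an open $\beta$-complete zone $H_i=\{\theta\in V(X_j):tord(\theta,\lambda_i)>\beta\}$ centered at a pizza-slice boundary $\lambda_i\subset X_j$ with $ord_{\lambda_i}f_l>\beta$. A closest-point partner $\mu_i\in X_l$ satisfies $tord(\lambda_i,\mu_i)>\beta$, and then $tord(\gamma,\mu_i)>\beta$ by non-archimedean additivity. Finally, inspecting the pizza on $X_l$ associated with $d(\cdot,X_j)$, I would choose $\gamma^\sharp$ as a generic arc of the pizza slice adjacent to $\mu_i$ in the direction where the order of $d(\cdot,X_j)$ drops to $\beta$; normal embedding of $X_l$ together with the non-archimedean property then delivers both $tord(\gamma,\gamma^\sharp)>\beta$ (through the chain $\gamma,\lambda_i,\mu_i,\gamma^\sharp$) and $itord(\gamma,\gamma^\sharp)=\beta$ (since $\gamma^\sharp$ is placed on the boundary of the open $\beta$-complete zone on the $X_l$ side).

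The main technical obstacle in both parts is the precise coordination of outer and inner tangency orders across pancakes. In (1), this means verifying the hypotheses of Proposition \ref{Prop:two triangles} when transferring $T_0$ from $X_j$ to $X_l$ and engineering the resulting $T$ so that $\gamma\in G(T)$. In (2), the delicate step is the ``edge'' construction of $\gamma^\sharp$ in $V(X_l)$ that keeps $tord(\gamma,\gamma^\sharp)>\beta$ while forcing $itord(\gamma,\gamma^\sharp)=\beta$, so that $\gamma^\sharp$ genuinely lands outside $N$ rather than being absorbed into the same open $\beta$-complete zone as $\gamma$.
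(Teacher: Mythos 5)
Your Part (1) argument follows the same contrapositive outline as the paper (assume $\gamma'$ is a segment arc, use a minimal pancake decomposition, conclude $\gamma$ is a segment arc for a contradiction), but the mechanism is different: you transport a triangle $T_0\subset X_j$ to a triangle $T\subset X_l$ via Proposition~\ref{Prop:two triangles}, whereas the paper never leaves the pancake containing $\gamma$. The paper uses Corollary~\ref{Cor:relative multiplicity pancake and segment} to pick arcs $\theta'_1,\theta'_2\in S$ on opposite sides of $\gamma'$ with $m_j(\theta'_i)=1$, and then produces a $\beta$-H\"older triangle $T(\theta_1,\theta_2)$ in $\gamma$'s own pancake with $\gamma$ generic and $V(T(\theta_1,\theta_2))\subset H_\beta(f_{j'})$. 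Your route has a real gap at the ``transfer'' step: to invoke Proposition~\ref{Prop:two triangles} you need a pre-existing normally embedded $\beta$-H\"older triangle $T\subset X_l$ with $tord(\theta,T_0)\ge\alpha$ for \emph{every} arc $\theta\subset T$ and a uniform $\alpha>\beta$, and this is not a consequence of $ord_\theta f_l>\beta$ for $\theta\in T_0$ --- the latter controls arcs of $T_0$, not arcs of the yet-to-be-constructed $T$. Supplying that construction requires essentially the same analysis as the paper's direct approach, so nothing is gained. Your observation that outer tangency $>\beta$ already forces equal multiplicity (through shared $\beta$-horn neighborhoods) is correct and is a nice shortcut not spelled out in the paper.

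Part (2) contains a concrete error in the final step. A generic arc $\gamma^\sharp$ of a pizza slice in $X_l$ with $\mu_i$ as a boundary arc satisfies $itord(\gamma^\sharp,\mu_i)=\beta$, hence $tord(\gamma^\sharp,\mu_i)=\beta$ since $X_l$ is normally embedded; the non-archimedean property then yields $tord(\gamma,\gamma^\sharp)=\min\bigl(tord(\gamma,\mu_i),\,tord(\mu_i,\gamma^\sharp)\bigr)=\beta$, not $>\beta$. So the chain $\gamma,\lambda_i,\mu_i,\gamma^\sharp$ fails at its last link, and you cannot simultaneously achieve $tord(\gamma,\gamma^\sharp)>\beta$ and $itord(\gamma,\gamma^\sharp)=\beta$ with this choice. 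The paper instead takes the witness $\gamma'$ to be an arc of $X_l$ that is already outer-close to $\gamma$ itself (in effect your $\mu_i$) and argues, from the description in Remark~\ref{Rem: relative segments and nodal zones and perfect and open complete zones} of how the open $\beta$-complete nodal zone sits inside $H_\beta(f_l)$, that this arc lies outside $N$; it does not pass to an adjacent slice. Your intermediate reduction --- that a nodal arc of $X$ must be nodal with respect to some pancake $X_l$, proved by intersecting finitely many constant-multiplicity triangles and invoking Remark~\ref{multiplicity formula} --- is correct and makes explicit a step the paper leaves implicit.
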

\begin{proof}
	$(1)$ Consider $\gamma,\gamma' \in V(X)$ such that $tord(\gamma,\gamma')>itord(\gamma,\gamma')$ and $\gamma$ is a nodal arc. Clearly, $\gamma'\notin Nod_{\gamma}$, since $itord(\gamma,\gamma')=\beta$. Suppose, by contradiction, that $\gamma'$ is a segment arc, say $\gamma'\in S$ where $S=Seg_{\gamma'}$. Let $\{X_k\}$ be a minimal pancake decomposition of $X$. Assume that $\gamma \in X_j$ and $\gamma'\in  X_{j'}$. Since each pancake is normally embedded, $j\ne j'$. As $\gamma'$ is a segment arc we can assume that $\gamma'\in G(X_{j'})$. Consider arcs $\theta'_1 \in S\cap G(T(\lambda_{j'-1},\gamma'))$ and $\theta'_2 \in S\cap G(T(\gamma',\lambda_{j'}))$. Since $\theta'_1,\theta'_2,\gamma'\in S$ and $m_j(\gamma')=1$, by Corollary \ref{Cor:relative multiplicity pancake and segment}, $m_j(\theta'_1)=m_j(\theta'_2)=1$. Thus, there exist arcs $\theta_1,\theta_2 \in V(X_j)$ such that $tord(\theta_i,\theta'_i)>\beta$, for $i=1,2$, $T=T(\theta_1,\theta_2)$ is a $\beta$-H\"older triangle and $\gamma$ is a generic arc of $T$. Then, $V(T)\subset H_\beta(f_{j'})$, what implies that $\gamma$ is segment arc with respect to $X_{j'}$, a contradiction with Remark \ref{Rem: relative segments and nodal zones and perfect and open complete zones}, since if a nodal arc belongs to a zone contained in $H_\beta(f_{j'})$, this zone should be an open $\beta$-complete zone.
	
	$(2)$ Let $\N$ be a node of $X$, and $N$ a nodal zone of $\N$. By Remark \ref{Rem: relative segments and nodal zones and perfect and open complete zones}, given $\gamma \in N$ there exists $\gamma'\in V(X)\setminus N$ such that $tord(\gamma,\gamma')>itord(\gamma,\gamma')$, since $N\subset H_\beta(f_l)$ for some $l$. By item $(1)$ of this Proposition, $\gamma'$ is a nodal arc and $Nod_{\gamma'}\ne N$ is a nodal zone of $\N$.
\end{proof}

\subsection{Bubbles, bubble snakes and spiral snakes}\label{Subsec: bubbles, bubble snakes and spiral snakes}

\begin{Def}\label{Def:bubble}
	\normalfont A $\beta$-\textit{bubble} is a non-singular $\beta$-H\"older triangle $X=T(\gamma_{1},\gamma_{2})$ such that there exists an interior arc $\theta$ of $X$ with both $X_{1}=T(\gamma_{1},\theta)$ and $X_{2}=T(\theta,\gamma_{2})$ normally embedded and $tord(\gamma_{1},\gamma_{2})>itord(\gamma_{1},\gamma_{2})$. If $X$ is a snake then it is called a $\beta$-\textit{bubble snake}.
\end{Def}

\begin{remark}
	\normalfont It follows from Lemma \ref{Lem:beta-bubble two NE pieces} that if $X$ is a $\beta$-bubble then $X_{1}$ and $X_{2}$ are $\beta$-H\"older triangles.
\end{remark}

\begin{Def}\label{DEF: spiral snake}
	\normalfont A \textit{spiral} $\beta$-snake $X$ is a $\beta$-snake with a single node and two or more segments (see Fig.~\ref{fig:segment multiplicity}b).
\end{Def}

\begin{Exam}\label{Exam: spiral snake}
	\normalfont Instead of removing the H\"older triangle $T$ from a complex curve as in Example \ref{Exam: curve in C2}, remove an $\alpha$-H\"older triangle $T'$ with $\alpha>1$ contained in $X$. Then $X''=\overline{X\setminus T'}$ is a spiral snake with $p$ segments.
\end{Exam}

\begin{remark}\label{REM: number of segments of spiral and bubble snakes}
	\normalfont Any snake with a single node and $p$ segments is either a bubble snake if $p=1$ or a spiral snake if $p>1$.
\end{remark}

\begin{Prop}\label{Prop: spiral snake and its segments}
	Let $X$ be a spiral $\beta$-snake. Then, for each segment arc $\gamma$ in $X$ and for each segment $S\ne Seg_\gamma$ of $X$, $tord(\gamma,S)>\beta$.
\end{Prop}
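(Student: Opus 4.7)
The plan is to reduce the statement to a claim about the distance function $f(x)=d(x,X_{j'})$ on a suitable pancake $X_j$ containing $\gamma$, and to exploit the single-node hypothesis to establish $ord_\gamma f>\beta$. I would first fix a minimal pancake decomposition $\{X_l\}_{l=1}^{p}$ of $X$, which by Lemma \ref{Lem:minimal pancake decomp of a snake} consists of $\beta$-H\"older triangles. Using Corollary \ref{Cor:segments and nodal zones are constant zones} (multiplicity constant on segments) together with Corollary \ref{Cor:relative multiplicity pancake and segment} (relative multiplicities $m_l$ constant on $S\cap G(X_j)$), I can replace $\gamma$ by a representative in $Seg_\gamma\cap G(X_j)$ for some pancake $X_j$ and pick $\sigma\in S\cap G(X_{j'})$ for some pancake $X_{j'}\ne X_j$ without loss of generality.

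The central step is to prove $ord_\gamma f>\beta$. I would exploit the single-node hypothesis as follows: by the proof of Lemma \ref{Lem:minimal pancake decomp of a snake}, each pair of adjacent pancakes $X_l,X_{l+1}$ admits arcs $\lambda\in X_l$, $\lambda'\in X_{l+1}$ with $tord(\lambda,\lambda')>itord(\lambda,\lambda')=\beta$, and by Proposition \ref{Prop: each node contains at least two nodal zones}(1) the arcs $\lambda$ and $\lambda'$ are of the same type (both nodal or both segment). In the nodal case, since $X$ has a single node $\N$, all these arcs lie in $\N$, so they have pairwise outer tangency $>\beta$; chaining this observation across the pancake sequence connecting $X_j$ to $X_{j'}$ and invoking Propositions \ref{Prop:arcs in B_{beta} are generic} and \ref{Prop:maximal zones in H_{beta}} for the multipizza of $X_j$ associated with $f$, I would argue that the maximal closed perfect $\beta$-zone $Z\subset H_\beta(f)$ containing the boundary-nodal arcs of $X_j$ extends across all of $Seg_\gamma\cap G(X_j)$, and in particular contains $\gamma$. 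The ``segment-type'' case for some consecutive pair is handled symmetrically using the same single-node propagation.

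From $ord_\gamma f>\beta$ I obtain an arc $\gamma^*\in X_{j'}$ with $tord(\gamma,\gamma^*)>\beta$. Since $\gamma$ is a segment arc and $tord(\gamma,\gamma^*)>itord(\gamma,\gamma^*)=\beta$, Proposition \ref{Prop: each node contains at least two nodal zones}(1) forces $\gamma^*$ to be a segment arc. To identify $\gamma^*$ as lying in $S$ specifically (rather than in another segment meeting $X_{j'}$), I would apply Lemma \ref{Lem:multipizza and generic arcs of multipizza triangle}(2) to a multipizza of $X_{j'}$ associated with $d(\cdot,X_j)$: each pizza slice meets at most one segment of $X$, and the slice containing $\gamma^*$ must be the one containing $\sigma$, by applying the chaining argument of the previous paragraph on $X_{j'}$ in place of $X_j$. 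This yields $tord(\gamma,S)\ge tord(\gamma,\gamma^*)>\beta$ as required.

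The main obstacle will be the chaining step in the second paragraph: proving rigorously that the sequence of $tord>\beta$ relations across successive pancake boundaries propagates through $X_j$ to reach $\gamma$, rather than stopping at the boundary-nodal arcs of $X_j$ closest to $X_{j'}$. The spiral-snake hypothesis is essential here: an intermediate $B_\beta(f)$-type zone separating $\gamma$ from these boundary arcs would produce a second independent node in $X$ lying strictly between $X_j$ and $X_{j'}$, contradicting the single-node assumption and thus confirming that the zone $Z\subset H_\beta(f)$ really does sweep across $Seg_\gamma\cap G(X_j)$.
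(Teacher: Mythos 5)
Your high-level strategy—drive the argument through a distance-function pizza and use the single-node hypothesis to rule out $\beta$-order—is in the same family as the paper's, but your specific setup and the two places you gloss over are genuine gaps.

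\textbf{Where the paper differs, and why it matters.} The paper does not work with $f=d(\cdot,X_{j'})$ on a pancake. It proves the consecutive-segment case first, placing the pizza on the H\"older triangle $T=T(\tilde\lambda,\lambda)$ whose boundary arcs $\tilde\lambda\in\tilde N$ and $\lambda\in N$ already lie in the unique node $\N$, with $f=d(\cdot,T')$ where $T'=T(\lambda,\tilde\lambda')$. Because $\lambda_0,\lambda_p\in\N$ and $T'$ meets $\N$, $ord_{\lambda_0}f>\beta$ and $ord_{\lambda_p}f>\beta$ are immediate—no chaining through intermediate pancakes is needed. The contradiction from a hypothetical $\lambda_j$ with $ord_{\lambda_j}f=\beta$ is then obtained by showing $\lambda_{j\pm1}$ must be nodal (in $\tilde N$ and $N$), hence every $\lambda'\in S$ has $ord_{\lambda'}f=\beta$, hence $tord(S,S')=\beta$; but that forces the arcs of the interior nodal zone $N$ to be normal (no abnormal pair $\theta_1\in S,\theta_2\in S'$ can exist), contradicting that $X$ is a snake. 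The non-consecutive case then follows by the non-archimedean inequality. You do not isolate the consecutive case and you do not find this endgame; your proposed fix (``a $B_\beta(f)$-type zone would produce a second independent node'') is not what the contradiction actually is, and is stated too vaguely to verify.

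\textbf{Concrete gap 1: the chaining step.} You acknowledge it, but the mechanism you sketch does not hold up. You speak of ``the boundary-nodal arcs of $X_j$'' and of a zone $Z\subset H_\beta(f)$ ``containing'' them—but boundary arcs of a pancake are not in $G(X_j)$, hence not in $H_\beta(f)$ at all. More fundamentally, in a spiral snake the common boundary arcs $\theta_l$ of a minimal pancake decomposition are typically \emph{segment} arcs, not nodal arcs: for instance, in a spiral snake with nodal zones $N_1,N_2,N_3$ and segments $S_1,S_2$, a pancake boundary cannot lie in $N_2$ (that pancake would contain arcs from two nodal zones of $\N$ with $tord>\beta$ and $itord=\beta$, contradicting normal embedding). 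So the picture in which the pancake boundary itself sits in $\N$ and the $H_\beta$ zones ``propagate from the boundary inward'' is wrong. Your ``segment-type case'' is also dismissed as ``symmetric'' without an argument, but the single-node propagation you describe is genuinely about nodal zones of $\N$; there is no obvious symmetric statement for clustered segments.

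\textbf{Concrete gap 2: the identification $\gamma^*\in S$.} From $ord_\gamma f>\beta$ you only get an arc $\gamma^*\in X_{j'}$ with $tord(\gamma,\gamma^*)>\beta$, which by Proposition \ref{Prop: each node contains at least two nodal zones}(1) is a segment arc—but in \emph{some} segment, not a priori in $S$. Your justification (``each pizza slice meets at most one segment, and the slice containing $\gamma^*$ must be the one containing $\sigma$, by applying the chaining argument on $X_{j'}$'') does not establish this: being in $H_\beta(g)$ with $g=d(\cdot,X_j)$ does not place $\gamma^*$ and $\sigma$ in the same slice or the same segment. What would close this gap is either (a) the observation that $G(X_{j'})$ of a spiral snake contains at most one segment (which needs its own proof), or (b) the paper's route: prove the result for consecutive segments and then propagate by the non-archimedean property. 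You have neither.

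In summary, the intuition about the single node is right, but the proof as written is not repairable without adopting something close to the paper's actual structure (consecutive case via the pizza on $T(\tilde\lambda,\lambda)$, the $tord(S,S')=\beta$-versus-abnormality contradiction, and non-archimedean chaining).
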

\begin{proof}
	
	\begin{figure}
		\centering
		\includegraphics[width=3in]{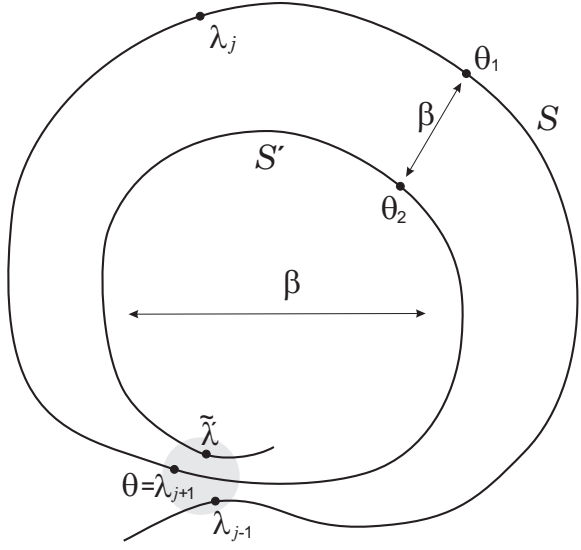}
		\caption{Contradictory case in the proof of Proposition \ref{Prop: spiral snake and its segments}. The shaded disk represents the single node of $X$.}\label{fig:spiral and its segments}
	\end{figure}

	First, we are going to prove that if $X$ is a spiral $\beta$-snake and $S$, $S'$ are consecutive segments of $X$, then $tord(\gamma,S')>\beta$ for each $\gamma \in S$. Let $N$ be the nodal zone adjacent to both $S$ and $S'$, and $\tilde{N}$, $\tilde{N}'$ the other nodal zones adjacent to $S$ and $S'$, respectively. Consider arcs $\lambda \in N$, $\tilde{\lambda} \in \tilde{N}$,  $\tilde{\lambda}' \in \tilde{N}'$, and the $\beta$-H\"older triangles $T=T(\tilde{\lambda},\lambda)$ and $T'=T(\lambda,\tilde{\lambda}')$. Proposition \ref{Prop:segment is the generic arcs from HD of nodal adjacent zones} implies that $S=G(T)$ and $S'=G(T')$.
	Consider the germ of the function $f\colon (T,0)\rightarrow (\R,0)$ given by $f(x)=d(x,T')$. Let $\{T_i=T(\lambda_{i-1},\lambda_i)\}_{i=1}^{p}$ be a minimal pizza on $T$ associated with $f$. It is enough to show that $ord_{\lambda_i}f>\beta$ for each $i=0,\ldots, p$.
	
	Suppose, by contradiction, that there is $j\in \{0,\ldots,p\}$ such that $ord_{\lambda_j}f=\beta$. Since a spiral snake has a single node $\N$, both $\lambda_0$ and $\lambda_p$ belong to $\N$. Thus, $ord_{\lambda_0}f>\beta$, $ord_{\lambda_p}f>\beta$ and $0<j<p$. Then $p>1$ and Lemma \ref{Lem:pizza HT and arcs in B_{beta}} implies that $ord_{\lambda_{j-1}}f>\beta$ and $ord_{\lambda_{j+1}}f>\beta$. We claim that both $\lambda_{j-1}$ and $\lambda_{j+1}$ do not belong to $S$ and consequently, since $X$ is a spiral snake, are nodal arcs (see Fig.~\ref{fig:spiral and its segments}). Assume that $\lambda_{j+1} \in S$ (if $\lambda_{j-1}\in S$ we obtain a similar contradiction). Let $\{X_k\}$ be a pancake decomposition of $X$ such that $\lambda_{j+1}\in G(X_k)$, $X_k\subset T$ and $\mu(X_k)=\beta$. As $ord_{\lambda_{j+1}}f>\beta$, $T$ is not normally embedded and $\{X_k\}$ is a pancake decomposition, there exists a pancake $X_l$, $X_l\ne X_k$, such that $X_l\cap T'\ne \emptyset$ and $tord(\lambda_{i+1},X_l\cap T')>\beta$. Since $\lambda_{j+1} \in S$, Lemma \ref{Lem:relative nodal zones and relative segments} implies that $\lambda_{j+1}$ is a segment arc in $G(X_k)$ with respect to $X_l$. Thus, since $ord_{\lambda_{j+1}}f_l= ord_{\lambda_{j+1}}f>\beta$, Remark \ref{Rem: relative segments and nodal zones and perfect and open complete zones} implies that $\lambda_{j+1}$ is contained in a closed perfect $\beta$-zone $H$ maximal in $H_\beta(f_l)$. Hence, $H\cap G(T_{j+1})\ne \emptyset$, a contradiction with $G(T_{j+1})\subset B_\beta(f_l)\subset B_\beta(f)$, by Proposition \ref{Prop:width function properties elementary triangle}.
	
	Then, for every $\lambda' \in S$, $ord_{\lambda'}f=\beta$ and consequently, $tord(S,S')=\beta$, a contradiction with the arc $\theta=\lambda_{j+1}$, in an interior nodal zone, being abnormal. To show this, suppose that $\theta$ is normal and consider arcs $\theta_1\in V(T(\gamma_1,\theta))$ and $\theta_2 \in V(T(\theta,\gamma_2))$, where $X=T(\gamma_{1},\gamma_{2})$, such that $T(\theta_1,\theta)$ and $T(\theta,\theta_2)$ are normally embedded $\beta$-H\"older triangles which intersection is $\theta$ and $tord(\theta_1,\theta_2)>itord(\theta_1,\theta_2)$. Since $T(\theta_1,\theta)$ and $T(\theta,\theta_2)$ are normally embedded and $X$ has a single node, since it is a spiral snake, both $\theta_1$ and $\theta_2$ are in $S\cup S'$, say $\theta_1\in S$ and $\theta_2\in S'$. However, $tord(S,S')=\beta$ thus $tord(\theta_1,\theta_2)=\beta$, a contradiction with $tord(\theta_1,\theta_2)>\beta =itord(\theta_1,\theta_2)$.
	
	Finally, given two non-necessarily consecutive segments $S$ and $S'=Seg_\gamma$ as in the Proposition \ref{Prop: spiral snake and its segments}, the result follows from the non-archimedean property.
\end{proof}

\begin{figure}
	\centering
	\includegraphics[width=4.5in]{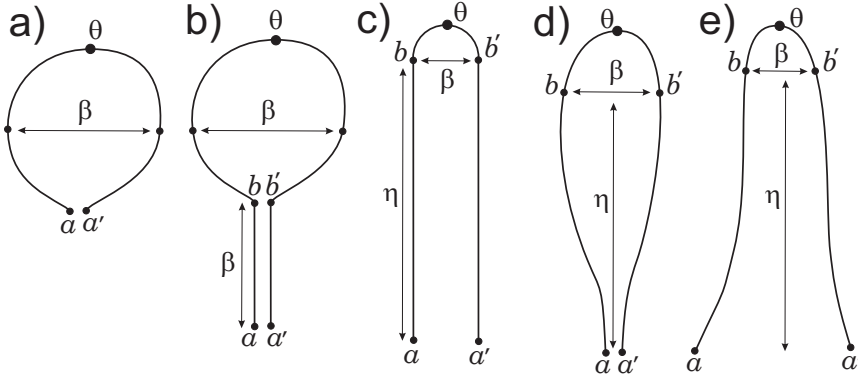}
	\caption{Links of a bubble snake and non-snake bubbles.}\label{fig:bubble snake and non-snake bubbles}
\end{figure}

\begin{Exam}\label{Exam: cases of the main theorem}
	\normalfont
	
	Fig.~\ref{fig:bubble snake and non-snake bubbles}a shows the link of a $\beta$-bubble snake $X_a$, with $tord(a,a')>\beta$.
	
	Fig.~\ref{fig:bubble snake and non-snake bubbles}b shows the link of a $\beta$-bubble $X_b$ with a ``neck'' consisting
	of two normally embedded $\beta$-H\"older triangles $T=T(a,b)$ and $T'=T(a',b')$ such that
	$tord(\gamma,T')>\beta$ for all arcs $\gamma\in V(T)$ and $tord(\gamma,T)>\beta$ for all arcs $\gamma\in V(T')$.
	Since all arcs in $T$ and $T'$ are normal, $X_b$ is not a snake, although it does contain a $\beta$-bubble snake $X_a$.
	
	Figs.~\ref{fig:bubble snake and non-snake bubbles}c, \ref{fig:bubble snake and non-snake bubbles}d and \ref{fig:bubble snake and non-snake bubbles}e show the links of non-snake $\eta$-bubbles $X_c$, $X_d$ and $X_e$, respectively,
	with $tord(a,a')>\eta$.  The set of abnormal arcs in each of them is a closed perfect $\beta$-complete abnormal zone $Z$.
	In each of these three figures $T(b,b')$ is a normally embedded $\beta$-H\"older triangle, while $T=T(a,b)$ and $T'=T(a',b')$ are normally embedded $\eta$-H\"older triangles where $\eta<\beta$.
	For each arc $\gamma\in V(T)\setminus Z$ we have $tord(\gamma,T')=\beta$ in $X_c$, $tord(\gamma,T')>\beta$ in $X_d$,  and $tord(\gamma,T')<\beta$ in $X_e$. 	
\end{Exam}

\begin{Exam}\label{Exam: snake inside bubble}
	\normalfont Fig.~\ref{fig:nonsnakebubble1} shows the link of a non-snake $\beta$-bubble containing a non-bubble $\beta$-snake with the same link as in Fig.~\ref{fig:three snakes}b.
	\begin{figure}
		\centering
		\includegraphics[width=3in]{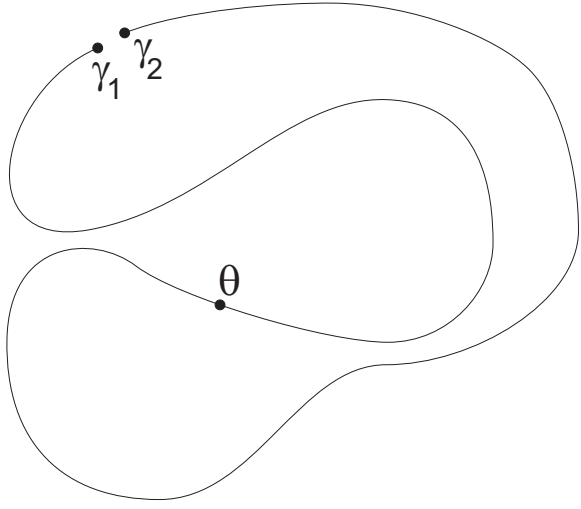}
		\caption{Link of a non-snake bubble containing a non-bubble snake.}\label{fig:nonsnakebubble1}
	\end{figure}
\end{Exam}

\begin{Prop}\label{Prop:bubble snake and its generic arcs}
	Let $X$ be a $\beta$-bubble snake as in Definition \ref{Def:bubble}. If $\gamma\in G(X_1)$ then $tord(\gamma,X_2)=\beta$.
\end{Prop}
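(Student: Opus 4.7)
The easy direction is $tord(\gamma, X_2) \ge \beta$. Since $\gamma \in G(X_1)$, Remark \ref{Rem: generic arcs of a non-singular HT} gives $itord(\gamma, \theta) = \beta$, and the normal embedding of $X_1$ (Remark \ref{Rem: NE HT condition}) promotes this to $tord(\gamma, \theta) = \beta$. As $\theta \in X_2$, we conclude $tord(\gamma, X_2) \ge \beta$.

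For the reverse inequality, I would argue by contradiction: assume some $\gamma' \in X_2$ satisfies $tord(\gamma, \gamma') = \alpha > \beta$. A preliminary reduction uses the non-archimedean property. From $tord(\gamma, \gamma_1) = \beta$ and the bubble condition $tord(\gamma_1, \gamma_2) > \beta$ (distinct values), one gets $tord(\gamma, \gamma_2) = \beta$. Any $\gamma' \in X_2$ with $tord(\gamma_2, \gamma') > \beta$ would then yield $tord(\gamma, \gamma') = \beta$ via non-archimedean through $\gamma_2$, contradicting $\alpha > \beta$; likewise $tord(\theta, \gamma') > \beta$ is ruled out via $\theta$. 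So $\gamma' \in G(X_2)$, meaning $tord(\gamma', \theta) = tord(\gamma', \gamma_2) = \beta$.

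In this reduced case, $T^* := T(\gamma, \gamma') \subset X$ has $\theta$ as an interior arc partitioning it into the normally embedded $\beta$-H\"older triangles $T(\gamma, \theta) \subset X_1$ and $T(\theta, \gamma') \subset X_2$, and $tord(\gamma, \gamma') > \beta = itord(\gamma, \gamma')$, so $T^*$ is itself a $\beta$-bubble by Definition \ref{Def:bubble}. Combining $tord(\gamma, \gamma_2) = \beta$ with $itord \le tord$ and the H\"older triangle bound $itord(\gamma, \gamma_2) \ge \beta$ gives $itord(\gamma, \gamma_2) = \beta$, hence $\gamma \in G(X)$ by Remark \ref{Rem: generic arcs of a non-singular HT}. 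The snake hypothesis then makes $\gamma$ abnormal, and Remark \ref{Rem:triangles of abnormal arc} furnishes normally embedded $\beta$-H\"older triangles $T = T(\lambda, \gamma)$ and $T' = T(\gamma, \lambda')$ with $T \cap T' = \gamma$ and $tord(\lambda, \lambda') > \beta = itord(\lambda, \lambda')$.

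To finish, I would propagate this construction via the iteration in the proof of Lemma \ref{Lem: maximal abnormal zone alpha le beta}: at each step the current abnormal pair, combined with the fact that the ambient H\"older triangle is a bubble, forces a new NE pair whose members lie in a fresh pancake of a minimal pancake decomposition of $X$. The finiteness of the pancake decomposition (Remark \ref{Rem: existence of pancake decomp}) produces the contradiction. The main obstacle is the iteration itself: I would need to verify that each newly constructed NE triangle has exponent $\beta$ (by Lemma \ref{Lem: maximal abnormal zone alpha le beta}) and that the new partner arc genuinely lies in a previously unused pancake, exactly as in the pancake-exhaustion argument of Lemma \ref{Lem: maximal abnormal zone alpha le beta}.
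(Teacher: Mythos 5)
Your reduction to the case $\gamma'\in G(X_2)$ (using the non-archimedean property through $\gamma_2$ and $\theta$), your observation that $\gamma\in G(X)$ and hence abnormal, and the invocation of Remark \ref{Rem:triangles of abnormal arc} to get the pair $(\lambda,\lambda')$ with $tord(\lambda,\lambda')>\beta=itord(\lambda,\lambda')$ — these all check out and parallel the first reduction in the paper's proof. The gap is exactly where you flag it: the ``propagate via iteration'' step does not work as described, and it is not a matter of filling in details. The pancake-exhaustion in Lemma \ref{Lem: maximal abnormal zone alpha le beta} is driven by the hypothesis $\alpha>\beta$: the inner tangency orders $\alpha_i$ of successive witness pairs stay strictly above $\beta$ and are non-decreasing, which traps each new pair in a shrinking horn neighborhood and forces it into a fresh pancake. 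In your situation every witness pair produced by abnormality of a generic arc of a $\beta$-snake has $itord$ equal to $\beta$ (by Remark \ref{Rem:triangles of abnormal arc} and Proposition \ref{Prop:weak LNE}), never above it, so there is no shrinking neighborhood, no forced progression into new pancakes, and no contradiction with finiteness. Worse, a minimal pancake decomposition of a bubble snake has only two pancakes, so there is nothing to exhaust.

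The paper avoids iteration entirely by choosing \emph{which} arc to apply abnormality to, and this is the idea your sketch is missing. Because $tord(\gamma_1,\gamma_2)>\beta$, a small horn neighborhood $H_{b,\beta}(\gamma_1)$ meets both $X_1$ and $X_2$ in H\"older triangles $T$ and $T'$ (Corollary \ref{Cor:intersection of horn neighbourhood with Holder triangle}), and $b$ is chosen small enough that neither $\gamma$ nor $\gamma'$ lies in them. One then applies abnormality not to $\gamma$ but to a generic arc $\lambda\in G(T)$, chosen so that $T(\gamma_1,\lambda)\subset H_{b/2,\beta}(\gamma_1)$. Abnormality of $\lambda$ gives normally embedded triangles $\tilde T\subset T(\gamma_1,\lambda)\subset X_1$ and $\tilde T'\subset T(\lambda,\gamma_2)$ with $\tilde T\cup\tilde T'$ not NE; since $X_1$ is NE, $\tilde T'$ must cross $\theta$, so $\theta$ and hence $\gamma$ lie in $\tilde T'$. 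The arc $\lambda'\subset\tilde T'$ witnessing non-normal embedding against $T(\gamma_1,\lambda)$ must lie in $X_2$ and, being $\beta$-close to $H_{b/2,\beta}(\gamma_1)$, must lie in $T'$; in particular $\gamma'$ sits between $\theta$ and $\lambda'$, so $\gamma'\subset\tilde T'$ as well. Now $\gamma,\gamma'\subset\tilde T'$ with $tord(\gamma,\gamma')>\beta=itord(\gamma,\gamma')$ contradicts $\tilde T'$ being normally embedded — a one-shot contradiction, no induction required.
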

\begin{proof}
	Suppose, by contradiction, that $tord(\gamma,X_2)>\beta$. Then there is an arc $\gamma'\in G(X_2)$ such that $tord(\gamma,\gamma')>\beta$.
	Choose $b>0$ (by Corollary \ref{Cor:intersection of horn neighbourhood with Holder triangle} such a real number exists) so that $T=X_1\cap H_{b,\beta}(\gamma_1)$ and $T'=X_2\cap H_{b,\beta}(\gamma_1)$ are H\"older triangles (in particular, $T$ and $T$' are connected), $\gamma\not\subset T$, $\gamma'\not\subset T'$. Next, choose $\lambda\in G(T)$ so that $T(\gamma_1,\lambda)\subset H_{b/2,\beta}(\gamma_1)$ (see Fig.~\ref{fig:bubble snake}). Then any arc $\lambda'\subset X_2$ such that $tord(\lambda', T(\gamma_1,\lambda))>\beta$ must belong to $T'$.
	Note that $T$ has exponent $\beta$. Thus, since $X$ is a snake and $\lambda\in G(T)\subset G(X)$, the arc $\lambda$ is abnormal: there exist normally embedded triangles $\tilde T\subset T(\gamma_1,\lambda)$ and $\tilde T'\subset T(\lambda,\gamma_2)$ such that $\tilde T\cup\tilde T'$ is not normally embedded. Since $X_1$ is normally embedded, $\theta\subset \tilde T'$, thus $\gamma\subset \tilde T'$ and both $\tilde T$ and $\tilde T'$ are $\beta$-H\"older triangles. Since $\tilde T\cup\tilde T'$ is not normally embedded, there exists an arc $\lambda'\subset\tilde T'\cap X_2$ such that $tord(\lambda', T(\gamma_1,\lambda))>\beta$. Then $\lambda'\subset T'$, which implies that $\gamma'\subset\tilde T'$, in contradiction to $\tilde T'$ being normally embedded, as $itord(\gamma,\gamma')=\beta$ and $tord(\gamma,\gamma')>\beta$.
	
	\begin{figure}
		\centering
		\includegraphics[width=3in]{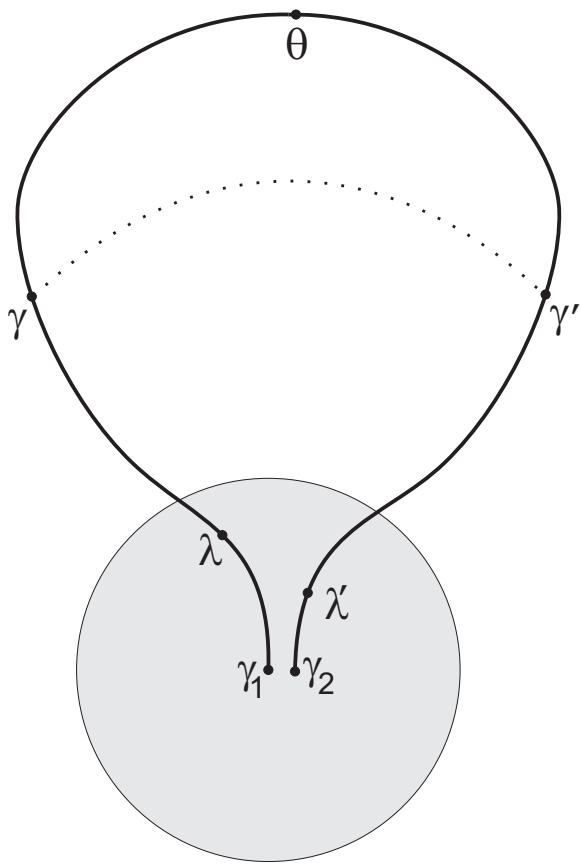}
		\caption{The shaded disk represents a $\beta$-horn neighborhood of $\gamma_{1}$ in the proof of Proposition \ref{Prop:bubble snake and its generic arcs}.}\label{fig:bubble snake}
	\end{figure}
\end{proof}

\begin{Prop}\label{Prop:bubble, segments, generic arcs and pancake decomp}
	If $X$ is a $\beta$-bubble snake as in Definition \ref{Def:bubble} then
	\begin{enumerate}
		\item $V(X)$ consists of a single segment $S$ of multiplicity $1$ and a single node $\N$ with two boundary nodal zones.
		\item For any generic arc $\gamma$ of $X$, both $T(\gamma_{1},\gamma)$ and $T(\gamma,\gamma_{2})$ are normally embedded.
		\item Any minimal pancake decomposition of $X$ has exactly two pancakes.
	\end{enumerate}
\end{Prop}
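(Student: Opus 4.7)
The plan is to establish items (2), (3), (1) in that order, since (2) is the main technical content from which (3) follows immediately and the multiplicity computations in (1) become accessible.

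For (2), let $\gamma \in G(X)$. The case $\gamma = \theta$ is trivial and the case $\gamma \in X_2$ is symmetric, so assume $\gamma \in X_1 \setminus \{\theta\}$. The sub-H\"older-triangle $T(\gamma_1, \gamma) \subset X_1$ is normally embedded since $X_1$ is. For $T(\gamma, \gamma_2) = T(\gamma, \theta) \cup X_2$, both constituents are normally embedded, so the question reduces to checking $tord(\lambda, \lambda') = itord(\lambda, \lambda')$ for $\lambda \in T(\gamma, \theta)$ and $\lambda' \in X_2$. Since any inner path from $\lambda$ to $\lambda'$ must pass through $\theta$, we have $itord(\lambda, \lambda') = \min(itord(\lambda, \theta), itord(\theta, \lambda'))$ and $tord(\lambda, \lambda') \ge \min(tord(\lambda, \theta), tord(\theta, \lambda'))$, with equality by the non-archimedean property whenever the two terms differ. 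The only delicate case is $q := tord(\lambda, \theta) = tord(\theta, \lambda')$, where $q = itord(\lambda, \theta) = itord(\theta, \lambda')$ by normal embedding of $X_1$ and $X_2$. If $q > \beta$, then $T(\lambda, \theta)$ and $T(\theta, \lambda')$ are normally embedded $q$-H\"older-triangles witnessing $\theta$ as an abnormal arc with exponent $q > \beta$, contradicting Lemma~\ref{Lem: maximal abnormal zone alpha le beta} applied to the order-$\beta$ maximal abnormal zone containing $\theta$. If $q = \beta$, then $\gamma \in G(X_1)$ (otherwise $itord(\gamma, \theta) > \beta$ would force $q > \beta$ for every $\lambda \in T(\gamma, \theta)$), whence $\lambda \in T(\gamma, \theta)$ with $itord(\lambda, \theta) = \beta$ also lies in $G(X_1)$, and Proposition~\ref{Prop:bubble snake and its generic arcs} gives $tord(\lambda, \lambda') \le tord(\lambda, X_2) = \beta = q$.

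For (3), let $\{Y_k\}_{k=1}^p$ be a minimal pancake decomposition of $X$. Since $X$ is not normally embedded, $p \ge 2$. By Lemma~\ref{Lem:minimal pancake decomp of a snake}, each $Y_k$ is a $\beta$-H\"older-triangle and each interior separating arc $\mu_i$ for $0 < i < p$ lies in $G(X)$. If $p \ge 3$, applying (2) to $\mu_2$ shows that $T(\gamma_1, \mu_2) = Y_1 \cup Y_2$ is normally embedded, contradicting the minimality of $\{Y_k\}$.

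For (1), define $N_i := \{\gamma \in V(X) : itord(\gamma, \gamma_i) > \beta\}$ for $i = 1, 2$ and $S := V(X) \setminus (N_1 \cup N_2)$. Disjointness of $N_1, N_2$ and the inclusions $N_1 \subset V(X_1)$, $N_2 \subset V(X_2)$ follow from $itord(\gamma_1, \gamma_2) = \beta$ via the non-archimedean property, and each $N_i$ is open $\beta$-complete in the sense of Definition~\ref{Def:complete and open complete zone}. To compute multiplicities: for $\gamma \in S \cap G(X_1)$, Proposition~\ref{Prop:bubble snake and its generic arcs} combined with Lemma~\ref{Lem:exist arc in horn connected component} gives $X_2 \cap HX_{a,\beta}(\gamma) = \{0\}$ for small $a$, so $HX_{a,\beta}(\gamma) \cap X = HX_{a,\beta}(\gamma) \cap X_1$ is connected and $m(\gamma) = 1$; for $\gamma \in S$ with $itord(\gamma, \theta) > \beta$, the horn components in $X_1$ and $X_2$ both contain $\theta$ and glue into a single piece, again giving $m(\gamma) = 1$. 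For $\gamma \in N_1$, the horn component in $X_1$ clusters around $\gamma$ and $\gamma_1$ while the one in $X_2$ clusters around $\gamma_2$ (since $tord(\gamma, \gamma_2) \ge \min(tord(\gamma, \gamma_1), tord(\gamma_1, \gamma_2)) > \beta$), and the exclusion of $\theta$ from the horn (since $tord(\gamma, \theta) = \beta$) keeps them disjoint, so $m(\gamma) = 2$; symmetrically on $N_2$. Any $\beta$-H\"older-triangle with a generic arc in $N_i$ must have its boundary arcs in $S$, hence mixes multiplicities $1$ and $2$, so $N_i$ contains no segment arcs, while $S$ is manifestly a constant mult-$1$ zone. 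Invoking Corollary~\ref{Cor: V(X) is a disjoint union of segments and nodal zones} then identifies $S$ as the unique segment and $N_1, N_2$ as the only nodal zones, both necessarily boundary since $\gamma_i \in N_i$. Finally $tord(\gamma_1, \gamma_2) > \beta$ gives $tord(N_1, N_2) > \beta$, placing both nodal zones in the single node $\N = N_1 \cup N_2$.

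The main obstacle is the cross-normality verification in (2) at the borderline $tord(\lambda, \theta) = tord(\theta, \lambda')$: ruling out $tord(\lambda, \lambda') > q$ requires Lemma~\ref{Lem: maximal abnormal zone alpha le beta} in the super-critical range $q > \beta$ and Proposition~\ref{Prop:bubble snake and its generic arcs} at $q = \beta$; the rest amounts to careful bookkeeping with the non-archimedean property.
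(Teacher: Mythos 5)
Your proof is correct, but it takes a genuinely different route from the paper's. The paper proves the items in the order $(1)\to(2)\to(3)$: item $(1)$ is obtained by applying Proposition~\ref{Prop:bubble snake and its generic arcs} to the distance function $f(x)=d(x,X_2)$ on $X_1$, showing $ord_\gamma f=\beta$ for $\gamma\in G(X_1)$; item $(2)$ is then a quick multiplicity contradiction (if $T(\gamma_1,\gamma)$ were not normally embedded, the witnessing arcs $\lambda,\lambda'$ would be generic, hence of multiplicity $1$ by $(1)$, contradicting $tord(\lambda,\lambda')>itord(\lambda,\lambda')$); and $(3)$ follows immediately. You reverse the order and make $(2)$ the workhorse, handling the delicate borderline $q:=tord(\lambda,\theta)=tord(\theta,\lambda')$ by a non-archimedean case split: $q>\beta$ is excluded via Lemma~\ref{Lem: maximal abnormal zone alpha le beta} (note that Proposition~\ref{Prop:weak LNE} would give this even more directly, since $tord(\lambda,\lambda')>itord(\lambda,\lambda')=q$ in a $\beta$-snake forces $q=\beta$), and $q=\beta$ is excluded by the same Proposition~\ref{Prop:bubble snake and its generic arcs} the paper uses. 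Your proof of $(1)$ via an explicit horn-neighborhood count of connected components is more geometric and self-contained than the paper's one-line appeal to $ord_\gamma f=\beta$, and your identification of $N_1,N_2$ as open $\beta$-complete nodal zones and $S=G(X)$ as the single segment is carried out carefully. The tradeoff is that the paper's ordering lets $(2)$ ride on $(1)$'s multiplicity statement for free, while your ordering makes $(2)$ logically independent of multiplicity and exposes exactly which structural facts about snakes (weak normal embedding, the $\beta$-boundedness of abnormality exponents) are doing the work. Both are sound.
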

\begin{proof}
	$(1)$ Let $f:(X_1,0)\rightarrow (\R,0)$ be the germ of the Lipschitz function given by $f(x)=d(x,X_2)$. Note that if $\gamma \in G(X_{1})$ then $ord_{\gamma}f>\beta$ if and only if $tord(\gamma,X_2)>\beta$. Thus, by Proposition \ref{Prop:bubble snake and its generic arcs}, the result follows.
	
	$(2)$ Let $\gamma$ be a generic arc of $X$. Let $\widetilde{T_{1}}=T(\gamma_{1},\gamma)$ and $\widetilde{T_{2}}=T(\gamma,\gamma_{2})$. From the definition of a bubble, there exists a generic arc $\theta$ of $X$ such that $X_{1}=T(\gamma_{1},\theta)$ and $X_{2}=T(\theta,\gamma_{2})$ are normally embedded.
	
	Suppose that $\widetilde{T_{2}} \subset X_{2}$. We are going to prove that $\widetilde{T_{1}}$ is normally embedded. The case when $\widetilde{T_{1}} \subset X_{1}$ and $\widetilde{T_{2}}$ is not normally embedded is similar.
	
	If $\widetilde{T_{1}}$ is not normally embedded then there exist arcs $\lambda \in V(X_{1})$ and $\lambda' \in V(\widetilde{T_{1}})\setminus V(X_1)$ such that $tord(\lambda,\lambda')>itord(\lambda,\lambda')$. Note that $\lambda'$ is generic and consequently abnormal. This implies that $\lambda$ is also abnormal.
	
	Thus, $\lambda$ and $\lambda'$ must be generic arcs of $X$. but this implies, by $(1)$, that $m(\lambda)=m(\lambda')=1$, a contradiction with $tord(\lambda,\lambda')>itord(\lambda,\lambda')$.
	
	$(3)$ This is an immediate consequence of item $(2)$ of this Proposition.
\end{proof}

\subsection{Pancake decomposition defined by segments and nodal zones}\label{Subsec: Pancake decomp defined by segments and nodal zones}

\begin{Prop}\label{Prop:bubble inside snake}
	Let $X=T(\gamma_{1},\gamma_{2})$ be a snake, $S$ a segment of $X$ and $N\ne N'$ two nodal zones of $X$ adjacent to $S$. Then
	\begin{enumerate}
		\item If $\gamma$ and $\gamma'$ are two arcs in $N$ then $T(\gamma,\gamma')$ is normally embedded.
		\item If $\gamma$ and $\gamma'$ are two arcs in $S$ then $T(\gamma,\gamma')$ is normally embedded.
		\item  If $\gamma\in S$ and $\gamma'\in N$ then $T(\gamma,\gamma')$ is normally embedded.
		\item If $\gamma\in N$ and $\gamma'\in N'$ then $T(\gamma,\gamma')$ is normally embedded, unless $X$ is either a bubble snake or a spiral snake.
	\end{enumerate}
	In particular, nodal zones and segments are normally embedded zones (see Definition \ref{Def: open closed perfect zone}).
\end{Prop}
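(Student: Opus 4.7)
The plan is to tackle the four cases in order of increasing difficulty, leveraging weak normal embedding of the snake (Proposition \ref{Prop:weak LNE}), open $\beta$-completeness of nodal zones (Proposition \ref{Prop:nodal zones are finite}), the minimal pancake decomposition of $X$ into normally embedded $\beta$-H\"older triangles (Lemma \ref{Lem:minimal pancake decomp of a snake}), and the constancy of multiplicity on segments (Corollary \ref{Cor:segments and nodal zones are constant zones}).

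Part (1) is the most direct. For $\gamma, \gamma' \in N$, Proposition \ref{Prop:nodal zones are finite}(1) gives $itord(\eta, \eta') > \beta$ for all pairs $\eta, \eta' \in N$, and so for all pairs of arcs in $V(T(\gamma,\gamma')) \subset N$. If $T(\gamma,\gamma')$ failed to be normally embedded, there would be arcs $\theta, \theta'$ in $T(\gamma,\gamma')$ with $tord(\theta,\theta') > itord(\theta,\theta')$, and weak normal embedding (Proposition \ref{Prop:weak LNE}) would force $itord(\theta,\theta') = \beta$, contradicting $\beta$-completeness of $N$.

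Part (2) is the delicate core and where I expect the main obstacle. Suppose $\gamma,\gamma' \in S$ and $T(\gamma,\gamma')$ is not normally embedded; applying weak normal embedding, pick $\mu', \mu'' \in V(T(\gamma,\gamma')) \subset S$ with $tord(\mu',\mu'') > \beta = itord(\mu',\mu'')$. Fix a minimal pancake decomposition $\{X_k\}$; by Lemma \ref{Lem:minimal pancake decomp of a snake} each $X_k$ is a normally embedded $\beta$-H\"older triangle, so $\mu'$ and $\mu''$ must lie in distinct pancakes, say $\mu' \in X_j$ and $\mu'' \in X_k$ with $k \ne j$. The distance function $f_k(x) = d(x,X_k)$ then satisfies $ord_{\mu'} f_k \ge tord(\mu',\mu'') > \beta$, so $m_k(\mu') = 1$. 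I would use Lemma \ref{Lem:relative nodal zones and relative segments}(2) and Corollary \ref{Cor:relative multiplicity pancake and segment} to move from $\mu'$ to an arc in $S \cap G(X_j)$ retaining the pancake-signature, and then use that segments have constant pancake-signature across their generic arcs in $X_j$. The sought contradiction comes from showing that the strand of $\mu'$ and the strand of $\mu''$ cannot both be labeled by the same segment $S$, since that would place a segment arc inside a closed perfect $\beta$-zone in $H_\beta(f_k)$ maximal in $V(X_j)$, against the structure of Propositions \ref{Prop:arcs in B_{beta} are generic} and \ref{Prop:maximal zones in H_{beta}}.

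Part (3) reduces to (1) and (2). For $\gamma \in S$ and $\gamma' \in N$ with $S$ and $N$ adjacent, use Proposition \ref{Prop:segment is the generic arcs from HD of nodal adjacent zones} to pick an arc $\eta$ at the interface of $S$ and $N$ so that $T(\gamma,\gamma') = T(\gamma,\eta) \cup T(\eta,\gamma')$, with the two pieces contained respectively in $S \cup \{\eta\}$ and $N$, hence normally embedded by (2) and (1); the gluing along $\eta$ preserves normal embedding since any violating pair of arcs would straddle $\eta$ and their inner geodesic would pass through $\eta$ at the exponent forced by the two sides. Part (4) is handled similarly: with $\gamma \in N$, $\gamma' \in N'$, and $S$ between them, choose $\eta \in N$ and $\eta' \in N'$ at the interfaces with $S$ and decompose $T(\gamma,\gamma') = T(\gamma,\eta) \cup T(\eta,\eta') \cup T(\eta',\gamma')$; by (1)--(3) each of the three pieces is normally embedded. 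If the union fails, a pair of arcs with $tord > \beta = itord$ must straddle $S$, one in (or beyond) $N$ and one in (or beyond) $N'$. The hard step will be showing that any such straddling pair forces $N$ and $N'$ to lie in a common node, which by Definition \ref{Def: node} and Proposition \ref{Prop: each node contains at least two nodal zones} collapses $X$ to a snake with a single node, i.e., a bubble snake or a spiral snake depending on the size of the cluster of $S$ in $\mathcal{S}(N,N')$; I would use the cluster partition (Definition \ref{Def: clusters}) together with Proposition \ref{Prop: spiral snake and its segments} to finish the analysis.
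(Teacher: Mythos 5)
Your proof of part (1) matches the paper's argument. The gluing step underlying your proposals for parts (3) and (4) is, however, exactly where the approach breaks down. You split $T(\gamma,\gamma')$ along ``interface'' arcs, argue each piece is normally embedded, and then assert that the union is normally embedded because ``any violating pair of arcs would straddle $\eta$ and their inner geodesic would pass through $\eta$ at the exponent forced by the two sides.'' This has no force: the defining phenomenon of a snake (Definition \ref{DEF: normal and abnormal arcs and zones}, Example \ref{cusp}) is that every generic arc is the common boundary of two normally embedded H\"older triangles whose union is \emph{not} normally embedded. Passing the inner geodesic through $\eta$ controls $itord$, but gives no control on $tord$, and in a snake these disagree generically. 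So normal embedding of the two or three pieces does not transfer to their union, and (3), (4) are not genuinely reduced to (1), (2). There is also a smaller problem that the ``interface arc'' $\eta$ is neither in $S$ nor in $N$, so neither (1) nor (2) directly applies to the pieces $T(\gamma,\eta)$, $T(\eta,\gamma')$.

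For part (2) your ingredients are in the right spirit (minimal pancake decomposition, the $m_l$-signature, Lemma \ref{Lem:relative nodal zones and relative segments}, Corollary \ref{Cor:relative multiplicity pancake and segment}), but the final contradiction you aim at --- a segment arc sitting in a closed perfect $\beta$-zone maximal in $H_\beta(f_k)$ --- is not actually forbidden: Remark \ref{Rem: relative segments and nodal zones and perfect and open complete zones} identifies relative segments precisely with such zones, so this is a routine configuration. The paper proves (2) and (3) together and (4) separately, all by a uniform mechanism that bypasses gluing: starting from $\gamma \in S$, $\gamma' \in S \cup N$ (or $\gamma \in N$ abnormal, $\gamma' \in N'$ in case (4)) with $tord(\gamma,\gamma')>itord(\gamma,\gamma')$, the \emph{abnormality} of $\gamma$ (Definition \ref{DEF: normal and abnormal arcs and zones}, Remark \ref{Rem:triangles of abnormal arc}) produces normally embedded $\beta$-triangles $T=T(\lambda,\gamma)$ and $T'=T(\gamma,\lambda')$ on opposite sides with $\lambda\subset T(\gamma,\gamma')$. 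Placing $\lambda\in G(X_j)$, $\lambda'\in V(X_{j+1})$ gives $m_{j+1}(\lambda)=1$; by Corollary \ref{Cor:relative multiplicity pancake and segment} (for (2)--(3)) or Lemma \ref{Lem:relative nodal zones and relative segments} and Proposition \ref{Prop:maximal zones in H_{beta}} (for (4)), this forces an arc $\gamma''$ on the $\lambda'$-side of $\gamma$ with $tord(\gamma,\gamma'')>\beta=itord(\gamma,\gamma'')$, contradicting the normal embedding of $T'$.
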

\begin{proof}
	$(1)$ Let $\gamma$ and $\gamma'$ be two arcs in $N$. Note that, by Proposition \ref{Prop:nodal zones are finite}, $T(\gamma,\gamma')$ has exponent greater than $\beta$. Then, there are no arcs $\lambda$ and $\lambda'$ in $N$ such that $tord(\lambda,\lambda')>itord(\lambda,\lambda')$, otherwise, by Proposition \ref{Prop:weak LNE}, $itord(\lambda,\lambda')=\beta$, a contradiction with exponent of $T(\gamma,\gamma')$ greater than $\beta$.
	
	To prove the next two items it is enough to show that there are no arcs $\gamma \in S$ and $\gamma' \in S\cup N$ such that $tord(\gamma,\gamma')>itord(\gamma,\gamma')$. Let us assume that $\gamma' \subset T(\gamma_{1},\gamma)$.
	
	$(2)$ and $(3)$ Suppose, by contradiction, that there exist such arcs $\gamma$ and $\gamma'$. As $\gamma \in S$ it is abnormal and then there are arcs $\lambda \subset T(\gamma,\gamma')$ and $\lambda' \subset T(\gamma,\gamma_2)$ such that $T=T(\lambda,\gamma)$ and $T'=T(\gamma,\lambda')$ are normally embedded $\beta$-H\"older triangles such that $T\cap T'=\gamma$ and $tord(\lambda,\lambda')>itord(\lambda,\lambda')$ (see Fig.~\ref{fig:bubble inside snake}). Let $\{X_k\}$ be a minimal pancake decomposition of $X$. We can assume that $\lambda \in V(X_j)$ and $\lambda'\in V(X_{j+1})$, since none of these arcs is in a nodal boundary zone and consequently, if necessary, we could enlarge the pancake attaching a $\beta$-H\"older triangle to one of its boundaries.
	
	Since $T$ is normally embedded, $\lambda \in S$. So, we can assume that $\lambda \in G(X_j)$. We can further assume that $\gamma \in G(X_j)$, since $\gamma \in S$. Thus, since, by Corollary \ref{Cor:relative multiplicity pancake and segment}, $m_{j+1}(\gamma)=m_{j+1}(\lambda)=1$, there exists $\gamma''\in V(\lambda_j,\lambda')$ such that $tord(\gamma,\gamma'')>\beta=itord(\gamma,\gamma'')$, a contradiction with $T'$ being normally embedded.
	
	$(4)$ Note that as a spiral snake has a single node, the result is trivially false in this case. Thus, assume that $X$ is not a $\beta$-spiral snake. Suppose, by contradiction, that there exist arcs $\gamma \in N$ and $\gamma' \in N'$ such that $tord(\gamma,\gamma')>itord(\gamma,\gamma')$. If $X$ is not a bubble snake then we can assume that one of the arcs $\gamma$, $\gamma'$, say $\gamma$, is abnormal. As $\gamma$ is abnormal there are arcs $\lambda \subset T(\gamma,\gamma')$ and $\lambda' \subset T(\gamma,\gamma_2)$ such that $T=T(\lambda,\gamma)$ and $T'=T(\gamma,\lambda')$ are normally embedded $\beta$-H\"older triangles such that $T\cap T'=\gamma$ and $tord(\lambda,\lambda')>itord(\lambda,\lambda')$. Let $\{X_k\}$ be a minimal pancake decomposition of $X$. We can assume that $\lambda \in V(X_j)$ and $\lambda'\in V(X_{j+1})$.
	
	Since $T$ is normally embedded, we have $\lambda \in S$. Thus, we can assume that $\lambda \in G(X_j)$. As $m_{j+1}(\lambda)=1$, Lemma \ref{Lem:relative nodal zones and relative segments} and Proposition \ref{Prop:maximal zones in H_{beta}} imply that $N\subset H_{\beta}(f_{j+1})$ and $\lambda$ belong to a closed perfect $\beta$-zone maximal in $H_{\beta}(f_{j+1})$ (the segment with respect to $X_{j+1}$, $S_{\lambda}^{j+1}$) adjacent to $N$. Then, there exists $\gamma''\in V(\lambda_j,\lambda')$ such that $tord(\gamma,\gamma'')>\beta=itord(\gamma,\gamma'')$, a contradiction with $T'$ normally embedded.

\begin{figure}
	\centering
	\includegraphics[width=4in]{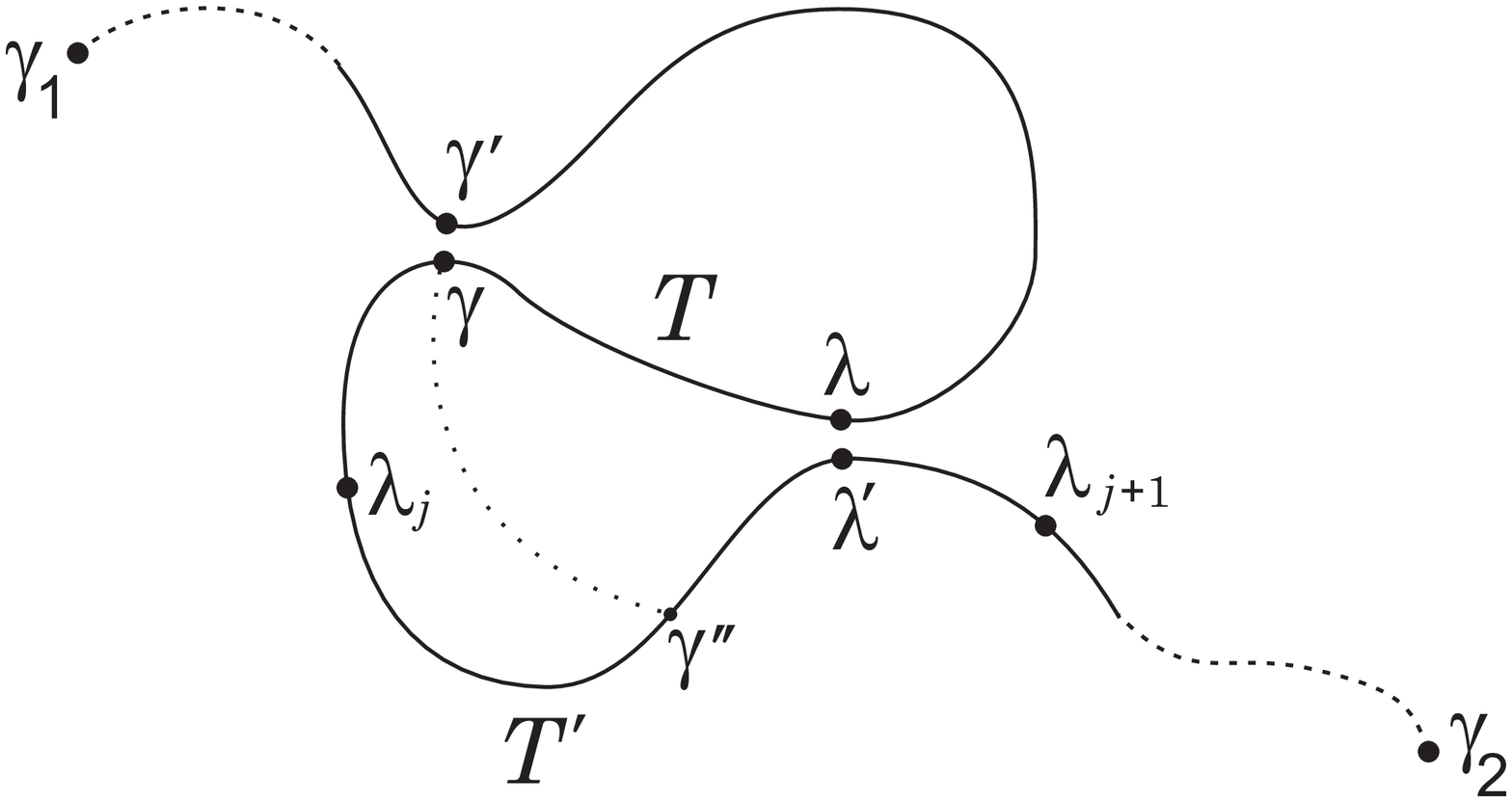}
	\caption{Position of arcs in the proof of Proposition \ref{Prop:bubble inside snake}}\label{fig:bubble inside snake}
	\end{figure}
\end{proof}

\begin{Prop}\label{Prop:pancake decom from nodal zones}
	The following decomposition of a snake $X$ other than the bubble and the spiral into H\"older triangles determines a pancake decomposition of $X$: the boundary arcs of the H\"older triangles in the decomposition are the two boundary arcs of $X$ together with one arc in each nodal zone. The segments of $X$ are in one-to-one correspondence with the sets of generic arcs of its pancakes.
\end{Prop}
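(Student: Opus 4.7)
The plan is to enumerate the alternating sequence of nodal zones and segments along the link of $X$ and then apply Proposition \ref{Prop:bubble inside snake}(4) to conclude that the resulting H\"older triangles are normally embedded. By Corollary \ref{Cor: V(X) is a disjoint union of segments and nodal zones} and Proposition \ref{Prop:segment is the generic arcs from HD of nodal adjacent zones}, the finitely many segments and nodal zones of $X=T(\gamma_1,\gamma_2)$ alternate linearly along the link, so I can list them as $N_1=Nod_{\gamma_1},\,S_1,\,N_2,\,S_2,\,\ldots,\,S_{m-1},\,N_m=Nod_{\gamma_2}$, with each $S_i$ adjacent to exactly $N_i$ and $N_{i+1}$. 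Because $X$ is not a bubble snake, Proposition \ref{Prop:bubble, segments, generic arcs and pancake decomp}(1) forces $m\ge 2$, and because $X$ is not a spiral snake, Definition \ref{DEF: spiral snake} ensures that consecutive nodal zones $N_i,\,N_{i+1}$ do not share a node, which is the hypothesis needed to invoke Proposition \ref{Prop:bubble inside snake}(4).

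Next I would select arcs $\theta_i\in N_i$ for each $i$, taking $\theta_1=\gamma_1$ and $\theta_m=\gamma_2$, and set $T_i=T(\theta_i,\theta_{i+1})$ for $i=1,\ldots,m-1$. By construction $X=\bigcup_i T_i$, consecutive triangles meet in the single arc $T_i\cap T_{i+1}=\theta_{i+1}$, and $T_i\cap T_j=\{0\}$ for $|i-j|>1$, so the dimension condition of Definition \ref{Def: pancake decomposition} holds and the links are connected. The substantive step is the normal embeddedness of each $T_i$, and this is exactly Proposition \ref{Prop:bubble inside snake}(4) applied to the pair $\theta_i\in N_i,\;\theta_{i+1}\in N_{i+1}$: since $X$ is neither a bubble nor a spiral snake, that item gives $T_i$ normally embedded. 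Thus $\{T_i\}$ is a pancake decomposition of $X$.

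For the bijection claim, I would apply Proposition \ref{Prop:segment is the generic arcs from HD of nodal adjacent zones} one more time: it asserts that $S_i=G(T(\gamma,\gamma'))$ for any $\gamma\in N_i,\;\gamma'\in N_{i+1}$, so in particular $G(T_i)=S_i$. The assignment $S_i\mapsto T_i$ is therefore a bijection between the segments of $X$ and the pancakes of the decomposition, sending each segment to the pancake whose set of generic arcs coincides with it.

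The main obstacle is precisely the normal embedding of the pieces $T_i$, and the exclusion of bubble and spiral snakes is essential: for a bubble snake the two boundary nodal zones of its unique node would produce $T_1=X$, which is not normally embedded by definition of a bubble, and for a spiral snake all nodal zones lie in a single node so Proposition \ref{Prop:bubble inside snake}(4) does not apply. These two exceptional cases are already treated separately, via Proposition \ref{Prop:bubble, segments, generic arcs and pancake decomp}(3) for the bubble and a distinct construction for the spiral.
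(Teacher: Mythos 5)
Your proof is correct and uses the same key tool as the paper's own (very brief) proof, namely Proposition \ref{Prop:bubble inside snake}; the alternating structure of segments and nodal zones, the choice of one arc in each nodal zone (boundary arcs included), and the identification $G(T_i)=S_i$ via Proposition \ref{Prop:segment is the generic arcs from HD of nodal adjacent zones} are all exactly what the paper has in mind when it says the Proposition is ``an immediate consequence of Proposition \ref{Prop:bubble inside snake}.''

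Two small inaccuracies in the supporting citations, neither fatal. First, Proposition \ref{Prop:bubble inside snake}(4) does \emph{not} have ``$N$ and $N'$ lie in different nodes'' as a hypothesis --- its only hypothesis is that $N\ne N'$ are both adjacent to the same segment; the conclusion (normal embeddedness when $X$ is neither a bubble nor a spiral) in fact \emph{forces} $N$ and $N'$ to be in different nodes, since arcs in two nodal zones of a common node have outer tangency order $>\beta$ but inner tangency order $\beta$. Your detour through Definition \ref{DEF: spiral snake} to pre-establish that consecutive nodal zones are in different nodes is therefore unnecessary, and as stated it is not quite a consequence of that definition alone (the cleanest combinatorial reason, if you want one, is Remark \ref{Rem: order of letters in words}: a snake name of length $>2$ never repeats a letter in consecutive positions). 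Second, the appeal to Proposition \ref{Prop:bubble, segments, generic arcs and pancake decomp}(1) to ``force $m\ge 2$'' is muddled --- every snake has two boundary nodal zones, so $m\ge 2$ always; what excluding the bubble case actually buys is $m\ge 3$, so that the decomposition has at least two triangles. These are presentation-level slips; the argument itself is sound.
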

\begin{proof}
	This is an immediate consequence of Proposition \ref{Prop:bubble inside snake}.
\end{proof}

\begin{remark}
	\normalfont In general, the pancake decomposition described in Proposition \ref{Prop:pancake decom from nodal zones} is not minimal. Moreover, if $\{X_k=T(\gamma_{k-1},\gamma_k)\}_{k=1}^p$ is the pancake decomposition defined in Proposition \ref{Prop:pancake decom from nodal zones} then $X$ has exactly $p$ segments $S_i = G(X_i)$, $i=1,\ldots, p$ and $p+1$ nodal zones $Nod_{\gamma_j}$, $j=0,\ldots, p$.
\end{remark}

\begin{Prop}\label{Prop: HT in consecutive segments}
	Let $X$ be a snake other than the spiral, $N$ a nodal zone of $X$ and $S\ne S'$ two segments of $X$ adjacent to $N$. If $\gamma\in S$ and $\gamma'\in S'$ then $T(\gamma,\gamma')$ is normally embedded.
\end{Prop}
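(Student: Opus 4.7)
The plan is to argue by contradiction, closely adapting the strategy from the proof of Proposition \ref{Prop:bubble inside snake}. Suppose, toward a contradiction, that $T(\gamma,\gamma')$ is not normally embedded. By the weak normal embeddedness of $X$ (Proposition \ref{Prop:weak LNE}), there exist arcs $\theta,\theta'\in V(T(\gamma,\gamma'))$ with $tord(\theta,\theta')>itord(\theta,\theta')=\beta$.

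The first step is to localize the witnesses in the segments $S$ and $S'$. Since $X$ is not a spiral snake, its segments and nodal zones are arranged linearly along the link of $X$, and each segment is adjacent to exactly two nodal zones (Proposition \ref{Prop:segment is the generic arcs from HD of nodal adjacent zones}). The hypothesis that $S\neq S'$ are both adjacent to $N$ then forces them to be consecutive, with $N$ the unique nodal zone between them; hence $V(T(\gamma,\gamma'))\subset S\cup N\cup S'$. By items (1)--(3) of Proposition \ref{Prop:bubble inside snake}, any two arcs both in $S\cup N$ or both in $N\cup S'$ bound a normally embedded H\"older triangle, ruling out every placement of $(\theta,\theta')$ except, after swapping, $\theta\in S$ and $\theta'\in S'$. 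Replacing $\gamma$ by $\theta$ and $\gamma'$ by $\theta'$, I may assume from now on that $tord(\gamma,\gamma')>\beta$.

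The second step exploits the abnormality of $\gamma\in S$ (since every segment arc of a snake is abnormal). By Remark \ref{Rem:triangles of abnormal arc}, there are normally embedded $\beta$-H\"older triangles $T_1=T(\mu,\gamma)$ and $T_2=T(\gamma,\mu')$ with $T_1\cap T_2=\gamma$ and $tord(\mu,\mu')>itord(\mu,\mu')=\beta$; I arrange $\mu\subset T(\gamma,\gamma')$ and $\mu'\subset T(\gamma_1,\gamma)$. Choosing a minimal pancake decomposition $\{X_k\}$ of $X$ and absorbing boundary H\"older triangles into adjacent pancakes if necessary (exactly as in the proof of Proposition \ref{Prop:bubble inside snake}), I place $\mu\in V(X_j)$ and $\mu'\in V(X_{j+1})$ with $j,j+1$ adjacent. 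Since $T_1$ is normally embedded and $\mu\in T(\gamma,\gamma')\subset S\cup N\cup S'$, the segment $G(X_j)$ containing $\mu$ can only be $S$; so I further arrange $\gamma,\mu\in S\cap G(X_j)$.

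The final step runs the multiplicity argument. Since $\mu'\in V(X_{j+1})$ with $tord(\mu,\mu')>\beta$, one has $m_{j+1}(\mu)=1$, and Corollary \ref{Cor:relative multiplicity pancake and segment} gives $m_{j+1}(\gamma)=m_{j+1}(\mu)=1$. Applying Lemma \ref{Lem:relative nodal zones and relative segments}(2) and Remark \ref{Rem: relative segments and nodal zones and perfect and open complete zones} to the distance function $f_{j+1}=d(\cdot,X_{j+1})$ on $X_j$, the arc $\gamma$ lies in a closed perfect $\beta$-zone maximal in $H_\beta(f_{j+1})$. This produces an arc $\gamma''\in V(T_2)\cap V(X_{j+1})$ satisfying $tord(\gamma,\gamma'')>\beta=itord(\gamma,\gamma'')$, contradicting the normal embeddedness of $T_2$. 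The main technical obstacle will be the pancake bookkeeping in Step~3: justifying that a minimal pancake decomposition can be adjusted so that both $\gamma$ and $\mu$ lie in $S\cap G(X_j)$ while $\mu'$ sits in the interior of the adjacent pancake $X_{j+1}$, which requires the ``absorbing border triangles'' trick that appears in the earlier items of Proposition \ref{Prop:bubble inside snake}.
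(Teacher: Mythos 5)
The proposal takes a genuinely different route from the paper's proof (which applies Corollary~\ref{Cor:relative multiplicity pancake and segment} at the level of the two pancakes $T,T'$ of Proposition~\ref{Prop:pancake decom from nodal zones} and pushes the conclusion out to the boundary nodal zones $\tilde N,\tilde N'$). Your proof instead localizes the bad pair and then reruns the abnormality-plus-multiplicity argument of Proposition~\ref{Prop:bubble inside snake}(4). The localization step and the final multiplicity computation are fine, but there is a gap in between.

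The gap is in the assertion ``the segment $G(X_j)$ containing $\mu$ can only be $S$; so I further arrange $\gamma,\mu\in S\cap G(X_j)$.'' Nothing forces the abnormality witness $\mu$ to lie in $S$. By Lemma~\ref{Lem:beta-bubble two NE pieces} and Proposition~\ref{Prop:weak LNE} one has $tord(\mu,\gamma)=itord(\mu,\gamma)=\beta$, so $\mu$ is at tangency order exactly $\beta$ from $\gamma\in S$ and from $\gamma'\in S'$; since $T_1=T(\mu,\gamma)$ is normally embedded, $\mu$ may a priori lie in $S$, in the nodal zone $N$ (items (1)--(3) of Proposition~\ref{Prop:bubble inside snake} show $T(\mu,\gamma)$ is normally embedded when $\mu\in N$), or even in $S'$ (whether that triangle is normally embedded is precisely the statement being proved, so it cannot be excluded at this stage). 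When $\mu\in N$ or $\mu\in S'$, $\mu$ is not a generic arc of the pancake corresponding to $S$, and Corollary~\ref{Cor:relative multiplicity pancake and segment} — which requires $\gamma,\mu\in S\cap G(X_j)$ — does not apply, so the step $m_{j+1}(\gamma)=m_{j+1}(\mu)$ is unjustified. This is exactly the point where the analogy with Proposition~\ref{Prop:bubble inside snake}(4) breaks: there the base arc sits in a nodal zone, so its witness at tangency order $\beta$ is forced out of that nodal zone into the adjacent segment; here the base arc already sits in $S$, and its witness can land in $N$. The paper's own proof sidesteps this issue entirely by never invoking a local abnormality witness: it applies Corollary~\ref{Cor:relative multiplicity pancake and segment} directly to the assumed bad pair $(\gamma,\gamma')\in S\times S'$, propagates $m_{T'}=1$ over all of $S$, concludes $tord(\tilde N,\tilde N')>\beta$, and contradicts abnormality from there. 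To repair your proof you would need to treat the cases $\mu\in N$ and $\mu\in S'$ separately, and the case $\mu\in S'$ appears to be essentially circular.
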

\begin{proof}
	Let $\tilde{N}$ and $\tilde{N}'$ be the nodal zones, distinct from $N$, adjacent to $S$ and $S'$, respectively.
	Consider arcs $\tilde\gamma \in \tilde{N}$, $\tilde\gamma' \in \tilde{N}'$ and $\theta \in N$. As $X$ is not a spiral snake, by item $(4)$ of Proposition \ref{Prop:pancake decom from nodal zones}, we can assume that $T=T(\tilde\gamma,\theta)$ and $T'=T(\theta,\tilde\gamma')$ are pancakes of a minimal pancake decomposition. Proposition \ref{Prop:segment is the generic arcs from HD of nodal adjacent zones} implies that $S= G(T)$ and $S'=G(T')$.
	
	Then, if there were arcs $\gamma \in S$ and $\gamma' \in S'$ such that $tord(\gamma,\gamma')>itord(\gamma,\gamma')$, by Corollary \ref{Cor:relative multiplicity pancake and segment}, we would have that for each arc $\lambda\in S$ there should exist $\lambda\in S'$ such that $tord(\lambda,\lambda')>itord(\lambda,\lambda')$. This implies that $tord(\tilde{N},\tilde{N}')>\beta$, but the arcs in $S$ should be abnormal, a contradiction.
\end{proof}

\begin{Prop}\label{Prop: segments with contact implies adjacent nodal zones with contact}
	Let $S$ be a segment of a $\beta$-snake $X$ such that the nodal zones adjacent to $S$ belong to distinct nodes $\N$ and $\widetilde{\N}$. If $S'$ is another segment of $X$ such that $tord(S,S')>\beta$ then the nodal zones adjacent to $S'$ belong to the same nodes $\N$ and $\widetilde{\N}$.
\end{Prop}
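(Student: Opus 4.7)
The plan is to use the pancake decomposition of Proposition~\ref{Prop:pancake decom from nodal zones}, writing $X=\bigcup_k P_k$ with $P_k=T(\theta_{k-1},\theta_k)$, $\theta_k\in N_k$, and $G(P_k)$ equal to the $k$-th segment. First I note that the hypothesis $\N\ne\widetilde{\N}$ excludes bubble and spiral snakes (which have only one node), so by Proposition~\ref{Prop:bubble inside snake}(4) every segment of $X$ has its two adjacent nodal zones in distinct nodes. Writing $S=G(P_i)$ with $N_{i-1}\in\N$, $N_i\in\widetilde{\N}$ and $S'=G(P_j)$ (WLOG $i<j$), I choose $\gamma\in S$, $\gamma'\in S'$ with $tord(\gamma,\gamma')>\beta$ and set $f\colon P_i\to\R$, $f(x)=d(x,P_j)$. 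The goal is to show that the nodal zones $N_{j-1},N_j$ adjacent to $S'$ lie one in $\N$ and one in $\widetilde{\N}$.

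Since $ord_\gamma f\ge tord(\gamma,\gamma')>\beta$, we have $\gamma\in H_\beta(f)$, and Corollary~\ref{Cor:relative multiplicity pancake and segment} (constancy of the multiplicity $m_j$ on segments) gives $G(P_i)\subset H_\beta(f)$. The crucial step is to prove that $ord_{\theta_{i-1}}f>\beta$ and symmetrically $ord_{\theta_i}f>\beta$. Suppose for contradiction that $ord_{\theta_{i-1}}f=\beta$, fix a minimal pizza of $P_i$ associated with $f$ with boundary arcs $\lambda_0=\theta_{i-1},\lambda_1,\dots,\lambda_q=\theta_i$, and let $k\ge 1$ be minimal with $ord_{\lambda_k}f>\beta$. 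Any pizza boundary $\lambda_m$ with $ord_{\lambda_m}f=\beta$ cannot lie in $G(P_i)=S$ (that would force $m_j(\lambda_m)=0$ while $m_j\equiv 1$ on $S$); since $V(P_i)$ decomposes in link order as $N_{i-1}\cap V(P_i)$, $G(P_i)$, $N_i\cap V(P_i)$, the arcs $\lambda_0,\dots,\lambda_{k-1}$ must all lie in $N_{i-1}$. Applying Lemma~\ref{Lem:order of boundary arcs of adjacent pizza slices} to the pizza slice $T_k=T(\lambda_{k-1},\lambda_k)$ gives $\mu(T_k)=\beta$, and combined with the affine structure of $\mu_{T_k}$ (so $\mu_{T_k}(q)=\beta$ iff $q=\beta$) and Proposition~\ref{Prop:width function properties elementary triangle}(3), the generic arcs of $T_k$ satisfy $ord f=\beta$. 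Since $\lambda_k\notin N_{i-1}$ (else $T_k\subset N_{i-1}$ would force $\mu(T_k)>\beta$), a non-archimedean argument using $itord(\theta_{i-1},\lambda_{k-1})>\beta$ shows that any generic arc $\gamma$ of $T_k$ has $itord(\theta_{i-1},\gamma)=\beta$, and the link-position argument excludes $\gamma\in N_i$, placing $\gamma$ in $G(P_i)$ and contradicting $G(P_i)\subset H_\beta(f)$.

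Once $ord_{\theta_{i-1}}f>\beta$ is established, I pick $\eta\in P_j$ with $tord(\theta_{i-1},\eta)>\beta$; Proposition~\ref{Prop: each node contains at least two nodal zones}(1) forces $\eta$ to be nodal. Proposition~\ref{Prop: HT in consecutive segments} yields $\{N_{j-1},N_j\}\cap\{N_{i-1},N_i\}=\emptyset$, and a non-archimedean argument with $\theta_{j-1},\theta_j$ excludes $\eta\in N_{i-1}$ (if $\eta\in N_{i-1}\cap V(P_j)$, then $itord(\eta,\theta_{j-1})=itord(\eta,\theta_j)=\beta$ would force $\eta\in G(P_j)$, contradicting nodality). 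Hence $itord(\theta_{i-1},\eta)=\beta<tord(\theta_{i-1},\eta)$, which by Definition~\ref{Def: node} places the nodal zone of $\eta$ in the same node $\N$ as $N_{i-1}$. The same non-archimedean analysis shows any nodal arc of $P_j$ must lie in $N_{j-1}$ or $N_j$, so one of these two zones belongs to $\N$. Applying the symmetric argument to $\theta_i$ places one of $N_{j-1},N_j$ in $\widetilde{\N}$, and since these two zones lie in distinct nodes (non-spiral, non-bubble), exactly one is in $\N$ and the other in $\widetilde{\N}$, yielding $S'\in\mathcal{S}(\N,\widetilde{\N})$. The main obstacle is establishing $ord_{\theta_{i-1}}f>\beta$: carefully identifying the location of the pizza boundary arcs $\lambda_0,\ldots,\lambda_{k-1}$ in $N_{i-1}$ and showing that the generic arcs of the transition slice $T_k$ land in $G(P_i)$ rather than escaping into a nodal zone is the technical heart of the argument.
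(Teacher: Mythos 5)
Your proof is correct and follows essentially the same route as the paper: pancake decomposition from Proposition~\ref{Prop:pancake decom from nodal zones} with $S=G(P_i)$, $S'=G(P_j)$, constancy of the relative multiplicity on segments via Corollary~\ref{Cor:relative multiplicity pancake and segment} to get $G(P_i)\subset H_\beta(f)$, and Proposition~\ref{Prop: each node contains at least two nodal zones} to identify the closest arcs of $P_j$ as nodal, placing $N_{j-1}$, $N_j$ in $\N$ and $\widetilde\N$. Your explicit minimal-pizza argument for $ord_{\theta_{i-1}}f>\beta$ (locating the transition slice $T_k$ and producing a generic arc of $P_i$ with $ord f=\beta$) is simply a spelled-out version of what the paper compresses into one citation of Proposition~\ref{Prop:width function properties elementary triangle} under the contrapositive assumption $tord(N,N')=tord(N,\widetilde{N}')=\beta$; the two contradictions are the same one, run in opposite directions.
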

\begin{proof}
	Let $N, \widetilde{N}$ and $N', \widetilde{N}'$ be the nodal zones adjacent to $S$ and $S'$, respectively. Assume that $N\subset \N$ and $\widetilde{N}\subset \widetilde{\N}$. Consider the arcs $\lambda\in N$, $\widetilde{\lambda} \in \widetilde{N}$, $\lambda'\in N'$, $\widetilde{\lambda}' \in \widetilde{N}'$ and the $\beta$-H\"older triangles $T=T(\lambda,\widetilde{\lambda})$ and $T'=T(\lambda',\widetilde{\lambda}')$. Proposition \ref{Prop:segment is the generic arcs from HD of nodal adjacent zones} implies that $S=G(T)$ and $S'=G(T')$. Moreover, Proposition \ref{Prop:pancake decom from nodal zones} implies that $T$ and $T'$ are pancakes from a pancake decomposition of $X$. Then, as $tord(S,S')>\beta$, Corollary \ref{Cor:relative multiplicity pancake and segment} implies that $tord(\gamma,S')>\beta$ for all arcs $\gamma \in S$.
	
	We now prove that either $N'\subset \N$ or $\widetilde{N}'\subset \N$. Suppose, by contradiction, that $tord(N,N')=tord(N,\widetilde{N}')=\beta$. Let $f\colon (T,0)\rightarrow (\R,0)$ be the function given by $f(x)=d(x,T')$ and let $\{T_i\}$ be a pizza on $T$ associated with $f$. As $tord(N,N')=tord(N,\widetilde{N}')=\beta$, Proposition \ref{Prop: each node contains at least two nodal zones} implies that $ord_{\lambda}f=\beta$. Then, Proposition \ref{Prop:width function properties elementary triangle} implies that there is an arc $\theta \in G(T)$ such that $ord_\theta f=\beta$, a contradiction with $tord(\theta, S')>\beta$, since $G(T)=S$.
	
	Finally, if, for example, $N'\subset \N$ then $\widetilde{N}'\subset \widetilde{\N}$. Indeed, $N'\subset \N$ implies $tord(\widetilde{N},N')=\beta$. If $tord(\widetilde{N},\widetilde{N}')=\beta$ then, similarly, $ord_{\widetilde{\lambda}}f=\beta$ and we obtain an arc $\theta \in G(T)$ such that $ord_\theta f=\beta$, a contradiction. Hence, $\widetilde{N}'\subset \widetilde{\N}$.
\end{proof}

\section{Main Theorem}\label{Section: Main Theorem}

In this section we investigate the role played by abnormal zones and snakes in the Lipschitz Geometry of surface germs. The main result of this section, Theorem \ref{Teo: Main HT decomposition}, was the original motivation for this paper. We use definitions and notations of the pizza decomposition from subsection \ref{Subsection: Pizza Decomp}. In particular, $\beta_i$, $Q_i$, $\mu_i$ and $q_i$ are as in Definition \ref{Def:Pizza decomp}.

\begin{Lem}\label{Lem: max abn zone fisrt slice property}
	Let $X=T(\gamma_1,\gamma_2)$ be a non-singular H\"older triangle partitioned by an interior arc $\gamma$ into two normally embedded H\"older triangles $X_1=T(\gamma_1,\gamma)$ and $X_2=T(\gamma,\gamma_2)$. Let $f\colon (X_1,0)\rightarrow (\R,0)$ be the function given by $f(x)=d(x,X_2)$, and let $\{T_i=T(\lambda_{i-1},\lambda_i)\}_{i=1}^p$ be a pizza on $X_1$ associated with $f$ such that $\lambda_{0}=\gamma$. Then, $\mu_1(q_{\theta})=itord(\theta,\gamma)$ for every arc $\theta \subset T_1$. Moreover, $\mu_1(q) = q$ for all $q\in Q_1$.
\end{Lem}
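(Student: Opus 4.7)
The plan is to deduce the first claim from Proposition \ref{Prop:width function properties elementary triangle}(4) applied to the pizza slice $T_1$, after identifying $\lambda_0=\gamma$ as the boundary arc on which the width is infinite, and then to upgrade the identity $\mu_1(q_\theta)=itord(\theta,\gamma)$ to $\mu_1(q)=q$ by exploiting the non-singularity hypothesis on $X$.

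For the first claim, since $\gamma=\lambda_0\subset X_2$ we have $f\equiv 0$ on $\gamma$, so $q_0=\infty$; and every two-dimensional H\"older sub-triangle $T'\subset T_1$ with $\gamma$ as a boundary arc satisfies $T'\setminus\gamma\not\subset X_2$ (because $X_1\cap X_2=\gamma$), so $Q_f(T')$ contains finite values alongside $\infty$ and is never a single point. Definition \ref{Def:width function} then forces $\mu_{T_1}(\gamma,f)=\infty$. Since $\lambda_1\ne\gamma$ gives $q_1<\infty$, the set $Q_1$ is not a point, so Proposition \ref{Prop:width function properties elementary triangle}(4) applies to the pizza slice $T_1$ with $\mu_0=\infty$ and distinguished boundary arc $\gamma_0=\gamma$. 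The condition $itord(\gamma_0,\theta)\le\mu_0$ is automatic, yielding $\mu_{T_1}(\theta,f)=itord(\gamma,\theta)$ for every $\theta\subset T_1$; since $T_1$ is elementary, $\mu_1$ is single-valued and this reads $\mu_1(q_\theta)=itord(\theta,\gamma)$, the first claim.

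For the second claim, combined with the first it suffices to produce arcs $\theta\subset T_1$ on which $q_\theta=itord(\theta,\gamma)$ and for which these values fill more than a single point of $Q_1$: the affine function $\mu_1$ is then forced to coincide with the identity on all of $Q_1$. Since $X$ is non-singular (Definition \ref{DEF: non-singular Holder triangle}), the interior arc $\gamma$ is Lipschitz non-singular, so by Definition \ref{DEF: Lipschitz non-singular arc} there exists a normally embedded H\"older triangle $T_0\subset X$ with $\gamma\in I(T_0)$ and $\gamma\not\subset\overline{X\setminus T_0}$; the latter condition produces $a>0$ and $\alpha\in\F$ with $HX_{a,\alpha}(\gamma)\subset T_0$ (see Definition \ref{Def:horn-neighbourhood}). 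The main technical step is to verify that every arc $\theta\subset T_1$ with $tord(\theta,\gamma)>\alpha$ satisfies $ord_\theta f=tord(\theta,\gamma)$: the inequality $\ge$ is immediate from $\gamma\subset X_2$, and for the reverse the triangle inequality forces any $p\in X_2$ with $|p-\theta(t)|$ much smaller than $at^\alpha$ to lie in $HX_{a,\alpha}(\gamma)\cap X_2\subset T_0\cap X_2$, so $d(\theta(t),X_2)$ agrees up to order with $d(\theta(t),T_0\cap X_2)$. Applying Lemma \ref{Lem: width function on NE HT} inside the normally embedded $T_0$ (split by $\gamma$ into $T_0\cap X_1$ and $T_0\cap X_2$) gives $ord_\theta d(\cdot,T_0\cap X_2)=tord(\theta,\gamma)$, and normal embedding of $X_1$ upgrades this to $q_\theta=tord(\theta,\gamma)=itord(\theta,\gamma)$. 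The values $q_\theta$ tend to infinity as $\theta\to\gamma$, so $\mu_1$ equals the identity at infinitely many points of $Q_1$, and the affine extrapolation finishes the proof. The hard part will be this horn-neighborhood bookkeeping that rules out contributions from $X_2\setminus T_0$ to the distance $d(\theta(t),X_2)$.
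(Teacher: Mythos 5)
Your proof is correct. For the first claim you follow the paper's route---apply Proposition \ref{Prop:width function properties elementary triangle}(4) to $T_1$ with $\gamma_0=\gamma$ and $\mu_0=\infty$---merely making explicit why $\mu_1(\infty)=\infty$, which the paper asserts without justification. For the second claim you take a technically different route, though both arguments ultimately hinge on the Lipschitz non-singularity of $\gamma$ in the same way. The paper uses the Arc Selection Lemma to pick $\theta'\subset X_2$ with $q_\theta=tord(\theta,\theta')$ and proves $tord(\theta,\theta')=itord(\theta,\theta')$ by contradiction: otherwise Lemma \ref{Lem:beta-bubble two NE pieces} and the genericity of $\gamma$ in a normally embedded triangle $X'\subset X$ would put both $\theta$ and $\theta'$ inside $X'$ with $tord>itord$, contradicting normal embedding of $X'$; the non-archimedean property then yields $q_\theta=itord(\theta,\gamma)$. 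You instead take a normally embedded $T_0\subset X$ with $HX_{a,\alpha}(\gamma)\subset T_0$, observe that for $\theta$ with $tord(\theta,\gamma)>\alpha$ the closest points of $X_2$ to $\theta(t)$ are forced into $T_0\cap X_2$, and then read off $ord_\theta d(\cdot,T_0\cap X_2)=tord(\theta,\gamma)$ directly from Lemma \ref{Lem: width function on NE HT} applied inside $T_0$. Your route bypasses the Arc Selection Lemma and Lemma \ref{Lem:beta-bubble two NE pieces}, and it makes explicit the localization that the paper's intermediate assertion ``$T(\theta,\gamma)\cup X_2$ is normally embedded'' glosses over; what it buys is a more transparent reason that $d(\theta(t),X_2)$ is computed near $\gamma$. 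The cost is the horn-neighborhood bookkeeping (controlling $\bigl||p|-t\bigr|$, the Lipschitz constant of $\gamma$, and the passage from $|p-\gamma(t)|$ to $|p-\gamma(|p|)|$), which is elementary but needs to be carried out carefully in a final write-up.
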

\begin{proof}
	Since the maximum of $\mu_1$ is $\mu_1(q_{\gamma})=\mu_1(\infty)=\infty$, by Proposition \ref{Prop:width function properties elementary triangle}, we have  $\mu_1(q_{\theta})=itord(\theta,\gamma)$ for every arc $\theta \subset T_1$.
	
	As $\gamma$ is Lipschitz non-singular, there is a normally embedded $\alpha$-H\"older triangle $X'=T(\tilde{\gamma_1},\tilde{\gamma_2}) \subset X$, with $\tilde{\gamma_1}\subset X_1$, such that $\gamma\in G(X')$. We are going to prove that, for each arc $\theta\subset T_1$ such that $itord(\theta,\gamma)>\alpha$, we have $\mu_1(q_{\theta})=q_{\theta}$. Indeed, given such an arc $\theta \subset T_1$, by the Arc Selection Lemma, there is an arc $\theta' \subset X_2$ such that $q_{\theta}=tord(\theta,\theta')$. We claim that $tord(\theta,\theta')=itord(\theta,\theta')$. Suppose, by contradiction, that $tord(\theta,\theta')>itord(\theta,\theta')$. As $X_1$ and $X_2$ are normally embedded, $tord(\theta,\gamma)=itord(\theta,\gamma)$ and $tord(\gamma,\theta')=itord(\gamma,\theta')$. Thus, Lemma \ref{Lem:beta-bubble two NE pieces} implies that $itord(\theta,\theta')=tord(\gamma,\theta')=tord(\theta,\gamma)>\alpha$ and consequently, since $\gamma \in G(X')$, $\theta'\subset X'$, a contradiction with $X'$ being normally embedded. Then, since $T(\theta,\gamma)\cup X_2$ is normally embedded, Lemma \ref{Lem: width function on NE HT} implies that $q_{\theta}=itord(\theta,\gamma)$. Finally,  $$q_{\theta}=tord(\theta,\theta')=itord(\theta,\theta')= itord(\theta,\gamma)=\mu_1(q_{\theta}).$$ Hence, since $\mu_1$ is linear, we have  $\mu_1(q) = q$ for all $q\in Q_1$.
\end{proof}

\begin{Lem}\label{Lem: T1 cup T' is NE and T1 cup T2 cup T' is not}
	Let $X$, $X_1$, $X_2$, $f$ and $\{T_i\}$ be as in Lemma \ref{Lem: max abn zone fisrt slice property}. Then,
	\begin{enumerate}
		\item $T_1\cup X_2$ is normally embedded.
		\item If $p>1$ and $\{T_i\}$ is a minimal pizza then $(T_1\cup T_2)\cup X_2$ is not normally embedded.
	\end{enumerate}
\end{Lem}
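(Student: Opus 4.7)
Both assertions compare an outer tangency order to an inner tangency order in a set obtained by gluing $X_2$ to a sub-triangle of $X_1$ along $\gamma$. The key input is Lemma \ref{Lem: max abn zone fisrt slice property}, which gives $q_\theta = \mu_1(q_\theta) = itord(\theta,\gamma) = tord(\theta,\gamma)$ for every $\theta \subset T_1$; the last equality uses normal embedding of $X_1$. I will also use repeatedly that any sub-triangle of the normally embedded $X_1$ is again normally embedded (transfer via bi-Lipschitz equivalence with a standard H\"older triangle in $\R^2$), so $T_1$ and $T_1 \cup T_2$ are normally embedded.

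\textbf{Part (1).} For arcs $\alpha_1,\alpha_2 \subset T_1 \cup X_2$ I verify $tord(\alpha_1,\alpha_2) = itord^{T_1 \cup X_2}(\alpha_1,\alpha_2)$ case by case. The cases with both arcs in $X_2$, or both in $T_1$, follow from normal embedding of $X_2$ and of $T_1$ respectively. For the mixed case $\theta \subset T_1$, $\theta' \subset X_2$, Lemma \ref{Lem: max abn zone fisrt slice property} together with $d(\theta(t),X_2)\le |\theta(t)-\theta'(t)|$ gives $tord(\theta,\theta') \le q_\theta = tord(\theta,\gamma)$; the ultrametric property applied through $\gamma \subset X_2$ gives $tord(\theta,\theta') \ge \min(tord(\theta,\gamma),\,tord(\gamma,\theta'))$. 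These force $tord(\theta,\theta') = \min(tord(\theta,\gamma),\,tord(\gamma,\theta'))$. Concatenating paths in $T_1$ from $\theta(t)$ to $\gamma(t)$ and in $X_2$ from $\gamma(t)$ to $\theta'(t)$ (in both pieces inner $\asymp$ outer) yields a path in $T_1 \cup X_2$ realizing this exponent, so $itord^{T_1 \cup X_2}(\theta,\theta') \ge \min(tord(\theta,\gamma),tord(\gamma,\theta'))$. Combined with $itord \le tord$, equality follows.

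\textbf{Part (2).} I first claim there is an arc $\theta \subset T_2$ with $q_\theta > tord(\theta,\gamma)$. Assume the contrary: then, together with Lemma \ref{Lem: max abn zone fisrt slice property}, we have $q_\theta = tord(\theta,\gamma)$ for every $\theta \subset T_1 \cup T_2$. Since $T_1 \cup T_2 \subset X_1$ is a normally embedded H\"older triangle with boundary arc $\gamma$, Lemma \ref{Lem: width function on NE HT} applied to $g(x)=d(x,\gamma)$ yields $ord_\theta g = tord(\theta,\gamma) = ord_\theta f$ for every $\theta \subset T_1 \cup T_2$. Thus $f$ and $g$ induce the same level-zones, so $T_1 \cup T_2$ is elementary with respect to $f$, and an argument parallel to Lemma \ref{Lem: max abn zone fisrt slice property} (via Proposition \ref{Prop:width function properties elementary triangle}(4) with $\gamma$ playing the role of the boundary arc of maximum width $\infty$) gives $\mu_{T_1 \cup T_2,f}(q) = q$, an affine function. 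Hence $T_1 \cup T_2$ is a pizza slice associated with $f$, contradicting the minimality of $\{T_i\}$.

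Pick $\theta \subset T_2$ with $q_\theta > tord(\theta,\gamma)$. By the Arc Selection Lemma there is $\tilde\theta \subset X_2$ with $tord(\theta,\tilde\theta) = q_\theta$. Since $T_1 \cup T_2 \subset X_1$ and $X_1 \cap X_2 = \gamma$, we have $(T_1 \cup T_2) \cap X_2 = \gamma$, so any path from $\theta(t)$ to $\tilde\theta(t)$ in $(T_1 \cup T_2) \cup X_2$ must reach $\gamma$ and therefore has length at least $d(\theta(t),\gamma) \sim t^{tord(\theta,\gamma)}$ (using normal embedding of $T_1 \cup T_2$ to equate outer and inner distance from $\theta(t)$ to $\gamma$). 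Hence
$$
itord^{(T_1 \cup T_2) \cup X_2}(\theta,\tilde\theta) \le tord(\theta,\gamma) < q_\theta = tord(\theta,\tilde\theta),
$$
so $(T_1 \cup T_2) \cup X_2$ is not normally embedded.

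\textbf{Main obstacle.} The delicate step is the contradiction in Part (2): deducing that $q_\theta = tord(\theta,\gamma)$ throughout $T_1 \cup T_2$ forces $T_1 \cup T_2$ to be a pizza slice with respect to $f$. Establishing elementariness w.r.t. $f$ (not merely the simpler $g$) and affinity of the width function requires carefully transferring information between $f$-orders and the distance-to-boundary structure on the normally embedded sub-triangle $T_1 \cup T_2$, via Lemma \ref{Lem: width function on NE HT} and Proposition \ref{Prop:width function properties elementary triangle}.
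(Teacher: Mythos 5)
Your proof is correct, and it takes a meaningfully different route from the paper's, particularly in Part (2).

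For Part (1), the paper argues by contradiction: if $T_1\cup X_2$ were not normally embedded there would be arcs $\theta\subset T_1$, $\theta'\subset X_2$ with $tord>itord$; then Lemma \ref{Lem:beta-bubble two NE pieces} gives $itord(\theta,\gamma)=itord(\theta,\theta')$, so $\mu_1(q_\theta)=itord(\theta,\theta')<tord(\theta,\theta')\le q_\theta$, contradicting Lemma \ref{Lem: max abn zone fisrt slice property}. Your direct case analysis reaches the same conclusion by computing both $tord$ and $itord$ in $T_1\cup X_2$ via the ultrametric inequality and a concatenated path. The underlying inputs are the same; your version is more explicit about why equality holds, at the cost of length.

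For Part (2), the paper's argument is a one-liner: assume $(T_1\cup T_2)\cup X_2$ is normally embedded, apply Lemma \ref{Lem: width function on NE HT} to that triangle partitioned by $\gamma$ to obtain $ord_\theta f=\mu_{T_1\cup T_2}(\theta,f)=tord(\theta,\gamma)$ on $T_1\cup T_2$, and conclude that $T_1\cup T_2$ is a pizza slice, contradicting minimality of $\{T_i\}$. You instead argue contrapositively inside $T_1\cup T_2$: you first deduce that minimality forces some $\theta\subset T_2$ with $q_\theta>tord(\theta,\gamma)$ (re-deriving elementariness and affinity of the width function in this degenerate case by hand), and then exhibit an explicit arc $\tilde\theta\subset X_2$ with $tord(\theta,\tilde\theta)=q_\theta>tord(\theta,\gamma)\ge itord^{(T_1\cup T_2)\cup X_2}(\theta,\tilde\theta)$ witnessing the failure of normal embedding. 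Both routes hinge on the pizza-slice mechanism; what you identify as the "main obstacle" — establishing elementariness of $T_1\cup T_2$ with respect to $f$ — is precisely what the paper's invocation of Lemma \ref{Lem: width function on NE HT}, applied to the glued triangle rather than internally to $T_1\cup T_2$, yields in one stroke. What your version buys is a concrete witnessing pair of arcs, which the paper's abstract contradiction does not produce.
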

\begin{proof}
	$(1)$ If $T_1\cup X_2$ is not normally embedded then there are arcs $\theta\subset T_1$ and $\theta'\subset X_2$ such that $tord(\theta,\theta')>itord(\theta,\theta')$. Thus, $q_{\theta}\ge tord(\theta,\theta')>itord(\theta,\theta')$. However, $\mu_1(q_{\theta})=itord(\theta,\gamma)$ and, since $X_1$ and $X_2$ are normally embedded, by Lemma \ref{Lem:beta-bubble two NE pieces}, $itord(\theta,\gamma)=itord(\theta,\theta')$. Then, $\mu_1(q_{\theta})=itord(\theta,\theta') <q_{\theta}$, a contradiction with Lemma \ref{Lem: max abn zone fisrt slice property}.
	
	$(2)$ If $(T_1\cup T_2)\cup X_2$ is normally embedded, Lemmas \ref{Lem: width function on NE HT} and \ref{Lem: max abn zone fisrt slice property} imply that $T_1\cup T_2$ is a pizza slice, a contradiction with $\{T_i\}$ being a minimal pizza.
\end{proof}

\begin{Lem}\label{Lem: beta1 less beta2 and beta1 equals beta2}
	Let $X=T(\gamma_1,\gamma_2)$ be a non-singular H\"older triangle and $\gamma$ an interior arc of $X$. Let $T=T(\lambda,\gamma)$ and $T'=T(\gamma,\lambda')$ be normally embedded H\"older triangles in $X$ such that $T\cap T'=\gamma$ and $tord(\lambda,\lambda')>itord(\lambda,\lambda')$. Let $f\colon (T,0)\rightarrow (\R,0)$ be the function given by $f(x)=d(x,T')$, and let $\{T_i=T(\lambda_{i-1},\lambda_i)\}_{i=1}^p$ be a minimal pizza on $T$ associated with $f$ such that $\lambda_0=\gamma$. Then
	\begin{enumerate}
		\item If $\beta_2 <\beta_1$ then, for every $\sigma\in \F$ such that $\beta_2<\sigma<\beta_1$, there are arcs $\theta\subset T_2$ and $\theta'\subset T'$ such that $itord(\theta,\gamma)=\sigma$ and $tord(\theta,\theta')>itord(\theta,\theta')$.
		\item If $\beta_2=\beta_1$ then, for every arc $\theta\subset T_2$ such that $tord(\theta,\lambda_2)>\beta_2$, there is an arc $\theta'\subset T'$ such that $tord(\theta,\theta')>itord(\theta,\theta')$.
	\end{enumerate}
\end{Lem}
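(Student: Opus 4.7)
My plan is to produce $\theta' \subset T'$ via the Arc Selection Lemma as a closest-point arc to $\theta$, so that $tord(\theta,\theta') = q_\theta := ord_\theta f$. The reduction for both parts is: once the strict inequality $q_\theta > itord(\theta,\gamma)$ is established, non-archimedean applied to the outer order (using $tord(\theta,\gamma) < q_\theta = tord(\theta,\theta')$) forces $tord(\gamma,\theta') = itord(\theta,\gamma)$, and Lemma \ref{Lem:beta-bubble two NE pieces} applied to $T(\theta,\theta') \subset X$ with interior arc $\gamma$, together with the normal embedding of $T$ and $T'$, yields $tord(\theta,\theta') > itord(\theta,\theta')$.

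The preparatory fact, used throughout, is that $q_1 = \beta_1$: Lemma \ref{Lem: max abn zone fisrt slice property} gives $\mu_1(q_1) = q_1$, and separately $\mu_1(q_1) = itord(\lambda_1,\gamma) = \beta_1$. Since $f$ is $1$-Lipschitz and $\gamma \subset T'$, this additionally produces the baseline bound $q_\theta \ge itord(\theta,\gamma)$ for every $\theta \subset T$.

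For part (1), I pick $\theta \subset T_2$ with $itord(\theta,\lambda_1) = \sigma$; non-archimedean together with $\sigma < \beta_1 = itord(\lambda_1,\gamma)$ yields $itord(\theta,\gamma) = \sigma$. To upgrade the Lipschitz bound to $q_\theta > \sigma$, I apply Proposition \ref{Prop:width function properties elementary triangle} to the pizza slice $T_2$, with a case analysis based on which boundary of $T_2$ carries $\mu_0 := \max_{Q_2} \mu_2$. Minimality of the pizza, combined with $\mu_1 = $ identity on $Q_1$, forces $\mu_2 \neq $ identity; coupled with item (2), this gives $\mu_2 < $ identity strictly on $Q_2$. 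If $\mu_0$ is attained at $\lambda_1$, item (4) gives $\mu_2(q_\theta) = \sigma$ when $\sigma \le \mu_0$, so $q_\theta = \mu_2^{-1}(\sigma) > \sigma$; when $\sigma > \mu_0$, $\theta$ lies in the constant-$q_1$ zone near $\lambda_1$, forcing $q_\theta = q_1 = \beta_1 > \sigma$. If $\mu_0$ is attained at $\lambda_2$, non-archimedean inside the $\beta_2$-H\"older triangle $T_2$ forces $itord(\theta,\lambda_2) = \beta_2 \le \mu_0$, and item (4) pins $q_\theta$ to the single value $q^* := \mu_2^{-1}(\beta_2)$; running the Lipschitz bound $q_\theta \ge \sigma$ uniformly over all $\sigma \in (\beta_2,\beta_1)$ forces $q^* \ge \beta_1$, hence $q_\theta = \beta_1 > \sigma$. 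The constant-$\mu_2$ alternative either reduces $Q_2$ to a point, so that $q_\theta = \beta_1$ directly, or is excluded by Proposition \ref{Prop:width function properties elementary triangle}(1).

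For part (2) with $\beta_1 = \beta_2 = \beta$, normal embedding of $T_2$ combined with $tord(\theta,\lambda_2) > \beta$ and non-archimedean inside the $\beta$-H\"older triangle $T_2$ forces $itord(\theta,\lambda_1) = itord(\theta,\gamma) = \beta$. Minimality now collapses the constant and decreasing $\mu_2$ alternatives (they both force $\mu_2 \equiv \beta$ and $Q_2 = \{\beta\}$ by Proposition \ref{Prop:width function properties elementary triangle}(1), making $T_1 \cup T_2$ a pizza slice matching $\mu_1 = $ identity, contradicting minimality), so $\mu_2$ is strictly increasing with $\gamma_0 = \lambda_2$; item (4) then gives $\mu_2(q_\theta) = tord(\theta,\lambda_2) > \beta$ (with the constant-$q_2$ zone near $\lambda_2$ handled trivially), and $\mu_2 < $ identity produces $q_\theta > \beta = itord(\theta,\gamma)$. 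The main obstacle I expect is Case (1) with $\gamma_0 = \lambda_2$: the width function on $T_2$ alone only determines $q_\theta$ up to the single value $q^*$, and the strict inequality $q^* > \sigma$ is invisible inside $T_2$; it must be recovered by running the Lipschitz lower bound uniformly in $\sigma$ across the entire open interval $(\beta_2,\beta_1)$.
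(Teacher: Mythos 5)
Your proposal follows the same core strategy as the paper: produce $\theta'$ via the Arc Selection Lemma so that $tord(\theta,\theta')=q_\theta=ord_\theta f$, exploit $q_1=\beta_1$ and the width-function dichotomy on the pizza slice $T_2$, and then reduce everything to $q_\theta>itord(\theta,\gamma)$. Two small remarks on the argument as you wrote it. First, the final reduction does not need Lemma~\ref{Lem:beta-bubble two NE pieces} at all: since $\gamma$ is an interior arc of $T(\theta,\theta')$, we have $itord(\theta,\theta')=\mu(T(\theta,\theta'))\le itord(\theta,\gamma)<q_\theta=tord(\theta,\theta')$ directly; invoking the lemma (a conditional in the opposite direction) is a detour, and its application there is slightly off. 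Second, in the case of part (1) with $\gamma_0=\lambda_2$, the ``run the Lipschitz bound over all $\sigma$'' step is unnecessary: since $\mu_2(q_1)=\beta_2$ is the minimum of $\mu_2$ and $\mu_2$ is injective, $q^*=\mu_2^{-1}(\beta_2)=q_1=\beta_1$ immediately, using exactly the preparatory fact $q_1=\beta_1$ you already set up. On the other hand, two of your choices are genuine improvements over the published proof: your explicit treatment of the sub-case $\sigma>\mu_0$ in part (1) (where $\theta$ falls in the constant-$q_1$ zone near $\lambda_1$ and $q_\theta=q_1=\beta_1$) plugs a point the paper's Case 3 glosses over, and your part (2) argument — showing that with $\beta_1=\beta_2=\beta$ the width function $\mu_2$ must be strictly increasing with maximum at $\lambda_2$, so that item (4) of Proposition~\ref{Prop:width function properties elementary triangle} yields $\mu_2(q_\theta)=tord(\theta,\lambda_2)>\beta$ and hence $q_\theta>\beta$ — is cleaner and more internally consistent than the paper's, which contains a self-contradictory case reference (``If $Q_2$ is as in case 1 or 2 then \dots a contradiction. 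Then, it is enough to consider $Q_2$ as in case 1.'') that appears to be a typo.
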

\begin{proof}
	For both items $(1)$ and $(2)$ we shall consider the following three cases:
		
	\textit{Case 1} - $Q_2$ is not a point and $\mu_{2}(q_2)=M$ is the maximum of $\mu_2$: Proposition \ref{Prop:width function properties elementary triangle} and Lemma \ref{Lem:itord>beta in H_{beta} and B_{beta} implies same ordf} imply that the minimum of $\mu_2$ is $\mu_2(q_1)=\beta_2=\mu_{2}(q_{\gamma'})$ for every arc $\gamma'\subset T_2$ such that $itord(\gamma',\lambda_2)=\beta_2$. Then, since $q_1=\beta_1$, by Lemma \ref{Lem: max abn zone fisrt slice property}, we have $\beta_1=q_{\gamma'}$, for every $\gamma'\subset T_2$ such that $itord(\gamma',\lambda_2)=\beta_2$.
		
	\textit{Case 2} - $Q_2$ is a point: Since by Lemma \ref{Lem: max abn zone fisrt slice property}, $q_1=\beta_1$, for every $\gamma'\subset T_2$ we have $\beta_1=q_{\gamma'}$.
	
	\textit{Case 3} - $Q_2$ is not a point and $\mu_{2}(q_1)=M$ is the maximum of $\mu_2$: Proposition \ref{Prop:width function properties elementary triangle} implies that $itord(\gamma',\lambda_1)=\mu_2(q_{\gamma'})$ for every arc $\gamma'\subset T_2$ such that $itord(\gamma',\lambda_1)\le M\le \beta_1$. Moreover, if $\gamma'\subset T_2$ and $itord(\gamma',\gamma)<\beta_1$ then $itord(\gamma',\lambda_1)=itord(\gamma',\gamma)=\mu_2(q_{\gamma'})$. Since $itord(\gamma',\gamma)=\mu_1(q_{\gamma'})=q_{\gamma'}$ for each arc $\gamma'\subset T_1$ and $\mu_2(q)\le q$ for each $q\in Q_2$, we have $itord(\gamma',\lambda_1)=\mu_2(q_{\gamma'})<q_{\gamma'}$ for every $\gamma'\subset T_2$ such that $itord(\gamma',\gamma)<\beta_1$. Otherwise $T_1\cup T_2$ would be a pizza slice, a contradiction with $\{T_i\}$ being a minimal pizza.
	
	$(1)$ Consider $\sigma \in \F$ such that $\beta_2<\sigma<\beta_1$ and an arc $\theta \subset T_2$ such that $\sigma=itord(\theta,\gamma)$. Let $\theta'\subset T'$ be an arc such that $q_{\theta} = tord(\theta,\theta')$.
	
	Suppose that $Q_2$ is as in case 1 or 2. Note that $\sigma>\beta_2$ implies $itord(\theta,\lambda_2)=\beta_2$. Thus, by cases 1 and 2 considered above, we have $q_{\theta}=\beta_1$. As $\sigma<\beta_1$, we have $$tord(\theta, \theta')=q_{\theta}=\beta_1>\sigma=itord(\theta,\gamma)\ge itord(\theta, \theta').$$
	
	If $Q_2$ is as in case 3 then $$tord(\theta, \theta')=q_{\theta}>itord(\theta,\lambda_1)=itord(\theta,\gamma)\ge itord(\theta,\theta').$$
	
	$(2)$ Suppose that $\beta_2=\beta_1$ and consider an arc $\theta\subset T_2$ such that $tord(\theta,\lambda_2)>\beta_2$. Let $\theta'\subset T'$ be an arc such that $q_{\theta} = tord(\theta,\theta')$. Lemma \ref{Lem: T1 cup T' is NE and T1 cup T2 cup T' is not} implies that $tord(T_2,T')>\beta_2$. Let $\tilde{\gamma}\subset T_2$ be an arc such that $tord(\tilde{\gamma},T')>\beta_2$. Note that $itord(\tilde{\gamma},\lambda_1) = \beta_2$, otherwise, by the non-archimedean property, we would have $T_1\cup T'$ not normally embedded, a contradiction with Lemma \ref{Lem: T1 cup T' is NE and T1 cup T2 cup T' is not}.
	
	If $Q_2$ is as in case 1 or 2 then $\beta_1=q_1=ord_{\tilde{\gamma}}f>\beta_2$, a contradiction. Then, it is enough to consider $Q_2$ as in case 1. Thus, we have $ord_{\tilde{\gamma}}f=q_{\theta}$ and consequently, $$tord(\theta, \theta')=q_{\theta}=q_{\tilde{\gamma}}>\beta_2=itord(\theta,\gamma)\ge itord(\theta,\theta').$$
\end{proof}

\begin{Lem}\label{Lem: maximal abnormal zone first and second slices}
	Let $A$, $T$, $T'$ be as in Lemma \ref{Lem: maximal abnormal zone alpha le beta} and let $\{T_i=T(\lambda_{i-1},\lambda_i)\}_{i=1}^p$ be a minimal pizza on $T$ associated with the function $f\colon (T,0)\rightarrow (\R,0)$ given by $f(x)=d(x,T')$, such that $\lambda_0=\gamma$. Then $p>1$, and one can choose the arcs $\lambda$ and $\lambda'$ in Lemma \ref{Lem: maximal abnormal zone alpha le beta} so that $p=2$ and $\lambda=\lambda_2$. Moreover, $\mu(T_2)\le \mu(T_1)=\beta$.
\end{Lem}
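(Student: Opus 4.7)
The plan is to establish the three assertions of the lemma in order: $p>1$, existence of a refined pair $(\lambda,\lambda')$ with $p=2$, and the exponent relation $\mu(T_2)\le\mu(T_1)=\beta$.

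For $p>1$, argue by contradiction. If $p=1$ then $T_1=T$, and Lemma \ref{Lem: T1 cup T' is NE and T1 cup T2 cup T' is not}(1), applied with $X_1=T$ and $X_2=T'$, gives that $T\cup T'=T_1\cup T'$ is normally embedded, directly contradicting the hypothesis $tord(\lambda,\lambda')>itord(\lambda,\lambda')$.

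The refinement step proceeds as follows. Starting from any admissible $(\lambda,\lambda')$ and its minimal pizza $\{T_i\}_{i=1}^p$ with $p\ge 2$, Lemma \ref{Lem: beta1 less beta2 and beta1 equals beta2} produces arcs that let us shrink $\lambda$ and $\lambda'$. Exploiting that $A$ is a $\beta$-zone containing $\gamma$, first arrange that $T$ is a $\beta$-H\"older triangle by picking $\lambda\in A$ with $itord(\lambda,\gamma)=\beta$; then $\min_i\beta_i=\mu(T)=\beta$, and Lemma \ref{Lem: max abn zone fisrt slice property} forces $\beta_1\ge\beta$. In the case $\beta_2<\beta_1$, take $\sigma=\beta\in(\beta_2,\beta_1)$ in Lemma \ref{Lem: beta1 less beta2 and beta1 equals beta2}(1) to obtain $\theta\subset T_2$ with $itord(\theta,\gamma)=\beta$ and $\theta'\subset T'$ with $tord(\theta,\theta')>itord(\theta,\theta')$; the case $\beta_2=\beta_1=\beta$ is handled analogously using Lemma \ref{Lem: beta1 less beta2 and beta1 equals beta2}(2). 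Put $\lambda:=\theta$, $\lambda':=\theta'$. The sub-triangles $T(\theta,\gamma)\subset T$ and $T(\gamma,\theta')\subset T'$ are normally embedded (as sub-triangles of normally embedded H\"older triangles), meet only in $\gamma$, and satisfy the hypotheses of Lemma \ref{Lem: maximal abnormal zone alpha le beta}. Since $d(\cdot,T(\gamma,\theta'))$ agrees with $f$ near $\gamma$ (the closest point of $T'$ from arcs close to $\gamma$ already lies in $T(\gamma,\theta')$), the new minimal pizza inherits exactly two slices from the old one, namely $T_1$ and $T(\lambda_1,\theta)\subset T_2$. Thus $p=2$ and $\lambda=\theta=\lambda_2$ in the new pizza.

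In this new pizza, $\mu(T_1)=itord(\gamma,\lambda_1)=\beta$ by the choice $\sigma=\beta$, while $\mu(T_2)=itord(\lambda_1,\theta)\le\mu(T(\theta,\gamma))\le\beta$ by Lemma \ref{Lem: maximal abnormal zone alpha le beta} applied to the new pair, giving $\mu(T_2)\le\mu(T_1)=\beta$. The main obstacle is verifying that, after the shrinking, the new minimal pizza really has exactly two slices with the first slice of exponent exactly $\beta$; this depends on the precise choice $\sigma=\beta$, which is made possible by the $\beta$-zone property of $A$, and on the fact that shrinking $T'$ to $T(\gamma,\theta')$ does not perturb the pizza structure near $\gamma$.
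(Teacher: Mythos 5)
Your argument for $p>1$ matches the paper's, but the rest of the proof has real gaps and departs from the paper's much cleaner route.

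The paper obtains $p=2$ in one stroke: since $(T_1\cup T_2)\cup T'$ is not normally embedded (Lemma \ref{Lem: T1 cup T' is NE and T1 cup T2 cup T' is not}(2)), one simply \emph{redefines} $\lambda:=\lambda_2$; the resulting triangle is $T_1\cup T_2$ and its minimal pizza for $f$ is $\{T_1,T_2\}$, with $T_1$ unchanged. The exponent identity $\mu(T_1)=\beta$ is then established in three separate steps: $\beta_1\ge\beta$ because $T_1\cup T'$ is normally embedded and hence $V(T_1)\subset A$; $\beta_2\le\beta$ because otherwise $\lambda_2\in A$ produces the infinite-descent contradiction of Lemma \ref{Lem: maximal abnormal zone alpha le beta}; and finally $\beta_1\le\beta$ by contradiction, invoking Lemma \ref{Lem: beta1 less beta2 and beta1 equals beta2}(1) to manufacture a new two-slice pizza with $\beta_2>\beta$, which is impossible.

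Your version tries to collapse all of this into a single refinement, and it does not hold up. First, you never justify that you \emph{can} pick $\lambda\in A$ with $itord(\lambda,\gamma)=\beta$ so that $T(\lambda,\gamma)$ is normally embedded and admits a companion $\lambda'$ satisfying the hypotheses of Lemma \ref{Lem: maximal abnormal zone alpha le beta}; the abnormality of $\gamma$ only guarantees that \emph{some} such pair exists, not one with prescribed $itord(\lambda,\gamma)$. Second, the key assertion ``$\mu(T_1)=itord(\gamma,\lambda_1)=\beta$ by the choice $\sigma=\beta$'' is false on its face: $itord(\gamma,\lambda_1)=\beta_1$, and the very hypothesis $\sigma=\beta\in(\beta_2,\beta_1)$ needed for Lemma \ref{Lem: beta1 less beta2 and beta1 equals beta2}(1) forces $\beta_1>\beta$, so after your shrinking the first slice $T_1$ still has exponent $\beta_1>\beta$, contradicting the conclusion you claim. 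Third, the assertion that the new minimal pizza on $T(\theta,\gamma)$ for $\tilde f=d(\cdot,T(\gamma,\theta'))$ ``inherits exactly two slices'' is not established: the function changes on all of $T(\theta,\gamma)$, and your observation that $\tilde f$ and $f$ agree near $\gamma$ does not control the pizza structure on $T(\lambda_1,\theta)$. You flag the last point yourself as ``the main obstacle,'' and it is indeed a gap, but it is not the only one; the exponent bookkeeping in step two is also incorrect. To repair this you would essentially have to recover the paper's three-step argument for $\beta_2\le\beta\le\beta_1$ and $\beta_1\le\beta$, so there is no shortcut here.
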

\begin{proof}
	Lemma \ref{Lem: T1 cup T' is NE and T1 cup T2 cup T' is not} implies that $p>1$, since otherwise we would have $T\cup T'$ normally embedded. Since $(T_1\cup T_2)\cup T'$ is not normally embedded, we can choose $\lambda=\lambda_2$ and have $p=2$.
Moreover, as $T_1\cup T'$ is normally embedded, $V(T_1)\subset A$ thus $\beta \le \beta_1=\mu(T_1)$.
	
	Since $\{T_i\}$ is a minimal pizza, $\mu(T_2)=\beta_2 \le \beta_1$.
From now on we assume that $\lambda_2=\lambda$.
We can further assume that any arc $\gamma' \subset T$ such that $tord(\gamma',\gamma)>\beta$ belongs to $A$.
If $\beta_2>\beta$ then $\lambda_2=\lambda \in A$ and we obtain the same contradiction as in the proof of Lemma \ref{Lem: maximal abnormal zone alpha le beta}. Thus we have $\beta_2\le \beta\le \beta_1$. It remains to prove that $\beta = \beta_1$. 	
	
	If $\beta_1 > \beta$ then, in particular, $\beta_1 > \beta_2$, since $\beta_2\le \beta$. Lemma \ref{Lem: beta1 less beta2 and beta1 equals beta2} implies that we can find arcs $\theta \subset T_2$ and $\theta'\subset T'$ such that $\beta < itord(\theta,\gamma) <\beta_1$ and $tord(\theta,\theta')>itord(\theta,\theta')$. Replacing $\lambda=\lambda_2$ with $\theta$ and $\lambda'$ with $\theta'$, we obtain a minimal pizza $\{T_1,T_2\}$ such that $\beta_2>\beta$, a contradiction with $\beta_2\le \beta$ for any minimal pizza $\{T_1,T_2\}$ of $T$.
\end{proof}

\begin{Cor}\label{Cor: max abn zone is perfect}
	Let $X$ be a non-singular H\"older triangle, and $A\subset V(X)$ a maximal abnormal $\beta$-zone. Then $A$ is a closed perfect $\beta$-zone.
\end{Cor}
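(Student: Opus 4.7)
The plan is to produce, for each $\gamma\in A$, a $\beta$-H\"older triangle $T^*\subset X$ containing $\gamma$ as a generic interior arc with $V(T^*)\subset A$; this simultaneously gives both closedness and perfectness of $A$. Since $\gamma$ is abnormal, Remark \ref{Rem:triangles of abnormal arc} provides normally embedded $\alpha$-H\"older triangles $T=T(\lambda,\gamma)$ and $T'=T(\gamma,\lambda')$ meeting only along $\gamma$ (on opposite local sides of $\gamma$ in $X$) with $tord(\lambda,\lambda')>itord(\lambda,\lambda')=\alpha$, and Lemma \ref{Lem: maximal abnormal zone alpha le beta} forces $\alpha\le\beta$. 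Apply Lemma \ref{Lem: maximal abnormal zone first and second slices} to the minimal pizza on $T$ associated with $f(x)=d(x,T')$: after replacing $\lambda$ by $\lambda_2$, the pizza reduces to $p=2$ with slices $T_1=T(\gamma,\lambda_1)$, $T_2=T(\lambda_1,\lambda)$ and $\mu(T_1)=\beta$, and $T_1\cup T'$ is normally embedded by Lemma \ref{Lem: T1 cup T' is NE and T1 cup T2 cup T' is not}(1).

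The core step is to verify $V(T_1)\subset A$. Each interior arc $\gamma''\in I(T_1)$ is abnormal via the pair of normally embedded triangles $T(\lambda,\gamma'')\subset T$ and $T(\gamma'',\gamma)\cup T'\subset T_1\cup T'$: they meet exactly along $\gamma''$, and their union equals $T\cup T'$, which is not normally embedded. The boundary arc $\lambda_1$ is abnormal by the analogous pair $(T_2,\,T_1\cup T')$, whose union is again $T\cup T'$. Since $X$ is non-singular, every sub-triangle of $T_1$ is non-singular (its interior arcs are interior arcs of $X$, hence Lipschitz non-singular), so $V(T_1)$ is a zone; all its arcs being abnormal, $V(T_1)$ is an abnormal zone containing $\gamma$, and maximality of $A$ yields $V(T_1)\subset A$. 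In particular, $A$ is closed.

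For perfectness, apply the same construction symmetrically on the $T'$-side: Lemma \ref{Lem: maximal abnormal zone first and second slices} applied to the minimal pizza on $T'$ associated with $x\mapsto d(x,T)$, adjusting only $\lambda'$, produces a $\beta$-H\"older triangle $T_1'=T(\gamma,\lambda_1')\subset T'$ with $V(T_1')\subset A$. Then $T^*:=T_1\cup T_1'=T(\lambda_1,\lambda_1')$ is a $\beta$-H\"older triangle containing $\gamma$ as an interior arc with $itord(\lambda_1,\gamma)=itord(\gamma,\lambda_1')=\beta$, so $\gamma\in G(T^*)$ by Remark \ref{Rem: generic arcs of a non-singular HT} and $V(T^*)=V(T_1)\cup V(T_1')\subset A$. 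Hence $\gamma\in G(A)$, giving $A=G(A)$.

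The main obstacle is the abnormality check at the boundary arc $\lambda_1$: the obvious pair $(T_1,T_2)$ has normally embedded union $T$, so one must reach across $\gamma$ and use the pair $(T_2,\,T_1\cup T')$, whose union equals the non-normally embedded $T\cup T'$. A secondary point is that the two applications of Lemma \ref{Lem: maximal abnormal zone first and second slices} on $T$ and on $T'$ modify $\lambda$ and $\lambda'$ independently within their respective normally embedded triangles, so the hypotheses of Remark \ref{Rem:triangles of abnormal arc} and Lemma \ref{Lem: maximal abnormal zone alpha le beta} are preserved throughout.
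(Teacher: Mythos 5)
Your proof is correct and amounts to the argument the authors compress into ``an immediate consequence'' of Lemmas \ref{Lem: T1 cup T' is NE and T1 cup T2 cup T' is not} and \ref{Lem: maximal abnormal zone first and second slices}: once Lemma \ref{Lem: maximal abnormal zone first and second slices} yields the $\beta$-H\"older slice $T_1$ with $T_1\cup T'$ normally embedded while $T\cup T'$ is not, every arc of $T_1$ is abnormal via the witness pair you describe (in particular the reach-across pair $(T_2,\,T_1\cup T')$ for $\lambda_1$), giving $V(T_1)\subset A$ by maximality, and the symmetric construction on the $T'$ side produces $T_1'$ and $T^*=T_1\cup T_1'$ realizing $\gamma$ as a generic arc. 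This is the same route as the paper's.
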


\begin{proof}
	This an immediate consequence of Lemmas \ref{Lem: T1 cup T' is NE and T1 cup T2 cup T' is not} and \ref{Lem: maximal abnormal zone first and second slices}.
\end{proof}

\begin{remark}\label{REM: cases of the main theorem}
	\normalfont The links of bubbles shown in Figs.~\ref{fig:bubble snake and non-snake bubbles} and ~\ref{fig:nonsnakebubble1} are  examples of the possibilities for the minimal pizza decomposition in Lemmas \ref{Lem: beta1 less beta2 and beta1 equals beta2} and \ref{Lem: maximal abnormal zone first and second slices}:
	
	1. In Fig.~\ref{fig:bubble snake and non-snake bubbles}a, the triangle $T=T(a,\theta)$ has exactly two pizza slices with $\lambda_0=\theta$, $\lambda_1$ being any generic arc of $T$, and $\lambda_2=a$. Moreover, $\beta_1=\beta_2=\beta$ and $Q_2$ is not a point. Also, the maximum of $\mu_2$ is $\mu_2(q_2)$.
	
	2. In Fig.~\ref{fig:bubble snake and non-snake bubbles}b, the triangle $T=T(a,\theta)$ has exactly three pizza slices with the same exponent $\beta$, where $\lambda_0=\theta$, $\lambda_1$ is any generic arc of $T(b,\theta)$, $\lambda_2=b$ and $\lambda_3=a$. Moreover, $Q_2$ is not a point, maximum of $\mu_2$ is $\mu_2(q_2)$, and $Q_3$ is a point with $q_3>\beta$.
	
	3. In Fig.~\ref{fig:bubble snake and non-snake bubbles}c, the triangle $T=T(a,\theta)$ has exactly two pizza slices with $\lambda_{0}=\theta$, $\lambda_1$ being any arc in $T$ having exponent $\beta$ with $\theta$, and $\lambda_2=a$. Moreover, $\beta_2=\eta < \beta=\beta_1$ and $Q_2$ is a point.
	
	4. In both Fig.~\ref{fig:bubble snake and non-snake bubbles}d and Fig.~\ref{fig:bubble snake and non-snake bubbles}e, the triangle $T=T(a,\theta)$ has exactly two pizza slices with $\lambda_{0}=\theta$, $\lambda_1$ being any arc in $T$ having exponent $\beta$ with $\theta$, and $\lambda_2=a$. Moreover, $\beta_2=\eta < \beta=\beta_1$, $q_2>\beta$ and $\mu_2(q_1)$ may be either the maximum or the minimum of $\mu_2$. If $\max \mu_2 = \mu_2(q_1)$ then $\max \mu_2\le\beta$ and the slope of $\mu_2$ is negative in the case of Fig.~\ref{fig:bubble snake and non-snake bubbles}d and positive in the case of Fig.~\ref{fig:bubble snake and non-snake bubbles}e. Otherwise, $\max \mu_2 < \beta$. In both cases, if $\mu_2(\lambda_1)<\beta$ then the bubble contains the bubble in Fig.~\ref{fig:bubble snake and non-snake bubbles}d with $\eta=\mu(\lambda_1)$. If $\max \mu_2 = \mu_2(q_2)$ then the slope of $\mu_2$ is positive in Fig.~\ref{fig:bubble snake and non-snake bubbles}d and negative in Fig.~\ref{fig:bubble snake and non-snake bubbles}e.  	
	
	5. In Fig.~\ref{fig:nonsnakebubble1}, the minimal pizza on $T=T(\gamma_{1},\theta)$ such that $\lambda_{0}=\theta$, has exactly four pizza slices, each of them with exponent $\beta$.
\end{remark}

\begin{Lem}\label{Lem: maximal abnormal zone and pancake decomp}
	Let $X$ be a non-singular H\"older triangle and $\{X_k=T(\theta_{k-1},\theta_k)\}_{k=1}^{p}$ a minimal pancake decomposition of $X$ with $\beta_k=\mu(X_k)$. If $A\subset V(X)$ is a maximal abnormal $\beta$-zone then:
	\begin{enumerate}
		\item the zone $A$ has non-empty intersection with at least two of the zones $V(X_k)$;
		\item if $V(X_k)\cap A\ne \emptyset$ then $\beta_k \le \beta$;
	\end{enumerate}
\end{Lem}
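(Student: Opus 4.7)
\medskip

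\noindent\textbf{Proof plan.} The plan is to fix an arc $\gamma\in V(X_k)\cap A$ and extract both statements from the structural consequences of its abnormality. By Definition \ref{DEF: normal and abnormal arcs and zones} together with Lemma \ref{Lem: maximal abnormal zone first and second slices}, there exist normally embedded H\"older triangles $T,T'\subset X$ with $T\cap T'=\gamma$ and a minimal two-piece pizza $T=T_1\cup T_2$ on $T$ associated with $f(x)=d(x,T')$ such that $\lambda_0=\gamma$, $\lambda_2=\lambda$, $V(T_1)\subset A$, $\mu(T_1)=\beta$, $\mu(T_2)\le\beta$, the union $T_1\cup T'$ is normally embedded, and $T\cup T'$ is not. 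The key object is the normally embedded $\beta$-H\"older triangle $T_1\subset X$ with $\gamma$ as a boundary arc and $V(T_1)\subset A$; the two items will be read off from the way $T_1$ sits inside the pancake $X_k$.

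For part (2), I would consider the connected component $T_1^{(k)}$ of $T_1\cap X_k$ containing $\gamma$. Since $T_1^{(k)}$ is a sub-H\"older triangle of the normally embedded $\beta_k$-H\"older triangle $X_k$, one has $\mu(T_1^{(k)})\ge\beta_k$. If $T_1\subset X_k$ (that is, $T_1^{(k)}=T_1$), then $\beta=\mu(T_1)\ge\beta_k$ and we are done. Otherwise the second boundary arc $\theta$ of $T_1^{(k)}$ lies in $\partial X_k$, hence in $V(X_{k'})$ for an adjacent pancake $X_{k'}$, and also in $V(T_1)\subset A$. The complementary sub-H\"older triangle $T(\theta,\lambda_1)\subset T_1$, where $\lambda_1$ is the second boundary of $T_1$, then satisfies $\mu(T(\theta,\lambda_1))=\beta$ by the non-archimedean identity $\mu(T_1)=\min(\mu(T_1^{(k)}),\mu(T(\theta,\lambda_1)))$. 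Iterating the same argument with $\theta\in V(X_{k'})\cap A$ in place of $\gamma\in V(X_k)\cap A$, I traverse the finite chain of pancakes crossed by $T_1$; by finiteness the chain terminates at $\lambda_1$, and propagating the exponent bound backwards along the chain, while invoking at each step the minimality of the pancake decomposition (so adjacent pancakes in the chain have non-normally embedded union), forces $\beta_k\le\beta$.

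For part (1), suppose toward contradiction that $A\subset V(X_k)$ for a single pancake. Then $V(T_1)\subset A\subset V(X_k)$ forces $T_1\subset X_k$, and applying Lemma \ref{Lem: maximal abnormal zone first and second slices} symmetrically (with the roles of $T$ and $T'$ interchanged) produces a decomposition $T'=T'_1\cup T'_2$ with $V(T'_1)\subset A\subset V(X_k)$ and $T'_1\subset X_k$. Since $T_1\cup T'_1\subset X_k$ is normally embedded while $T\cup T'$ is not, at least one of $T_2,T'_2$ must contain arcs outside $V(X_k)$. Applying the iterative construction from the proof of Lemma \ref{Lem: maximal abnormal zone alpha le beta} starting from a pair $(\lambda,\lambda')$ with $tord(\lambda,\lambda')>itord(\lambda,\lambda')$ and at least one of $\lambda\in V(T_2)$, $\lambda'\in V(T'_2)$ lying outside $V(X_k)$, one produces, via the non-archimedean bookkeeping used in Lemma \ref{Lem: maximal abnormal zone alpha le beta}, an abnormal arc of the maximal zone containing $\gamma$ that lies outside $V(X_k)$, contradicting $A\subset V(X_k)$. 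Hence $A$ must intersect at least two of the $V(X_k)$.

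The main obstacle I anticipate is the backward propagation in part (2) when $T_1\not\subset X_k$: upgrading the existence of \emph{some} pancake in the chain with exponent at most $\beta$ to the bound on the starting pancake $X_k$ itself. Making this step precise requires carefully exploiting the minimality of the pancake decomposition at each transition together with the ultrametric behavior of $tord$ across pancake boundaries.
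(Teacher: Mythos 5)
Your overall framing---fix $\gamma\in V(X_k)\cap A$, build $T_1\subset X$ with $\gamma$ as a boundary arc, $V(T_1)\subset A$ and $\mu(T_1)=\beta$ via Lemma \ref{Lem: maximal abnormal zone first and second slices}, and then read off the items from how $T_1$ sits inside $X_k$---is a reasonable starting point, and the observation that $T_1\subset X_k$ would immediately give $\beta\ge\beta_k$ is correct. But the two places where the argument has to do real work are exactly the ones you flag as vague, and as written neither one goes through.

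For part (2), the ``chain traversal'' does not reach the conclusion. Under the contradiction hypothesis $\beta_k>\beta$ you correctly get an arc $\theta\in\partial X_k\cap V(T_1)\subset A$ and $\mu(T(\theta,\lambda_1))=\beta$, but ``iterating with $\theta\in V(X_{k'})\cap A$'' only produces information about the pancakes your fixed $T_1$ happens to cross; it never feeds any constraint back onto $\beta_k$, and indeed you never again invoke $\beta_k>\beta$. The ingredient you are missing is Corollary \ref{Cor: max abn zone is perfect}: since $A$ is a \emph{closed perfect} $\beta$-zone, $\gamma$ is a generic arc of some $\beta$-H\"older triangle $T\subset X$ with $V(T)\subset A$, whose boundary arcs have inner tangency order exactly $\beta$ with $\gamma$; if $\beta_k>\beta$ these boundary arcs cannot lie in $X_k$, hence $V(X_k)\subset V(T)\subset A$. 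In particular the pancake boundary arc $\theta_k$ lies in $A$, and because one of the two normally embedded $\alpha$-H\"older triangles witnessing abnormality of $\theta_k$ can be taken inside $X_k$ (using minimality of the pancake decomposition), one has $\alpha\ge\mu(X_k)=\beta_k>\beta$, contradicting Lemma \ref{Lem: maximal abnormal zone alpha le beta}. Your chain-of-exponents argument never establishes anything like $V(X_k)\subset A$ and so cannot replicate this.

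For part (1), the appeal to ``the iterative construction from the proof of Lemma \ref{Lem: maximal abnormal zone alpha le beta}'' is a genuine misuse: that construction runs under the hypothesis $\alpha>\beta$ and terminates in a \emph{contradiction} with the finiteness of the pancake decomposition; it does not, in the present setting (where $\alpha\le\beta$ by the same lemma), output an abnormal arc outside $V(X_k)$. You also need to be careful that ``some arc of $T_2$ lies outside $V(X_k)$'' does not by itself give an arc of $A$ outside $V(X_k)$---after all $V(T_2)$ need not be contained in $A$. The paper's route makes this precise: since $X_k$ is normally embedded while $T\cup T'$ is not, one of $\lambda,\lambda'$ must lie outside $X_k$; by minimality one may take them in adjacent pancakes, so a boundary arc $\theta_k$ of $X_k$ is squeezed between $T(\lambda',\theta_k)\subset X_k$ and $T(\theta_k,\lambda)\subset X_{k+1}$, both normally embedded, which makes $\theta_k$ abnormal. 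Then Lemma \ref{Lem: T1 cup T' is NE and T1 cup T2 cup T' is not} (applied at $\theta_k$) produces arcs of $A$ strictly inside $X_{k+1}$, contradicting $A\subset V(X_k)$. So your identification of the correct starting lemma is fine, but both conclusions rest on steps that are presently heuristic, and the one genuinely new ingredient (the closed-perfectness of $A$, forcing $V(X_k)\subset A$ when $\beta_k>\beta$) is absent.
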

\begin{proof}
	$(1)$ Suppose, by contradiction, that $A$ intersects only a single zone $V(X_k)$. Then $A\subset V(X_k)$ and $\mu(X_k)\le \beta$. Given an arc $\gamma \in A$ there exist arcs $\lambda, \lambda'\in V(X)$ such that $T=T(\lambda,\gamma)$ and $T'=T(\gamma,\lambda')$ are normally embedded H\"older triangles with a common boundary arc $\gamma$ and $tord(\lambda,\lambda')>itord(\lambda,\lambda')$. Let $\alpha=itord(\lambda,\lambda')$. Lemma \ref{Lem:beta-bubble two NE pieces} implies that $\mu(T)=\mu(T')=\alpha$.
	
	Since $X_k$ is normally embedded, one of the arcs $\lambda$ and $\lambda'$, say $\lambda$, is not contained in $X_k$. Assume that $\theta_k \subset T(\lambda, \gamma)$. As $\{X_k\}$ is a minimal pancake decomposition, we can assume that $\lambda$ and $\lambda'$ are in adjacent pancakes, $\lambda'\in V(X_k)$ and $\lambda\in V(X_{k+1})$. Then, $\theta_k$ is abnormal, since $T(\lambda',\theta_k)$ and $T(\theta_k,\lambda)$ are normally embedded. However, by Lemma \ref{Lem: T1 cup T' is NE and T1 cup T2 cup T' is not}, there exist arcs of $A$ in $V(X_{k+1})$, a contradiction with $A\subset V(X_k)$.
	
	$(2)$ Suppose, by contradiction, that $V(X_k)\cap A\ne \emptyset$ and $\mu(X_k) > \beta$. Corollary \ref{Cor: max abn zone is perfect} implies that $V(X_k)\subset A$. In particular, $\theta_k$ is abnormal. Thus, there are arcs $\lambda, \lambda'\in V(X)$ such that $T=T(\lambda,\theta_k)$ and $T'=T(\theta_k,\lambda')$ are normally embedded $\alpha$-H\"older triangles with a common boundary arc $\theta_k$ and $tord(\lambda,\lambda')>itord(\lambda,\lambda')$. As $\{X_k\}$ is a minimal pancake decomposition, we may assume that $\lambda$ and $\lambda'$ are in adjacent pancakes. Hence, $\alpha \ge \mu(X_k) >\beta$ and consequently, $\lambda,\lambda'\in A$, a contradiction with Lemma \ref{Lem: maximal abnormal zone first and second slices}.
\end{proof}

\begin{Lem}\label{Lem: maximal abnormal zone MAIN PART}
	Let $A$, $T$ and $T'$ be as in Lemma \ref{Lem: maximal abnormal zone alpha le beta}, and let $\{T_1,T_2\}$ be a minimal pizza on $T$ associated with $f\colon (T,0)\rightarrow (\R,0)$, given by $f(x)=d(x,T')$, such that $\lambda_0=\gamma$ and $\lambda_2=\lambda$ (see Lemma \ref{Lem: maximal abnormal zone first and second slices}). Then
	\begin{enumerate}
		\item If $\mu(T_2)=\beta$ in Lemma \ref{Lem: maximal abnormal zone first and second slices} then $\gamma$ is contained in a $\beta$-bubble snake and $A\subset V(Y)$ where $Y$ is a $\beta$-snake.
		\item If $\mu(T_2)<\beta$ then $\gamma$ is not contained in any snake.
	\end{enumerate}
\end{Lem}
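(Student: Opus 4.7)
Under the hypothesis of Part (1), Lemma \ref{Lem: maximal abnormal zone first and second slices} gives $\beta_1=\beta_2=\beta$. The plan for producing the $\beta$-bubble snake containing $\gamma$ is to choose an arc $\theta\in T_2$ with $tord(\theta,\lambda)>\beta$ and the corresponding $\theta'\in T'$ supplied by Lemma \ref{Lem: beta1 less beta2 and beta1 equals beta2}(2), so that $tord(\theta,\theta')>itord(\theta,\theta')=\beta$, and to set $Z=T(\theta,\gamma)\cup T(\gamma,\theta')$. Both halves are normally embedded as sub-triangles of the normally embedded $T$ and $T'$, so $Z$ is a $\beta$-bubble by Definition \ref{Def:bubble}. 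To verify that $Z$ is a snake I would check that every generic arc $\eta$ of $Z$ is abnormal. For $\eta\subset T_1$ this is immediate using $\theta,\theta'$ as witnesses: $T(\theta,\eta)\subset T$ is normally embedded, $T(\eta,\gamma)\cup T(\gamma,\theta')\subset T_1\cup T'$ is normally embedded by Lemma \ref{Lem: T1 cup T' is NE and T1 cup T2 cup T' is not}(1), and $tord(\theta,\theta')>\beta=itord(\theta,\theta')$. The main technical step is the remaining case $\eta\subset T(\theta,\lambda_1)\subset T_2$ (and its mirror on the $T'$-side), where alternative witnesses must be exhibited; here one should pick $\theta$ deep inside the $\beta$-horn of $\lambda$ so that the arcs in $T(\theta,\lambda_1)$ remain compatible with the normal embedding of the relevant portion of $T'$.

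For the second claim of Part (1), I would invoke Corollary \ref{Cor: max abn zone is perfect}: $A$ is a closed perfect $\beta$-zone. Since $A$ is closed, there is a $\beta$-H\"older triangle $Y_0\subset X$ with $V(Y_0)\subset A$; enlarging $Y_0$ to a maximal $\beta$-H\"older triangle $Y\subset X$ with $G(Y)\subset A$, the perfectness of $A$ forces $G(Y)=A$, hence $A\subset V(Y)$. Since every generic arc of $Y$ lies in $A$ and is therefore abnormal, $Y$ is a $\beta$-snake by Definition \ref{Def:snake}.

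For Part (2) I would argue by contradiction: suppose $\gamma\subset X^*$ for some $\beta'$-snake $X^*$. The set $G(X^*)$ is an abnormal $\beta'$-zone in $V(X)$ containing $\gamma$, so by Lemma \ref{Lem: union of adjacent zones is a zone} the maximal abnormal zone $A$ containing $\gamma$ has order at most $\beta'$, giving $\beta\le\beta'$. By Proposition \ref{Prop:weak LNE}, $X^*$ is weakly normally embedded with exponent $\beta'$. Since $\gamma$ is a generic arc of $X^*$, every arc of $X^*$ has inner tangency order with $\gamma$ at least $\beta'$; in particular $\lambda\in T$ with $itord(\gamma,\lambda)=\alpha=\beta_2<\beta\le\beta'$ cannot belong to $X^*$. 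By Lemma \ref{Lem: beta1 less beta2 and beta1 equals beta2}(1), for each $\sigma\in(\beta_2,\beta)$ there exist $\theta\in T_2$ and $\theta'\in T'$ with $itord(\theta,\gamma)=\sigma$ and $tord(\theta,\theta')>itord(\theta,\theta')$. The hard part will be turning these data into a definite contradiction: the idea is that $\gamma$ being generic in a $\beta'$-snake rigidly constrains the local structure around $\gamma$ to exponent $\beta'$ via Lemma \ref{Lem:minimal pancake decomp of a snake} (all pancakes of a minimal decomposition of $X^*$ are $\beta'$-H\"older triangles), whereas the pizza slice $T_2$ of exponent $\beta_2<\beta'$ emanating from $\gamma$ produces an incompatible exponent in any pancake decomposition of $X$ refining one of $X^*$ near $\gamma$.
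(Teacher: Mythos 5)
The proposal has real gaps in all three of its subarguments; let me address them in turn.

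\textbf{Part (1), first claim.} You build the bubble $Z=T(\theta,\gamma)\cup T(\gamma,\theta')$ as a proper subtriangle of $T\cup T'$, whereas the paper shows that $T\cup T'$ itself is the $\beta$-bubble snake. Your argument for $\eta\subset T_1$ is correct, but you explicitly leave the case $\eta\subset T(\theta,\lambda_1)\subset T_2$ open, and the heuristic you offer --- ``pick $\theta$ deep inside the $\beta$-horn of $\lambda$'' --- actually makes the problem worse, since moving $\theta$ closer to $\lambda$ enlarges $T(\theta,\lambda_1)$ and so enlarges the family of arcs for which witnesses must still be found (witnesses that, by Lemma~\ref{Lem: T1 cup T' is NE and T1 cup T2 cup T' is not}(2), cannot naively be taken across $\gamma$ because $T\cup T'$ is not normally embedded). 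The paper resolves this by exploiting the affine structure of the width function: since $\beta_1=\beta_2=\beta$, $Q_1=\{\beta\}\cup\cdots$ and $q_1=\beta<q_2$, Proposition~\ref{Prop:width function properties elementary triangle} forces $\min\mu_2=\mu_2(q_1)=\beta$, $\max\mu_2=\mu_2(q_2)$, and hence $tord(\gamma',T')=\beta$ for \emph{every} $\gamma'\in G(T)$, regardless of whether $\gamma'$ lies in $T_1$ or in $T_2$. This single computation is what makes $T(\gamma',\lambda')$ normally embedded, and it is exactly the ingredient your sketch lacks.

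\textbf{Part (1), second claim.} The construction ``enlarge $Y_0$ to a maximal $\beta$-H\"older triangle $Y$ with $G(Y)\subset A$, then perfectness forces $G(Y)=A$'' is not justified, but the more serious problem is the final step: you conclude that $Y$ is a $\beta$-snake because every generic arc of $Y$ lies in $A$ and ``is therefore abnormal.'' Abnormality in Definition~\ref{DEF: normal and abnormal arcs and zones} is relative to the ambient surface germ; an arc $\eta\in A$ is abnormal \emph{in $X$}, i.e.\ its witness triangles live in $X$, which does not imply it is abnormal \emph{in $Y$}, i.e.\ that the witnesses can be taken inside $Y$. The entire point of the paper's inductive construction (adjoining bubbles $T'_i\cup T''_i$ while $\theta_i\in A$, using $\theta'_{i+1}\subset T'_i\cup T''_i$ to keep one witness arc inside the previously built set, and using finiteness of a pancake decomposition to terminate) is to arrange that the witnesses end up inside $Y$. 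Your sketch never engages with this issue, so the conclusion that $Y$ is a snake is not established.

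\textbf{Part (2).} You state outright that you do not have a complete argument (``the hard part will be turning these data into a definite contradiction''). Beyond the admitted incompleteness, your plan (arguing about a hypothetical $\beta'$-snake $X^*\ni\gamma$ and trying to extract an incompatible pancake exponent) is a genuinely different route from the paper's, which simply shows that $T\cup T'$ fails to be a bubble snake: assuming it is, Lemma~\ref{Lem: beta1 less beta2 and beta1 equals beta2}(1) produces an arc $\theta\subset T_2$ with $tord(\theta,T')>\beta_2$, contradicting Proposition~\ref{Prop:bubble snake and its generic arcs}. Your preliminary reductions ($\beta\le\beta'$ and $\lambda\notin X^*$) are fine, but they stop short of a contradiction, and the proposed appeal to Lemma~\ref{Lem:minimal pancake decomp of a snake} is not developed far enough to evaluate.

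In short: the case $\eta\subset T_1$ in part (1) and the preliminary observations in part (2) are sound, but the claimed snake property of $Z$, the intrinsic abnormality of generic arcs in $Y$, and the contradiction in part (2) all rest on gaps that the paper fills with specific mechanisms (the width-function computation, the inductive bubble construction, and Proposition~\ref{Prop:bubble snake and its generic arcs}) absent from your proposal.
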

\begin{proof}
	$(1)$ If $\beta_2=\mu(T_2)=\beta$ then, by Lemmas \ref{Lem: beta1 less beta2 and beta1 equals beta2} and \ref{Lem: maximal abnormal zone first and second slices}, we have $\beta_1=\beta_2=\beta$. We claim that $T\cup T'$ is a $\beta$-bubble snake.
Indeed, since $tord(\lambda_1,T')=\beta$ and $tord(\lambda_2,T')>\beta$, we have $\min\mu_2 = \mu_2(q_1)=\beta$ and $\max\mu_2 = \mu_2(q_2)$. Proposition \ref{Prop:width function properties elementary triangle} implies that $q_{\gamma'}=tord(\gamma',T')=\beta$ for every $\gamma'\in G(T)$. Then, every arc in $G(T)$ is abnormal and similarly we can prove that every arc in $G(T')$ is also abnormal. Finally, since $\gamma$ is in a closed perfect abnormal $\beta$-zone by Corollary \ref{Cor: max abn zone is perfect},
it follows that $G(T\cup T')=Abn(T\cup T')$.
	
 Now we are going to prove that, when $\beta_1=\beta_2=\beta$, there is a $\beta$-snake $Y$ such that $A\subset V(Y)$.
 We already proved that $T\cup T'$ is a $\beta$-bubble snake. If $\lambda\notin A$ and $\lambda'\notin A$ then we can take $Y=T\cup T'$. Suppose that $\lambda\in A$ and $\lambda'\notin A$. Since $\lambda\in A$, there are normally embedded $\alpha_1$-H\"older triangles $T'_1=T(\theta_1,\lambda)$ and $T''_1=T(\lambda,\theta'_1)$ such that $T'_1\cap T=T'_1\cap T''_1 =\lambda$ and $tord(\theta_1,\theta'_1)>itord(\theta_1,\theta'_1)$.
Then $\alpha_1\le\beta$ according to Lemma \ref{Lem: maximal abnormal zone alpha le beta}.
 As $T\cup T'$ is not normally embedded, we have $\theta'_1\subset T\cup T'$.
 Thus, $\alpha_1=tord(\lambda,\theta'_1)\ge itord(\lambda,\lambda')=\beta$. Hence $\alpha_1=\beta$ and $T'_1\cup T''_1$ is a $\beta$-bubble snake. If $\theta_1 \notin A$, then $Y=(T'_1\cup T''_1)\cup (T\cup T')$ is a $\beta$-snake and $A\subset V(Y)$. If $\theta_1\in A$ we apply the same argument to find normally embedded $\beta$-H\"older triangles $T'_2=T(\theta_2,\theta_1)$ and $T''_2=T(\theta_1,\theta'_2)$ such that $T'_2\cap T''_2 =\theta_1$ and $tord(\theta_2,\theta'_2)>itord(\theta_2,\theta'_2)$. If $\theta_2\notin A$ then $Y=(T'_2\cup T''_2)\cup (T'_1\cup T''_1)\cup (T\cup T')$ is a $\beta$-snake and $A\subset V(Y)$. If $\theta_2\in A$ we continue applying the same argument. This procedure must stop after finitely many steps, otherwise there would be infinitely many pancakes in a minimal pancake decomposition of $X$, a contradiction. Thus, after finitely many steps, we find an integer $p$ such that $\theta_p \notin A$, thus $Y=(\bigcup_{i=1}^pY_i)\cup (T\cup T')$ is a $\beta$-snake, where $Y_i=T'_i\cup T''_i$, and $A\subset Y$.
	
	$(2)$ It is enough to prove that if $\beta_2=\mu(T_2)<\beta=\beta_1$ then $T\cup T'$ is a non-snake $\beta$-bubble. Let $Y=T\cup T'$. Suppose, by contradiction, that $Y$ is a $\beta_2$-bubble snake. Consider $\alpha'\in \F$ such that $\beta_2 < \alpha' <\beta_1$. By Lemma \ref{Lem: beta1 less beta2 and beta1 equals beta2}, there is an arc $\theta \subset T_2$ such that $tord(\theta, T')>\beta_2$, a contradiction with Proposition \ref{Prop:bubble snake and its generic arcs}.
\end{proof}

\begin{Teo}\label{Teo: Main HT decomposition}
	Let $X$ be a surface germ. Then $V(X)$ is the union of finitely many maximal normal (possibly singular) zones and finitely many maximal abnormal zones. Moreover, each maximal abnormal zone is closed perfect, and if its order is $\beta$ then it is either a circular snake (see Remark \ref{REM: circular snake}), or the set of generic arcs in a $\beta$-snake $Y\subset X$, or else it is $\beta$-complete and, for any small $\epsilon > 0$, contained in $V(T_\eta)$, where $\eta=\beta - \epsilon$ and $T_\eta$ is a non-snake $\eta$-bubble (see Figures ~\ref{fig:bubble snake and non-snake bubbles}c, ~\ref{fig:bubble snake and non-snake bubbles}d, ~\ref{fig:bubble snake and non-snake bubbles}e)).
\end{Teo}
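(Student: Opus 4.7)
The plan is to synthesise the lemmas of the first half of Section~\ref{Section: Main Theorem} with the pancake and pizza machinery from Sections~\ref{Section: Basic Definitions} and~\ref{Section: Lipschitz functions on a NE Holder triangle}, organising everything around the dichotomy provided by Lemma~\ref{Lem: maximal abnormal zone MAIN PART}.

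For the finiteness assertion I would fix a minimal pancake decomposition $\{X_k\}_{k=1}^p$ of $X$ and, on each pancake $X_j$, consider the Lipschitz distance functions $f_l(x)=d(x,X_l)$ for $l\ne j$ together with a multipizza of $X_j$ associated with $\{f_l\}$. Applied to each $f_l$ separately, Propositions~\ref{Prop:arcs in B_{beta} are generic} and~\ref{Prop:maximal zones in H_{beta}} partition $V(X_j)$ into finitely many maximal closed perfect and open complete zones on each of which all the indicator functions from Definition~\ref{Def: distance functions to pancakes} are constant. This common refinement is finer than the partition of $V(X_j)$ into abnormal and normal arcs (via Remark~\ref{Rem: relative segments and nodal zones and perfect and open complete zones}), and merging compatibly across pancake boundaries yields only finitely many maximal abnormal and maximal normal zones in $V(X)$. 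Closed-perfectness of any maximal abnormal $\beta$-zone $A$ is then precisely Corollary~\ref{Cor: max abn zone is perfect}.

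To classify $A$, I would pick any $\gamma\in A$ and, using Definition~\ref{DEF: normal and abnormal arcs and zones}, take normally embedded H\"older triangles $T=T(\lambda,\gamma)$ and $T'=T(\gamma,\lambda')$ with $T\cap T'=\gamma$ and $tord(\lambda,\lambda')>itord(\lambda,\lambda')$. Lemma~\ref{Lem: maximal abnormal zone alpha le beta} gives that both have common exponent $\alpha\le\beta$, and Lemma~\ref{Lem: maximal abnormal zone first and second slices} reduces matters to a two-slice minimal pizza $\{T_1,T_2\}$ on $T$ associated with $f(x)=d(x,T')$, satisfying $\mu(T_1)=\beta$ and $\mu(T_2)\le\beta$. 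If $\mu(T_2)=\beta$, Lemma~\ref{Lem: maximal abnormal zone MAIN PART}(1) constructs a $\beta$-snake $Y\subset X$ with $A\subset V(Y)$ by iteratively extending $T\cup T'$ through further $\beta$-bubble snakes on the boundary. The iteration either halts at two normal boundary arcs, producing a $\beta$-snake in the sense of Definition~\ref{Def:snake}, or closes up, producing a circular snake as in Remark~\ref{REM: circular snake}. In either case every generic arc of $Y$ is abnormal in $X$ by Proposition~\ref{Prop:weak LNE}, so $G(Y)$ is an abnormal zone containing $\gamma$, and maximality of $A$ forces $A=G(Y)$.

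The remaining case, $\mu(T_2)=\eta<\beta$, where Lemma~\ref{Lem: maximal abnormal zone MAIN PART}(2) identifies $T\cup T'$ as a non-snake $\eta$-bubble $T_\eta$ and shows $\gamma$ lies in no snake, is where the real work lies, and I expect it to be the main obstacle. Two things must be checked: first, that $A\subset V(T_\eta)$, which follows because any normally embedded triangles witnessing the abnormality of an arc $\gamma'\in A$ close to $\gamma$ must reach into both sides of the pair $(T,T')$ and hence $\gamma'$ must itself lie inside $T\cup T'$; second, that $A$ is $\beta$-complete, meaning every $\gamma'\in V(X)$ with $itord(\gamma,\gamma')\ge\beta$ belongs to $A$. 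For the latter, any such $\gamma'$ lies inside the $\beta$-slice $T_1$ (or its counterpart in $T'$), and I would transport the abnormality-witnessing pair from $\gamma$ to $\gamma'$ via the bi-Lipschitz matching of normally embedded H\"older triangles furnished by Proposition~\ref{Prop:two triangles}, producing normally embedded triangles on both sides of $\gamma'$ whose union is not normally embedded and thereby placing $\gamma'$ in the same maximal abnormal zone as $\gamma$. The parameter $\eta$ is then brought arbitrarily close to $\beta$ by replacing $T_2$ with a thinner sub-slice. The delicate point, and the hardest step of the proof, is ensuring that this transport of witnesses can be performed uniformly in $\gamma'$; this is precisely the situation where Proposition~\ref{Prop:two triangles} is essential.
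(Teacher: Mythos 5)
Your classification of a maximal abnormal $\beta$-zone $A$, built around Lemmas~\ref{Lem: maximal abnormal zone alpha le beta}, \ref{Lem: maximal abnormal zone first and second slices} and \ref{Lem: maximal abnormal zone MAIN PART} and the dichotomy $\mu(T_2)=\beta$ versus $\mu(T_2)<\beta$, is the same route the paper takes and is essentially sound. The difficulties lie in the two parts you treat more loosely.

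First, you never perform the reduction to a non-singular H\"older triangle, which the paper does at the outset: it isolates the circular-snake possibility (connected link homeomorphic to a circle with every arc abnormal) before any pancake machinery is invoked, and then removes horn neighborhoods of all Lipschitz singular arcs and of finitely many normal arcs, replacing $X$ by a finite union of disjoint \emph{non-singular} H\"older triangles $X_i$ with $Abn(X)\subset\bigcup Abn(X_i)$. This step is not cosmetic: Definition~\ref{DEF: normal and abnormal arcs and zones} classifies only Lipschitz non-singular arcs as normal or abnormal, and every one of Lemmas~\ref{Lem: maximal abnormal zone alpha le beta}--\ref{Lem: maximal abnormal zone MAIN PART} is stated for a non-singular H\"older triangle. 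Your suggestion that the iterative construction in Lemma~\ref{Lem: maximal abnormal zone MAIN PART}(1) might ``close up'' into a circular snake cannot happen once one has reduced to a genuine H\"older triangle with two distinct boundary arcs, and in the unreduced setting the lemma does not apply.

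Second, the finiteness argument by multipizza is the genuine gap. Propositions~\ref{Prop:arcs in B_{beta} are generic} and \ref{Prop:maximal zones in H_{beta}} are derived under the hypothesis of Definition~\ref{Def:B_beta and H_beta}, namely that $ord_\gamma f\ge\beta$ for all $\gamma$, where $\beta$ is the exponent of the triangle on which $f$ is defined. For a pancake $X_j$ of exponent $\beta_j$ inside a general non-singular H\"older triangle $X$, the distance function $f_l=d(\cdot,X_l)$ satisfies only $ord_\gamma f_l\ge\mu(X)$, which may be strictly less than $\beta_j$; the hypothesis can fail. Moreover, Remark~\ref{Rem: relative segments and nodal zones and perfect and open complete zones}, which you cite to link the indicator zones to the abnormal/normal dichotomy, is specific to pancakes of a \emph{snake}, where all pancakes share the same exponent $\beta$ and the segment/nodal-zone language is available; there is no argument that constancy of the $m_l$ on a zone of a general pancake forces that zone to be entirely abnormal or entirely normal. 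By contrast, the paper proves finiteness directly from Lemma~\ref{Lem: maximal abnormal zone and pancake decomp}: each maximal abnormal zone must meet at least two pancakes, and a short combinatorial argument using the boundary arcs $\theta_{k-1},\theta_k$ shows that no pancake can meet three or more maximal abnormal zones, since at most two of them can ``escape'' through the boundary of $X_k$ while the rest would be trapped inside $V(X_k)$, contradicting Lemma~\ref{Lem: maximal abnormal zone and pancake decomp}(1). I would recommend replacing the multipizza computation by this direct argument.

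Finally, for $\beta$-completeness in the non-snake case, the paper gets by with the explicit description of the abnormal arcs provided by Lemma~\ref{Lem: beta1 less beta2 and beta1 equals beta2}(1) and the observation from Corollary~\ref{Cor: max abn zone is perfect} that $A$ is a closed perfect $\beta$-zone, rather than by transporting witnesses via Proposition~\ref{Prop:two triangles}. Your transport idea is plausible in spirit, but Proposition~\ref{Prop:two triangles} applies to triangles whose tangency order is strictly greater than $\beta$, whereas $\beta$-completeness must be established for arcs at inner tangency exactly $\beta$ from $\gamma$; some care would be needed to make it rigorous, and it is not the route the paper takes.
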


\begin{proof} It is enough to prove the statement for a surface germ $X$ with connected link. If the link of $X$ is a circle and all arcs in $X$ are abnormal then $X$ is a circular snake. Otherwise, $X$ must contain some normal (possibly, Lipschitz singular) arcs.
Existence of pancake decomposition implies that $tord(\gamma,Abn(X))<\infty$ for each normal arc $\gamma$ of $X$.
If $\alpha>tord(\gamma,Abn(X))$ then an $\alpha$-horn neighborhood $H_{a,\alpha}(\gamma)$ of $\gamma$ in $X$ (see Definition \ref{Def:horn-neighbourhood} and Remark \ref{Rmk:horn-neighborhood}) consists of normal arcs.
Removing $H_{a,\alpha}(\gamma)$ from $X$ does not change $Abn(X)$.
Indeed, let $\lambda\in Abn(X)$ be an arc such that $\lambda=T_1\cap T_2$ where $T_1=T(\lambda_1,\lambda)$ and $T_2=T(\lambda,\lambda_2)$ are normally embedded $\beta$-H\"older triangles such that $tord(\lambda_1,\lambda_2)>\beta$ and $\gamma\subset T_1$. In particular, $\beta<\alpha$.
If $T(\gamma,\lambda)\cup T_2$ is not normally embedded then $\lambda\in Abn(X\setminus H_{a,\alpha}(\gamma))$.
Otherwise, we would have $tord(\gamma,\lambda_1)=\beta$, thus $\gamma$ would be an abnormal arc, a contradiction.

Removing from $X$, if necessary, open horn neighborhoods of Lipschitz singular arcs and finitely many normal arcs, we get finitely many disjoint non-singular H\"older triangles $X_i\subset X$ such that $Abn(X)\subset \cup Abn(X_i)$.
Thus it is enough to prove the statement for a non-singular H\"older triangle $X$.

	By definition, abnormal zones do not intersect with normal zones. Moreover (see Definition \ref{Def: maximal abnormal and normal zones}) there are no adjacent maximal abnormal zones and adjacent maximal normal zones.
	
	Let $\{X_k=T(\theta_{k-1},\theta_k)\}$ be a minimal pancake decomposition of $X$. To prove that there are finitely many abnormal zones in $V(X)$ it is enough to prove that each zone $V(X_k)$ intersects finitely many maximal abnormal zones in $V(X)$. Suppose, by contradiction, that there are infinitely many maximal abnormal zones $A_1,A_2,\ldots$ in $V(X)$ such that $V(X_k)\cap A_i\ne \emptyset$ for all $i=1,2,\ldots$. Lemma \ref{Lem: maximal abnormal zone and pancake decomp} implies that $\mu(X_k) \le \mu(A_i)$ for all $i=1,2,\ldots$. One of the boundary arcs of $X_k$ must belong to $A_i$ for some $i$, otherwise, since $\mu(X_k) \le \mu(A_i)$ for all $i=1,2,\ldots$, we would have $A_i\subset V(X_k)$ for all $i$, a contradiction with Lemma \ref{Lem: maximal abnormal zone and pancake decomp}. Assume that $\theta_k \in A_i$ and consider $A_j$ for $j\ne i$. Thus, $\theta_{k-1}\in A_j$, otherwise we would have $A_j\subset V(X_k)$, since $A_i\cap A_j=\emptyset$, a contradiction with Lemma \ref{Lem: maximal abnormal zone and pancake decomp}. Then, $\theta_{k-1}\in A_j$, $\theta_k \in A_i$ and $A_l\subset V(X_k)$ for all $l=1,2,\ldots$ with $l\ne i$ and $l\ne j$, a contradiction with Lemma \ref{Lem: maximal abnormal zone and pancake decomp}.
Since there exist finitely many abnormal zones it follows that there are finitely many maximal normal zones in $V(X)$.
	
	Finally, let $A$ be a maximal abnormal $\beta$-zone in $V(X)$. Corollary \ref{Cor: max abn zone is perfect} implies that $X$ is a closed perfect zone. Consider an arc $\gamma\in A$ and arcs $\lambda,\lambda'\in V(X)$ such that $T=T(\lambda,\gamma)$ and $T'=T(\gamma,\lambda')$ are normally embedded H\"older triangles with $tord(\lambda,\lambda')>itord(\lambda,\lambda')$. Let $f\colon (T,0)\rightarrow (\R,0)$ be the function given by $f(x)=d(x,T')$, and $\{T_i\}$ a minimal pizza on $T$ associated with $f$. By Lemma \ref{Lem: maximal abnormal zone first and second slices}, we can assume that $p=2$ and $\beta_2\le \beta_1=\beta$. If $\beta_2= \beta_1=\beta$ then, by Lemma \ref{Lem: maximal abnormal zone MAIN PART}, $A$ is contained in a $\beta$-snake. If $\beta_2<\beta_{1}$ then, also by Lemma \ref{Lem: maximal abnormal zone MAIN PART}, $A$ is contained in the non-snake bubble $Y=T\cup T'$ and, by Lemma \ref{Lem: beta1 less beta2 and beta1 equals beta2}, for any $\epsilon>0$ such that $\beta_2 < \eta=\beta - \epsilon <\beta_1=\beta$, $A\subset V(T_\eta)$ where $T_\eta \subset Y$ is a non-snake $\eta$-bubble.	
\end{proof}

\section{Combinatorics of snakes}\label{Section: Names of snakes}
In this section we assign a word to a snake. It is a combinatorial invariant of the snake reflecting the order, with respect to a fixed orientation, in which nodal zones belonging to each of its nodes appear.

\subsection{Words and partitions}\label{Subsec: words and partitions}

\begin{Def}\label{DEF: word}
	\normalfont Consider an alphabet $A=\{x_1,\ldots,x_n\}$. A \textit{word} $W$ of length $m=|W|$ over $A$ is a finite sequence of $m$ letters in $A$, i.e., $W=[w_1 \cdots w_m]$ with $w_i\in A$ for $1\le i \le m$. One also considers the \textit{empty word} $\varepsilon=[\;]$ of length $0$. Given a word $W=[w_1\cdots w_m]$, the letter $w_i$ is called the $i$-\textit{th entry} of $W$.
If $w_i=x$ for some $x\in A$, it is called a \textit{node entry} of $W$ if it is the first occurrence of $x$ in $W$.
Alternatively, $w_i$ is a node entry of $W$ if $w_j\ne w_i$ for all $j<i$.
\end{Def}

\begin{Def}\label{DEF: open, semi-open subwords etc}
	\normalfont
	Given a word $W=[w_1\cdots w_m]$, a \textit{subword} of $W$ is either an empty word or a word $[w_j\cdots w_k]$ formed by consecutive entries of $W$ in positions $j, \ldots, k$, for some $j\le k$. We also consider \textit{open} subwords $(w_j\cdots w_k)$ formed by the entries of $W$ in positions $j+1, \ldots, k-1$, for some $j<k$, and \textit{semi-open} subwords $(w_j\cdots w_k]$ and $[w_j\cdots w_k)$ formed by the entries of $W$ in positions $j+1, \ldots, k$ and $j, \ldots, k-1$, respectively.

\end{Def}

\begin{Def}\label{Def: partition assoc with a snake name}
	\normalfont Let $W=[w_1\cdots w_m]$ be a word of length $m$ containing $n$ distinct letters $x_1,\ldots,x_n$.
We associate with $W$ a partition $P(W)=\{I_1,\ldots,I_n\}$ of the set $\{1,\ldots,m\}$ where $i\in I_j$ if $w_i=x_j$.
\end{Def}

\begin{remark}\label{REM: convention for word and equivalent words}
	\normalfont Note that $P(W)$ does not depend on the alphabet, only on positions where the same letters appear. For convenience we often assign $a$ (or $x_1$) to the first letter of the word $W$, $b$ (or $x_2$) to the first letter of $W$ other than $a$, and so on. Two words $W$ and $W'$ are equivalent if $P(W)=P(W')$. In particular, equivalent words have the same length and the same number of distinct letters. For example, the words $X=abcdacbd$, $Y=bcdabdca$ and $Z=xyzwxzyw$ are equivalent, since
$P(X)=P(Y)=P(Z)=\{ \{1,5\},\{2,7\},\{3,6\},\{4,8\}\}$.
\end{remark}

\begin{Def}\label{Def: primitive and binary words}
	\normalfont A word $W$ is \textit{primitive} if it contains no repeated letters, i.e., if each part of $P(W)$ contains a single entry. We say that $W=[w_1\cdots w_m]$ is \textit{semi-primitive} if $w_1=w_m$ and the subword $[w_1\cdots w_m)$ of $W$ is primitive, i.e., if each part of $P(W)$ except $\{w_1, w_m\}$ contains a single entry. A word $W$ is \textit{binary} if each of its letters appears in $W$ exactly twice, i.e., if each part of $P(W)$ contains exactly two entries.
\end{Def}

\subsection{Snake names}\label{Subsec: snake names}

\begin{Def}\label{Def: rules for snakes names}
	\normalfont Given a non-empty word $W=[w_1\cdots w_m]$, we say that $W$ is a \textit{snake name} if the following conditions hold:	
	\begin{enumerate}
		\item[$(i)$] Each of the letters of $W$ appears in $W$ at least twice;
		\item[$(ii)$] For any $k\in\{2,\ldots,m-1\}$, there is a semi-primitive subword $[w_j\cdots w_l]$ of $W$ such that $j<k<l$.
	\end{enumerate}
\end{Def}

\begin{remark}
	\normalfont Note that every word equivalent to a snake name $W$ is also a snake name.
\end{remark}
	
\begin{remark}\label{Rem: order of letters in words}
	\normalfont The word $[aa]$ (or any equivalent word) is the only snake name of length two. No snake name of length greater than two contains the same letter in consecutive positions. There are no snake names of length three, and the words $[abab]$ and $[ababa]$ are the only snake names, up to equivalence, of length four and five, respectively.
\end{remark}

\begin{Exam}\label{Exam: snake names}
	\normalfont The word $W=[abcdacbd]$ is a snake name, while the word $W'=[abacdcbd]$ is not, since the entry $w_3=a$ of $W'$ does not satisfy condition $(ii)$ of Definition \ref{Def: rules for snakes names}. There may be more than one subword in a snake name satisfying condition  $(ii)$ of Definition \ref{Def: rules for snakes names} for a fixed position $k$. For example both subwords $[abcda]$ and $[cdac]$ of $W$ satisfy condition $(ii)$ for its entry $w_4=d$.
\end{Exam}

\begin{Def}\label{Def: word associated with a snake}
	\normalfont  Let $T=T(\gamma_1,\gamma_2)$ be a $\beta$-snake with $n$ nodes $\N_1,\ldots, \N_n$ and $m$ nodal zones $N_1,\ldots N_m$. From now on we assume that the link of $T$ is oriented from $\gamma_1$ to $\gamma_2$, and the nodal zones of $T$ are enumerated in the order in which they appear when we move along the link of $T$ from $\gamma_{1}$ to $\gamma_{2}$. We enumerate the nodes of $T$ similarly, starting with the node $\N_1$ containing $\gamma_1$, skipping the nodes for which the numbers were already assigned.
In particular, $\gamma_1 \in N_1 \subset \N_1$ and $\gamma_2 \in N_m$, but $N_m$ does not necessarily belong to $\N_n$.

Consider an alphabet $A=\{x_1,\ldots,x_n\}$ where each letter $x_j$ is assigned to the node $\N_j$ of $T$. A word $W=[w_1\cdots w_m]$ over $A$ is associated with the snake $T=T(\gamma_1,\gamma_2)$ (notation $W=W(T)$) if, while moving along the link of $T$ from $\gamma_1$ to $\gamma_2$, the $i$-th entry $w_i$ of $W$ is the letter $x_j$ assigned to the node $\N_j$ to which the nodal zone $N_i$ belongs.
\end{Def}

\begin{Prop}\label{Prop: word assoc with a snake is a snake name}
	Let $T=T(\gamma_1,\gamma_2)$ be a snake, other than a spiral snake, and let $W=W(T)$ be the word associated with $T$. Then $W$ is a snake name satisfying conditions $(i)$ and $(ii)$ of Definition \ref{Def: rules for snakes names}.
\end{Prop}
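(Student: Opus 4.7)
The plan is to verify both conditions of Definition~\ref{Def: rules for snakes names} for $W = W(T)$. Condition (i) is immediate: letters of $W$ correspond bijectively to nodes of $T$, each of which has at least two nodal zones by Proposition~\ref{Prop: each node contains at least two nodal zones}, so each letter occurs at least twice. For condition (ii), the case when $T$ is a bubble snake is trivial because then $m = 2$ and $\{2, \ldots, m-1\}$ is empty; otherwise fix $m \ge 3$ and $k \in \{2, \ldots, m-1\}$. Every $\gamma \in N_k$ is a generic arc of $T$ and hence abnormal, so by Remark~\ref{Rem:triangles of abnormal arc} there exist $\lambda, \lambda'$ with $T(\lambda, \gamma)$ and $T(\gamma, \lambda')$ normally embedded $\beta$-H\"older triangles, $tord(\lambda, \gamma) = tord(\gamma, \lambda') = itord(\lambda, \lambda') = \beta$, and $tord(\lambda, \lambda') > \beta$. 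Writing $\lambda \in N_j$, $\lambda' \in N_l$, the equality $tord(\lambda, \gamma) = \beta$ forbids $\lambda \in N_k$ (else Proposition~\ref{Prop:nodal zones are finite} would give $itord(\lambda, \gamma) > \beta$), and similarly $\lambda' \notin N_k$, so $j < k < l$. Since $itord(\lambda, \lambda') = \beta < tord(\lambda, \lambda')$, $N_j$ and $N_l$ lie in a common node, so $w_j = w_l$.

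Among all such triples $(\gamma, \lambda, \lambda')$, I select one minimizing $l - j$ and denote the resulting pair by $(j_*, l_*)$. The heart of the argument is the claim that $[w_{j_*} \cdots w_{l_*}]$ is semi-primitive. Suppose there is a repeat $w_{i_1} = w_{i_2}$ with $j_* \le i_1 < i_2 \le l_*$ and $\{i_1, i_2\} \ne \{j_*, l_*\}$. If $j_* < i_1 < k < i_2 < l_*$ strictly, then $(i_1, i_2)$ is a shorter enclosing pair, contradicting minimality. If the repeated letter equals $w_{j_*}$, then some interior $i$ satisfies $w_i = w_{j_*}$: when $i \ne k$, either $(i, l_*)$ or $(j_*, i)$ is a shorter enclosure, contradicting minimality; when $i = k$, then $N_{j_*}$ and $N_k$ share a node, so there exist $\mu \in N_{j_*}$ and $\gamma'' \in N_k$ with $tord(\mu, \gamma'') > \beta$, and combining this with $tord(\lambda, \mu) > \beta$ and $tord(\gamma, \gamma'') > \beta$ (each pair in a single nodal zone) via the non-archimedean inequality forces $tord(\lambda, \gamma) > \beta$, contradicting $tord(\lambda, \gamma) = \beta$.

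The remaining subcases have $j_* < i_1 < i_2 < l_*$ and $w_{i_1} \ne w_{j_*}$, with either both $i_1, i_2$ on one side of $k$ or one of them equal to $k$. If $i_1 < i_2 < k$, the nodal zones $N_{i_1}, N_{i_2}$ are entirely contained in the normally embedded $T(\lambda, \gamma)$; arcs $\mu_1 \in N_{i_1}$, $\mu_2 \in N_{i_2}$ realizing the high outer tangency of their common node would satisfy $tord(\mu_1, \mu_2) > \beta$, while Proposition~\ref{Prop:nodal zones are finite} forces $tord(\mu_1, \mu_2) = itord(\mu_1, \mu_2) = \beta$, a contradiction; the case $k < i_1 < i_2$ is symmetric using $T(\gamma, \lambda')$. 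If $i_1 = k < i_2 < l_*$ (so $w_k = w_{i_2}$), pick $\gamma'' \in N_k$, $\mu_2 \in N_{i_2}$ with $tord(\gamma'', \mu_2) > \beta$; then $tord(\gamma, \gamma'') > \beta$ and non-archimedean give $tord(\gamma, \mu_2) > \beta$, contradicting that $\gamma, \mu_2$ lie in the normally embedded $T(\gamma, \lambda')$ in distinct nodal zones (which forces $tord(\gamma, \mu_2) = \beta$); the subcase $i_2 = k$ is symmetric. The main obstacle is this last batch of edge subcases: minimality alone is insufficient, and one must propagate outer tangency between arcs in a common node or nodal zone using the non-archimedean property, then extract a contradiction from the normal embeddedness of one of the two half-triangles witnessing abnormality.
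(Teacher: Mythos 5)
Your treatment of condition (i) is correct, and your overall strategy for (ii)---pick $\gamma\in N_k$, extract witnesses $\lambda,\lambda'$ to abnormality, and run a minimization over triples to force semi-primitivity of $[w_{j_*}\cdots w_{l_*}]$---is the same argument the paper uses, with the paper's ``furthermore, we may assume that $V(T(\lambda_1,\lambda_2))$ does not contain distinct nodal zones in the same node other than $\N$'' replaced by your more explicit minimization of $l-j$ plus a case analysis. That part of your argument is sound; the edge subcases are handled correctly via the non-archimedean inequality and normal embedding of the half-triangles.

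There is, however, a genuine gap at the very start of the condition (ii) argument: you write ``Writing $\lambda\in N_j$, $\lambda'\in N_l$'' as though the witnesses to abnormality automatically lie in nodal zones. Remark~\ref{Rem:triangles of abnormal arc} gives you arcs $\lambda,\lambda'$ with the stated tangency and normal embedding properties, but nothing says they are nodal arcs; by Corollary~\ref{Cor: V(X) is a disjoint union of segments and nodal zones} and Proposition~\ref{Prop: each node contains at least two nodal zones}~(1), either both are nodal or both are segment arcs, and the latter can happen. If $\lambda,\lambda'$ are segment arcs, the indices $j,l$ you minimize over are not defined, and your minimizing set may be empty, so the whole argument becomes vacuous. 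The paper's proof spends roughly half its length precisely on this reduction: it assumes $\lambda_1\in S_1$, $\lambda_2\in S_2$ are segment arcs, invokes Proposition~\ref{Prop: segments with contact implies adjacent nodal zones with contact} to identify the nodes flanking $S_1$ and $S_2$, and then carefully replaces $\lambda_1,\lambda_2$ by arcs in suitable nodal zones $N,N'$, checking normal embedding of the new half-triangles via Proposition~\ref{Prop: each node contains at least two nodal zones}~(1). You need to supply that step (or an equivalent one) before the minimization can get off the ground.
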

\begin{proof}
	Condition $(i)$ of Definition \ref{Def: rules for snakes names} holds since each node of $T$ contains at least two nodal zones (see Proposition \ref{Prop: each node contains at least two nodal zones}).
	
	For a bubble snake condition $(ii)$ of Definition \ref{Def: rules for snakes names} is empty, thus we may assume that $T$ is not a bubble snake. Consider $w_k$, with $1<k<m$, and the nodal zone $N_k$ associated with $w_k$. Since $N_k$ is an interior nodal zone, every arc in $N_k$ is abnormal. Let $\gamma \in N_k$, and let $\lambda_1 \subset T(\gamma_1,\gamma)$ and $\lambda_2 \subset T(\gamma,\gamma_2)$ be two arcs such that $T(\lambda_1,\gamma)$ and $T(\gamma,\lambda_2)$ are normally embedded H\"older triangles with $tord(\lambda_1,\lambda_2)>itord(\lambda_1,\lambda_2)$ (see Remark \ref{Rem:triangles of abnormal arc}). Propositions \ref{Prop: each node contains at least two nodal zones} and \ref{Prop:bubble inside snake} imply that $\lambda_1$ and $\lambda_2$ belong either to distinct nodal zones in the same node or to distinct segments.
	
	We can assume, replacing the arcs $\lambda_1$ and $\lambda_2$ if necessary, that $\lambda_1$ and $\lambda_2$ belong to different nodal zones in the same node. Indeed, suppose that $\lambda_1$ and $\lambda_2$ belong to segments $S_1$ and $S_2$, respectively. Let $\N$ and $\N'$ be the nodes containing the nodal zones adjacent to $S_1$ and $S_2$ (see Proposition \ref{Prop: segments with contact implies adjacent nodal zones with contact}). We can assume that $T(\lambda_1,\gamma)$ and $T(\gamma,\lambda_2)$ do not contain arcs in nodal zones of the same node. Otherwise, $\lambda_1$ and $\lambda_2$ can be replaced by those arcs. Then, $T(\lambda_1,\gamma)$ contains arcs in a nodal zone in one of these nodes, say $N\subset\N$, and $T(\gamma,\lambda_2)$ contains arcs in a nodal zone $N'\subset\N'$. This implies that $T(\gamma,\lambda_2)$ does not contain arcs in $N$. If $T(\gamma,\lambda_2)$ contains arcs of some other nodal zone $N''$ in $\N$ other $N$ then $\lambda_1$ and $\lambda_2$ can be replaced by arcs in $N$ and $N''$, respectively. Thus, assume that $T(\gamma,\lambda_2)$ does not contain arcs in $\N$ and consider arcs $\lambda'_1\in N$ and $\lambda'_2\in N'$. Since H\"older triangles $T(\lambda'_1,\gamma)$ and $T(\gamma,\lambda'_2)$ are normally embedded, we can replace $\lambda_i$ by $\lambda'_i$ for $i=1,2$. In fact, $T(\lambda'_1,\gamma)$ is normally embedded because $T(\lambda'_1,\gamma)\subset T(\lambda_1,\gamma)$ where $T(\lambda_1,\gamma)$ is normally embedded. If $T(\gamma,\lambda'_2)$ is not normally embedded, we get a contradiction with $T(\gamma,\lambda_2)$ containing no arcs in $\N$ (see Proposition \ref{Prop: each node contains at least two nodal zones} item $(1)$).
	
Assume now that $\lambda_1$ and $\lambda_2$ belong to nodal zones $N_j$ and $N_l$, respectively, in the same node $\N$, where $j<l$. Since $T(\lambda_1,\gamma)$ and $T(\gamma,\lambda_2)$ are normally embedded, the nodal zone $N_k$ does not belong to $\N$, and consequently, $j<k<l$. Furthermore, we may assume that $V(T(\lambda_1,\lambda_2))$ does not contain distinct nodal zones in the same node other than $\N$.

	Let $w_j$ and $w_l$ be the entries associated with $N_j$ and $N_l$, respectively. By our assumption for $V(T(\lambda_1,\lambda_2))$, the only letters common to the primitive subwords $[w_j\cdots w_k]$ and $[w_k\cdots w_l]$ are $w_k$ and $w_j=w_l$. Hence, the subword $[w_j\cdots w_l]$ is semi-primitive, and condition $(ii)$ of Definition \ref{Def: rules for snakes names} is satisfied.
\end{proof}

\begin{Prop}\label{Prop: subwords in snake names are NE HT}
	Let $T$ be a snake, and $W=[w_1\cdots w_m]$ the word associated with $T$. Let $T'\subset T$ be a H\"older triangle with the boundary arcs in  the nodal zones $N_j$ and $N_k$ of $T$, where $j<k$. Then $T'$ is normally embedded if, and only if, the subword $[w_j\cdots w_k]$ of $W$ is primitive.
\end{Prop}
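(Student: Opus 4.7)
The proof establishes both implications directly, relying on the structural results of Section 5, most crucially the weak normal embeddedness of snakes (Proposition \ref{Prop:weak LNE}), which guarantees that any pair of arcs $\gamma,\gamma'\in T$ with $tord(\gamma,\gamma')>itord(\gamma,\gamma')$ satisfies $itord(\gamma,\gamma')=\beta$.

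For the implication ``$[w_j\cdots w_k]$ not primitive $\Rightarrow$ $T'$ not normally embedded'', pick $j\le a<b\le k$ with $w_a=w_b$, so that $N_a$ and $N_b$ are distinct nodal zones of a common node $\mathcal{N}$. Because the nodal zones of $T$ are enumerated in the order they are traversed along the link, each $N_l$ with $j\le l\le k$ meets $V(T')$, so one may choose $\gamma\in N_a\cap V(T')$ and $\gamma'\in N_b\cap V(T')$. Definition \ref{Def: node} yields $tord(\gamma,\gamma')>\beta$. On the other hand, $T(\gamma,\gamma')\subset T'$ traverses at least one segment of $T$ (for instance $S_a$), which is a $\beta$-zone, so $\mu(T(\gamma,\gamma'))=\beta$ and hence $itord(\gamma,\gamma')=\beta$. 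Therefore $tord(\gamma,\gamma')>itord(\gamma,\gamma')$, and by Remark \ref{Rem: NE HT condition} the triangle $T'$ is not normally embedded.

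For the implication ``$[w_j\cdots w_k]$ primitive $\Rightarrow$ $T'$ normally embedded'', suppose for contradiction that $T'$ is not normally embedded. Then there exist $\gamma,\gamma'\in T'$ with $tord(\gamma,\gamma')>itord(\gamma,\gamma')$; by Proposition \ref{Prop:weak LNE}, $itord(\gamma,\gamma')=\beta$ and $tord(\gamma,\gamma')>\beta$. Proposition \ref{Prop: each node contains at least two nodal zones}(1) (together with its symmetric version) shows that $\gamma,\gamma'$ are either both nodal or both segment arcs. If both are nodal, write $\gamma\in N_a$ and $\gamma'\in N_b$ with $j\le a,b\le k$; since $tord(\gamma,\gamma')>\beta$, the zones $N_a$ and $N_b$ belong to a common node, so $w_a=w_b$, and primitivity forces $a=b$. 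But then $\gamma$ and $\gamma'$ lie in the same nodal zone, which is $\beta$-complete by Proposition \ref{Prop:nodal zones are finite}(1), forcing $itord(\gamma,\gamma')>\beta$, a contradiction. If both are segment arcs, write $\gamma\in S_a$ and $\gamma'\in S_b$; the case $a=b$ is excluded because segments are normally embedded by Proposition \ref{Prop:bubble inside snake}(2), so $a\ne b$. Primitivity gives $w_a\ne w_{a+1}$, so the nodal zones $N_a,N_{a+1}$ adjacent to $S_a$ belong to distinct nodes. Proposition \ref{Prop: segments with contact implies adjacent nodal zones with contact}, applied to $S=S_a$ and $S'=S_b$ (using $tord(S_a,S_b)\ge tord(\gamma,\gamma')>\beta$), then forces the unordered pair of nodes associated with $\{N_b,N_{b+1}\}$ to coincide with that associated with $\{N_a,N_{a+1}\}$. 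Since $a\ne b$, at least one of the identifications $w_b\in\{w_a,w_{a+1}\}$ or $w_{b+1}\in\{w_a,w_{a+1}\}$ furnishes two distinct positions in $[w_j\cdots w_k]$ carrying the same letter, contradicting primitivity.

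The main obstacle is the segment sub-case of the forward direction, where the abstract conclusion of Proposition \ref{Prop: segments with contact implies adjacent nodal zones with contact} must be converted into an actual repetition of a letter inside $[w_j\cdots w_k]$; one must handle separately the boundary case $b=a+1$ (in which primitivity has to be contradicted via $w_{a+2}\in\{w_a,w_{a+1}\}$) and the generic case $b>a+1$, tracking which of the four letters $w_a,w_{a+1},w_b,w_{b+1}$ coincide.
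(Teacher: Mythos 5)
Your proof is correct, and it relies on the same underlying machinery as the paper: Proposition \ref{Prop:weak LNE} (weak normal embedding), the node/segment structure of a snake, and Proposition \ref{Prop: segments with contact implies adjacent nodal zones with contact}. The organization of the backward direction differs somewhat. The paper reduces the problem to the ``both arcs nodal'' case by the replacement argument from the proof of Proposition \ref{Prop: word assoc with a snake is a snake name}, and only then observes that two distinct nodal zones with $tord>\beta$ give $w_{j'}=w_{k'}$. You instead invoke Proposition \ref{Prop: each node contains at least two nodal zones}~(1) to split cleanly into a nodal case and a segment case, and in the segment case you derive a contradiction with primitivity directly from Proposition \ref{Prop: segments with contact implies adjacent nodal zones with contact} (carefully tracking the four positions $a,a+1,b,b+1$, including the edge case $b=a+1$). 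This is more self-contained than the paper's ``same argument as Proposition~\ref{Prop: word assoc with a snake is a snake name}'' reference, and it avoids the replacement step entirely; what it costs is the explicit case bookkeeping at the end.

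One step in the forward direction is under-justified: ``Definition \ref{Def: node} yields $tord(\gamma,\gamma')>\beta$.'' Definition \ref{Def: node} only asserts $tord(N_a,N_b)>\beta$, which is a supremum and guarantees the existence of \emph{some} pair of arcs with high tangency, not that \emph{every} pair $\gamma\in N_a\cap V(T')$, $\gamma'\in N_b\cap V(T')$ does. To get the conclusion for arbitrary $\gamma,\gamma'$ you need to observe that any two arcs in a common nodal zone satisfy $itord>\beta$, hence $tord>\beta$ by weak normal embedding (Proposition \ref{Prop:weak LNE}, or Proposition \ref{Prop:bubble inside snake}~(1)), and then propagate $tord>\beta$ by the non-archimedean inequality through a witnessing pair $\theta\in N_a$, $\theta'\in N_b$. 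Since you already flag Proposition \ref{Prop:weak LNE} as the crucial tool, this is a small omission rather than a flaw in the approach, but as written the citation of Definition \ref{Def: node} alone does not carry the claim.
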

\begin{proof}
	If two of the nodal zones $N_j,\ldots , N_k$ belong to the same node then, by Proposition \ref{Prop:weak LNE}, $T'$ is not normally embedded.
	
	Conversely, if $T'$ is not normally embedded then there are arcs $\lambda,\lambda' \subset T'$ such that $tord(\lambda,\lambda')>itord(\lambda,\lambda')=\beta$. By the same argument as in the proof of Proposition \ref{Prop: word assoc with a snake is a snake name}, we can assume that $\lambda\in N_{j'}$ and $\lambda'\in N_{k'}$ where $j\le j' <k'\le k$.
Hence $N_{j'}$ and $N_{k'}$ belong to the same node, $w_{j'}=w_{k'}$ and the subword $[w_j\cdots w_k]$ is not primitive.
\end{proof}

\begin{Cor}\label{Cor: subHT is a bubble snake iff subword is semi-primitive}
	Let $T$, $W$ and $T'$ be as in Proposition \ref{Prop: subwords in snake names are NE HT}. Then $T'$ is a bubble snake if, and only if, the subword $[w_j\cdots w_k]$ of $W$ is semi-primitive.
\end{Cor}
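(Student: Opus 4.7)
I will prove the two implications separately. The forward direction rests on the rigid structure of a bubble snake given by Proposition \ref{Prop:bubble, segments, generic arcs and pancake decomp}, while the backward direction uses Proposition \ref{Prop: subwords in snake names are NE HT} as the key building block.

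For the forward direction, suppose $T' = T(\gamma'_1, \gamma'_2)$ is a bubble snake with $\gamma'_1 \in N_j$ and $\gamma'_2 \in N_k$. Definition \ref{Def:bubble} gives $tord(\gamma'_1, \gamma'_2) > itord(\gamma'_1, \gamma'_2)$, so the outer tangency order of $N_j$ and $N_k$ exceeds $\beta$ and they lie in a common node of $T$, whence $w_j = w_k$. Assume, toward a contradiction, that $[w_j \cdots w_k]$ has an additional repetition $w_{j'} = w_{k'}$ with $j \le j' < k' \le k$ and $(j', k') \ne (j, k)$; choose $\lambda_1 \in N_{j'}$ and $\lambda_2 \in N_{k'}$. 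Since $N_{j'}$ and $N_{k'}$ lie in the same node of $T$ but are disjoint open $\beta$-complete zones (Proposition \ref{Prop:nodal zones are finite}), $tord(\lambda_1, \lambda_2) > \beta = itord_T(\lambda_1, \lambda_2)$, and $itord_{T'}(\lambda_1, \lambda_2) = \beta$ because the inner tangency order can only decrease upon restriction and is bounded below by $\mu(T') = \beta$. By Proposition \ref{Prop:bubble, segments, generic arcs and pancake decomp}, $T'$ has a single segment and a single node whose only nodal zones are the two boundary ones; Proposition \ref{Prop: each node contains at least two nodal zones}(1) applied in $T'$ forces $\lambda_1$ and $\lambda_2$ to be either both nodal or both segment arcs of $T'$. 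If both are segment arcs, Proposition \ref{Prop:bubble inside snake}(2) makes $T(\lambda_1, \lambda_2)$ normally embedded with $\mu(T(\lambda_1, \lambda_2)) = tord(\lambda_1, \lambda_2) > \beta$, so any generic arc of $T(\lambda_1, \lambda_2)$ would belong simultaneously to the disjoint nodal zones $N_{j'} \ne N_{k'}$ of $T$, a contradiction. If both are nodal arcs of $T'$, they must lie in different boundary nodal zones of $T'$ (otherwise $itord_{T'}(\lambda_1, \lambda_2) > \beta$), so $itord_{T'}(\lambda_1, \gamma'_1) > \beta$ and $itord_{T'}(\lambda_2, \gamma'_2) > \beta$; since $itord_T \ge itord_{T'}$ and distinct nodal zones of $T$ are disjoint, this forces $N_{j'} = N_j$ and $N_{k'} = N_k$, i.e., $(j', k') = (j, k)$, again a contradiction.

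For the backward direction, suppose $[w_j \cdots w_k]$ is semi-primitive, so $w_j = w_k$ and the letters $w_j, w_{j+1}, \ldots, w_{k-1}$ are pairwise distinct. As before, $w_j = w_k$ gives $tord(\gamma'_1, \gamma'_2) > \beta = itord(\gamma'_1, \gamma'_2)$. To exhibit $T'$ as a bubble, I pick a splitting interior arc $\theta$: if $k > j+1$, choose $\theta \in N_l$ for any $j < l < k$, so the words of $T(\gamma'_1, \theta)$ and $T(\theta, \gamma'_2)$ are $[w_j \cdots w_l]$ and $[w_l \cdots w_k]$, both primitive because $w_k = w_j$ appears in $[w_j \cdots w_{k-1}]$ only at position $j$ and $l > j$; Proposition \ref{Prop: subwords in snake names are NE HT} then makes both pieces normally embedded. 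If $k = j+1$, take $\theta$ in the segment of $T$ between the adjacent nodal zones $N_j$ and $N_k$ and apply Proposition \ref{Prop:bubble inside snake}(3) to obtain the same conclusion. To see that $T'$ is moreover a snake, let $\gamma$ be a generic arc of $T'$; then $itord(\gamma'_1, \gamma) = itord(\gamma, \gamma'_2) = \beta$, so $\gamma$ lies neither in $N_j$ nor in $N_k$, and the same primitivity argument (enlarging the endpoint to an adjacent nodal zone if $\gamma$ lies in an interior segment of $T$, then using that subtriangles of normally embedded H\"older triangles are normally embedded) makes both $T(\gamma'_1, \gamma)$ and $T(\gamma, \gamma'_2)$ normally embedded; since $T(\gamma'_1, \gamma) \cup T(\gamma, \gamma'_2) = T'$ is not normally embedded, $\gamma$ is abnormal in $T'$, and $G(T') \subseteq Abn(T')$ as required.

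The main obstacle is the bookkeeping in the forward direction, namely, keeping track of inner tangency orders in the three surfaces $T$, $T'$, and the subtriangle $T(\lambda_1, \lambda_2)$, and correctly matching the boundary nodal zones of $T'$ (characterized via $itord_{T'}$ with $\gamma'_1$ and $\gamma'_2$) against the nodal zones of the ambient snake $T$ (characterized via $itord_T$). Once these translations are carried out, both contradictions reduce to the basic fact that two distinct nodal zones of $T$ are disjoint.
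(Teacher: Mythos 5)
Your argument is correct in outline but takes a noticeably different route from the paper on both implications, and there is one small soft spot worth flagging.

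For the forward implication, the paper's proof is short: from non\mbox{-}semi\mbox{-}primitivity it extracts $j\le j'<k'<k$ with $w_{j'}=w_{k'}$, notes that $N_{j'}\cap V(T')\ne\emptyset$ while $N_{k'}\subset G(T')$, and lets Proposition~\ref{Prop:bubble snake and its generic arcs} (a generic arc of one half of a bubble snake has tangency order exactly $\beta$ with the other half) supply the contradiction directly. You instead run a case analysis inside $T'$ using Proposition~\ref{Prop: each node contains at least two nodal zones}(1), Proposition~\ref{Prop:bubble, segments, generic arcs and pancake decomp}, and Proposition~\ref{Prop:bubble inside snake}, splitting into ``both segment'' and ``both nodal'' and carefully tracking $itord_T$ versus $itord_{T'}$. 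This is more work, but it is correct, and it has the advantage of making explicit exactly which structural facts about bubble snakes are being used; the paper's appeal to Proposition~\ref{Prop:bubble snake and its generic arcs} is cleaner but hides the same bookkeeping.

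For the backward implication the paper is terse (one sentence citing Proposition~\ref{Prop: segments with contact implies adjacent nodal zones with contact}), whereas you carry out the bubble construction explicitly via Proposition~\ref{Prop: subwords in snake names are NE HT} and Proposition~\ref{Prop:bubble inside snake}(3) and then verify the snake property by hand. Two remarks here. First, the ``enlarging the endpoint to an adjacent nodal zone'' step fails at the extreme segments: if $\gamma$ lies in the segment adjacent to $N_k$ (or $N_j$), enlarging $T(\gamma'_1,\gamma)$ outward lands you in $N_k$, and the resulting subword $[w_j\cdots w_k]$ is only semi-primitive, so Proposition~\ref{Prop: subwords in snake names are NE HT} does not give normal embedding. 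The conclusion is still true, but you need to rerun the argument of Proposition~\ref{Prop: subwords in snake names are NE HT} directly (any non-NE pair inside $T(\gamma'_1,\gamma)$ would sit in $N_{j'},N_{k'}$ with $j\le j'<k'\le k-1$, contradicting primitivity of $[w_j\cdots w_{k-1}]$) rather than enlarge. Second, you only establish $G(T')\subseteq Abn(T')$; Definition~\ref{Def: abnormal surface} requires equality. The reverse inclusion is what the paper calls ``obvious'' in the proof of Theorem~\ref{Teo: snake name realization}, and it does follow here (e.g.\ from weak normal embedding of $T$, Proposition~\ref{Prop:weak LNE}), but it should at least be remarked.
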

\begin{proof}
	If $T'$ is a bubble snake then $N_j$ and $N_k$ belong to the same node. If $[w_j\cdots w_k]$ is not semi-primitive then at least one of the words $[w_j\cdots w_k)$ and $(w_j\cdots w_k]$ is not primitive. If $[w_j\cdots w_k)$ is not primitive (the case when $(w_j\cdots w_k]$ is not primitive is similar) then there are entries $w_{j'}$ and $w_{k'}$ with $j\le j'<k'<k$ such that $w_{j'}=w_{k'}$. Consequently, there are nodal zones $N_{j'}$ and $N_{k'}$ of $T$ such that $tord(N_{j'},N_{k'})>\beta$. As $j\le j'<k'<k$, we have $N_{j'}\cap V(T')\ne \emptyset$ and $N_{k'}\subset G(T')$, a contradiction with Proposition \ref{Prop:bubble snake and its generic arcs}.
	
	Conversely, if $[w_j\cdots w_k]$ is semi-primitive then $N_j$ and $N_k$ are the only nodal zones of $T$ having nonempty intersection with $V(T')$ which belong to the same node. Proposition \ref{Prop: segments with contact implies adjacent nodal zones with contact} implies that $T'$ is a bubble snake.
\end{proof}

\begin{Def}\label{Def: node entry and the sets Pi's}
 \normalfont Let $W=[w_1\cdots w_m]$ be a snake name. If $w_j$ is not a node entry, for some $j=2,\ldots,m$, we define $r(j)$ so that $w_{r(j)}$ is a node entry and $w_{r(j)}=w_j$. If $w_j$ is a node entry then $r(j)=j$.
\end{Def}

\begin{Def}\label{Def: linear HT}
	\normalfont Given arcs $\gamma,\gamma'\subset \R^p$ we define the set $\Delta(\gamma,\gamma')$ as the union of straight line segments, $[\gamma(t),\gamma'(t)]$, connecting $\gamma(t)$ and $\gamma'(t)$ for any $t\ge 0$.
\end{Def}

\begin{Def}\label{Def: arcs and HT assoc with a snake name}
	\normalfont Let $W=[w_1,\ldots,w_m]$ be a snake name of length $m>2$. Consider the space $\R^{2m-1}$ with the standard basis $\mathbf{e}_1,\ldots, \mathbf{e}_{2m-1}$. Let $\alpha,\beta\in \F$, with $1\le \beta < \alpha$, and let $\delta_1,\ldots, \delta_m$ and $\sigma_1,\ldots, \sigma_{m-1}$ be arcs in $\R^{2m-1}$ (parameterized by the first coordinate, which is equivalent to the distance to the origin) such that:
	\begin{enumerate}
		\item $\delta_1(t)=t\mathbf{e}_1$;
		\item for $1<j\le m$, if $w_j$ is a node entry then $\delta_j(t) = t\mathbf{e}_1 + t^\beta \mathbf{e}_j$. Otherwise, $\delta_j(t) = \delta_{r(j)}(t) + t^\alpha \mathbf{e}_j$;
		\item for any $j=1,\ldots,m-1$, we define $\sigma_j(t)=t\mathbf{e}_1 + t^\beta \mathbf{e}_{m+j}$.
	\end{enumerate}
	Consider the $\beta$-H\"older triangles $T_j=\Delta(\delta_j,\sigma_j)\cup \Delta(\sigma_j,\delta_{j+1})$ for $j=1,\ldots,m-1$, and $T'_j=\Delta(\sigma_{j},\delta_{j+1})\cup \Delta(\delta_{j+1},\sigma_{j+1})$ for $j=1,\ldots, m-2$ (see Definition \ref{Def: linear HT}). Let $T_{W}=\bigcup_{j=1}^{m-1}T_j$. The H\"older triangle $T_W =T(\delta_1,\delta_m)$ is the \textit{$\beta$-H\"older triangle associated with the snake name} $W$. Assuming that the link of $T_W$ is oriented from $\delta_1$ to $\delta_m$, the arcs $\delta_j$ and $\sigma_k$ appear in $T_W$ in the following order $\delta_1,\sigma_1,\delta_2,\ldots,\delta_{m-1},\sigma_{m-1},\delta_m$.
\end{Def}

The following Lemma is a consequence of Definition \ref{Def: arcs and HT assoc with a snake name}.
\begin{Lem}\label{Lem: tords of deltas and sigmas}
	The arcs $\delta_1,\ldots, \delta_m, \sigma_1,\ldots, \sigma_{m-1}$ of Definition \ref{Def: arcs and HT assoc with a snake name} satisfy the following:
	\begin{enumerate}
		\item[(i)] $tord(\delta_i,\delta_{j})=\left\{
		\begin{array}{cl}
		\alpha & \text{if} \;  w_i=w_{j} \\
		\beta & otherwise
		\end{array}\right.$ for all $i\ne j$,
		\item[(ii)] $tord(\sigma_i,\delta_{j})=\beta$ for all $i$ and $j$,
		\item[(iii)] $tord(\sigma_i,\sigma_{j})=\beta$ for all $i$ and $j$ with $i\ne j$.
	\end{enumerate}
\end{Lem}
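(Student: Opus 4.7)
The plan is to prove each claim by direct computation from Definition \ref{Def: arcs and HT assoc with a snake name}, exploiting the fact that every arc is an explicit sum of orthogonal basis vectors scaled by powers of $t$. Since $\mathbf{e}_1,\ldots,\mathbf{e}_{2m-1}$ are orthonormal, the Euclidean norm of any difference splits as a sum of squares and the tangency order is read off from the leading exponent.

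First I would record the explicit formulas: for each $i$ with $w_i$ a node entry, $\delta_i(t)=t\mathbf{e}_1+t^\beta\mathbf{e}_i$ and $r(i)=i$; otherwise $\delta_i(t)=t\mathbf{e}_1+t^\beta\mathbf{e}_{r(i)}+t^\alpha\mathbf{e}_i$ with $r(i)<i$ and $w_{r(i)}=w_i$. In all cases $\delta_i(t)=t\mathbf{e}_1+t^\beta\mathbf{e}_{r(i)}+\epsilon_i t^\alpha\mathbf{e}_i$, where $\epsilon_i\in\{0,1\}$. Note also that $\sigma_i(t)=t\mathbf{e}_1+t^\beta\mathbf{e}_{m+i}$, and the basis vectors $\mathbf{e}_{m+i}$ (for $1\le i\le m-1$) lie outside the set of indices $\{1,\ldots,m\}$ appearing in any $\delta_j$.

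For (i), I would split into cases. If $w_i=w_j$ then $r(i)=r(j)$, so the contributions $t\mathbf{e}_1$ and $t^\beta\mathbf{e}_{r(i)}$ cancel in $\delta_j(t)-\delta_i(t)$, leaving only $\pm t^\alpha$ terms along the orthogonal directions $\mathbf{e}_i$ and/or $\mathbf{e}_j$; hence $|\delta_j-\delta_i|=c t^\alpha+o(t^\alpha)$ with $c>0$, giving $tord(\delta_i,\delta_j)=\alpha$. If $w_i\ne w_j$ then $r(i)\ne r(j)$, and $\delta_j(t)-\delta_i(t)$ contains the non-cancelling pair $t^\beta\mathbf{e}_{r(j)}-t^\beta\mathbf{e}_{r(i)}$ whose norm is $\sqrt{2}\,t^\beta$; the remaining $t^\alpha$-order corrections are absorbed into $o(t^\beta)$ since $\alpha>\beta$, so $tord(\delta_i,\delta_j)=\beta$.

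For (ii) and (iii), the same idea applies and is in fact simpler, because the vectors $\mathbf{e}_{m+i}$ appearing in $\sigma_i$ are disjoint from all indices appearing in any $\delta_j$ and from $\mathbf{e}_{m+j}$ for $j\ne i$. Consequently $\sigma_i-\delta_j$ and $\sigma_i-\sigma_j$ both contain two $t^\beta$-components along distinct basis directions, with no cancellation possible, so their norms have leading term of order $t^\beta$ and the tangency orders are exactly $\beta$. There is no real obstacle here; the only thing to check carefully is the disjointness of index sets, which is immediate from $m+i>m\ge r(j),j$ in the relevant ranges.
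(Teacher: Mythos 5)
Your direct coordinate computation is exactly what the paper intends: the Lemma is stated as an immediate consequence of Definition \ref{Def: arcs and HT assoc with a snake name} without a written proof, and your verification via orthonormality supplies the missing details correctly. One small bookkeeping slip is worth flagging: the unified formula $\delta_i(t)=t\mathbf{e}_1+t^\beta\mathbf{e}_{r(i)}+\epsilon_i t^\alpha\mathbf{e}_i$ does not hold when $r(i)=1$, since Definition \ref{Def: arcs and HT assoc with a snake name} sets $\delta_1(t)=t\mathbf{e}_1$ with no $t^\beta\mathbf{e}_1$ term, so for $j$ with $w_j=w_1$ one has $\delta_j(t)=t\mathbf{e}_1+t^\alpha\mathbf{e}_j$. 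This discrepancy only perturbs the coefficient of $\mathbf{e}_1$ by a $t^\beta$ correction and changes none of your conclusions: when $w_i=w_j$ the $\mathbf{e}_{r(i)}$-components in $\delta_i-\delta_j$ still cancel (both present or both absent), and in every other case a $t^\beta$-component along an orthogonal basis vector $\mathbf{e}_{r(j)}\ne\mathbf{e}_1$ or $\mathbf{e}_{m+i}$ survives and dominates, so all tangency orders come out as claimed.
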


\begin{Lem}\label{Lem: Tj is NE}
	Each $T_j$ in Definition \ref{Def: arcs and HT assoc with a snake name} is a normally embedded $\beta$-H\"older triangle.
\end{Lem}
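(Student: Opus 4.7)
The plan is to exhibit $T_j$ as the outer bi-Lipschitz image of a standard $\beta$-H\"older triangle, in two steps.

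First, I would check that each of the two linear pieces $\Delta(\delta_j,\sigma_j)$ and $\Delta(\sigma_j,\delta_{j+1})$ is on its own a normally embedded $\beta$-H\"older triangle. By Lemma \ref{Lem: tords of deltas and sigmas} (i) and (ii) the generating pairs $(\delta_j,\sigma_j)$ and $(\sigma_j,\delta_{j+1})$ have outer tangency exactly $\beta$. For any ``linear'' H\"older triangle $\Delta(\gamma,\gamma')$ between two arcs of outer tangency $\beta$, two points at the same level $t$ lie on the straight segment $[\gamma(t),\gamma'(t)]\subset\Delta(\gamma,\gamma')$, so the inner distance between them coincides with the outer distance; for points at different levels an elementary estimate using the affine parametrization $(t,s)\mapsto s\gamma(t)+(1-s)\gamma'(t)$ shows the inner and outer distances differ by at most a uniform constant. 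Hence each piece is a normally embedded $\beta$-H\"older triangle.

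Second, I would glue them. Parametrize the standard $\beta$-H\"older triangle as $T_\beta=\{(t,y):0\le y\le 2t^\beta\}$ and define $\Phi\colon T_\beta\to T_j$ by
$$\Phi(t,y)=\begin{cases}(1-y/t^\beta)\,\delta_j(t)+(y/t^\beta)\,\sigma_j(t),& 0\le y\le t^\beta,\\ (2-y/t^\beta)\,\sigma_j(t)+(y/t^\beta-1)\,\delta_{j+1}(t),& t^\beta\le y\le 2t^\beta.\end{cases}$$
This $\Phi$ is continuous, piecewise linear in $y$ at each level, and sends the boundary arcs of $T_\beta$ to $\delta_j$ and $\delta_{j+1}$. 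On each side of the fold $y=t^\beta$ the map $\Phi$ is a linear rescaling of the linear parametrization of the corresponding piece, so step one gives outer bi-Lipschitz constants there. For points $(t,y)$ and $(t',y')$ on opposite sides of the fold, I would estimate $|\Phi(t,y)-\Phi(t',y')|$ directly: the $\mathbf{e}_1$-component contributes $|t-t'|$, and the remaining coordinate directions involved in $\delta_j,\sigma_j,\delta_{j+1}$ are distinct basis vectors of $\R^{2m-1}$ (namely $\mathbf{e}_j$ or $\mathbf{e}_{r(j)}$, $\mathbf{e}_{m+j}$, and $\mathbf{e}_{j+1}$ or $\mathbf{e}_{r(j+1)}$, $\mathbf{e}_{m+j+1}$), so no cancellation is possible and the Euclidean chord distance is comparable to $|t-t'|+|y-y'|$, matching the inner metric on $T_\beta$.

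Once $\Phi$ is seen to be bi-Lipschitz for the outer metric, $T_j$ inherits from $T_\beta$ the property that its inner and outer metrics are equivalent, so $T_j$ is normally embedded; and since its boundary arcs $\delta_j,\delta_{j+1}$ have outer (and hence inner) tangency $\beta$ by Lemma \ref{Lem: tords of deltas and sigmas}(i), $T_j$ is of exponent $\beta$. The main technical obstacle is precisely the outer-metric estimate across the fold $\sigma_j$: one must rule out a metric shortcut in $\R^{2m-1}$ that would make the outer distance small while the inner distance (along the broken line $\delta_j\to\sigma_j\to\delta_{j+1}$) stays of order $t^\beta$. The orthogonality of the coordinate directions listed above is what makes this work.
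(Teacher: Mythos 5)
Your proof is correct and follows essentially the same strategy as the paper's: the key fact in both is that the two line segments $[\delta_j(t),\sigma_j(t)]$ and $[\sigma_j(t),\delta_{j+1}(t)]$ meet at $\sigma_j(t)$ at an angle bounded below uniformly in $t$, which precludes a metric shortcut across the fold. The paper concludes via the arc criterion of Remark \ref{Rem: NE HT condition}, directly verifying $tord(\gamma,\gamma')=itord(\gamma,\gamma')$ for arcs on opposite sides of $\sigma_j$, while you instead package the same estimate into an explicit outer bi-Lipschitz map $\Phi:T_\beta\to T_j$; these formulations are interchangeable. One point in favour of your version: the paper deduces the angle bound solely from $tord(\delta_j,\sigma_j)=tord(\sigma_j,\delta_{j+1})=tord(\delta_j,\delta_{j+1})=\beta$, which by itself does not rule out a degenerate angle (three side lengths of the same order $t^\beta$ may still satisfy $c=a+b$ or $c=|a-b|$). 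The orthogonality of the coordinate directions you invoke --- namely that $\mathbf{e}_{m+j}$ is distinct from $\mathbf{e}_{r(j)}$, $\mathbf{e}_{j}$, $\mathbf{e}_{r(j+1)}$, $\mathbf{e}_{j+1}$, and that $r(j)\ne r(j+1)$ because consecutive letters of a snake name differ --- is what actually forces both leg vectors at $\sigma_j(t)$ to share the $-t^\beta\mathbf{e}_{m+j}$ component while their remaining $t^\beta$-order parts are mutually orthogonal, giving $\cos\phi(t)\le 1/\sqrt{2}+o(1)$. So your argument supplies the justification the paper's terse ``thus'' is implicitly relying on. (Minor slip: $\mathbf{e}_{m+j+1}$, which you list, belongs to $\sigma_{j+1}$ and plays no role in $T_j$.)
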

\begin{proof}
	Note that, for each $j=1,\ldots, m-1$,
$$T_j=\bigcup_{t\ge 0} \big([\delta_j(t),\sigma_j(t)]\cup [\sigma_j(t),\delta_{j+1}(t)]\big)$$
where $[\delta_j(t),\sigma_j(t)]$ and $[\sigma_j(t),\delta_{j+1}(t)]$ are straight line segments with a common endpoint. As $n>1$, any consecutive letters of $W$ are distinct, thus item (i) of Lemma \ref{Lem: tords of deltas and sigmas} implies that $tord(\delta_{j},\delta_{j+1})=\beta$. Also, item (ii) of Lemma \ref{Lem: tords of deltas and sigmas} implies that $tord(\delta_{j},\sigma_{j})=\beta$ and $tord(\sigma_{j},\delta_{j+1})=\beta$. Then, the family of angles $\phi(t)$ formed by the straight line segments $[\delta_j(t),\sigma_j(t)]$ and $[\sigma_j(t),\delta_{j+1}(t)]$ is bounded from below by a positive constant.
	
	This implies that $T_j$ is normally embedded. Indeed, given two arcs $\gamma\subset \Delta(\delta_j,\sigma_j)$ and $\gamma'\subset \Delta(\sigma_j,\delta_{j+1})$, such that $\gamma(t)\in [\delta_j(t),\sigma_j(t)]$ and $\gamma'(t)\in [\sigma_j(t),\delta_{j+1}(t)]$, we have
	$|\gamma'(t)-\gamma(t)|>C\,\max(|\sigma_j(t)-\gamma(t)|,\,|\gamma'(t)-\sigma_j(t)|)$ for some constant $C>0$, thus $$itord(\gamma,\gamma')=\min(tord(\gamma,\sigma_j),\,tord(\sigma_j,\gamma'))=tord(\gamma,\gamma').$$
\end{proof}

\begin{Lem}\label{Lem: sigma HTs are NE and interior deltas are Lips non-singular}
	Each $T'_j$ in Definition \ref{Def: arcs and HT assoc with a snake name} is a normally embedded $\beta$-H\"older triangle.
\end{Lem}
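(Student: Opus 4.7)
The plan is to mirror the proof of Lemma \ref{Lem: Tj is NE} essentially verbatim, replacing the triple of arcs $\delta_j, \sigma_j, \delta_{j+1}$ by the triple $\sigma_j, \delta_{j+1}, \sigma_{j+1}$. First I would observe that
$$T'_j = \bigcup_{t\ge 0}\bigl([\sigma_j(t),\delta_{j+1}(t)] \cup [\delta_{j+1}(t),\sigma_{j+1}(t)]\bigr),$$
so $T'_j$ is the union of two families of straight line segments with the common endpoint $\delta_{j+1}(t)$.

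Next I would invoke Lemma \ref{Lem: tords of deltas and sigmas} to record the three relevant pairwise tangency orders among $\sigma_j$, $\delta_{j+1}$, $\sigma_{j+1}$. By parts (ii) and (iii) of that lemma, we have $tord(\sigma_j,\delta_{j+1}) = tord(\delta_{j+1},\sigma_{j+1}) = tord(\sigma_j,\sigma_{j+1}) = \beta$. In particular, $\sigma_j$ and $\sigma_{j+1}$ are boundary arcs of a $\beta$-H\"older triangle, so $T'_j$ is a $\beta$-H\"older triangle. Note that whether $w_{j+1}$ is a node entry or not is irrelevant here: the defining formula for $\delta_{j+1}$ only adds a $t^\alpha$-order correction to some $\delta_{r(j+1)}$, which does not change the tangency orders of $\delta_{j+1}$ with $\sigma_j$ and $\sigma_{j+1}$, since those are already $\beta<\alpha$.

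Because all three pairwise tangency orders equal $\beta$, the family of angles $\phi(t)$ formed at $\delta_{j+1}(t)$ by the segments $[\sigma_j(t),\delta_{j+1}(t)]$ and $[\delta_{j+1}(t),\sigma_{j+1}(t)]$ is bounded below by a positive constant. The hardest part — really the only point where anything has to be checked — is then to conclude normal embedding from this angle bound, exactly as in Lemma \ref{Lem: Tj is NE}: given arcs $\gamma\subset\Delta(\sigma_j,\delta_{j+1})$ and $\gamma'\subset\Delta(\delta_{j+1},\sigma_{j+1})$ with $\gamma(t)\in[\sigma_j(t),\delta_{j+1}(t)]$ and $\gamma'(t)\in[\delta_{j+1}(t),\sigma_{j+1}(t)]$, the angle bound yields a constant $C>0$ with
$$|\gamma'(t)-\gamma(t)| \ge C\max\bigl(|\delta_{j+1}(t)-\gamma(t)|,\,|\gamma'(t)-\delta_{j+1}(t)|\bigr),$$
so that
$$itord(\gamma,\gamma') = \min(tord(\gamma,\delta_{j+1}),\,tord(\delta_{j+1},\gamma')) = tord(\gamma,\gamma'),$$
and by Remark \ref{Rem: NE HT condition} this establishes that $T'_j$ is normally embedded.
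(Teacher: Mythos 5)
Your proof is correct and takes essentially the same approach as the paper, which simply states that $T'_j$ is the union of two families of straight line segments with common endpoint $\delta_{j+1}(t)$ and then asserts that "the same argument as in the proof of Lemma \ref{Lem: Tj is NE}" applies. You have spelled out that argument explicitly, including the correct invocation of parts (ii) and (iii) of Lemma \ref{Lem: tords of deltas and sigmas} to establish that all three pairwise tangency orders among $\sigma_j$, $\delta_{j+1}$, $\sigma_{j+1}$ equal $\beta$, and the correct observation that the $t^\alpha$-order correction in the definition of $\delta_{j+1}$ is irrelevant at the $\beta$ scale.
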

\begin{proof}
	Note that $T'_j = \bigcup_{0\le t} ([\sigma_{j}(t),\delta_{j+1}(t)]\cup [\delta_{j+1}(t),\sigma_{j+1}(t)])$, with $[\sigma_{j}(t),\delta_{j+1}(t)]$ and $[\delta_{j+1}(t),\sigma_{j+1}(t)]$ being straight line segments with a common endpoint. The same argument as in the proof of Lemma \ref{Lem: Tj is NE} implies that $T'_j$ is a normally embedded $\beta$-H\"older triangle.
\end{proof}

\begin{Cor}\label{Cor: HT assoc with a snake name is non-singular}
	Let $W$ be a snake name of length $m>2$, and let $T_W$ be the H\"older triangle associated with $W$ in Definition \ref{Def: arcs and HT assoc with a snake name}. Then $T_W$ is a non-singular H\"older triangle.
\end{Cor}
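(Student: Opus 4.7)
The plan is to verify the Lipschitz non-singular condition of Definition \ref{DEF: Lipschitz non-singular arc} for every interior arc $\gamma$ of $T_W$. By construction $T_W = \bigcup_{j=1}^{m-1} T_j$, where the pieces $T_j$ are normally embedded $\beta$-H\"older triangles by Lemma \ref{Lem: Tj is NE}, meeting pairwise only along the shared boundary arcs $\delta_2,\ldots,\delta_{m-1}$. In parallel, Lemma \ref{Lem: sigma HTs are NE and interior deltas are Lips non-singular} provides normally embedded $\beta$-H\"older triangles $T'_1,\ldots,T'_{m-2}$, where each $T'_j$ has boundary arcs $\sigma_j,\sigma_{j+1}$ and contains $\delta_{j+1}$ as an interior arc. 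Every interior arc of $T_W$ falls into one of two classes: either it is interior to some $T_j$ (not lying on a shared boundary), or it coincides with one of the shared boundary arcs $\delta_j$ with $1<j<m$.

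First, I would dispose of the generic case. If $\gamma$ is an interior arc of $T_W$ and $\gamma \neq \delta_j$ for all $1<j<m$, then $\gamma$ lies in a unique $T_j$ and is an interior arc of $T_j$, since distinct pieces $T_j$ and $T_k$ intersect only on the shared boundary arcs $\delta_\ell$. In particular, no other $T_k$ contains $\gamma$, so $\gamma\not\subset \overline{T_W\setminus T_j}$. Combined with the fact that $T_j$ is normally embedded, this shows that $T_j$ itself witnesses Lipschitz non-singularity of $\gamma$.

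Second, for each $\delta_j$ with $1<j<m$ I would use the triangle $T'_{j-1}$, which is normally embedded and has $\delta_j$ as an interior arc. It remains to check that $\delta_j\not\subset \overline{T_W\setminus T'_{j-1}}$. Because $T'_{j-1}=\Delta(\sigma_{j-1},\delta_j)\cup\Delta(\delta_j,\sigma_j)\subset T_{j-1}\cup T_j$, the only pieces of $T_W$ that might contribute to the closure of $T_W\setminus T'_{j-1}$ near $\delta_j$ are $\Delta(\delta_{j-1},\sigma_{j-1})\subset T_{j-1}$ and $\Delta(\sigma_j,\delta_{j+1})\subset T_j$, together with the triangles $T_k$ for $k\notin\{j-1,j\}$. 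For every $t>0$, the slice of $T'_{j-1}$ at height $t$ is the union of the two straight line segments $[\sigma_{j-1}(t),\delta_j(t)]$ and $[\delta_j(t),\sigma_j(t)]$, which already realizes a full neighborhood of $\delta_j(t)$ in the slice of $T_{j-1}\cup T_j$ at the same height; the remaining pieces listed above stay away from $\delta_j(t)$ in each slice. Consequently $\delta_j\not\subset \overline{T_W\setminus T'_{j-1}}$, and $\delta_j$ is Lipschitz non-singular.

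The main technical point, more than a real obstacle, is the last verification that a slice-neighborhood of $\delta_j(t)$ inside $T_W$ is entirely covered by $T'_{j-1}$; this has to be read off from the explicit linear parametrization in Definition \ref{Def: arcs and HT assoc with a snake name}, and is the only place where one must be careful to rule out other portions of $T_W$ accumulating onto $\delta_j$.
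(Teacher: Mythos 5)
Your proposal is correct and takes essentially the same route as the paper: use the normally embedded pancakes $T_j$ (Lemma \ref{Lem: Tj is NE}) to witness non-singularity of arcs strictly interior to a single $T_j$, and use the overlapping triangles $T'_{j-1}$ (Lemma \ref{Lem: sigma HTs are NE and interior deltas are Lips non-singular}) to witness non-singularity of the shared boundary arcs $\delta_j$. The only difference is one of detail: the paper's two-line proof leaves both verifications of the condition $\gamma\not\subset\overline{T_W\setminus T}$ from Definition \ref{DEF: Lipschitz non-singular arc} implicit, whereas you spell them out (first via the observation that the $T_j$ meet pairwise only along $\delta_2,\ldots,\delta_{m-1}$, then via the slice-by-slice inspection near $\delta_j(t)$). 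The slice argument for $\delta_j$ is stated a bit informally (``stay away from $\delta_j(t)$''); to make it airtight you would note that for any $k$ the distance from $\delta_j(t)$ to $T_k\setminus T'_{j-1}$ at height $t$ is bounded below by a positive multiple of $t^\alpha$ (when $w_k=w_j$) or $t^\beta$ (otherwise), because the relevant pieces live in distinct coordinate directions $\mathbf{e}_{m+k}$. This matches the implicit geometry the paper relies on.
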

\begin{proof}
	Since, by Lemma \ref{Lem: Tj is NE}, each $T_j$ is non-singular, it is enough to prove that $\delta_{j}$ is Lipschitz non-singular for each $j=2,\ldots,m-1$. But $\delta_{j}\in I(T'_{j-1})$, where $T'_{j-1}$ is normally embedded by Lemma \ref{Lem: sigma HTs are NE and interior deltas are Lips non-singular}.
\end{proof}

\begin{Lem}\label{Lem: HT assoc with a primitive subsnake name is NE}
	Let $W=[w_1\cdots w_m]$ and $T_W$ be as in Corollary \ref{Cor: HT assoc with a snake name is non-singular}. If $W'=[w_j\cdots w_l]$ is a primitive subword of $W$ then $T(\delta_j,\delta_l)\subset T_W$ is a normally embedded $\beta$-H\"older triangle.
\end{Lem}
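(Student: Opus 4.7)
\emph{Plan.} We argue by induction on $l-j$. The base case $l = j+1$ is Lemma \ref{Lem: Tj is NE}. For the inductive step, note that $[w_j\cdots w_{l-1}]$ is also a primitive subword of $W$, so by the induction hypothesis $T' := T(\delta_j,\delta_{l-1})$ is normally embedded. Since $T(\delta_j,\delta_l) = T' \cup T_{l-1}$ with the two normally embedded pieces meeting only in the common boundary arc $\delta_{l-1}$, it suffices to rule out arcs $\lambda_1\subset T'$ and $\lambda_2\subset T_{l-1}$ with $tord(\lambda_1,\lambda_2) > itord(\lambda_1,\lambda_2)$.

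Suppose such arcs exist and set $\mu := itord(\lambda_1,\lambda_2)$. Because $T'$ and $T_{l-1}$ are normally embedded, $tord(\lambda_k,\delta_{l-1}) = itord(\lambda_k,\delta_{l-1})$ for $k=1,2$, and Lemma \ref{Lem:beta-bubble two NE pieces}, applied to the triangle $T(\lambda_1,\lambda_2)$ with intermediate arc $\delta_{l-1}$, forces $tord(\lambda_1,\delta_{l-1}) = tord(\delta_{l-1},\lambda_2) = \mu$. The key geometric input is Lemma \ref{Lem: tords of deltas and sigmas} combined with primitivity of $W'$: every pair among the link-vertices $\delta_j,\sigma_j,\delta_{j+1},\ldots,\sigma_{l-1},\delta_l$ of $T(\delta_j,\delta_l)$ has tangency order exactly $\beta$, so after rescaling the last $2m-2$ coordinates by $t^{-\beta}$ these vertices converge, as $t\to 0$, to pairwise distinct unit vectors in $\R^{2m-2}$.

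If $\mu>\beta$, then $tord(\lambda_1,\delta_{l-1})>\beta$ forces $\lambda_1$ onto the link-segment $[\sigma_{l-2},\delta_{l-1}]$ adjacent to $\delta_{l-1}$, since a convex combination of two distinct unit vectors cannot equal a third distinct unit vector; similarly $\lambda_2\subset[\delta_{l-1},\sigma_{l-1}]$. The two edge-directions at $\delta_{l-1}$ have leading terms $t^\beta(\mathbf{e}_{m+l-2}-\mathbf{e}_{r(l-1)})$ and $t^\beta(\mathbf{e}_{m+l-1}-\mathbf{e}_{r(l-1)})$, with norms $\sqrt 2\,t^\beta$ and dot product $t^{2\beta}$; hence the angle at $\delta_{l-1}$ is asymptotically $\pi/3$, and
\[
|\lambda_1(t)-\lambda_2(t)|\ge c\max\bigl(|\lambda_1(t)-\delta_{l-1}(t)|,\,|\lambda_2(t)-\delta_{l-1}(t)|\bigr)=\Theta(t^\mu),
\]
so $tord(\lambda_1,\lambda_2)=\mu$, a contradiction. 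If $\mu=\beta$, the rescaled arcs $\lambda_k(t)/t^\beta$ have limits on the limiting piecewise-linear path; distinct limits yield $|\lambda_1(t)-\lambda_2(t)|=\Theta(t^\beta)$, giving $tord(\lambda_1,\lambda_2)=\beta$, which contradicts $tord>\mu=\beta$; and coinciding limits must equal the rescaled image of $\delta_{l-1}$ (the only shared point of the two rescaled pieces), which forces $|\lambda_k(t)-\delta_{l-1}(t)|=o(t^\beta)$, contradicting $tord(\lambda_k,\delta_{l-1})=\beta$.

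The main technical point is the uniform angle bound at $\delta_{l-1}$: it rests on Definition \ref{Def: arcs and HT assoc with a snake name} assigning each $\sigma_i$ to its own fresh coordinate $\mathbf{e}_{m+i}$, disjoint from every basis vector used by the $\delta$'s, together with primitivity of $W'$, which guarantees that the $\delta$-limits in $[j,l]$ are themselves pairwise distinct unit vectors. Without either ingredient, the convex-combination argument ruling out cancellation in the rescaled picture would fail, and the two normally embedded pieces could glue to a bubble instead of a normally embedded triangle.
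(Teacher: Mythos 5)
Your proof is correct and takes a genuinely different route from the paper's. The paper's argument is global: it constructs a single linear projection $\pi\colon\R^{2m-1}\to\R^2$ sending the basis vectors $\mathbf{e}_{r(i)}$ and $\mathbf{e}_{m+i}$ for $j\le i\le l$ to strictly interlaced heights in the second coordinate, so that $\pi$ restricted to $T(\delta_j,\delta_l)$ is an injective Lipschitz map onto a planar $\beta$-H\"older triangle; an abnormal pair of arcs in $T(\delta_j,\delta_l)$ would project to a pair in different slices of the planar model with tangency order strictly greater than $\beta$, which is impossible. Primitivity of $W'$ enters there only to make the assignment $\mathbf{e}_{r(i)}\mapsto(0,c_i)$ well-defined. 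Your argument instead inducts on $l-j$: one glues $T(\delta_j,\delta_{l-1})$ (normally embedded by the induction hypothesis) to $T_{l-1}$ along $\delta_{l-1}$, and rules out abnormal pairs across the gluing arc via the rescaling by $t^{-\beta}$, where primitivity makes the rescaled vertex arcs converge to pairwise distinct standard basis vectors, the two edges at $\delta_{l-1}$ make an angle asymptotically $\pi/3$, and a proper convex combination of two distinct basis vectors cannot equal a third. Both proofs use primitivity in the same essential way --- distinctness of the leading basis vectors --- but the paper's yields a single planar model in one step, while yours is local and inductive, pinpointing exactly where normal embedding could fail (the gluing arc) and why it does not (the uniform angle bound at $\delta_{l-1}$). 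One small point worth spelling out in your $\mu=\beta$ case: the claim that the rescaled links of $T'$ and $T_{l-1}$ meet only at $\mathbf{e}_{r(l-1)}$ relies on the observation that two line segments joining distinct standard basis vectors can intersect only at a shared endpoint.
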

\begin{proof}
	Consider constants $c_j,c_{j+1},\ldots, c_l$ and $s_j,s_{j+1},\ldots,s_{l-1}$ such that \begin{equation}\label{Inequality of constants on Lemma sec 7}
	c_j<s_j<c_{j+1}<\dots<c_{l-1}<s_{l-1}<c_l,
	\end{equation}
	and $c_i=0$ if $r(i)=1$ for some $i$ (see Definition \ref{Def: node entry and the sets Pi's}). Consider the ordered sequence of basis vectors \begin{equation*}
	\E=\{\mathbf{e}_{r(j)}, \mathbf{e}_{m+j}, \mathbf{e}_{r(j+1)}, \mathbf{e}_{m+j+1},\ldots,\mathbf{e}_{m+l-1}, \mathbf{e}_{r(l)}\},
	\end{equation*} where each vector $\mathbf{e}_{r(i)}$ is associated with the arc $\delta_i$ and each vector $\mathbf{e}_{m+i}$ is associated with the arc $\sigma_i$ (see Definition \ref{Def: arcs and HT assoc with a snake name}).
	
	We define the linear mapping $\pi \colon \R^{2m-1}\rightarrow \R^2$ given by $\pi(\mathbf{e}_1)=(1,0)$, $\pi(\mathbf{e}_{r(i)}) = (0,c_i)$, $\pi(\mathbf{e}_{m+i})=(0,s_i)$ and $\pi(\mathbf{e}_{i})=(0,0)$ if $\mathbf{e}_{i}\notin \E\cup \{\mathbf{e}_1\}$. We claim that $\pi$ maps $T(\delta_j,\delta_l)$ one-to-one to the $\beta$-H\"older triangle $T(\pi(\delta_j),\pi(\delta_l))$. Indeed, for each $i=j,\ldots,l$, we have $$\pi(\delta_i(t))  =  \pi(\delta_{r(i)}(t) + t^\alpha\mathbf{e}_i) =  t\pi(\mathbf{e}_1) + t^\beta \pi(\mathbf{e}_{r(i)}) =  (t,c_it^\beta).$$

	Similarly, $\pi(\sigma_i) = \pi(t\mathbf{e}_1 + t^\beta\mathbf{e}_{m+i})=(t,s_it^\beta)$ for each $i=j,\ldots,l-1$. Inequality (\ref{Inequality of constants on Lemma sec 7}) implies that the arcs $\pi(\delta_j), \pi(\sigma_j), \pi(\delta_{j+1}), \cdots  , \pi(\sigma_{l-1}), \pi(\delta_l)$ are ordered in $\R^2$ in the same way as $\delta_j,\sigma_j,\delta_{j+1},\ldots,\sigma_{l-1},\delta_l$ are ordered in $T(\delta_j,\delta_l)$ (see Definition \ref{Def: arcs and HT assoc with a snake name}). Then, as each H\"older triangle $\Delta(\delta_i,\sigma_i)$ and $\Delta(\sigma_i,\delta_{i+1})$ is a union of straight line segments and $\pi$ is a linear mapping, it follows that $\pi\colon T(\delta_j,\delta_l) \rightarrow T(\pi(\delta_j),\pi(\delta_l))$ is one-to-one.
	
	One can easily check that
$$tord(\pi(\delta_i),\pi(\sigma_{k}))=tord(\pi(\delta_i),\pi(\delta_{p}))=tord(\pi(\sigma_i),\pi(\sigma_{p}))=\beta$$ for all $i,k,p$ with $i\ne p$.	
	We want to prove that given two arcs $\gamma,\gamma'\subset T(\delta_j,\delta_l)$ we have $tord(\pi(\gamma),\pi(\gamma'))\ge tord(\gamma,\gamma')$. First, note that $\pi$ is Lipschitz, since it is linear. Thus, there is $K>0$ such that $|\pi(x)-\pi(y)|\le K|x-y|$ for every $x,y \in \R^{2m-1}$. Given an arc $\gamma \subset T(\delta_j,\delta_l)$, we may assume that $\gamma \subset T(\delta_i,\sigma_i)$ (if $\gamma \subset T(\sigma_i,\delta_{i+1})$ the argument is the same). Reparameterizing $\gamma$, if necessary, we can assume that $\gamma(t) \in [\delta_i(t),\sigma_i(t)]$ for any $t\ge 0$. Then, as $\delta_i$ and $\sigma_i$ are both parameterized by the first coordinate, $\gamma$ is also parameterized by the first coordinate $t$. So, since $\pi$ maps the first coordinate $t$ of $\delta_i$ and $\sigma_i$ to the first coordinate $t$ of $\pi(\delta_i)$ and $\pi(\sigma_i)$, it follows that $\pi(\gamma)$ is also parameterized by the first coordinate $t$. Hence, given two arcs $\gamma,\gamma'\subset T(\delta_j,\delta_l)$ we have $tord(\pi(\gamma),\pi(\gamma'))\ge tord(\gamma,\gamma')$, since $|\pi(\gamma(t))-\pi(\gamma'(t))|\le K |\gamma(t)-\gamma'(t)|$.

	Now we can finally prove that $T(\delta_j,\delta_l)$ is normally embedded. Suppose, by contradiction, that there are arcs $\gamma,\gamma'\subset T(\delta_j,\delta_l)$ such that $tord(\gamma,\gamma')>itord(\gamma,\gamma')$. Lemmas \ref{Lem: Tj is NE} and \ref{Lem: sigma HTs are NE and interior deltas are Lips non-singular} imply that $\gamma$ and $\gamma'$ cannot be both contained in $T_i$ or $T'_k$ for every $i=1,\ldots, m-1$ and $k=1,\ldots, m-2$ (in particular, $itord(\gamma,\gamma')=\beta$). Then, as the arcs $\pi(\delta_j),\pi(\sigma_j),\pi(\delta_{j+1}),\cdots , \pi(\sigma_{l-1}),\pi(\delta_l)$ are ordered as described above and $tord(\pi(\delta_i),\pi(\sigma_{k}))=tord(\pi(\delta_i),\pi(\delta_{p}))=tord(\pi(\sigma_i),\pi(\sigma_{p}))=\beta$ for all $i,k,p$ with $i\ne p$, we have $tord(\pi(\gamma),\pi(\gamma'))=\beta$. However, we should have $\beta=itord(\gamma,\gamma')<tord(\gamma,\gamma')\le tord(\pi(\gamma),\pi(\gamma'))=\beta$, a contradiction.
\end{proof}

\begin{Cor}\label{Cor: interior deltas of T are abnormal}
	Let $W$ and $T_W$ be as in Lemma \ref{Lem: HT assoc with a primitive subsnake name is NE}. Then $G(T_W)\subset Abn(T_W)$.
\end{Cor}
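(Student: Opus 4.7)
The strategy is to show that every $\gamma\in G(T_W)$ can be sandwiched between two node arcs $\delta_j$ and $\delta_l$ with $w_j=w_l$, in such a way that the two halves $T(\delta_j,\gamma)$ and $T(\gamma,\delta_l)$ are normally embedded H\"older triangles sharing $\gamma$ as a common boundary arc. Once such $j<l$ are found, Lemma \ref{Lem: tords of deltas and sigmas}(i) gives $tord(\delta_j,\delta_l)=\alpha$, while $itord(\delta_j,\delta_l)=\beta$ because $T(\delta_j,\delta_l)$ is a $\beta$-H\"older triangle sitting inside $T_W$; hence $T(\delta_j,\gamma)\cup T(\gamma,\delta_l)=T(\delta_j,\delta_l)$ is not normally embedded, which by Definition \ref{DEF: normal and abnormal arcs and zones} exhibits $\gamma$ as abnormal.

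To locate such $j,l$, I will invoke condition (ii) of Definition \ref{Def: rules for snakes names}. First I classify the position of a generic arc: by Lemma \ref{Lem: tords of deltas and sigmas}(i) a generic arc cannot coincide with any $\delta_i$ for which $w_i\in\{w_1,w_m\}$, and thus either (Case A) $\gamma=\delta_k$ for some $1<k<m$ with $w_k\notin\{w_1,w_m\}$, or (Case B) $\gamma$ lies strictly in the interior of some $T_k=T(\delta_k,\delta_{k+1})$ with $1\le k\le m-1$ (including $\gamma=\sigma_k$ and arbitrary interior arcs of $T_k$). I then apply condition (ii) at a position $p\in\{2,\dots,m-1\}$ adjacent to $\gamma$: take $p=k$ in Case A, and in Case B take $p=k+1$ when $k\le m-2$ or $p=k=m-1$ when $k=m-1$. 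The resulting semi-primitive subword $[w_j\cdots w_l]$ of $W$ satisfies $j\le k$ and $l\ge k+1$; by the primitivity of $[w_j\cdots w_l)$ and $(w_j\cdots w_l]$ the letters $w_j,\dots,w_k$ are pairwise distinct and so are $w_{k+1},\dots,w_l$, while $w_j=w_l$.

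In Case A, $[w_j\cdots w_k]$ and $[w_k\cdots w_l]$ are themselves primitive, so Lemma \ref{Lem: HT assoc with a primitive subsnake name is NE} directly gives that $T(\delta_j,\delta_k)=T(\delta_j,\gamma)$ and $T(\delta_k,\delta_l)=T(\gamma,\delta_l)$ are normally embedded. In Case B, when the chosen subword leaves room on both sides (namely $j<k$ and $l>k+1$), the subwords $[w_j\cdots w_{k+1}]$ and $[w_k\cdots w_l]$ are primitive, so $T(\delta_j,\delta_{k+1})$ and $T(\delta_k,\delta_l)$ are normally embedded by Lemma \ref{Lem: HT assoc with a primitive subsnake name is NE}, and they contain $T(\delta_j,\gamma)$ and $T(\gamma,\delta_l)$ as sub-H\"older-triangles; the latter inherit normal embedding.

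The principal obstacle lies in the degenerate sub-cases where the chosen semi-primitive subword is forced to share an endpoint with $\gamma$'s side, namely $k=1$ (forcing $j=1$), $k=m-1$ (forcing $l=m$), or $j=k$ in Case B: the enclosing primitive subword on one side is no longer available. In these cases I would argue directly that a union of two normally embedded triangles sharing a boundary arc $\delta_i$ is normally embedded, by returning to the coordinate description of Definition \ref{Def: arcs and HT assoc with a snake name} and showing that two distinct arcs of $T_W$ can have $tord>\beta$ only when both are node arcs $\delta_{i'}$ and $\delta_{i''}$ with $w_{i'}=w_{i''}$ (the leading $t^\beta$-term of any arc in $T_W$ determines its angular direction, and these coincide only along a shared node). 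Since the node indices appearing in the union range over a primitive segment of $W$ by construction, no such pair exists; hence every pair of distinct arcs in the union has $tord=\beta$, and the bound $itord\ge\beta$ inside a $\beta$-H\"older triangle forces $itord=tord=\beta$, giving normal embedding of the union. Combining both cases yields $\gamma\in Abn(T_W)$ for every $\gamma\in G(T_W)$, hence $G(T_W)\subset Abn(T_W)$.
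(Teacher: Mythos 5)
Your strategy---sandwich a generic arc $\gamma$ between node arcs $\delta_j,\delta_l$ with $w_j=w_l$, so that $T(\delta_j,\gamma)$ and $T(\gamma,\delta_l)$ are normally embedded, using condition $(ii)$ of Definition \ref{Def: rules for snakes names} together with Lemma \ref{Lem: HT assoc with a primitive subsnake name is NE}---is the right one and coincides with the paper's. But the case split is set up on a false premise: a generic arc of $T_W$ \emph{can} equal $\delta_i$ with $w_i\in\{w_1,w_m\}$. Genericity (Remark \ref{Rem: generic arcs of a non-singular HT}) is a condition on \emph{inner} tangency orders with $\delta_1,\delta_m$, and $itord(\delta_1,\delta_i)=itord(\delta_i,\delta_m)=\beta$ for every $1<i<m$; Lemma \ref{Lem: tords of deltas and sigmas}(i) only controls \emph{outer} tangency orders. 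So every interior $\delta_i$ is generic. This error is harmless in itself---your Case A argument works for all $1<k<m$ without the exclusion---but it signals a confusion between $tord$ and $itord$ that becomes fatal in Case B.

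The genuine gap is in the degenerate sub-case $j=k$ of Case B when $\gamma$ lies in the nodal zone $N_k$ (that is, $tord(\gamma,\delta_k)>\beta$), which is fully compatible with $\gamma\in G(T_W)\cap I(T_k)$. There you want $T(\gamma,\delta_l)$ normally embedded with $[w_k\cdots w_l]$ only semi-primitive, on the grounds that ``the node indices appearing in the union range over a primitive segment.'' They do not: for any exponent $\eta\in(\beta,\,tord(\gamma,\delta_k))$ there is an arc $\theta$ between $\gamma$ and $\delta_{k+1}$ in $T_k$ with $tord(\theta,\delta_k)=\eta$, so $\theta\in N_k\cap V(T(\gamma,\delta_l))$; since $w_k=w_l$ one then gets $tord(\theta,\delta_l)>\beta=itord(\theta,\delta_l)$, and $T(\gamma,\delta_l)$ is \emph{not} normally embedded. (Relatedly, your auxiliary coordinate claim that $tord>\beta$ can occur in $T_W$ only between the literal node arcs $\delta_{i'},\delta_{i''}$ is false: any arc in a nodal zone has $tord>\beta$ with the node arcs of the matching letter.) The fix is to choose the position $p$ in condition $(ii)$ adaptively: if $\gamma\in N_k$ for some $1<k<m$, take $p=k$, so that each half of the sandwich meets only nodal zones carrying pairwise-distinct letters; if $\gamma$ lies in no nodal zone, either $p=k$ or $p=k+1$ works. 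The paper avoids the issue differently: it first proves every $\delta_k$ ($1<k<m$) is abnormal and then attaches $\gamma$ to a neighbouring $\delta_k$, falling back on Lemma \ref{Lem: maximal abnormal zone MAIN PART} for the boundary edge case.
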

\begin{proof}
	Note that each arc $\delta_{k}$ of $T_W$ is abnormal, for $k=2,\ldots, m-1$. Indeed, since $W$ is a snake name and $1<k<m$, there is a semi-primitive subword $[w_j\cdots w_l]$ of $W$ with $j<k<l$. In particular, $[w_j\cdots w_k]$ and $[w_k\cdots w_l]$ are also primitive. Thus, Lemma \ref{Lem: HT assoc with a primitive subsnake name is NE} implies that the H\"older triangles $T(\delta_j,\delta_{k})$ and $T(\delta_k,\delta_l)$ are normally embedded. As $w_j=w_l$ we have $tord(\delta_j,\delta_l)=\alpha>\beta = itord(\delta_j,\delta_l)$. Hence, $\delta_{k}$ is abnormal.
	
	Now, consider an arc $\gamma\in G(T_W)$. Let $\gamma \subset T_{k-1}$ and assume that $k<m$. As $1<k<m$, we have $\delta_k$ abnormal. Let $\delta_j$ and $\delta_l$ be arcs such that the H\"older triangles $T(\delta_j,\delta_{k})$ and $T(\delta_k,\delta_l)$ are normally embedded and $tord(\delta_j,\delta_l)=\alpha>\beta = itord(\delta_j,\delta_l)$. If $k-1>1$ then, as $[w_{j}\cdots w_k]$ and $[w_{k-1}\cdots w_l]$ are also primitive words, Lemma \ref{Lem: HT assoc with a primitive subsnake name is NE} implies that $T(\delta_j,\gamma)$ and $T(\gamma,\delta_l)$ are normally embedded, since $T(\delta_j,\gamma)\subset T(\delta_j,\delta_k)$ and $T(\gamma,\delta_l)\subset T(\delta_{k-1},\delta_l)$. Thus, $\gamma$ is abnormal. If $k-1=1$ then $j=1$. Hence, as $\mu(T(\delta_1,\delta_k))=\beta$, Lemma \ref{Lem: maximal abnormal zone MAIN PART} implies that $\delta_k$ is contained in $\beta$-snake where $\delta_1$ is a boundary arc. So, as $itord(\gamma,\delta_1)=\beta$, by Remark \ref{Rem: abnormal arcs of a snake}, $\gamma$ is abnormal.
	
	If $k=m$ the argument to prove that $\gamma$ is abnormal is similar (regarding $\delta_{k-1}$ instead of $\delta_k$) and will be omitted.
\end{proof}

\begin{Teo}\label{Teo: snake name realization}
	Given a snake name $W$, there exists a snake $T$ such that $W=W(T)$ (see Definition \ref{Def: word associated with a snake}).
\end{Teo}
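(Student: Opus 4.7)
The strategy is to realize $W$ by the explicit H\"older triangle $T:=T_W$ furnished by Definition \ref{Def: arcs and HT assoc with a snake name}, and then to read off its associated word from the tangency data recorded in Lemma \ref{Lem: tords of deltas and sigmas}. I assume $m>2$; the case $m=2$ forces $W=[aa]$ and can be realized by any $\beta$-bubble snake (for instance the snake in Fig.~\ref{fig:three snakes}a).

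First, I would verify that $T$ is a $\beta$-snake. By Corollary \ref{Cor: HT assoc with a snake name is non-singular}, $T$ is a non-singular $\beta$-H\"older triangle; by Corollary \ref{Cor: interior deltas of T are abnormal}, $G(T)\subset Abn(T)$. To confirm the reverse inclusion (i.e.\ $T$ is an abnormal surface in the sense of Definition \ref{Def: abnormal surface}), I would observe that any non-generic interior arc $\gamma$ satisfies $itord(\gamma,\delta_1)>\beta$ or $itord(\gamma,\delta_m)>\beta$, and use the controlled tangencies of Lemma \ref{Lem: tords of deltas and sigmas} together with the normal embeddedness granted by Lemma \ref{Lem: HT assoc with a primitive subsnake name is NE} to rule out the existence of the two normally embedded triangles demanded by Definition \ref{DEF: normal and abnormal arcs and zones}. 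Thus $T$ is a $\beta$-snake by Definition \ref{Def:snake}.

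Next, I would identify the nodal zones of $T$ as $N_i:=Nod_{\delta_i}$ for $i=1,\dots,m$. Lemma \ref{Lem: tords of deltas and sigmas} gives $tord(\delta_i,\delta_j)=\alpha>\beta$ precisely when $w_i=w_j$, and all other tangency orders between named arcs equal $\beta$. Combined with the characterization of nodal zones as open $\beta$-complete zones (Proposition \ref{Prop:nodal zones are finite}), this shows the $N_i$ are pairwise disjoint, and that each $\sigma_i$ is a segment arc (being a generic arc of the normally embedded $\beta$-H\"older triangle $T_i$ from Lemma \ref{Lem: Tj is NE}). Travelling along the link of $T$ from $\delta_1$ to $\delta_m$ encounters the $N_i$ in the order $N_1,\dots,N_m$, separated by the segments $G(T_i)$. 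The nodes are then $\N_j=\bigcup_{w_i=x_j}N_i$: two nodal zones belong to the same node iff their representative $\delta_i,\delta_j$ satisfy $tord(\delta_i,\delta_j)>\beta$, iff $w_i=w_j$, which is exactly Definition \ref{Def: node}. Enumerating these nodes in the order of first appearance as one moves from $\delta_1$ towards $\delta_m$ reproduces the labeling in $W$ (see Definition \ref{Def: word associated with a snake}), and therefore $W(T)=W$.

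The main obstacle will be the claim that the nodal zones $N_1,\dots,N_m$ exhaust \emph{every} nodal zone of $T$, i.e., that there are no hidden nodal zones produced by unnamed arcs of $T$. Concretely, I must check that any arc $\gamma\subset T$ with $itord(\gamma,\gamma')>\beta$ for some second arc $\gamma'\subset T$ lies in $Nod_{\delta_i}$ for some $i$. This will be done by running the pizza/pancake analysis developed in Sections \ref{Section: Lipschitz functions on a NE Holder triangle} and \ref{Section: Snakes} against the explicit coordinate model built in Definition \ref{Def: arcs and HT assoc with a snake name}: every point of $T$ not in a horn neighborhood of some $\delta_i$ lies in a normally embedded subtriangle (by Lemmas \ref{Lem: Tj is NE}, \ref{Lem: sigma HTs are NE and interior deltas are Lips non-singular}, \ref{Lem: HT assoc with a primitive subsnake name is NE}), and such points can only contribute segment arcs. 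Once this completeness is established the combinatorial identification $W(T)=W$ is immediate.
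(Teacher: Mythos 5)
Your proposal follows exactly the paper's route: realize $W$ by the explicit model $T_W$ of Definition~\ref{Def: arcs and HT assoc with a snake name}, invoke Corollary~\ref{Cor: HT assoc with a snake name is non-singular} for non-singularity, Corollary~\ref{Cor: interior deltas of T are abnormal} for $G(T_W)\subset Abn(T_W)$, argue the reverse inclusion $Abn(T_W)\subset G(T_W)$ from the controlled tangency orders, and then read off the nodal zones $N_i=\{\gamma : itord(\gamma,\delta_i)>\beta\}$ and nodes $\N_k=\bigcup_{i\in I_k}N_i$ to conclude $W(T_W)=W$. The only difference is emphasis: what the paper dismisses as ``obvious'' (that non-generic arcs of $T_W$ are normal) and asserts without comment (that the $N_i$ exhaust the nodal zones, i.e.\ the generic arcs of each $T_j$ form segments), you correctly flag as the points requiring verification and sketch adequate reasons using Lemmas~\ref{Lem: tords of deltas and sigmas}, \ref{Lem: Tj is NE}, \ref{Lem: sigma HTs are NE and interior deltas are Lips non-singular} and \ref{Lem: HT assoc with a primitive subsnake name is NE}.
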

\begin{proof}
	Let $W=[w_1\ldots w_m]$ be a snake name with $n$ distinct letters. If $m=2$ then $W$ is the word associated with a bubble snake. Thus, assume that $m>2$. Let $T=T_W$ be the $\beta$-H\"older triangle associated with $W$ (see Definition \ref{Def: arcs and HT assoc with a snake name}). We claim that $T$ is a $\beta$-snake such that $W=W(T)$.
	
	Corollary \ref{Cor: HT assoc with a snake name is non-singular} implies that $T$ is a non-singular $\beta$-H\"older triangle. So, to show that $T$ is a $\beta$-snake it remains to prove that $G(T)=Abn(T)$. The inclusion $Abn(T)\subset G(T)$ is obvious, and the inverse inclusion is given by Corollary \ref{Cor: interior deltas of T are abnormal}.

	Finally, as the link of $T$ is oriented from $\delta_1$ to $\delta_m$ (see Definition \ref{Def: arcs and HT assoc with a snake name}), the $i$-th nodal zone of $T$ is $N_i=\{\gamma \in V(T) : itord(\gamma,\delta_i)>\beta\}$, for $i=1,\ldots,m$, and the $k$-th node of $T$ is $\N_k=\bigcup_{i  \in I_k}N_i$ for each $k=1,\ldots,n$ (here $I_k$ is as in Definition \ref{Def: partition assoc with a snake name}).
In particular, $T$ is a $\beta$-snake with $m$ nodal zones and $n$ nodes, such that $W=W(T)$.
\end{proof}

\begin{remark}\label{Rem: case n=1 for snake assoc with a snake name}
	\normalfont If we would consider $T_W$ as in Definition \ref{Def: arcs and HT assoc with a snake name} for $m=2$, so that $n=1$, we would obtain a H\"older triangle outer bi-Lipschitz equivalent to the H\"older triangle $T$ in Example \ref{Exam: Lipschitz singular arc}. Then $T_W$ would contain a Lipschitz singular arc and would not be a snake.
\end{remark}

\begin{remark}\label{Rem: comments about the snake TW}
	\normalfont The triangle $T_W$ in Definition \ref{Def: arcs and HT assoc with a snake name} is the simplest kind of a $\beta$-snake associated with the snake name $W$.
All segments of $T_W$ have multiplicity one, and the spectrum of each of its nodes consists of a single exponent $\alpha$.
Moreover, if we consider a pancake decomposition $\{X_k\}$ of $T$ defined in Proposition \ref{Prop:pancake decom from nodal zones},
then a minimal pizza on any pancake $X_k$, for the distance function from $X_k$ to any other pancake, has at most two pizza slices $T_i$, such that either $Q_i=\{\beta\}$ is a point and $\mu_i=\beta$ or $Q_i=[\beta,\alpha]$ and $\mu_i(q)=q$ for all $q\in Q_i$.
Note that construction in Definition \ref{Def: arcs and HT assoc with a snake name} can be slightly modified to obtain a snake with the given snake name $W$ and prescribed cluster partitions of the sets $\mathcal{S}(\N,\N')$ of its segments
(see Remark \ref{Rmk:clusters} for conditions satisfied by such partitions).
\end{remark}

\begin{remark}\label{REM: snake name ignores}
	\normalfont The snake name ignores many geometric properties of a snake, such as pizza decompositions for the distance functions on pancakes associated with its segments, and the spectra of its nodes.
\end{remark}

\subsection{Weakly bi-Lipschitz maps and weak Lipschitz equivalence}\label{Subsection: weak equivalence}
In this Subsection we consider combinatorial and geometric significance of the cluster partitions of the sets $\mathcal{S}(\N,\N')$ in Definition \ref{Def: clusters}.

\begin{Def}\label{Def:weak equivalence}
\normalfont Let $h:X\to X'$ be a homeomorphism of two $\beta$-H\"older triangles $X$ and $X'$, bi-Lipschitz with respect to the inner metrics of $X$ and $X'$. We say that $h$ is \textit{weakly outer bi-Lipschitz} when $tord(h(\gamma),h(\gamma'))>\beta$ for any two arcs $\gamma$ and $\gamma'$ of $X$ if, and only if, $tord(\gamma,\gamma')>\beta$.
If such a homeomorphism exists, we say that $X$ and $X'$ are \textit{weakly outer Lipschitz equivalent}.
\end{Def}

\begin{Teo}\label{Teo:weak equivalence}
Two $\beta$-snakes $X$ and $X'$ are weakly outer Lipschitz equivalent if, and only if, they can be oriented so that
\begin{enumerate}
\item[(i)] Their snake names are equivalent, the nodes $\N_1,\ldots,\N_n$ of $X$ are in one-to-one correspondence with the nodes
$\N'_1,\ldots,\N'_n$ of $X'$, and the nodal zones $N_1,\ldots,N_m$ of $X$ are in one-to-one correspondence with the nodal zones $N'_1,\ldots,N'_m$ of $X'$;
\item[(ii)] For any two nodes $\N_j$ and $\N_k$ of $X$, and the corresponding nodes $\N'_j$ and $\N'_k$ of $X'$,
each cluster of the cluster partition of the set $\mathcal{S}(\N'_j,\N'_k)$ (see Definition \ref{Def: clusters}) consists
of the segments of $X'$ corresponding to the segments of $X$ contained in a cluster of the cluster partition of the set $\mathcal{S}(\N_j,\N_k)$.
\end{enumerate}
\end{Teo}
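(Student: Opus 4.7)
Suppose $h \colon X \to X'$ is weakly outer bi-Lipschitz. Since $h$ is inner bi-Lipschitz, inner tangency orders are preserved exactly (as elements of $\F$), while by hypothesis the strict relation ``$tord > \beta$'' is also preserved. Hence the open $\beta$-complete nodal zones of $X$, characterized in Proposition \ref{Prop:nodal zones are finite} as $\{\gamma' : itord(\gamma,\gamma') > \beta\}$ for a nodal arc $\gamma$, are in one-to-one correspondence with those of $X'$; similarly the segments, being the maximal zones of non-nodal arcs, correspond. Because nodes are defined (Definition \ref{Def: node}) as unions of nodal zones with pairwise tangency $>\beta$, and clusters in each $\mathcal{S}(\N_j,\N_k)$ are defined (Definition \ref{Def: clusters}) via the same strict tangency relation on segments, both the node partition and every cluster partition transfer to $X'$. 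Orienting $X'$ compatibly with $h$, this gives the matching snake names and cluster data asserted in (i) and (ii).

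\textbf{Sufficiency.} Conversely, assume (i) and (ii), with $X$ and $X'$ oriented compatibly. I first treat the case in which $X$ is neither a bubble nor a spiral snake, so Proposition \ref{Prop:pancake decom from nodal zones} applies. Pick an arc $\delta_i \in N_i$ for each nodal zone of $X$, and an arc $\delta'_i \in N'_i$ for each nodal zone of $X'$. This produces pancake decompositions $X = \bigcup_{i=1}^{m-1} X_i$ with $X_i = T(\delta_{i-1},\delta_i)$, and analogously for $X'$. Each $X_i$ and $X'_i$ is a normally embedded $\beta$-H\"older triangle, hence inner bi-Lipschitz equivalent to the standard $T_\beta$; composing such trivializations yields inner bi-Lipschitz homeomorphisms $h_i \colon X_i \to X'_i$ sending $\delta_{i-1} \mapsto \delta'_{i-1}$ and $\delta_i \mapsto \delta'_i$. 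Gluing the $h_i$ along the shared boundary arcs $\delta_i$ produces an inner bi-Lipschitz homeomorphism $h \colon X \to X'$. The bubble- and spiral-snake cases are handled by direct construction: a bubble snake has a unique segment and a single node with two nodal zones (Proposition \ref{Prop:bubble, segments, generic arcs and pancake decomp}), and for a spiral snake the single node and the cluster containing all segments already determine the full outer structure by Proposition \ref{Prop: spiral snake and its segments}.

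\textbf{Verification of the weak outer condition.} It remains to show, for any two arcs $\gamma,\gamma'\in V(X)$, that $tord(h(\gamma),h(\gamma')) > \beta$ iff $tord(\gamma,\gamma') > \beta$. If $\gamma,\gamma' \in X_i$ lie in a single pancake, then $tord = itord$ there (since $X_i$ is normally embedded), so the inner bi-Lipschitz $h_i$ preserves strict inequality with $\beta$. If $\gamma \in X_i$ and $\gamma' \in X_j$ with $i\ne j$, then by Propositions \ref{Prop:bubble inside snake} and \ref{Prop: HT in consecutive segments} the only way to have $tord(\gamma,\gamma') > \beta$ is that either (a) $\gamma$ and $\gamma'$ lie in nodal zones of the same node, or (b) $\gamma$ and $\gamma'$ lie in segments belonging to the same cluster of some $\mathcal{S}(\N_p,\N_q)$. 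Case (a) is preserved by the snake-name correspondence (i), while case (b) is preserved by the cluster correspondence (ii); in particular, Proposition \ref{Prop: segments with contact implies adjacent nodal zones with contact} ensures that cluster membership is compatible with the node assignment. Hence the weak outer condition holds for $h$.

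\textbf{Main obstacle.} The delicate point is the cross-pancake analysis in case (b): one must check that the maps $h_i$ on individual pancakes can be chosen compatibly enough that arcs belonging to segments in the \emph{same} cluster of $X$ are sent to arcs in the correspondingly matched cluster of $X'$. This requires refining each $h_i$ via Proposition \ref{Prop:two triangles} (or its inner analogue) so that the multiplicity-structure within the segment is respected: because each segment is a constant zone and clusters within $\mathcal{S}(\N_p,\N_q)$ have sizes equal to the common multiplicity of their segments (Definition \ref{Def: clusters}), the cluster partition of $X$ together with the matching cluster partition of $X'$ provides exactly the combinatorial data needed to produce compatible refinements, and this is where conditions (i) and (ii) are both used essentially.
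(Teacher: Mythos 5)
Your necessity argument is fine and matches the paper's. For sufficiency, however, there is a genuine gap, and you partially recognize it yourself in the ``Main obstacle'' paragraph — but you do not resolve it, which means the proof is incomplete precisely at its hardest step. After picking arcs $\delta_i\in N_i$ and $\delta'_i\in N'_i$ and forming the pancake decompositions, you construct $h_i\colon X_i\to X'_i$ by ``composing trivializations'' and then glue. Such an $h$ is inner bi-Lipschitz, but an arbitrary inner bi-Lipschitz $h_i$ says nothing about outer tangency orders between $h_i(\gamma)$ and $h_j(\gamma')$ for $\gamma\in X_i$, $\gamma'\in X_j$ lying in segments of the same cluster. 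The cluster condition (ii) only guarantees that the \emph{segments} $S'_i$ and $S'_j$ lie in the same cluster of $X'$; it does not guarantee that the \emph{particular arcs} $h_i(\gamma)$ and $h_j(\gamma')$ satisfy $tord>\beta$. (Segments are closed perfect $\beta$-zones, not $\beta$-complete, so two arbitrary arcs in them generally have $tord=\beta$; Corollary \ref{Cor:relative multiplicity pancake and segment} only gives an existence statement — for each arc in $S'_i$ there exists a partner in $S'_j$ with $tord>\beta$ — not that any given pair works.) Citing Proposition \ref{Prop: segments with contact implies adjacent nodal zones with contact} in your case (b) is insufficient for the same reason: it provides a necessary condition for two segments to be in the same cluster, not a verification that your glued $h$ preserves the strict tangency relation between arcs in them.

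The missing idea is the coordinated construction of the $h_i$: choose one reference pancake per cluster in $\mathcal{S}(\N_j,\N_k)$, define the bi-Lipschitz map freely on that reference pancake only, and then \emph{force} the map on every other pancake $X_k$ in the same cluster as the composition $h'_{jk}\circ h_j\circ h_{kj}$, where $h_{kj}\colon X_k\to X_j$ and $h'_{jk}\colon X'_j\to X'_k$ are obtained from Proposition \ref{Prop:two triangles} and satisfy $tord(\gamma,h_{kj}(\gamma))>\beta$ and $tord(\gamma',h'_{jk}(\gamma'))>\beta$ for all arcs. It is precisely this pipeline, together with the non-archimedean property and normal embeddedness of pancakes, that makes the strict tangency condition propagate across pancakes in the same cluster. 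You gesture at ``refining each $h_i$ via Proposition \ref{Prop:two triangles}'' but do not say how, and your verification step incorrectly treats the cluster correspondence as if it already implied the tangency condition at the level of individual arcs. Note also that Proposition \ref{Prop:two triangles} gives an \emph{outer} bi-Lipschitz map with controlled tangency, which is what is needed here, not its inner analogue — the inner analogue would not give the $tord(\gamma,h_{kj}(\gamma))>\beta$ control that the argument relies on. Finally, for the bubble and spiral cases you again cite the relevant propositions but do not carry out the construction; the paper's fix (inserting auxiliary arcs $\lambda_j$, $\lambda'_j$ in the segments with pairwise $tord>\beta$) is needed because the segments in those cases are not themselves normally embedded, so Proposition \ref{Prop:pancake decom from nodal zones} does not apply directly.
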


\begin{proof}
It follows from the definition \ref{Def:weak equivalence} that a weakly outer bi-Lipschitz homeomorphism $h:X\to X'$ defines equivalence of the snake names $W=W(X)$ and $W'=W(X')$, and identifies cluster partitions of the sets $\mathcal{S}(N_j,N_k)$ and $\mathcal{S}(N'_j,N'_k)$ for any $j$ and $k$.
Thus we have to prove that conditions (i) and (ii) of Theorem \ref{Teo:weak equivalence} imply weak outer Lipschitz equivalence of the snakes $X$ and $X'$.

Let us assume first that $X$ and $X'$ are not bubble or spiral snakes, so any segment of each of them has two adjacent nodal zones
in two distinct nodes.
Since the snake names $W$ and $W'$ are equivalent, each nodal zone $N_j$ of $X$ corresponds to
the $j$-th entry $w_j$ of $W$ and each nodal zone $N'_j$ of $X'$ corresponds to the $j$-th entry $w'_j$ of $X'$.
Also, nodal zones $N_j$ and $N_k$ of $X$ (resp., $N'_j$ and $N'_k$ of $X'$) belong to the same node if, and only if, $w_j=w_k$ (resp., $w'_j=w'_k$).
Selecting an arc $\gamma_j$ in each nodal zone $N_j$ of $X$ (a boundary arc if $N_j$ is a boundary nodal zone of $X$) and an arc $\gamma'_j$ in each nodal zone $N'_j$ of $X'$ (a boundary arc if $N'_j$ is a boundary nodal zone of $X'$) we obtain, according to Proposition
\ref{Prop:pancake decom from nodal zones}, pancake decompositions of $X$ and $X'$, such that each pancake $X_j=T(\gamma_j,\gamma_{j+1})$ of $X$ (resp., pancake $X'_j=T(\gamma'_j,\gamma'_{j+1})$ of $X'$) is a $\beta$-H\"older triangle corresponding to a segment of $X$ with adjacent nodal zones $N_j$ and $N_{j+1}$ (resp., to a segment of $X'$ with adjacent nodal zones $N'_j$ and $N'_{j+1}$).

We construct a weakly outer bi-Lipschitz homeomorphism $h:X\to X'$ as follows.

First, we define $h$ on each arc $\gamma_j$ as the map $\gamma_j\to\gamma'_j$ consistent with the parameterisations of both arcs by the distance to the origin. Next, for each nodes $\N$ and $\N'$ of $X$, if the set $\mathcal{S}=\mathcal{S}(\N,\N')$ is not empty, we choose one pancake $X_j=T(\gamma_j,\gamma_{j+1})$ corresponding to a segment from each cluster of the cluster partition of $\mathcal{S}$, and define a bi-Lipschitz homeomorphism $h_j:X_j\to X'_j$ consistent with the previously defined mappings for the arcs $\gamma_j$ and $\gamma_{j+1}$.
Finally, for any cluster of $\mathcal{S}$ containing a segment with the homeomorphism $h=h_j$ defined on the corresponding pancake $X_j$,
if there is another segment in that cluster, we define $h$ on the pancake $X_k$ corresponding to that segment as follows.
Since pancakes $X_j$ and $X_k$ correspond to segments in the same cluster, pancakes $X'_j$ and $X'_k$ also correspond to segments in the same cluster.
It follows from Proposition \ref{Prop:two triangles} that there
is a bi-Lipschitz homeomorphism $h_{kj}:X_k\to X_j$ such that $tord(\gamma,h_{kj}(\gamma))>\beta$ for each arc $\gamma\subset X_k$,
and a bi-Lipschitz homeomorphism $h'_{jk}:X'_j\to X'_k$ such that $tord(\gamma',h'_{jk}(\gamma'))>\beta$ for each arc $\gamma'\subset X'_j$.
Then $h:X_k\to X'_k$ is defined as the composition of $h_{kj},\;h_j$ and $h'_{jk}$.
This defines an outer bi-Lipschitz homeomorphism $h:X\to X'$.

If $X$ and $X'$ are either bubble snakes or spiral snakes, so their segments are not normally embedded, the above construction
should be slightly modified by adding extra arcs $\lambda_j$ in each segment of $X$ and $\lambda'_j$ in each segment of $X'$ so
that $tord(\lambda_j,\lambda_k)>\beta$ and $tord(\lambda'_j,\lambda'_k)>\beta$ for all $j$ and $k$.
\end{proof}

\begin{remark}\label{Rem:combinatorial cluster partitions} \normalfont
The sets of segments $\mathcal{S}(\N,\N')$ in Definition \ref{Def: clusters}
can be recovered from the snake name $W=W(X)$ of a snake $X$ as follows. Let $\N$ and $\N'$ be two nodes of $X$ associated with the letters $x$ and $x'$ of $W$.
Then the set $\mathcal{S}(\N,\N')$ can be identified with the set $\mathcal{S}(x,x')$ of pairs of consecutive entries $(w_j,w_{j+1})$ of $W$ such that either $(w_j,w_{j+1})=(x,x')$ or $(w_j,w_{j+1})=(x',x)$.
Accordingly, a cluster partition of the set $\mathcal{S}(\N,\N')$ in Definition \ref{Def: clusters} can be identified with a partition of $\mathcal{S}(x,x')$. Remark \ref{Rmk:clusters} implies that, if $X$ is a spiral snake, then $\N=\N'$ and the partition of $\mathcal{S}(\N,\N)$ consists of a single cluster. Also, if $w_{j-1}=w_{j+1}$ in $W(X)$ then the pairs $(w_{j-1},w_j)$ and $(w_j,w_{j+1})$ cannot belong to the same cluster of partition.
\end{remark}

\subsection{Binary snakes and their names}\label{Subsec: Binary snakes and their names}

In this subsection we consider binary snakes (see Definition \ref{Def: binary snake}). They play important role in the combinatorial classification of snakes since any snake name can be reduced to a binary one (see Definition \ref{Def: reducing snakes to binary snakes}).

\begin{Def}\label{Def: binary snake}
	\normalfont A \textit{binary snake name} is a snake name $W$ which is also a binary word (see Definition \ref{Def: primitive and binary words}). A snake $T$ is \textit{binary} if $W(T)$ is a binary snake name. Alternatively, a snake $T$ is binary if each of its nodes contains exactly two nodal zones.
\end{Def}

\begin{Def}\label{Def: reducing snakes to binary snakes}
	\normalfont Let $W$ be a snake name and $x$ a letter of $W$. If $x$ appears $p>2$ times in $W$ and $W=X_0 x X_1 x \cdots x X_{p-1} x X_p$, we replace $x$ by $p-1$ distinct new letters $x_1,\ldots,x_{p-1}$, and define the \textit{binary reduction} of $W$ with respect to $x$ as the word
	\begin{equation}\label{equa: binary reduction}
	W_x=X_0 x_1 X_1 x_1 x_2X_2x_2x_3 \cdots x_{p-2} x_{p-1} X_{p-1} x_{p-1} X_p.
	\end{equation}
	Note that the first and last entries of $x$ are replaced by a single letter each, while every other entry of $x$ is replaced by two letters.
\end{Def}

\begin{Prop}\label{Prop: reduction to binary snakes if well-defined}
	The word $W_x$ in Definition \ref{Def: reducing snakes to binary snakes} is a snake name.
\end{Prop}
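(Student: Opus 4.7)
The approach is to verify the two defining conditions of Definition \ref{Def: rules for snakes names} for $W_x$. Condition (i) is essentially immediate: each letter of $W_x$ inherited from some subword $X_j$ has exactly the same multiplicity as in $W$ and so appears at least twice, while each new letter $x_i$ ($1\le i\le p-1$) appears exactly twice, once as the last letter of the block replacing the $i$-th occurrence of $x$ and once as the first letter of the block replacing the $(i+1)$-th occurrence.

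The real work is condition (ii). I would fix an interior position $k'$ of $W_x$ and construct a semi-primitive subword of $W_x$ strictly containing $k'$. Two structural observations drive everything. First, for every $i$ the string $[x_i X_i x_i]$ is a substring of $W_x$, since the two copies of $x_i$ are separated exactly by $X_i$. Second, any semi-primitive subword $[w_{j_0}\cdots w_{l_0}]$ of $W$ whose matching letter is \emph{not} $x$ translates directly into a semi-primitive subword of $W_x$: interior distinctness forces at most one interior $x$ in $W$, and replacing that single $x$ by the block of one or two fresh letters $x_{?}$ leaves the interior still primitive, while the matching endpoints are unchanged.

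Let $k$ be the position in $W$ corresponding to $k'$ (either the original position, if $w'_{k'}$ lies in some $X_j$, or one of the positions $p_a$ of $x$ in $W$, if $w'_{k'}$ is a new letter). One checks that $k$ is interior in $W$ whenever $k'$ is interior in $W_x$. If $w'_{k'}=x_i$ is a new letter, then $w_k=x$, so any semi-primitive subword of $W$ strictly containing $k$ provided by condition (ii) applied to $W$ automatically has matching letter different from $x$, and the second structural observation translates it into the required subword of $W_x$. If $w'_{k'}$ lies in some $X_j$ and condition (ii) for $W$ yields a semi-primitive subword strictly containing $k$ with matching letter different from $x$, the same translation works.

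The main obstacle is the remaining case: $w'_{k'}$ lies in some $X_j$, but every semi-primitive subword of $W$ strictly containing $k$ has matching letter $x$, say $w_{j_0}=w_{l_0}=x$ at positions $p_a<p_b$. Here a direct translation breaks down, because the endpoints in $W_x$ would become distinct fresh letters (something like $x_{a-1}$ versus $x_{b-1}$). The resolution uses the key numerical constraint: the interior of such a subword contains the $b-a-1$ consecutive $x$-positions $p_{a+1},\ldots,p_{b-1}$, and interior distinctness allows at most one of them, forcing $b\le a+2$. Thus the interior is either $X_a$ (when $b=a+1$) or $X_a\cup\{x\}\cup X_{a+1}$ (when $b=a+2$), and in either situation the involved subwords $X_a$ (and, when relevant, $X_{a+1}$) are primitive. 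Since $w_k\ne x$, the position $k$ lies inside $X_a$ or $X_{a+1}$, and the corresponding subword $[x_a X_a x_a]$ or $[x_{a+1} X_{a+1} x_{a+1}]$ of $W_x$ is genuinely semi-primitive, because the fresh letter $x_a$ or $x_{a+1}$ does not appear anywhere inside $X_a$ or $X_{a+1}$. This subword strictly contains $k'$, completing the verification.
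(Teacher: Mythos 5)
Your proof is correct and follows essentially the same approach as the paper's: condition (i) is checked directly, and condition (ii) is established by translating semi-primitive subwords of $W$ into $W_x$, splitting on whether the matching letter is $x$ and using exactly the two structural observations the paper relies on (that $[x_k X_k x_k]$ is a subword of $W_x$, and that a semi-primitive subword of $W$ with matching letter distinct from $x$ translates directly because it contains at most one interior occurrence of $x$). The $b\le a+2$ analysis you carry out explicitly is implicit in the paper's terser phrasing, which simply notes that in the matching-letter-$x$ case the relevant block $X_k$ lies inside the primitive interior and is therefore primitive.
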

\begin{proof}
	Note that $W_x$ satisfies condition (i) of Definition \ref{Def: rules for snakes names} because each of the new letters $x_i$ in $W_x$ appears exactly twice, and each other letter appears at least twice, since $W$ is a snake name. It remains to prove that $W_x$ satisfies condition (ii) of Definition \ref{Def: rules for snakes names}.
	
	Let $w$ be an entry of $W$ other than $x$ such that there is a semi-primitive subword $[w_j\cdots w_l]$ of $W$ containing $w$, where $w_j=w_l\ne w$. If $w_j=w_l=x$ then $w$ belongs to one of the subwords $X_k$ of $W$ and $x_k X_k x_k$ is a semi-primitive subword of $W_x$ containing $w$. Otherwise $[w_j\cdots w_l]$ contains at most one entry of $x$, and replacing that entry with one or two new letters results in a semi-primitive subword of $W_x$ containing $w$.
	
	If $w=x$ then $[w_j\cdots w_l]$ does not contain other entries of $x$, and replacing $x$ with one or two new letters results
	in a semi-primitive subword of $W_x$ containing the new entries.
\end{proof}

\begin{remark}
	\normalfont The binary reduction could be geometrically interpreted as splitting a node with more than two nodal zones (see Fig.~\ref{fig:snake to generic binary snake}).
\end{remark}

\begin{figure}
	\centering
	\includegraphics[width=4.8in]{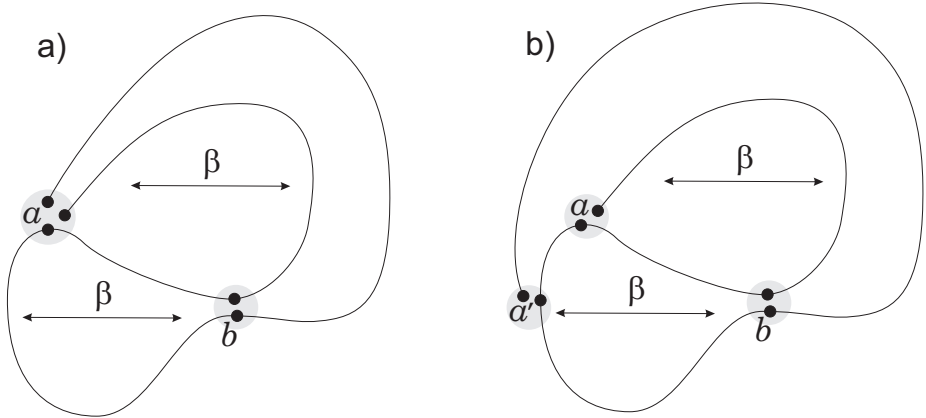}
	\caption{Reducing a non-binary snake (a) to a binary snake (b)}\label{fig:snake to generic binary snake}
\end{figure}

\begin{remark}
	\normalfont Any non-binary snake name $W$ could be reduced to a binary snake name by applying binary reduction to each letter that appears in $W$ more than twice.
If there are several such letters, the resulting binary snake name does not depend on the order of the letters to which binary reduction is applied.
\end{remark}

\subsection{Recursion for the number of binary snake names}\label{Subsec: Recursion for the number of binary snakes}

\begin{Prop}\label{Prop: aXa is semi-primitive}
	If $W=aXaZ$ is a binary snake name then $aXa$ is semi-primitive.
\end{Prop}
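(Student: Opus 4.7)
My plan is to argue by contradiction, assuming $aXa$ is not semi-primitive. Since $W$ is binary, the letter $a$ occurs exactly twice in $W$, at position $1$ and at some position $k$, so $X = [w_2 \cdots w_{k-1}]$ contains no occurrence of $a$. Hence the failure of $aXa$ to be semi-primitive is equivalent to the existence of some letter appearing twice in $X$, i.e., of an ``arch'' $[r_1, r_2]$ (meaning $w_{r_1} = w_{r_2}$) with $1 < r_1 < r_2 < k$, strictly nested inside the $a$-arch $[1, k]$.

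I will introduce the set $S$ of semi-primitive subwords $[w_i \cdots w_j]$ of $W$ satisfying $1 < i < j < k$, that is, the set of semi-primitive arches strictly nested inside the $a$-arch. I first show $S$ is nonempty: starting from any strictly nested arch, if it fails to be semi-primitive then by definition a strictly smaller strictly nested arch can be found, and iterating this finite descent terminates in an element of $S$. Then I pick $[r_1, r_2] \in S$ with $r_1$ \emph{minimal}.

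The decisive step is to apply condition $(ii)$ of Definition \ref{Def: rules for snakes names} at the interior position $r_1$ (interior because $2 \le r_1 < k \le m$), which produces a semi-primitive subword $[w_p \cdots w_q]$ with $p < r_1 < q$ and $w_p = w_q$. Since $aXa = [w_1 \cdots w_k]$ is not semi-primitive, this witness cannot be the $a$-arch, and since $a$ occurs only at positions $1$ and $k$, I may conclude $w_p \ne a$. Writing $d = w_p$ and $b = w_{r_1}$, the position $r_1$ lies in the primitive interior $[w_p \cdots w_{q-1}]$, so $b$'s second occurrence at $r_2$ cannot also lie there; together with $r_2 > r_1 > p$ this forces $r_2 \ge q$, and the boundary case $r_2 = q$ (which would yield $d = b$ and then $\{p,q\}=\{r_1,r_2\}$, hence $p = r_1$, contradicting $p < r_1$) is excluded. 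Therefore $p < r_1 < q < r_2 < k$, so $[w_p \cdots w_q]$ is a semi-primitive arch strictly nested inside the $a$-arch, i.e., it lies in $S$, and its left endpoint $p$ is strictly smaller than $r_1$, contradicting minimality.

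The only real obstacle is selecting the correct minimality: choosing $r_1$ minimal among left endpoints of elements of $S$ is precisely what forces condition $(ii)$ applied at $r_1$ to return a semi-primitive arch whose left endpoint descends strictly below $r_1$, completing the contradiction in one step. The technical subtlety is the position bookkeeping around the case $r_2 = q$, which uses binarity of $W$ in an essential way.
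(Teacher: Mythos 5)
Your argument is correct, but it is substantially longer than necessary and misses the shortcut the paper uses. The paper applies condition $(ii)$ of Definition \ref{Def: rules for snakes names} once, at position $2$: this yields a semi-primitive subword $[w_j\cdots w_l]$ with $j<2<l$, which forces $j=1$, hence $w_j=w_l=a$, and binarity then pins $l$ at the position of $a^+$, so the subword is exactly $aXa$ --- done in one step. Your version instead assumes $aXa$ is \emph{not} semi-primitive, extracts a semi-primitive arch $[r_1,r_2]$ strictly nested inside the $a$-arch (via a finite descent on nested arches), chooses one with $r_1$ minimal, applies condition $(ii)$ at $r_1$, and shows the resulting witness is a strictly nested semi-primitive arch with a strictly smaller left endpoint, contradicting minimality. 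Every step of that chain checks out (including the careful exclusion of $r_2=q$ using binarity, and the observation that $w_p\ne a$ forces $p>1$), so the proof is valid; what it buys you is nothing over the paper's, and what it costs is an extremal argument and a case analysis that the direct application of $(ii)$ at position $2$ renders unnecessary. The lesson is that condition $(ii)$ applied at position $2$ has no freedom in its left endpoint --- it must be $1$ --- and that rigidity, together with binarity fixing the right endpoint at $a^+$, is the whole proof.
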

\begin{proof}
	Since $W$ is a snake name, there is a semi-primitive subword $[w_j\cdots w_l]$ of $W$ such that $j<2<l$. Thus $j=1$ and $w_j=w_l=a$. As $W$ is binary, $aXa$ is the only option for such a subword.
\end{proof}

\begin{Def}\label{Def: left and right appearances of a letter}
	\normalfont Given a word $W$ and a letter $x$ of $W$ that appears exactly twice, we write $x^{-}$ and $x^{+}$ to denote the first and second entries of $x$ in $W$, respectively. If $W=X_1x^-X_2x^+X_3$ we write $W-\{x\}$ to denote the word $X_1X_2X_3$ representing \textit{deletion} of the letter $x$ from $W$.
\end{Def}

\begin{Lem}\label{Lem: deleting first letter}
	Let $W = abZ$ be a binary snake name, and $W' = W - \{a\}$. Then, $W'$ is a snake name if and only if $[b^-\cdots b^+]$ is a semi-primitive subword of $W$.
\end{Lem}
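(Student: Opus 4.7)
My plan is to prove both directions by comparing the subword $[b^-\cdots b^+]$ of $W$ with its counterpart in $W'$. First I would set up notation: let $k$ be the position of $a^+$ in $W$ and $l$ the position of $b^+$. A preliminary observation: condition~(ii) of Definition~\ref{Def: rules for snakes names} applied to $W$ at position~$2$ forces $[a^-\cdots a^+]=[w_1\cdots w_k]$ to be semi-primitive (the only candidate), which in turn forces $l\geq k+1$; so $a^+$ precedes $b^+$ in $W$. The key technical point is then: the interior entries of $[b^-_{W'}\cdots b^+_{W'}]$ in $W'$ differ from those of $[b^-\cdots b^+]$ in $W$ by removing exactly one entry, the letter $a$ at position $k$. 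Since $W$ is binary, $a$ occurs at most once inside $[b^-\cdots b^+]$, so this removal neither creates nor destroys a repetition, nor changes the endpoints. Hence I obtain the equivalence: $[b^-_{W'}\cdots b^+_{W'}]$ is semi-primitive in $W'$ iff $[b^-\cdots b^+]$ is semi-primitive in $W$.

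The forward direction follows quickly from this key equivalence. The degenerate case $|W'|=2$, which forces $W=[abab]$, can be handled by inspection. Otherwise $|W'|\geq 3$, and condition~(ii) applied to $W'$ at position~$2$ produces a semi-primitive subword of $W'$ strictly containing position~$2$; it must start at position~$1$ (which is $b$) and hence end at $b^+_{W'}$. So $[b^-_{W'}\cdots b^+_{W'}]$ is semi-primitive in $W'$, and the key equivalence finishes this direction.

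For the backward direction, I assume $[b^-\cdots b^+]$ is semi-primitive in $W$ and verify both conditions of Definition~\ref{Def: rules for snakes names} for $W'$. Condition~(i) is immediate since $W'$ is binary. Before checking condition~(ii), I need the absence of consecutive equal letters in $W'$ (see Remark~\ref{Rem: order of letters in words}): the only new adjacency in $W'$ is between $w_{k-1}$ and $w_{k+1}$. Using $k<l$, both of these positions lie in $[2,l]$, so if $w_{k-1}=w_{k+1}$ with common value different from $b$, we get a repetition strictly inside the semi-primitive subword $[b^-\cdots b^+]$, a contradiction; and if the common value is $b$, this forces $k=3$, $l=4$, and an argument analogous to the proof that $[ababX]$ fails condition~(ii) at position~$4$ whenever $X$ is non-empty forces $|W|=4$, reducing us to the trivial case $|W'|=2$.

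Finally, I would verify condition~(ii) for every interior position $i$ of $W'$, corresponding to some position $p$ of $W$ with $p\notin\{1,k\}$. When $2<p<l$, I would use $[b^-_{W'}\cdots b^+_{W'}]$, semi-primitive by the key equivalence, which strictly contains position~$i$. When $p\geq l$, I would apply condition~(ii) of $W$ at $p$ to obtain a semi-primitive subword $[w_j\cdots w_s]$ of $W$ with $j<p<s$; since $s>l>k$ and $a$ occurs in $W$ only at positions $1$ and $k$, the endpoint letter $w_j=w_s$ cannot equal $a$, so this subword survives passage to $W'$ (possibly after deleting the single interior entry $a^+$), remains semi-primitive by the same argument used in the key equivalence, and strictly contains $i$. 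The main obstacle I anticipate is the verification of the no-consecutive-equal-letters constraint in $W'$, specifically excluding the pathological case $W=[abab X]$ with $X\neq \varepsilon$ and carefully book-keeping the position correspondence between $W$ and $W'$ across the two removals.
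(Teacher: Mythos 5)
Your proof is correct and follows essentially the same route as the paper's: both hinge on the observation that deleting the single interior occurrence of $a$ (at position $k$, with $2<k<l$) from $[b^-\cdots b^+]$ preserves semi-primitivity in both directions, both use Proposition~\ref{Prop: aXa is semi-primitive} applied to $W'$ for the forward implication, and both verify condition~(ii) in the backward direction by splitting interior positions of $W'$ according to whether they lie inside the span of $b$ or further along. Your separate verification of the no-consecutive-repeat property and the explicit treatment of the degenerate case $W=[abab]$ are harmless but not needed, since a consecutive repeat at an interior position would already contradict the existence of a semi-primitive subword strictly containing that position, which your case analysis supplies.
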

\begin{proof} Given a letter $x$ of $W$, let $W'(x)$ be the subword of $W'$ obtained by deleting the letter $a$ from the subword $[x^-\cdots x^+]$ of $W$.

If $W'$ is a snake name then Proposition \ref{Prop: aXa is semi-primitive} applied to $W'$ (note that $b$ is the first letter of $W'$) implies that $W'(b)$ is a semi-primitive subword of $W'$. As $W'(b)$ does not contain $a$, the subword $[b^-\cdots b^+]$ of $W$,
obtained by inserting the second entry of $a$ into $W'(b)$, is also semi-primitive.
	
Conversely, suppose that $[b^-\cdots b^+]$ is a semi-primitive subword of $W$. Since $W'$ is a binary word, it satisfies condition (i) of Definition \ref{Def: rules for snakes names}, and we have only to check that condition (ii) is satisfied.
Since $[b^-\cdots b^+]$ is a semi-primitive subword of $W$, the subword $W'(b)$ of $W'$ is also semi-primitive.
Thus any entry $w\ne b$ of $W'$ contained in $W'(b)$ satisfies condition (ii) of Definition \ref{Def: rules for snakes names}.
 Let $w$ be an entry of $W'$, other than the last one, not contained in the subword $W'(a)$. Since $W$ is a snake name, there exists a semi-primitive subword $[x^-\cdots x^+]$ of $W$ containing the corresponding entry $w$ of $W$.
 Then $w\ne x\ne a$, and $W'(x)$ is a semi-primitive subword of $W'$ containing $w$.
 Since any entry of $W'$ either belongs to $W'(b)$ or does not belong to $W'(a)$, this implies that all entries of $W'$, except the first and last ones, satisfy condition (ii) of Definition \ref{Def: rules for snakes names}. Thus $W'$ is a snake name.
\end{proof}

\begin{Lem}\label{Lem: when a letter x could be removed}
	Let $W$ be a snake name where a letter $x$ appears exactly twice.
	If the subword $[x^-\cdots x^+]$ of $W$ is not semi-primitive then $W-\{x\}$ is a snake name.
\end{Lem}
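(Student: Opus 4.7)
The plan is to verify the two conditions of Definition \ref{Def: rules for snakes names} for $W' = W - \{x\}$. Condition (i) is immediate: every letter $y \ne x$ of $W$ occupies exactly the same positions in $W'$ (after reindexing) as it did in $W$, since the deletion removes only $x$-entries; so every such $y$ still appears at least twice in $W'$, as it did in $W$.

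For condition (ii), let $w'_k$ be an interior entry of $W'$ and let $w_r$ be the corresponding original entry of $W$, so $w'_k = w_r \ne x$. I first observe that $1 < r < m$: indeed if $r = 1$ then $w_1 \ne x$ survives the deletion at position $1$ in $W'$, forcing $k = 1$, contrary to interiority; symmetrically for $r = m$. Applying the snake name property of $W$ to $w_r$ gives a semi-primitive subword $[w_a \cdots w_b]$ of $W$ with $a < r < b$. The plan is then to take the image of this subword in $W'$ after removing any $x$-entry it may contain, and show it is semi-primitive and contains $w'_k$ properly in its interior.

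The key step, and the only place where the hypothesis enters, is to show that $w_a = w_b \ne x$. Since $[w_a \cdots w_b]$ is semi-primitive, $w_a = w_b$ by definition. Because $x$ appears only at positions $p$ and $q$ in $W$, the equality $w_a = w_b = x$ would force $(a,b) = (p,q)$, making $[w_a \cdots w_b] = [x^- \cdots x^+]$ semi-primitive, which contradicts the hypothesis. Hence $w_a = w_b = y$ for some $y \ne x$. Using semi-primitivity of $[w_a \cdots w_b]$, the entries $w_a, w_{a+1}, \ldots, w_{b-1}$ are pairwise distinct, so $x$ appears at most once among them; since $w_b = y \ne x$, the subword $[w_a \cdots w_b]$ contains at most one $x$-entry. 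Deleting this $x$-entry (if any) produces a contiguous subword $[w'_{a'} \cdots w'_{b'}]$ of $W'$ (deletion preserves consecutiveness), still beginning and ending with $y$, whose open prefix is a subset of a distinct set and therefore primitive; thus $[w'_{a'} \cdots w'_{b'}]$ is semi-primitive. The surviving entry $w_r$ becomes $w'_k$, and the strict inequalities $a' < k < b'$ hold because $w_r \ne w_a$ and $w_r \ne w_b$ (by distinctness of $w_a, \ldots, w_{b-1}$ and $w_a = w_b$). This completes condition (ii).

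I do not anticipate a genuine obstacle, since the argument is essentially bookkeeping: the only substantive point is the dichotomy on $w_a$, and the hypothesis was tailored precisely to rule out the bad case. The most delicate clerical matters are checking that deleting $x$-entries preserves the contiguity of the subword in $W'$, and checking that $w_r$ sits strictly between the new boundary entries; both follow from primitivity of $[w_a \cdots w_b)$ together with $w_r \ne x$.
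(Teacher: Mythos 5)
Your proof is correct and follows the same route as the paper: condition (i) is immediate, and condition (ii) is handled by taking a witnessing semi-primitive subword $[w_a\cdots w_b]$ of $W$, observing that the hypothesis forces $w_a=w_b\ne x$ (since otherwise $[w_a\cdots w_b]=[x^-\cdots x^+]$ would be semi-primitive), and then noting that deleting the at most one $x$-entry from $[w_a\cdots w_b]$ leaves a semi-primitive subword of $W-\{x\}$ still straddling the surviving entry. Your version spells out the positional bookkeeping (the inequalities $1<r<m$ and $a'<k<b'$) more explicitly than the paper does, but the key observation and the overall structure are identical.
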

\begin{proof}
	Since $W$ is a snake name, $W-\{x\}$ satisfies condition (i) of Definition \ref{Def: rules for snakes names}. Let $w\ne x$ be an entry of $W$ such that there is a semi-primitive subword $[w_j\cdots w_l]$ of $W$ containing $w$, with $w_j=w_l\ne w$. Since $[x^-\cdots x^+]$ is not semi-primitive, $w_j\ne x$, and deleting $x$ from $W$ results in a semi-primitive subword $[w_j\cdots w_l]$ of $W-\{x\}$ containing $w$. This implies that all entries of $W-\{x\}$, except the first and last ones, satisfy condition (ii) of Definition \ref{Def: rules for snakes names}. Then $W-\{x\}$ is a snake name.
\end{proof}

\begin{Prop}\label{Prop: if not the first, the second letter can be removed}
	Let $W=abZ$ be a binary snake name. If $W - \{a\}$ is not a snake name then $W-\{b\}$ is a snake name.
\end{Prop}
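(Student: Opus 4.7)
The proposition should follow almost immediately from the two preceding lemmas once we identify $[b^-\cdots b^+]$ as the subword whose semi-primitivity controls both deletions.

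My plan is to start by applying Lemma \ref{Lem: deleting first letter} in its contrapositive form. The lemma says that $W-\{a\}$ is a snake name if and only if the subword $[b^-\cdots b^+]$ of $W$ is semi-primitive (here $b$ is well-defined because $W$ is binary, so $b^-$ and $b^+$ denote its two occurrences, with $b^-$ being the first entry). Since by hypothesis $W-\{a\}$ is not a snake name, we immediately conclude that $[b^-\cdots b^+]$ is not a semi-primitive subword of $W$.

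Next I would apply Lemma \ref{Lem: when a letter x could be removed} with $x = b$. This lemma requires three inputs: $W$ is a snake name (given), the letter $b$ appears exactly twice in $W$ (true because $W$ is binary), and the subword $[b^-\cdots b^+]$ is not semi-primitive in $W$ (just established in the previous step). The conclusion of the lemma is precisely that $W-\{b\}$ is a snake name, which is what we need.

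So the proof is essentially a two-line concatenation of the cited lemmas, with no real obstacle. The only subtlety worth spelling out is the verification that $b$ genuinely appears twice in $W$ (used both for $b^+$ to make sense and for the hypothesis of Lemma \ref{Lem: when a letter x could be removed}), which is immediate from the binary hypothesis on $W$. I would write the proof as a single short paragraph making this chain explicit.
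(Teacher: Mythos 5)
Your proposal is correct and mirrors the paper's proof exactly: both deduce from Lemma \ref{Lem: deleting first letter} that $[b^-\cdots b^+]$ is not semi-primitive, then invoke Lemma \ref{Lem: when a letter x could be removed} with $x=b$ to conclude. There is nothing to add.
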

\begin{proof}
	If $W - \{a\}$ is not a snake name then, by Lemma \ref{Lem: deleting first letter}, $[b^-\cdots b^+]$ is not semi-primitive. Then, Lemma \ref{Lem: when a letter x could be removed} implies that $W-\{b\}$ is a snake name.
\end{proof}

\begin{remark}
	\normalfont We can (similarly) prove a symmetric version of Proposition \ref{Prop: if not the first, the second letter can be removed}, i.e., if $W=Xyz$ is a binary snake name and $W-\{z\}$ is not a snake name then $W-\{y\}$ is a snake name.
\end{remark}

\begin{Def}\label{Def: parameters of a binary snake name}
	\normalfont Given a binary snake name $W=aXaZ$ of length $2m>2$, we define its \textit{parameters} as the numbers $j$ and $k$ where $j$ is the position of $a^{+}$ and $w_k$ is the first entry of $W$ such that $[w_2\cdots w_k]$ is not a primitive subword. For $m>1$, we define $\W_m(j,k)$ as the set of all binary snake names of length $2m$ with parameters $j$ and $k$.
\end{Def}

\begin{remark}\label{Rem: inequality for the parameters of a BSN}
	\normalfont Note that parameter $k$ is not defined for the bubble snake name $[aa]$. The word $[abab]\in\W_2(3,4)$ is the only binary snake name of length $4$. For $m\ge 3$, the set $\W_m(j,k)$ is nonempty only when $3\le j < k$ and $5\le k \le m +2$. In particular, $[abacbc]\in\W_3(3,5)$ and $[abcabc]\in\W_3(4,5)$ are the only binary snake names of length $6$.
\end{remark}

\begin{Def}\label{Def: new snake names of types A and B}
	\normalfont
	Given a binary snake name $W=[w_1\cdots w_{2m}]\in\W_m(j,k)$, we can obtain new binary words of length $2m+2$ inserting a new letter at two positions in $W$ as follows:
	
	\begin{enumerate}
		\item[$(A)$] For $l=2,\ldots,j$, insert the first copy of a new letter $a$ to $W$ in front of $w_1$, and a second copy between $w_{l-1}$ and $w_l$.
		\item[$(B)$] For $l=k+1,\ldots 2m$, insert the first copy of a new letter $b$ to $W$ between $w_1$ and $w_2$, and a second copy between $w_{l-1}$ and $w_l$.
	\end{enumerate}
\end{Def}
	
\begin{Exam}
	\normalfont The binary snake names $[abacbc]\in\W_3(3,5)$ and $[abcabc]\in\W_3(4,5)$ can be obtained from the binary snake name $[abab]\in\W_2(3,4)$ by applying operation $(A)$ with $l=2$ and $l=3$, respectively, and renaming the letters. Applying operations $(A)$ with $l=2,3$ and $(B)$ with $l=6$ to $W=[abacbc]$ we obtain, renaming the letters, the words $[abacbdcd]\in\W_4(3,5)$, $[abcabdcd]\in\W_4(4,5)$ and $[abcadcbd]\in\W_4(4,6)$.
Applying operations $(A)$ with $l=2,3,4$ and $(B)$ with $l=6$ to $W=[abcabc]$ we obtain, renaming the letters, the words $[abacdbcd]\in\W_4(3,6)$, $[abcadbcd]\in\W_4(4,6)$, $[abcdabcd]\in\W_4(5,6)$ and $[abcdacbd]\in\W_4(5,6)$.
Note that all these words are binary snake names, and that all $7$ binary snake names of length $8$ are thus obtained
(see Propositions \ref{new words are snake names} and \ref{Prop: uniqueness from operations A and B} and Theorem \ref{Teo: recursion formula for binary snakes} below).
\end{Exam}

\begin{Prop}\label{new words are snake names}
	If $W=[w_1\cdots w_{2m}]\in\W_m(j,k)$ is a binary snake name then the words obtained from $W$ by applying any operations $(A)$ and $(B)$ in Definition \ref{Def: new snake names of types A and B} are also binary snake names.
\end{Prop}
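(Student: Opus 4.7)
The proof splits along the two conditions of Definition~\ref{Def: rules for snakes names}. Each operation produces a binary word of length $2m+2$ in which every letter of $W$ still appears exactly twice and the inserted letter appears exactly twice, so condition $(i)$ is immediate and the work lies in condition $(ii)$: at every interior position of the new word I must exhibit a semi-primitive subword containing it. Throughout, I translate semi-primitive subwords already present in $W$ to the new word; such a translation may pick up one interior occurrence of the new letter, which is harmless since the new letter does not appear anywhere in $W$.

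For operation $(A)$, the inserted letter $a$ sits at positions $1$ and $l+1$, with $l\in\{2,\ldots,j\}$. The single subword $[W'_1\cdots W'_{l+1}]=[a\,w_1\,w_2\cdots w_{l-1}\,a]$ is semi-primitive: its interior $w_1,w_2,\ldots,w_{l-1}$ consists of distinct letters, because $w_2,\ldots,w_{l-1}$ sit inside the primitive block $w_2\cdots w_{k-1}$ (using $l\le j\le k-1$), and $w_1=w_j$ together with binarity of $W$ prevents $w_1$ from reappearing before position $j$. This single subword simultaneously covers the inserted $a$ at position $l+1$ and the positions $2,\ldots,l$ carrying $w_1,\ldots,w_{l-1}$. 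For the remaining interior positions of $W'$, which carry $w_l,\ldots,w_{2m-1}$, I transport a semi-primitive subword $[w_s\cdots w_t]$ furnished by condition $(ii)$ applied to $W$; in $W'$ it may gain an interior copy of $a$ when $s\le l-1<l\le t$, but this preserves semi-primitivity.

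For operation $(B)$, the inserted letter $b$ sits at positions $2$ and $l+1$, with $l\in\{k+1,\ldots,2m\}$. The first $b$ is covered symmetrically by $[W''_1\cdots W''_{j+1}]=[w_1\,b\,w_2\cdots w_{j-1}\,w_1]$, which is semi-primitive by the same calculation and lies in positions $1,\ldots,j+1\le k\le l$, so does not swallow the second $b$. Any interior position of $W''$ other than the second $b$ comes from some interior entry $w_i$ of $W$, and any semi-primitive subword $[w_s\cdots w_t]$ of $W$ containing $w_i$ transports to a semi-primitive subword of $W''$, except possibly when $s=1$ and the subword straddles the insertion point, since then both copies of $b$ would enter its interior; but $s=1$ forces $t=j\le k-1<l$ by binarity of $W$, which rules out straddling.

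The main obstacle is covering the second $b$ at position $l+1$. The naive candidate $[W''_2\cdots W''_{l+1}]=[b\,w_2\cdots w_{l-1}\,b]$ is \emph{not} semi-primitive, because $l-1\ge k$ and by the definition of $k$ the block $w_2\cdots w_k$ is non-primitive. I bypass this by exhibiting a semi-primitive subword $[w_s\cdots w_t]$ of $W$ with $2\le s\le l-1$ and $l\le t\le 2m$; its translation to $W''$ is semi-primitive and has the second $b$ in its interior. Such a subword of $W$ is supplied by condition $(ii)$ applied to the interior entry $w_l$ of $W$ when $l\le 2m-1$, and to the interior entry $w_{2m-1}$ when $l=2m$ (which forces the right boundary to be $2m=l$). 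In either case $s\ge 2$: if $s$ were $1$, then $w_s=w_1=w_j$ would force $t=j\le k-1<l$, contradicting $t\ge l$. This completes the verification of condition $(ii)$.
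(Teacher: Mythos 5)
Your strategy matches the paper's --- cover each interior position of the new word by a hand-built semi-primitive subword or by transporting one from $W$, noting that transports remain semi-primitive because the new letter is new --- and your operation $(B)$ argument is correct, including the careful treatment of the second $b$ via a subword $[w_s\cdots w_t]$ with $2\le s<l\le t$. However, the operation $(A)$ argument has a gap. You assert that the semi-primitive subword $[a\,w_1\,w_2\cdots w_{l-1}\,a]$ simultaneously covers positions $2,\ldots,l$ and the second $a$ at position $l+1$. It covers the former, but not the latter: position $l+1$ holds the \emph{last} entry of that subword, hence a boundary entry, and condition $(ii)$ of Definition~\ref{Def: rules for snakes names} demands the strict inequalities $j<k<l$, so a boundary position is not covered. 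You supply no other witness for position $l+1$, since your transported subwords are only invoked for the positions carrying $w_l,\ldots,w_{2m-1}$, i.e.\ positions $l+2,\ldots,2m+1$.

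The repair is small. As the paper does, transport $[w_1\cdots w_j]$, which is semi-primitive in $W$ by Proposition~\ref{Prop: aXa is semi-primitive}; its translate in $W'$ occupies positions $2,\ldots,j+2$ and reads $[w_1\cdots a^+\cdots w_j]$, so the inserted $a^+$ at position $l+1$ lies strictly inside it (because $l\le j$), and the subword stays semi-primitive since $a$ is a new letter. Alternatively, the transported subword furnished by condition $(ii)$ applied to $w_l$ in $W$ also covers position $l+1$, exactly as you argue for $b^+$ in operation $(B)$. With this patch your proof is complete and otherwise essentially identical to the paper's.
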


\begin{proof}
	Let $W_A$ be the word obtained by applying operation $(A)$ in Definition \ref{Def: new snake names of types A and B} to $W$ for some $l\in\{2,\ldots,j\}$. Since $W_A$ is a binary word, condition $(i)$ of Definition \ref{Def: rules for snakes names} is satisfied.
As the first entry $a^-$ of the letter $a$ is the first letter of $W_A$, we have to check condition $(ii)$ of Definition
\ref{Def: rules for snakes names} for the second entry $a^+$ of $a$, and for any entry $w\ne a$ of $W_A$ other than its last entry.
Since $W\in\W_m(j,k)$, we have $w_1=w_j$ and $[w_1\cdots w_j]$ is a semi-primitive subword of $W$, by Proposition \ref{Prop: aXa is semi-primitive}.
Since $l\le j$, the corresponding subword $[w_1\cdots a^+\cdots w_j]$ of $W_A$ is also semi-primitive.
Since $W$ is a snake name, any entry $w\ne a$ of $W_A$, other than its last entry, corresponds to an entry of $W$ contained in some semi-primitive subword $[w_p\cdots w\cdots w_q]$ of $W$, where $w_p=w_q\ne w$. The corresponding subword $[w_p\cdots w\cdots w_q]$ of $W_A$ is also semi-primitive (it is either the same as in $W$ or contains one extra entry $a^+$).
Thus condition $(ii)$ of Definition \ref{Def: rules for snakes names} is satisfied for any entry $w\ne a$ of $W_A$. Then $W_A$ is a snake name.

Let now $W_B$ be the word obtained by applying operation $(B)$ in Definition \ref{Def: new snake names of types A and B} to $W$ for some $l\in\{k+1,\ldots,2m\}$. Since $W_B$ is a binary word, condition $(i)$ of Definition \ref{Def: rules for snakes names} is satisfied.
The first entry $b^-$ of the letter $b$ is contained in the semi-primitive subword $[w_1 b^-\cdots w_j]$ of $W_B$, and its second entry
$b^+$, inserted between the entries $w_{l-1}$ and $w_l$ of $W$, belongs to the semi-primitive subword of $W_B$ corresponding to a
semi-primitive subword $[w_p\cdots w_{l-1}\cdots w_q]$ of $W$ containing $w_{l-1}$. Note that, as $l>k>j$, we have $w_p=w_q\ne w_1$, thus the subword $[w_p\cdots b^+\cdots w_q]$ of $W_B$ cannot contain $b^-$ and remains semi-primitive. The same argument as for $W_A$ shows that condition $(ii)$ of Definition \ref{Def: rules for snakes names} is satisfied for any entry $w\ne b$ of $W_B$. Then $W_B$ is a snake name.
\end{proof}

\begin{remark}\label{Rem: uniqueness of operations A and B}
	\normalfont
Note that a word $W_B$, obtained by applying operation $(B)$ in Definition \ref{Def: new snake names of types A and B} to a binary snake name $W$, would be a binary snake name even if $l>j$ instead of $l>k$ was allowed.
However, condition $l>k$ in Definition \ref{Def: new snake names of types A and B} implies that the subword $[b^-\cdots b^+]$ of $W_B$ is not semi-primitive, thus $W_B$ cannot be obtained applying the operation $(A)$ to any binary snake name.
Similarly, the word $W_A$ cannot be obtained applying the operation $(B)$ to any binary snake name.
\end{remark}

\begin{remark}\label{Rem: deletion versus operations A and B}
	\normalfont If $W_A$ (resp., $W_B$) is obtained from a binary snake name $W$ by applying operation $(A)$ (resp., $(B)\,$) then the first (resp., second) letter of $W_A$ (resp., $W_B$) can be deleted, resulting in the original word $W$. Note that ``deletion'' operations are unique, while ``insertion'' operations are not.
\end{remark}

\begin{Prop}\label{Prop: uniqueness from operations A and B}
	Any binary snake name of length $2m+2$ could be obtained from a binary snake name of length $2m$ by applying exactly one of the operations $(A)$ and $(B)$ as in Definition \ref{Def: new snake names of types A and B}.
\end{Prop}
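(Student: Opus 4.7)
Let $W' = [w'_1 \cdots w'_{2m+2}]$ be a binary snake name with first letter $a = w'_1$ and second letter $b = w'_2$, and denote by $r, q$ the positions of $a^+, b^+$ in $W'$. The plan is to show that the dichotomy ``$[b^-\cdots b^+]$ is semi-primitive in $W'$ vs.\ not semi-primitive'' detects exactly which of the operations $(A)$ or $(B)$ produces $W'$; recall from Remark \ref{Rem: uniqueness of operations A and B} that any word produced by $(A)$ has semi-primitive $[b^-\cdots b^+]$, while any word produced by $(B)$ does not, so at most one of the two operations applies to a given $W'$, and inside each case the source $W$ and the parameter $\ell$ are forced by $W'$.

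In the semi-primitive case, the crucial preliminary step is to show $r < q$. I would apply condition $(ii)$ of Definition \ref{Def: rules for snakes names} to the entry $w'_2 = b$: since $W'$ is binary, the only candidate semi-primitive superword is $[w'_1 \cdots w'_r] = [a, b, \ldots, a]$, whose open middle must be primitive; if $r \ge q$ were true, then $b = w'_q$ would sit in that middle together with $b = w'_2$, violating primitivity. Lemma \ref{Lem: deleting first letter} then gives that $W := W' - \{a\}$ is a binary snake name of length $2m$ starting with $b$, whose parameter $j$ (the position of $b^+$ in $W$) equals $q - 2$. Setting $\ell := r - 1$, the inequalities $2 \le \ell \le j$ reduce to $3 \le r \le q - 1$ and hold; direct inspection of Definition \ref{Def: new snake names of types A and B} then confirms that operation $(A)$ with parameter $\ell$ applied to $W$ recovers $W'$.

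In the non-semi-primitive case, Lemma \ref{Lem: when a letter x could be removed} applied with $x = b$ gives that $W := W' - \{b\}$ is a binary snake name of length $2m$, still starting with $a$. I set $\ell := q - 1$ and must verify $\ell \in \{k+1, \ldots, 2m\}$, where $k$ is the second parameter of $W$. The lower bound $k \le q - 2$ is straightforward: non-semi-primitivity forces a repeated letter inside positions $3, \ldots, q-1$ of $W'$, which translates directly into a repeat within the first $q-2$ positions of $W$. The upper bound $q \le 2m + 1$ is the most delicate point and is where I expect the main obstacle; to establish it I would apply condition $(ii)$ to the entry $w'_{2m+1}$, whose only candidate semi-primitive superword must have right endpoint $w'_{2m+2}$: if $q = 2m + 2$ this superword is precisely $[b^-\cdots b^+]$, contradicting the hypothesis. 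Once both bounds on $\ell$ are in place, operation $(B)$ with parameter $\ell$ applied to $W$ reproduces $W'$; combined with the mutually exclusive dichotomy from Remark \ref{Rem: uniqueness of operations A and B}, this completes the proof.
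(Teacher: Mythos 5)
Your proof is correct and takes essentially the same route as the paper's: both split on whether $W'-\{a\}$ is a snake name (equivalently, via Lemma \ref{Lem: deleting first letter}, whether $[b^-\cdots b^+]$ is semi-primitive), delete the appropriate letter using Lemma \ref{Lem: deleting first letter} or Lemma \ref{Lem: when a letter x could be removed} (via Proposition \ref{Prop: if not the first, the second letter can be removed}), and invoke Remark \ref{Rem: uniqueness of operations A and B} for mutual exclusivity. The one thing you do beyond the paper's write-up is explicitly check that the insertion parameter $\ell$ lands in the allowed range $\{2,\ldots,j\}$ (resp.\ $\{k+1,\ldots,2m\}$), which the paper asserts implicitly; this is a welcome addition, not a different approach.
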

\begin{proof}
	Let $W=abZ$ be a binary snake name of length $2m+2$. If $W-\{a\}$ is a snake name then $W$ can be obtained from $W-\{a\}$ by applying operation $(A)$ to add back the deleted letter $a$. If $W-\{a\}$ is not a snake name then, by Proposition \ref{Prop: if not the first, the second letter can be removed}, $W-\{b\}$ is a snake name and, similarly, $W$ can be obtained from $W-\{b\}$ by applying operation $(B)$.
	
	Finally, if $W$ was obtained from a word of length $2m$ by applying operation $(A)$ (resp., $(B)\,$) then $W$ cannot be obtained from any word of length $2m$ by applying operation $(B)$ (resp., $(A)\,$) (see Remark \ref{Rem: uniqueness of operations A and B}).
\end{proof}

\begin{Teo}\label{Teo: recursion formula for binary snakes}
Let $M_m$ be the number of all binary snake names of length $2m$, and let $M_{m}(j,k)=|\W_m(j,k)|$ be the number of binary snake names of length $2m>2$ with parameters $j$ and $k$ (see Definition \ref{Def: parameters of a binary snake name}).
Then $M_1=1,\;M_2=M_2(3,4)=1$ and, for $m\ge 2$,
\begin{equation}\label{add up}
M_{m+1}(j,k) = M_{m, A}(j,k) + M_{m,B}(j,k),
\end{equation}
where
\begin{equation}\label{MA}
M_{m,A}(j,k) = \sum_{l=k-1}^{m+2}M_{m}(k-2,l)
\end{equation}
and
\begin{equation}\label{MB}
M_{m,B}(j,k) = (2m-k+1)M_{m}(j-1,k-1).
\end{equation}
Consequently,
\begin{equation}\label{M}
M_{m+1} = \sum_{3\le j < k,\;5\le k \le m+3}M_{m+1}(j,k).
\end{equation}
\end{Teo}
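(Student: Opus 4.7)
The plan is to prove the recursion by a direct combinatorial argument, using Proposition \ref{Prop: uniqueness from operations A and B} to partition binary snake names of length $2m+2$ according to which of the operations $(A)$ and $(B)$ of Definition \ref{Def: new snake names of types A and B} constructs them from a shorter binary snake name. The base cases follow from Remark \ref{Rem: inequality for the parameters of a BSN}: $[aa]$ is the only binary snake name of length $2$ and $[abab] \in \W_2(3,4)$ is the only binary snake name of length $4$, so $M_1 = 1$ and $M_2 = M_2(3,4) = 1$.

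Proposition \ref{Prop: uniqueness from operations A and B} guarantees that every binary snake name of length $2m+2$ arises in exactly one way from a binary snake name of length $2m$ by applying exactly one of operations $(A)$ or $(B)$ with a uniquely determined insertion parameter. Writing $M_{m,A}(j,k)$ and $M_{m,B}(j,k)$ for the number of binary snake names of length $2m+2$ with parameters $(j,k)$ produced by operation $(A)$ and $(B)$ respectively, identity (\ref{add up}) is immediate from this partition. Formula (\ref{M}) will then be the total count obtained by summing $M_{m+1}(j,k)$ over the admissible range of $(j,k)$ given in Remark \ref{Rem: inequality for the parameters of a BSN}. The substantive task is therefore to establish the explicit formulas (\ref{MA}) and (\ref{MB}).

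Both formulas will follow from an explicit parameter-tracking computation: if $W' \in \W_m(j',k')$ and $l$ is the insertion parameter of Definition \ref{Def: new snake names of types A and B}, I will show that operation $(A)$ produces a word with parameters $(j,k) = (l+1,\, j'+2)$, while operation $(B)$ produces $(j,k) = (j'+1,\, k'+1)$. For operation $(A)$, the position of $a^+$ in $W_A$ is directly read off as $l+1$, giving $j$. For the value of $k$, the key is Proposition \ref{Prop: aXa is semi-primitive}: the subword $[w'_1 \cdots w'_{j'}]$ of $W'$ is semi-primitive, so the first repeat encountered as one reads $[w'_1, w'_2, \ldots]$ from left to right occurs precisely at position $j'$ where $w'_{j'} = w'_1$. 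Inserting $a^+$ at a position with $l \le j'$ shifts this repeat by one position in the suffix past $a^-$, while the new letter $a^+$ itself has no match in that suffix, yielding $k = j'+2$. For operation $(B)$, an analogous position count gives $j = j'+1$ (the insertion of $b^-$ shifts $w'_{j'}$ by exactly one position), while the constraint $l > k'$ forces the first repeat in the suffix past $w'_1$ to come from the original non-primitive prefix $[w'_2 \cdots w'_{k'}]$ of $W'$ rather than from the $b^-,b^+$ pair, giving $k = k'+1$.

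Inverting these parameter maps produces the stated formulas: for operation $(A)$, the target $(j,k)$ forces $j' = k-2$ and $l = j-1$, while $k'$ ranges freely over $\{k-1,\ldots,m+2\}$, yielding (\ref{MA}); for operation $(B)$, one has $(j',k') = (j-1,\, k-1)$ with $l$ free in $\{k, \ldots, 2m\}$, producing the factor $2m - k + 1$ in (\ref{MB}). The main obstacle will be the careful verification of the parameter-tracking claim, specifically that the semi-primitive structure of $W'$ supplied by Proposition \ref{Prop: aXa is semi-primitive} is strong enough to rule out any repeat earlier than the one identified in each case, and that the admissibility ranges $l \in \{2,\ldots,j'\}$ for $(A)$ and $l \in \{k'+1,\ldots,2m\}$ for $(B)$ in Definition \ref{Def: new snake names of types A and B} translate precisely to the ranges of summation and the multiplicity appearing in (\ref{MA}) and (\ref{MB}).
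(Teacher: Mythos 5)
Your proposal is correct and follows essentially the same route as the paper's proof: both partition binary snake names of length $2m+2$ via Proposition \ref{Prop: uniqueness from operations A and B}, then invert the parameter maps $(j',k',l)\mapsto(j,k)$ induced by operations $(A)$ and $(B)$ to get \eqref{MA} and \eqref{MB}. The only difference is that you spell out the parameter-tracking computation (using Proposition \ref{Prop: aXa is semi-primitive} to locate the first repeat in the modified prefix), whereas the paper simply asserts the resulting constraints on $(j',k')$; your added detail is a legitimate elaboration of the same argument.
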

\begin{proof} Since the bubble snake name $[aa]$ is the only binary snake name of length $2$, and the word $[abab]$ is the only binary snake name of length 4, we have $M_1=1,\;M_2=M_2(3,4)=1$.
	For $m\ge 2$, Proposition \ref{Prop: uniqueness from operations A and B} implies that it is enough to count separately the binary snake names of length $2m+2$ obtained by applying operations $(A)$ and $(B)$ from the binary snake names of length $2m$.
	
	Note that $M_{m,A}(j,k)$ denotes the number of binary snake names of length $2m+2$ with parameters $j$ and $k$ obtained from binary snake names of length $2m$ by applying operation $(A)$. Each such binary snake name $W'$ of length $2m$ must have parameters $j'=k-2$ and $k' \in \{k-1, \ldots, m+2\}$. This implies (\ref{MA}).
	
	Similarly, $M_{m,B}$ denotes the number of binary snake names of length $2m+2$ with parameters $j$ and $k$ obtained from binary snake names of length $2m$ by applying operation $(B)$. Each such snake name $W'$ of length $2m$ must have parameters $j'=j-1$ and $k'=k-1$. For each of them we have $2m - k' = 2m - (k-1) = 2m - k +1$ possibilities to place the second entry of the new letter. This implies (\ref{MB}).
	
	Adding up these two numbers, we obtain the formula (\ref{add up}).
Remark \ref{Rem: inequality for the parameters of a BSN} implies (\ref{M}).
\end{proof}

\subsection{Binary snake names and standard Young tableaux}
In this subsection we assign a standard Young tableau (SYT) of shape $(m-1,\,m-1)$ to a binary snake name of length $2m$.

\begin{Def}\label{Def: Young Diagram}
	\normalfont A \textit{Young diagram}, or \textit{shape}, $\lambda=(\lambda_1,\lambda_2,\ldots)$ of size $n$, where $\lambda_1\ge\lambda_2\ge\ldots\ge 0$ and $\lambda_1+\lambda_2+\ldots=n$ (see, e.g., \cite{fulton1997young} pp.~1-2) is a collection of cells arranged in left-justified rows of lengths $\lambda_j$.
A \textit{filling} of $\lambda$ means placing positive integers in each of its cells.
A \textit{standard Young tableau} (SYT) of shape $\lambda$ is a filling of $\lambda$ with the numbers from $1$ to $n$, each of them occurring exactly once, so that the numbers in each row and each column of $\lambda$ are strictly increasing.
\end{Def}

\begin{Def}\label{Def: Young Tableau associated with a binary snake}
	\normalfont Let $W=[w_1\cdots w_{2m}]$ be a binary snake name. We assign to $W$ the following filling $\T(W)$ of shape $\lambda=(m-1,\,m-1)$: for $i=2,\ldots, 2m-1$, we place the number $i-1$ into the first empty cell of the first row of $\lambda$ if $w_i\ne w_j$ for all $j<i$, and into the first empty cell of the second row of $\lambda$ otherwise. Alternatively, $i-1$ is inserted into the first row of $\lambda$ if $w_i$ is a node entry of $W$, and into the second row otherwise.
\end{Def}

\begin{Prop}\label{Prop: Young Tableau associated with a binary snake}
	The filling $\T(W)$ assigned to a binary snake name $W=[w_1\cdots w_{2m}]$ in Definition \ref{Def: Young Tableau associated with a binary snake} is a standard Young tableau.
\end{Prop}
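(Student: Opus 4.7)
The plan is to verify the three defining properties of a standard Young tableau: that the shape $(m-1,\,m-1)$ is correctly filled with the integers $1,\ldots,2m-2$, that each row is strictly increasing, and that each column is strictly increasing. The shape count is a cell-counting argument: since $W$ is binary with $m$ distinct letters, $W$ contains $m$ node entries and $m$ non-node entries. Because $w_1$ is always a node entry, and $w_{2m}$ cannot be a node entry (otherwise its letter would appear only once in $W$), the $2m-2$ entries $w_2,\ldots,w_{2m-1}$ split exactly into $m-1$ node entries and $m-1$ non-node entries, filling the two rows. Strict row increase is immediate, because the labels $i-1$ are inserted in order of increasing $i$ into the first empty cell of whichever row applies.

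The column condition reduces to a combinatorial inequality. Let $i_k$ and $j_k$ denote, respectively, the positions of the $k$-th node entry and the $k$-th non-node entry among $w_2,\ldots,w_{2m-1}$. Column $k$ of $\T(W)$ then holds $i_k-1$ in row~1 and $j_k-1$ in row~2, so column strictness amounts to $i_k<j_k$ for every $k$. Setting $n(i)$ and $r(i)$ to be the number of node and non-node entries in $w_1,\ldots,w_i$, this is equivalent to $n(i)>r(i)$ for every $i$ with $1\le i\le 2m-1$. The weak inequality $n(i)\ge r(i)$ is automatic (each non-node entry repeats an earlier node entry), so the task is to rule out the equality case $n(i)=r(i)$ for $2\le i\le 2m-2$.

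The crux of the argument is this: if $n(i)=r(i)$ for some such $i$, then $i$ is even and every letter occurring in $w_1,\ldots,w_i$ already appears there twice. Because $W$ is binary, the set $A$ of letters in $w_1,\ldots,w_i$ is then disjoint from the set $B$ of letters in $w_{i+1},\ldots,w_{2m}$. Now apply condition~$(ii)$ of Definition \ref{Def: rules for snakes names} to the entry $w_{i+1}$ (legitimate because $i+1\le 2m-1$): there must exist a semi-primitive subword $[w_j\cdots w_l]$ of $W$ with $j\le i$ and $l\ge i+2$, and semi-primitivity forces $w_j=w_l$. But $w_j\in A$ and $w_l\in B$, contradicting $A\cap B=\emptyset$. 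This contradiction establishes strict inequality $n(i)>r(i)$ throughout the range and hence the column condition.

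The main (and only real) obstacle is step three: noticing that the column strictness inequality is equivalent to the absence of any proper "complete" prefix of $W$, and that condition~$(ii)$ of a snake name is precisely what forbids such a prefix. Once this equivalence is seen, the semi-primitive subword axiom does all the work in a single line.
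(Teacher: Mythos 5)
Your proof is correct and takes essentially the same approach as the paper. Both arguments reduce the column-strictness condition to ruling out a ``complete'' prefix $w_1\cdots w_i$ (one in which every occurring letter already appears twice), and both obtain the contradiction by applying condition~$(ii)$ of Definition~\ref{Def: rules for snakes names} to the next entry $w_{i+1}$: any semi-primitive subword witnessing that condition must span position $i+1$, forcing its equal endpoint letters to lie on opposite sides of the split, which is impossible when the prefix is complete. Your reformulation of the column condition as the ballot-type inequality $n(i)>r(i)$ is a slightly cleaner packaging of the same counting the paper carries out with $\ell$ twice-occurring letters, and the rest (shape count, row monotonicity) is routine in both.
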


\begin{proof}
	By Definition \ref{Def: Young Tableau associated with a binary snake}, each number $1,\ldots,2m-2$ appears in $\T(W)$ exactly once, and the numbers in each row are strictly increasing. To check that the numbers are increasing in columns, suppose that $W\in\W_{j,k}$ for some $j$ and $k$, and that the $\ell$-th cell of the second row of $\T(W)$ contains the number $i-1$.
This means that $w_i$ is the second entry of some letter of $W$, and that exactly $\ell$ distinct letters appear twice in the subword $[w_1\cdots w_i]$ of $W$.
Note that at least one letter of $W$ must appear only once in the subword $[w_1\cdots w_i]$
(Proposition \ref{Prop: aXa is semi-primitive} implies that $j\le i$, thus the first letter of $W$ appears twice in $[w_1\cdots w_i]$).
Otherwise $i=2\ell$ would be even, $i+1<2m$, and there will be no semi-primitive subword $[x^-\cdots w_{i+1}\cdots x^+]$ of $W$
containing $w_{i+1}$, in contradiction to $W$ being a snake name.
This implies that the subword $[w_2\cdots w_{i-1}]$ contains at least $\ell$ node entries of $W$.
Thus the number in the $\ell$-th cell of the first row of $\T(W)$ is strictly less than $i-1$.
This completes the proof.
\end{proof}

\begin{remark}
	\normalfont Note that Proposition \ref{Prop: Young Tableau associated with a binary snake} does not necessarily hold for binary words which are not snake names. For example, it is not true for the binary words $W=[aabb]$ and $W=[ababcdcd]$.
\end{remark}

\begin{remark}
	\normalfont The empty SYT of shape $(0,0)$ is assigned to the bubble snake

\smallskip
\noindent name $[aa]$, and the single SYT $\scriptstyle\begin{bmatrix} 1\\2\end{bmatrix}$ of shape $(1,1)$ is assigned to the binary snake

\smallskip
\noindent  name $[abab]$. Two SYTs $\scriptstyle\begin{bmatrix} 1&3\\2&4\end{bmatrix}$ and $\scriptstyle\begin{bmatrix} 1&2\\3&4\end{bmatrix}$ of shape $(2,2)$ are assigned to the

\smallskip
\noindent binary snake names $[abacbc]$ and $[abcabc]$, respectively. Consider next the SYT

\smallskip
\noindent $\lambda=\scriptstyle\begin{bmatrix} 1&2&4\\3&5&6\end{bmatrix}$ of shape $(3,3)$. The words $W=[abcadbcd]$ and $W'=[abcadcbd]$

\smallskip
\noindent are distinct binary snake names such that $\T(W)=\T(W')=\lambda$.
Thus the same SYT may be assigned to several binary snake names.
\end{remark}

\begin{Def}\label{Def: binary snake associated with SYT}
	\normalfont Let $\T$ be a standard Young tableau of shape $(m-1,m-1)$.
We define a binary word $W=W(\T)=[w_1\cdots w_{2m}]$ with $m$ distinct letters $x_1,\ldots , x_m$ as follows.
If $m=1$ and $\T$ is empty then $W(\T)=[x_1 x_1]$.
If $m>1$, we set $w_1=x_1$, $w_{2m}=x_m$ and, for $1<i<2m$,
	$w_i=x_{k+1}$ (resp., $w_i=x_k$) if the $k$-th cell of the first row (resp., second row) of $\T$ contains the number $i-1$.
\end{Def}

\begin{remark}\label{Remark: binary snake associated with SYT}\normalfont
If $1<i<2m$ and the $k$-th cell of the first row of $\T$ contains $i-1$, it follows from
Definition \ref{Def: binary snake associated with SYT} that the subword $[w_1\cdots w_i]$
of $W(\T)$ contains exactly $k+1$ first entries of the letters $x_1,\ldots,x_{k+1}$, with $w_i=x_{k+1}$,
and at most $k-1$ second entries of letters $x_j$ for some $j<k$.
In particular, there are at least two more first entries than second entries of the letters in $[w_1\cdots w_i]$.

If the $k$-th cell of the second row of $\T$ contains $i-1$, then the subword $[w_1\cdots w_i]$ of $W(\T)$ contains
$\ell\ge k+1$ first entries of the letters $x_1,\ldots,x_\ell$ and exactly $k$ second entries of the letters $x_1,\ldots,x_k$, with $w_i=x_k$.
In particular, there are more first entries than second entries of the letters in $[w_1\cdots w_i]$.

This implies that the first entries of all letters $x_j$ appear in $W(\T)$ in increasing order of their indices $j$.
Similarly, the second entries of all letters $x_j$ appear in $W(\T)$ in increasing order of their indices $j$.
\end{remark}

\begin{Def}\label{Def: inversion on binary snake name}
	\normalfont An \textit{inversion} in a binary word $W$ is a pair of distinct letters $x$ and $y$ contained in $W$ such that the subword $[x^-\cdots x^+]$ of $W$ contains both entries of $y$. We say that a binary word $W$ is \textit{inversion free} if it has no inversions.
\end{Def}

\begin{Lem}\label{Lem: word associated with a SYT is inversion free}
	If $\T$ is a standard Young tableau of shape $(m-1,m-1)$ then $W(\T)$ in Definition \ref{Def: binary snake associated with SYT} is an inversion free binary word.
\end{Lem}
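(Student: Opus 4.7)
The plan is to exploit Remark \ref{Remark: binary snake associated with SYT}, which records the key structural feature of $W(\T)$: both the sequence of first entries and the sequence of second entries of the letters $x_1,\ldots,x_m$ occur in $W(\T)$ in the order of their indices. This pair of parallel orderings of the two ``copies'' of the alphabet will be exactly what rules out inversions, since an inversion requires the second copy of one letter to appear in the reverse order relative to the second copy of another.

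More concretely, I would argue by contradiction. Suppose $W(\T)$ contains an inversion: there are distinct letters $x$ and $y$ such that $[x^-\cdots x^+]$ contains both $y^-$ and $y^+$, i.e., the four entries occur in $W(\T)$ in the order $x^-,\,y^-,\,y^+,\,x^+$. Write $x=x_j$ and $y=x_k$ in the notation of Definition \ref{Def: binary snake associated with SYT}. From $x_j^-$ preceding $x_k^-$, the ordering of first entries given by Remark \ref{Remark: binary snake associated with SYT} forces $j<k$. From $x_k^+$ preceding $x_j^+$, the analogous ordering of second entries forces $k<j$. These two conclusions contradict each other, so $W(\T)$ admits no inversion.

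Since the content of the proof is entirely contained in Remark \ref{Remark: binary snake associated with SYT}, no step here is really an obstacle; the only task is to formulate the contradiction cleanly. If one wanted to avoid citing the remark, one would re-derive the two orderings directly from Definition \ref{Def: binary snake associated with SYT}: the first entry of $x_{k+1}$ is placed at position $i$ precisely when the $k$-th cell of the first row of $\T$ contains $i-1$, and the strict increase of entries along that row gives the ordering of first entries; the same reasoning applied to the second row gives the ordering of second entries. Either way, the lemma follows in one short step.
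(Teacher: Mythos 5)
Your argument for inversion-freeness is correct and is essentially the same as the paper's: both hinge on the observation that the strictly increasing rows of a standard Young tableau force the first entries of the letters $x_1,\ldots,x_m$ in $W(\T)$ to appear in increasing order of index, and likewise for the second entries, which rules out the order $x^-,\,y^-,\,y^+,\,x^+$ that an inversion would require. You route this through Remark \ref{Remark: binary snake associated with SYT}, while the paper compares cell entries $i<i'$ and $j<j'$ directly, but this is a presentational difference, not a mathematical one.

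There is, however, one omission worth flagging. The lemma asserts that $W(\T)$ is an \emph{inversion free binary word}, and ``binary'' is part of the claim: one must check that each of the $m$ letters appears in $W(\T)$ exactly twice. The paper devotes the first paragraph of its proof to this (for $1<k<m$, the letter $x_k$ is produced exactly once by cell $k-1$ of the first row and once by cell $k$ of the second row; $x_1$ and $x_m$ are each produced once by the forced boundary assignments $w_1=x_1$, $w_{2m}=x_m$ and once more by a single cell). Moreover, Remark \ref{Remark: binary snake associated with SYT} already speaks of ``first entries'' and ``second entries'' of the letters, so invoking it to prove binary-ness would be circular; the binary property needs a short direct verification from Definition \ref{Def: binary snake associated with SYT} before one can meaningfully talk about $x^-$ and $x^+$. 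Your proof as written skips this step. The fix is easy and you clearly have the ingredients (your closing remark about cell positions is exactly what is needed), but as stated the proof does not establish the full claim of the lemma.
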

\begin{proof}
For $m=1$ the statement is true since $\T$ is empty and $W(\T)=[x_1 x_1]$, thus we may assume that $m>1$.

Let us show first that $W(\T)$ is binary.
The letter $x_1$ is the first letter of $W(\T)$, and
$w_i=x_1$ for $i>1$ only if the first cell of the second row of $\T$ contains $i-1$.
Thus $x_1$ appears in $W(\T)$ exactly twice.
Similarly, $x_m$ is the last letter of $W(\T)$, and $w_i=x_m$ for $i<2m$ only if the last cell of the first row of $\T$ contains $i-1$. Thus $x_m$ appears in $W(\T)$ exactly twice.
If $1<k<m$ then $w_i=w_j=x_k$ for $i<j$ only when the cell $(k-1)$ of the first row contains $i-1$ and the cell $k$ of the second row contains $j-1$. Thus $x_k$ appears in $W(\T)$ exactly twice. This proves that $W(\T)$ is a binary word.

To prove that $W(\T)$ is inversion free, consider the entries in $W(\T)$ of two letters $x_k$ and $x_\ell$ for $k<\ell$.
If $1<k<m$ then the two entries of $x_k$ are $w_i$ and $w_j$ where $i-1$ is in the cell $k-1$ of the first row of $\T$
and $j-1$ is in the cell $k$ of its second row, while the two entries of $x_\ell$ are $w_{i'}$ and $w_{j'}$ where $i'-1$ is in the cell $\ell-1$ of the first row of $\T$ and $j'-1$ is in the cell $\ell$ of its second row.
Since $\T$ is a standard Young tableau, we have $i<i'$ and $j<j'$, thus $x_k$ and $x_\ell$ is not an inversion.

The proofs for the cases $k=1$ and $\ell=m$ are similar.
\end{proof}

\begin{Prop}\label{Prop: binary snake name associated with SYT}
	The word $W(\T)$ in Definition \ref{Def: binary snake associated with SYT} is an inversion free binary snake name.
\end{Prop}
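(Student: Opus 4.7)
The plan is to verify conditions (i) and (ii) of Definition \ref{Def: rules for snakes names} for $W(\T)$. Condition (i) is immediate, since $W(\T)$ is a binary word by Lemma \ref{Lem: word associated with a SYT is inversion free}, so each letter appears exactly twice.

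For condition (ii), the key observation is that in any inversion-free binary word every subword of the form $[x^-\cdots x^+]$ is automatically semi-primitive: inversion-freeness forbids any other letter from having both its entries strictly between $x^-$ and $x^+$, so the open subword $(x^-\cdots x^+)$ contains at most one entry of each remaining letter, i.e., it is primitive. Invoking Lemma \ref{Lem: word associated with a SYT is inversion free}, it therefore suffices to show that for every position $k$ with $1<k<2m$ there is a letter $x_c$ \emph{straddling} $k$, meaning $x_c^-<k<x_c^+$; then $[x_c^-\cdots x_c^+]$ is the required semi-primitive subword containing $w_k$ in its interior.

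The straddling count is controlled as follows. Let $\phi(j)$ and $\psi(j)$ be the numbers of first and second entries among $w_1,\ldots,w_j$. The number of letters with first entry in $\{1,\ldots,k-1\}$ and second entry in $\{k,\ldots,2m\}$ equals $\phi(k-1)-\psi(k-1)$, and subtracting $1$ when $w_k$ is itself a second entry yields the number of letters straddling $k$. Translating to $\T$: for $j\in\{1,\ldots,2m-2\}$, $w_{j+1}$ is a first (resp.\ second) entry precisely when $j$ is placed in the first (resp.\ second) row of $\T$. Writing $\alpha(j)$ and $\beta(j)$ for the numbers of values $\le j$ in rows $1$ and $2$, and accounting for the boundary contribution $w_1=x_1^-$, one obtains $\phi(k-1)=1+\alpha(k-2)$ and $\psi(k-1)=\beta(k-2)$. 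The ballot property of standard Young tableaux gives $\alpha(j)\ge\beta(j)$ for every $j$, so $\phi(k-1)-\psi(k-1)\ge 1$. This already settles the case in which $w_k$ is a first entry. When $w_k$ is a second entry, the value $k-1$ lies in row $2$, so the ballot property applied at stage $k-1$ gives the strict inequality $\alpha(k-2)\ge\beta(k-2)+1$, hence $\phi(k-1)-\psi(k-1)\ge 2$, and after subtracting the letter whose second entry is $w_k$ itself we are still left with a straddling letter.

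The main obstacle is the bookkeeping: correctly relating first and second entries of $W(\T)$ to the two rows of $\T$, treating the boundary letters $x_1$ (whose first entry is $w_1$) and $x_m$ (whose second entry is $w_{2m}$, neither of which is recorded by an SYT cell) uniformly with the rest, and invoking the ballot property at exactly the intermediate stage corresponding to position $k$. Once this dictionary is in place, the two-line verification above furnishes, in either case, a letter $x_c$ with $x_c^-<k<x_c^+$, giving the semi-primitive subword $[x_c^-\cdots x_c^+]$ demanded by condition (ii).
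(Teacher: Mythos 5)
Your proof is correct and follows essentially the same approach as the paper: verify inversion-freeness via Lemma \ref{Lem: word associated with a SYT is inversion free}, observe that in an inversion-free binary word every $[x^-\cdots x^+]$ is automatically semi-primitive, and then show that each interior position $k$ is straddled by some letter. The only cosmetic difference is that the paper delegates the prefix count (``more first entries than second entries'' in every proper prefix) to Remark \ref{Remark: binary snake associated with SYT}, whereas you re-derive the same count directly from the ballot inequality $\alpha(j)\ge\beta(j)$ for standard Young tableaux of shape $(m-1,m-1)$.
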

\begin{proof}
	By Lemma \ref{Lem: word associated with a SYT is inversion free}, $W(\T)$ is an inversion free binary word.
 In particular, condition $(i)$ of Definition \ref{Def: rules for snakes names} is satisfied. We are going to prove that
 condition $(ii)$ of Definition \ref{Def: rules for snakes names} is also satisfied.
 For $m=1$ the statement is true since $\T$ is empty and $W(\T)=[x_1 x_1]$, thus we may assume that $m>1$.

Note first that any subword $[x^-\cdots x^+]$ of an inversion free binary word is semi-primitive.
Let $w_i$ be an entry of $W(\T)$ where $1<i<2m$ which is the first entry of some of its letters.
Remark \ref{Remark: binary snake associated with SYT} implies that the subword $[w_1\cdots w_{i-1}]$ of $W(\T)$
contains only one entry of some letter $x$. Since $W(\T)$ is inversion free, $[x^-\cdots x^+]$ is its semi-primitive subword  containing $w_i$. The proof for the case when $w_i$ is the second entry of some letter is similar.
\end{proof}

\begin{Lem}\label{Lem: T(W(T))=T}
	Let $\T$ be a standard Young tableau of shape $(m-1,m-1)$,
and let $W=W(\T)$ be the word of length $2m$ associated with $\T$ in
Definition \ref{Def: binary snake associated with SYT},
which is an inversion free binary snake name by Proposition \ref{Prop: binary snake name associated with SYT}.
If $\T(W)$ is the standard Young tableau associated with $W$ in
Definition \ref{Def: Young Tableau associated with a binary snake} then $\T(W) = \T$.
\end{Lem}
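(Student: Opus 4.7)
The plan is to verify cell by cell that the assignment of numbers to the rows of the shape $(m-1, m-1)$ is the same under $\T$ and under $\T(W(\T))$. Set $W = W(\T) = [w_1 \cdots w_{2m}]$ with letters $x_1, \ldots, x_m$. By Definition \ref{Def: binary snake associated with SYT}, $w_1 = x_1^-$ and $w_{2m} = x_m^+$, and Remark \ref{Remark: binary snake associated with SYT} records that the first entries $x_j^-$ appear in $W$ in order of increasing $j$, and likewise for the second entries $x_j^+$. Consequently, among $w_2, \ldots, w_{2m-1}$, the node entries (first entries) are $x_2^-, x_3^-, \ldots, x_m^-$ in this left-to-right order, while the non-node entries (second entries) are $x_1^+, x_2^+, \ldots, x_{m-1}^+$ in this left-to-right order.

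Next I would unpack the construction of $\T' := \T(W)$ from Definition \ref{Def: Young Tableau associated with a binary snake}. Node entries are inserted left-to-right into row $1$ and non-node entries left-to-right into row $2$. Combined with the order observation above, this says: the $k$-th cell of row $1$ of $\T'$ contains $i-1$, where $i$ is the position in $W$ of $x_{k+1}^-$, for $1 \le k \le m-1$; and the $k$-th cell of row $2$ of $\T'$ contains $j-1$, where $j$ is the position in $W$ of $x_k^+$, again for $1 \le k \le m-1$ (a count shows that $w_2, \ldots, w_{2m-1}$ contains exactly $m-1$ node entries and $m-1$ non-node entries, so every cell is filled once).

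Now I would compare with the definition of $W = W(\T)$. By Definition \ref{Def: binary snake associated with SYT}, for $1 < i < 2m$ the rule $w_i = x_{k+1}$ (a first entry) holds precisely when $i-1$ is in the $k$-th cell of row $1$ of $\T$, and $w_i = x_k$ (a second entry) holds precisely when $i-1$ is in the $k$-th cell of row $2$ of $\T$. Reading this in the opposite direction: the position of $x_{k+1}^-$ in $W$ is the unique $i$ with $i-1$ in cell $k$ of row $1$ of $\T$, and the position of $x_k^+$ in $W$ is the unique $j$ with $j-1$ in cell $k$ of row $2$ of $\T$. Substituting into the description of $\T'$ obtained in the previous paragraph gives that the $k$-th cell of row $1$ (resp.\ row $2$) of $\T'$ agrees with the $k$-th cell of row $1$ (resp.\ row $2$) of $\T$, so $\T' = \T$.

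The only subtle point, and the one I would write out most carefully, is justifying the monotonicity of occurrences recorded in Remark \ref{Remark: binary snake associated with SYT}: that both the first entries and the second entries of $x_1, \ldots, x_m$ in $W(\T)$ are sorted by index. For the first entries, row $1$ of $\T$ is strictly increasing, so reading off positions of $x_2^-, x_3^-, \ldots, x_m^-$ from row $1$ (together with $x_1^- = w_1$) yields an increasing sequence; for the second entries, row $2$ of $\T$ is strictly increasing, which similarly orders $x_1^+, \ldots, x_{m-1}^+$ (with $x_m^+ = w_{2m}$ coming last). Once this is in place, the identification of the cells is bookkeeping, and the edge case $m = 1$ (where $\T$ is empty and $W(\T) = [x_1 x_1]$ so $\T(W) = \T$ trivially) can be disposed of separately.
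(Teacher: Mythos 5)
Your proof is correct and takes essentially the same approach as the paper: both arguments match the $k$-th cell of each row of $\T(W)$ to the corresponding cell of $\T$ by using the monotonicity (from Remark \ref{Remark: binary snake associated with SYT}) of the positions of the first and second occurrences of the letters $x_1,\ldots,x_m$ in $W(\T)$, together with the count of node (resp.\ non-node) entries in a prefix $[w_1\cdots w_i]$. Your write-up merely makes the bookkeeping more explicit (and notes the trivial $m=1$ case), whereas the paper phrases it by fixing a cell of $\T$ and verifying that $\T(W)$ places the same number there.
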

\begin{proof}
If $w_i$ is an entry of $W$ such that $i-1$ is in the $k$-th cell of the first row of $\T$, then $i>1$ and,
by Remark \ref{Remark: binary snake associated with SYT}, the subword $[w_1\cdots w_i]$ of $W$ contains exactly $k+1$
first entries of the letters $x_1,\ldots,x_{k+1}$ of $W$. By Definition \ref{Def: Young Tableau associated with a binary snake},
the $k$-th cell of the first row of $\T(W)$ contains the same number $i-1$ as the $k$-th cell of the first row of $\T$.

If $w_i$ is an entry of $W$ such that $i-1$ is in the $k$-th cell of the second row of $\T$,
by Remark \ref{Remark: binary snake associated with SYT}, the subword $[w_1\cdots w_i]$ of $W$ contains exactly $k$
second entries of the letters $x_1,\ldots,x_k$ of $W$. By Definition \ref{Def: Young Tableau associated with a binary snake},
the $k$-th cell of the second row of $\T(W)$ contains the same number $i-1$ as the $k$-th cell of the second row of $\T$.
\end{proof}

\begin{Lem}\label{Lem: W(T(W))=W}
Let $W$ be an inversion free binary snake name of length $2m$ containing $m$ letters $x_1,\ldots,x_m$, so that their first entries in $W$ appear in the same order as their indices. Let $\T=\T(W)$ be the standard Young tableau of shape $(m-1,m-1)$ associated with $W$ in Definition \ref{Def: Young Tableau associated with a binary snake}.
If $W(\T)$ is the word associated with $\T$ in Definition \ref{Def: binary snake associated with SYT} then $W(\T)=W$.
\end{Lem}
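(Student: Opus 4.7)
The plan is to invert the construction in Lemma \ref{Lem: T(W(T))=T} by first pinning down how the second entries of the letters of $W$ are ordered, and then matching $W$ with $W(\T)$ position by position using the two defining recipes. The only non-bookkeeping step is the following ordering claim, which is the main obstacle: \emph{in an inversion free binary $W$, the second entries of $x_1,\dots,x_m$ appear in the same order as their first entries}. Assuming the first entries of $x_1,\dots,x_m$ are indexed in the order they appear in $W$, if $j<k$ but $x_k^+$ preceded $x_j^+$, then the four occurrences would be situated as $x_j^-<x_k^-<x_k^+<x_j^+$, so the subword $[x_j^-\cdots x_j^+]$ of $W$ would contain both entries of $x_k$, contradicting Definition \ref{Def: inversion on binary snake name}.

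\textbf{Matching positions.} Write $W(\T)=[w'_1\cdots w'_{2m}]$. For $i=1$ we have $w_1=x_1$ by the hypothesis on indexing and $w'_1=x_1$ by Definition \ref{Def: binary snake associated with SYT}. For $1<i<2m$, I would split into the two cases that Definition \ref{Def: Young Tableau associated with a binary snake} uses. If $w_i$ is a node entry of $W$ and is the $(k{+}1)$-st node entry counted from position $1$, then the indexing hypothesis gives $w_i=x_{k+1}$, while $i-1$ lies in the $k$-th cell of the top row of $\T$, so Definition \ref{Def: binary snake associated with SYT} yields $w'_i=x_{k+1}$. If $w_i$ is a second entry and is the $k$-th second entry in $W$, the ordering claim forces $w_i=x_k$, while $i-1$ lies in the $k$-th cell of the bottom row of $\T$, so $w'_i=x_k$.

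\textbf{The last letter.} Finally, for $i=2m$, a simple count shows $w_{2m}$ must be a second entry: $W$ has exactly $m$ node entries, one of which is $w_1$, and the remaining $m-1$ must fill the top row of $\T$, which is of length $m-1$; so $w_{2m}$ is not among the $m-1$ interior node entries. By the ordering claim, the last second entry is $x_m^+$, so $w_{2m}=x_m$, matching $w'_{2m}=x_m$ from Definition \ref{Def: binary snake associated with SYT}. Combining all three cases yields $W(\T)=W$.
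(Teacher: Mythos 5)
Your proof is correct and follows essentially the same route as the paper: both rest on the observation that in an inversion free binary word the second entries appear in the same order as the first entries, and then match up the two constructions by direct bookkeeping (you index the matching by position, the paper by letter, but that is only a reindexing). The one small addition you make is to actually prove the ordering observation from Definition \ref{Def: inversion on binary snake name}, which the paper simply asserts.
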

\begin{proof}
Since $W$ and $W(\T)$ are inversion free words, second entries of all letters $x_j$ in each of them appear in the same order as their first entries, and in the same order as their indices.
In particular, the first entry of $W(\T)$ is $x_1$, same as the first entry of $W$,
and the last entry of $W(\T)$ is $x_m$, same as the last entry of $W$.

Let $w_i=x_k^-$ and $w_j=x_k^+$ be two entries of the letter $x_k$ in $W$, where $1<k<m$.
Since $w_i$ is the $k$-th first entry of a letter in $W$, $i-1$ is in the cell $k-1$ of the first row of $\T$.
Similarly, since $w_j$ is the $k$-th second entry of a letter in $W$, $j-1$ is in the cell $k$ of the second row of $\T$.
Definition \ref{Def: binary snake associated with SYT} implies that $x_k$ appears in $W(\T)$ also as its $i$-th and $j$-th entries.
The proofs for the second entry of $x_1$ and the first entry of $x_m$ are similar.
Thus all entries of these two words are the same.
\end{proof}

\begin{Teo}\label{Prop: bijection SYT and inv. free binary snake names}
	There is a bijection between the set of standard Young tableaux of shape $(m-1,m-1)$ and the set of equivalence classes of inversion free binary snake names of length $2m$, for each $m\ge 1$.
\end{Teo}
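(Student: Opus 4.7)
The plan is to show that the two constructions already introduced, $W \mapsto \T(W)$ from Definition~\ref{Def: Young Tableau associated with a binary snake} and $\T \mapsto W(\T)$ from Definition~\ref{Def: binary snake associated with SYT}, descend to mutually inverse maps between the two sets in question. Almost all the technical work has been done in Propositions~\ref{Prop: Young Tableau associated with a binary snake} and \ref{Prop: binary snake name associated with SYT} and Lemmas~\ref{Lem: T(W(T))=T} and \ref{Lem: W(T(W))=W}; what remains is a short bookkeeping argument about equivalence classes.

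First I would verify that $W \mapsto \T(W)$ is well-defined on equivalence classes. Since $\T(W)$ is determined entirely by which positions $i$ in $W$ contain node entries (i.e.\ first occurrences of letters), and this information is recorded in the partition $P(W)$, two equivalent binary snake names produce the same tableau. By Proposition~\ref{Prop: Young Tableau associated with a binary snake} the resulting filling is a standard Young tableau of shape $(m-1,m-1)$, so this gives a well-defined map $\Phi$ from equivalence classes of inversion free binary snake names of length $2m$ to $\mathrm{SYT}(m-1,m-1)$.

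Next I would observe that every equivalence class of inversion free binary snake names contains a canonical representative, namely the one whose letters are $x_1,\ldots,x_m$ and appear with their first entries in the order of their indices; any other word in the class is obtained by relabeling the alphabet. Proposition~\ref{Prop: binary snake name associated with SYT} produces, from any $\T\in \mathrm{SYT}(m-1,m-1)$, precisely such a canonical representative $W(\T)$, giving a well-defined map $\Psi$ in the opposite direction.

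Finally I would invoke the two lemmas to conclude that $\Phi$ and $\Psi$ are inverse to each other: Lemma~\ref{Lem: T(W(T))=T} gives $\Phi\circ\Psi=\mathrm{id}$, and Lemma~\ref{Lem: W(T(W))=W}, applied to the canonical representative in each equivalence class, gives $\Psi\circ\Phi=\mathrm{id}$. I expect no genuine obstacle here; the only subtle point is checking that Lemma~\ref{Lem: W(T(W))=W} indeed applies to the canonical representative (whose letters are already ordered by first appearance), which is immediate from the definition of the canonical representative.
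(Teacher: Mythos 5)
Your proposal is correct and takes essentially the same approach as the paper: both use $\T(\cdot)$ and $W(\cdot)$ as the candidate bijection and its inverse, and invoke Lemmas~\ref{Lem: T(W(T))=T} and \ref{Lem: W(T(W))=W} to establish mutual inverseness. Your extra bookkeeping about canonical representatives and well-definedness on equivalence classes is a welcome clarification that the paper leaves implicit.
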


\begin{proof}
Definition \ref{Def: Young Tableau associated with a binary snake} defines the map $f$
from the set of equivalence classes of inversion free binary snake names of length $2m$ to the set of standard Young tableaux of shape $(m-1,m-1)$, and Definition \ref{Def: binary snake associated with SYT} defines a map in the opposite direction.
It follows from Lemmas \ref{Lem: T(W(T))=T} and \ref{Lem: W(T(W))=W} that these two maps are inverses of each other,
thus they are bijective.
\end{proof}

\begin{Cor}\label{Cor: binary snake names and Catalan number} \emph{(See \cite{Stanley2} p.~226 Exercise 6.19 ww, p.~230 Exercise 6.20.)}
	The number of equivalence classes of inversion free binary snake names of length $2m+2$ is the $m$-th Catalan number
$$C_m=\frac{1}{m+1}\genfrac(){0pt}{0}{2m}{m}.$$
\end{Cor}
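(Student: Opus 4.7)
The plan is to deduce this corollary as an immediate consequence of the bijection established in Theorem \ref{Prop: bijection SYT and inv. free binary snake names}, combined with a classical result in the enumerative combinatorics of Young tableaux.

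First, I would apply Theorem \ref{Prop: bijection SYT and inv. free binary snake names} with $m$ replaced by $m+1$. This yields a bijection between the set of equivalence classes of inversion free binary snake names of length $2(m+1)=2m+2$ and the set of standard Young tableaux of shape $(m,m)$. Hence it suffices to count the latter.

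The second (and only remaining) step is to invoke the well-known enumeration of standard Young tableaux of rectangular shape $(m,m)$, which equals the Catalan number $C_m = \frac{1}{m+1}\binom{2m}{m}$. This can be established either by the hook length formula (in which the product of hook lengths of the $2\times m$ rectangle yields $(2m)!/\prod h(u)$ and simplifies to $C_m$), or combinatorially by the reflection principle: reading an SYT of shape $(m,m)$ entry by entry from $1$ to $2m$ and recording whether $i$ appears in the first row ($+1$) or the second row ($-1$) produces a Dyck-type lattice path of length $2m$, where the column-strict condition of the tableau corresponds precisely to the ballot condition. This correspondence with Dyck paths (or equivalently with balanced parenthesizations) is exactly one of the classical Catalan number interpretations, and is stated in the references cited, \cite{Stanley2} Exercises 6.19(ww) and 6.20.

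Combining these two steps, the count of equivalence classes of inversion free binary snake names of length $2m+2$ equals the number of SYTs of shape $(m,m)$, which is $C_m$. The main conceptual work has already been carried out in Theorem \ref{Prop: bijection SYT and inv. free binary snake names}; no new obstacle arises here, as the Catalan enumeration of $(m,m)$-tableaux is a standard fact that requires no adaptation to our setting.
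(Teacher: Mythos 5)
Your proposal is correct and is exactly the intended argument: the paper leaves this corollary without an explicit proof because it is an immediate consequence of Theorem \ref{Prop: bijection SYT and inv. free binary snake names} (applied with $m$ shifted to $m+1$) together with the classical enumeration of standard Young tableaux of shape $(m,m)$ by the Catalan number $C_m$, which is precisely the content of the cited exercises in Stanley. Your handling of the index shift and your description of the Dyck-path/ballot correspondence are both accurate.
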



\begin{thebibliography}{99}

\bibitem{birbrair1999local} L.~Birbrair. {\it Local bi-Lipschitz classification of 2-dimensional semialgebraic sets.} Houston J.~Math., 25 (1999), no.~3, 453--472

\bibitem{birbrair2014lipschitz} L.~Birbrair, A.~Fernandes, A.~Gabrielov, V.~Grandjean. {\it Lipschitz contact equivalence of function germs in {$\mathbb{R}^2$}.} Annali SNS Pisa, 17 (2017), 81--92, DOI 10.2422/2036-2145.201503{$\_$}014

\bibitem{birbrair2020lipschitz} L.~Birbrair, A.~Fernandes, Z.~Jelonek. {\it On the extension of bi-Lipschitz mappings.} Preprint arXiv:2001.00753 (2020)

\bibitem{birbrair2018lipschitz} L.~Birbrair, R.~Mendes. {\it Lipschitz contact equivalence and real analytic functions.} Preprint arXiv:1801.05842 (2018)

\bibitem{birbrair2018arc} L.~Birbrair, R.~Mendes. {\it Arc criterion of normal embedding.} In: Singularities and Foliations. Geometry, Topology and Applications. NBMS 2015, BMMS 2015. Springer Proceedings in Mathematics \& Statistics, v.~222. Springer, (2018)
	
	\bibitem{birbrair2000normal} L.~Birbrair, T.~Mostowski. {\it Normal embeddings of semialgebraic sets.} Michigan Math.~J., 47 (2000), 125--132

	\bibitem{fulton1997young} W.~Fulton. {\it Young tableaux: with applications to representation theory and geometry.} Cambridge University Press (1997)

	\bibitem{kurdyka1992subanalytic} K.~Kurdyka. {\it On a subanalytic stratification satisfying a Whitney property with exponent 1.} Real algebraic geometry, Springer, (1992), 316--322
	
	\bibitem{kurdyka1997distance} K.~Kurdyka, P.~Orro. {\it Distance g\'eod\'esique sur un sous-analytique.} Revista Math.~Univ.~Comput.~ Madrid, 10 (1997)
	
	\bibitem{Mostowski} T.~Mostowski. {\it Lipschitz equisingularity} Dissertationes Math. (Rozprawy Mat.), 243 (1985) 46 pp.
	
\bibitem{Parusinski} A.~Parusinski. {\it Lipschitz stratifications of subanalytic sets}, Ann.~Sci.~Ecole.~Norm.~Sup., (4) 27 (1994), 661-–696

	\bibitem{Pham} F.~Pham and B.~Teissier. {\it Fractions Lipschitziennes d’une alg\`ebre analytique complexe et saturation de Zariski.} Pr\'epublications Ecole Polytechnique No. M17.0669. Paris, 1969.  Available at http://hal.archives-ouvertes.fr/hal-00384928/fr/

	\bibitem{Stanley2} R.~Stanley. {\em Enumerative combinatorics. Vol.~2,} Cambridge University Press, Cambridge, 1999.

	\bibitem{valette2007link} G.~Valette. {\it The link of the germ of a semi-algebraic metric space.} Proc.~Amer.~Math.~Soc., 135 (2007), no.~10, 3083--3090
	
\end{thebibliography}
\end{document}